\tikzstyle{blob}=[fill={rgb,255: red,240; green,240; blue,255}, draw=black, shape=circle, scale=0.8, minimum size=0.4cm, inner sep=0.05cm]
\tikzstyle{Blob}=[fill={rgb,255: red,240; green,240; blue,255}, draw=black, shape=rectangle, scale=0.8, minimum size=0.6cm, inner sep=0.1cm, rounded corners, anchor=center]
\tikzstyle{Gap}=[fill=none, dashed, draw=black, shape=circle, scale=0.8, minimum size=1cm, inner sep=0.1cm, anchor=center]
\tikzstyle{intxtB}=[fill=white, draw=black, shape=rectangle, scale=0.8, minimum size=0.5cm, inner sep=0.1cm, rounded corners, anchor=center]
\tikzstyle{spot}=[fill=white, draw=black, shape=circle]
\tikzstyle{iblob}=[fill=white, draw=none, shape=circle, scale=0.8, minimum size=0.6cm, inner sep=0.05cm]
\tikzstyle{iBlob}=[fill=white, draw=none, shape=rectangle, scale=0.8, minimum size=0.6cm, inner sep=0.1cm, rounded corners]
\tikzstyle{tblob}=[fill=white, draw=black, shape=circle, scale=0.8, minimum size=0.3cm, inner sep=0.05cm]
\tikzstyle{small-label}=[fill=none, draw=none, shape=rectangle, tikzit category=label, font={\footnotesize}, scale=0.8]
\tikzstyle{norm-label}=[fill=none, draw=none, shape=rectangle, tikzit category=label, font={\normalsize}, scale=0.8]
\tikzstyle{big-label}=[fill=none, draw=none, shape=rectangle, tikzit category=label, font={\large}, scale=0.8]
\tikzstyle{ZeroCell}=[fill=none, shape=rectangle, draw=black, tikzit category=label, font={\footnotesize}, scale=1]
\tikzstyle{over-label}=[fill=white, draw=none, shape=rectangle, tikzit category=label, font={\footnotesize}, scale=0.8]
\tikzstyle{equals}=[fill=none, draw=none, shape=rectangle, font={$=$}]
\tikzstyle{eBlob}=[fill=white, draw=black, shape=circle, scale=0.8, minimum size=0.5cm, inner sep=0.05cm, font={$=$}]
\tikzstyle{starb}=[fill=white, draw=black, shape=circle, scale=0.8, minimum size=0.6cm, inner sep=0.05cm, font={$*$}]
\tikzstyle{mapsto}=[fill=none, draw=none, shape=rectangle, font={$\mapsto$}]
\tikzstyle{leadsto}=[fill=none, draw=none, shape=rectangle, font={\Large $\leadsto$}]
\tikzstyle{compos}=[fill=none, draw=none, shape=rectangle, font={$\circ$}]
\tikzstyle{Box}=[fill=white, draw=black, shape=rectangle, anchor=center, inner sep=0.1cm, align=center, minimum width=1.3cm, minimum height=1cm, thick, font={\footnotesize \scshape}]
\tikzstyle{miniblob}=[fill=white, draw=black, shape=circle, scale=0.8, minimum size=0.4cm, inner sep=0.05cm]
\tikzstyle{genA}=[fill=white, draw=black, shape=circle, inner sep=0pt, minimum width=0.25cm, tikzit fill=red, thick, outer sep=-0.1cm]
\tikzstyle{genAn}=[fill=white, draw=black, shape=circle, path picture={{ \draw[black]
\tikzstyle{genB}=[fill=white, draw=black, shape=isosceles triangle, shape border rotate=90, isosceles triangle apex angle=90, anchor=lower side, minimum width=0.5cm, inner sep=0pt, outer sep=-0.1pt, tikzit fill={rgb,255: red,255; green,128; blue,0}, thick, yshift=-0.034cm]
\tikzstyle{genBn}=[fill=white, draw=black, shape=isosceles triangle, shape border rotate=270, isosceles triangle apex angle=90, anchor=lower side, minimum width=0.5cm, inner sep=0pt, outer sep=-0.1pt, tikzit fill={rgb,255: red,171; green,85; blue,0}, thick, yshift=0.034cm]
\tikzstyle{genC}=[fill=white, draw=black, shape=diamond, anchor=center, minimum width=0.5cm, inner sep=0pt, minimum height=0.5cm, tikzit fill=yellow, thick]
\tikzstyle{genZn}=[fill=white, draw=black, shape=regular polygon, regular polygon sides=4, anchor=center, minimum width=0.5cm, inner sep=0pt, outer sep=-1pt, minimum height=0.5cm, tikzit fill={rgb,255: red,18; green,18; blue,127}, thick]
\tikzstyle{genD}=[fill=white, draw=black, shape=diamond, path picture={{ \draw[black]
\tikzstyle{genDn}=[fill=white, draw=black, shape=diamond, path picture={{ \draw[black]
\tikzstyle{genE}=[fill=white, draw=black, shape=circular sector, shape border rotate=270, circular sector angle=90, anchor=center, inner sep=0.02, minimum size=0.33cm, yshift=-0.026cm, tikzit fill=blue, thick]
\tikzstyle{genEn}=[fill=white, draw=black, shape=circular sector, shape border rotate=90, circular sector angle=90, anchor=center, inner sep=0.02, minimum size=0.33cm, yshift=0.026cm, tikzit fill={rgb,255: red,11; green,243; blue,255}, thick]
\tikzstyle{genX}=[fill=white, draw=black, shape=trapezium, trapezium angle=60, shape border rotate=180, anchor=bottom side, inner sep=0.02, minimum size=0.25cm, yshift=0.034cm, tikzit fill={rgb,255: red,150; green,243; blue,155}, thick]
\tikzstyle{string}=[-]
\tikzstyle{dirstring}=[-,postaction=decorate, decoration={markings, 
\tikzstyle{border}=[-, very thick]
\tikzstyle{divider}=[-, tikzit draw={rgb,255: red,191; green,0; blue,64}, dashed, draw=black]
\tikzstyle{dots}=[-, tikzit draw={rgb,255: red,191; green,0; blue,64}, dotted, draw=black]
\tikzstyle{invis}=[-, tikzit draw={rgb,255: red,0; green,255; blue,64}, draw=none, pattern={crosshatch}]
\tikzstyle{ident}=[-, tikzit draw={rgb,255: red,191; green,0; blue,64}, dashed, draw=black]
\tikzstyle{outline}=[-, fill=white, tikzit fill={rgb,255: red,166; green,205; blue,255}, draw=black, tikzit draw={rgb,255: red,3; green,5; blue,115}]
\tikzstyle{Grn}=[-, draw=mySeaGreen, ultra thick, tikzit draw={rgb,255: red,0; green,126; blue,18}]
\tikzstyle{Prp}=[-, draw=myPurple, ultra thick, tikzit draw={rgb,255: red,180; green,40; blue,240}]
\tikzstyle{Inrt}=[-, draw=myBlue, ultra thick, tikzit draw={rgb,255: red,20; green,20; blue,180}]
\tikzstyle{PrpId}=[-, draw={rgb,255: red,144; green,92; blue,203}, very thick, dashed]
\tikzstyle{GrnId}=[-, draw=mySeaGreen, very thick, dashed]
\tikzstyle{Grns}=[-, draw=mySeaGreen, thick, double, tikzit draw={rgb,255: red,129; green,208; blue,0}]
\tikzstyle{GPs}=[-, draw=myAmbig, thick, double, tikzit draw={rgb,255: red,61; green,149; blue,208}]
\tikzstyle{SGr}=[-, draw=black, double=mySeaGreen, double distance=2pt, tikzit draw={rgb,255: red,129; green,208; blue,0}]
\tikzstyle{SPr}=[-, draw=black, double=myPurple, double distance=2pt, tikzit draw={rgb,255: red,208; green,31; blue,161}]
\tikzstyle{Grnsvar}=[-, draw=mySeaGreen, very thick, decorate, decoration={coil,aspect=0}, tikzit draw={rgb,255: red,129; green,208; blue,0}]
\tikzstyle{Prps}=[-, draw=myPurple, thick, double, tikzit draw={rgb,255: red,240; green,54; blue,237}]
  \tikzset{
  popleft/.style={
  rounded corners,
  to path={-- ([xshift=-#1]\tikztostart.west) |- (\tikztotarget)[pos=0.25]\tikztonodes}
  },
  popright/.style={
  rounded corners,
  to path={-- ([xshift=#1]\tikztostart.east) |- (\tikztotarget)[pos=0.25] \tikztonodes}
  },
  popup/.style={
  rounded corners,
  to path={-- ([yshift=#1]\tikztostart.north) -| (\tikztotarget)[pos=0.25] \tikztonodes}
  },
  popdown/.style={
  rounded corners,
  to path={-- ([yshift=-#1]\tikztostart.south) -| (\tikztotarget)[pos=0.25] \tikztonodes}
  },
  myNArrow/.style={double equal sign distance,>={Implies},->},
  commutative diagrams/Rightarrow/.style={myNArrow, nfold},
  commutative diagrams/equal/.style={double equal sign distance, nfold, no head},
  commutative diagrams/equals/.style={double equal sign distance, nfold, no head},
  triple/.style={myNArrow,scaling nfold=3},
  quadruple/.style={myNArrow,scaling nfold=4},
  mybold/.style={line width=1.2pt, -{To[length=3pt]}},
  commutative diagrams/eql/.style={double equal sign distance, dash, nfold}
  }
\tikzset{curve/.style={settings={#1},to path={(\tikztostart)
    .. controls ($(\tikztostart)!\pv{pos}!(\tikztotarget)!\pv{height}!270:(\tikztotarget)$)
    and ($(\tikztostart)!1-\pv{pos}!(\tikztotarget)!\pv{height}!270:(\tikztotarget)$)
    .. (\tikztotarget)\tikztonodes}},
    settings/.code={\tikzset{quiver/.cd,#1}
        \def\pv##1{\pgfkeysvalueof{/tikz/quiver/##1}}},
    quiver/.cd,pos/.initial=0.35,height/.initial=0}
\tikzset{between/.style n args={2}{/tikz/spath/at end path construction={
    \tikzset{spath/split at keep middle={current}{#1}{#2}}
}}}
\tikzset{tail reversed/.code={\pgfsetarrowsstart{tikzcd to}}}
\tikzset{2tail/.code={\pgfsetarrowsstart{Implies[reversed]}}}
\tikzset{2tail reversed/.code={\pgfsetarrowsstart{Implies}}}
\tikzset{no body/.style={/tikz/dash pattern=on 0 off 1mm}}
\newtheorem{theorem}{Theorem}[section]%
\newtheorem{lemma}[theorem]{Lemma}%
\newtheorem{corollary}[theorem]{Corollary}%
\theoremstyle{definition}%
\newtheorem{definition}[theorem]{Definition}%
\newtheorem{notation}[theorem]{Notation}%
\newtheorem{explication}[theorem]{Explication}%
\newtheorem{proposition}[theorem]{Proposition}%
\newtheorem{example}[theorem]{Example}%
\newtheorem{remark}[theorem]{Remark}%
\DeclareFontFamily{U}{dmjhira}{}
\DeclareFontShape{U}{dmjhira}{m}{n}{
  <-> dmjhira
}{}
\newcommand{\yo}{{\usefont{U}{dmjhira}{m}{n}\symbol{"48}}}
    \title{Comparing loose bimodules and double barrels using pseudo-models of enhanced sketches}\author{
        Jason Brown
      \and{}
        Kevin Carlson
      \and{}
        Sophie Libkind
      \and{}
        David Jaz Myers
      }
\begin{document}

\maketitle
\begin{abstract}
        (Pseudo) double categories have two sorts of morphisms: tight ones which compose strictly, and loose ones which compose up to coherent isomorphism. In this paper, we consider bimodules between double categories in the loose direction. We provide two formulation of this concept --- first as pseudo-bimodules between pseudo-categories in the 2-category of categories, and second as double barrels generalizing Joyal's definition of bimodules between categories as functors into the walking arrow --- and prove these two formulations equivalent. In order to prove this equivalence, we define a notion of \emph{pseudo-model} of an enhanced sketch, which may be of independent interest. We then consider some double category theory unlocked by the theory of loose bimodules: loose adjunctions, and loose limits.
    \end{abstract}

    \tableofcontents
    \section{Introduction}\label{djm-00JP}\par{}
 	The central lesson of category theory is that one should always consider mathematical objects together with their appropriate form of \emph{morphism}. When the objects are themselves categories, the morphisms are functors. But to develop the theory of categories categorically, it is not sufficient to work with functors; most central notions in category theory are better expressed by combining functors with \emph{bimodules} \(M : \mathsf {C}^{\mathsf {op}} \times  \mathsf {D} \to  \mathsf {Set}\) between categories (also known as \emph{profunctors}  or \emph{distributors} ). For example, adjunctions and colimits are defined via isomorphisms between bimodules:
 	\begin{equation}\mathsf {C}(Fx, y) \cong  \mathsf {D}(x, Gy), \quad \quad  \mathsf {C}(\operatorname {colim} D, y) \cong  \mathsf {C}^{J}(D(-), y)\end{equation}
 	While the above notions can be defined in ordinary category theory using functors and natural transformations, their definitions using bimodules are more general and more robust. Indeed, \emph{weighted} colimits and \emph{pointwise} Kan extensions in enriched and internal settings cannot generally be expressed 2-categorically, and so must be expressed in terms of bimodules, or some equivalent structure. Simply put, bimodules between categories are central because \emph{representability} is central in category theory.
 \par{}
 	 The ur-bimodule is the hom-bimodule \(\mathsf {C}(-, -)\) of \emph{morphisms} on an object \(\mathsf {C}\); many other bimodules are constructed from hom bimodules by \emph{restriction}, composition, and mapping. With restrictions of hom bimodules, we can define universal properties by isomorphism of bimodules as in the examples above. It is for this reason that \emph{formal} category theory is naturally expressed in the (virtual) double category of categories, functors, and bimodules (as in \cite{cruttwell-2010-unified} and \cite{koudenburg-2024-formal}), though it can  also be expressed via Yoneda structures (as in \cite{gray-1974-formal} or \cite{di-liberti-2019-unicity}).
\par{}
 	Where categories have a single notion of morphism, double categories, definitionally, have two: the “tight” morphisms which compose strictly and the “loose” morphisms whose composition is only unital and associative up to coherent isomorphism. Double categories, double functors, and tight transformations form a 2-category, and one can do much of double category theory working in this 2-category. This is a key advantage of
  double categories over the older concept of bicategory (\cite{bénabou-1967-introduction}). Taking the \emph{tight} transformations as 2-cells means that this notion of double category theory privileges tight morphisms. The corresponding notion of \emph{tight} bimodule was given by Bob Paré, at least implicitly, in \emph{Yoneda theory for double categories} \cite{pare-2011-yoneda} and explicitly by Bertalan in his thesis \cite{bertalan-2012-hidak} (there called “double profunctors”) as lax double functors
 	\begin{equation}M : \mathbb {C}^{\mathsf {op}} \times  \mathbb {D} \to  \mathbb {S}\mathsf {pan}(\mathsf {Set}).\end{equation}
 	The intuition is that \(M(c, d)\) is a set of \emph{tight heteromorphisms} from \(c \in  \mathbb {C}\) to \(d \in  \mathbb {D}\); these are acted upon by pre- and post-composition in the tight direction.
 	These \emph{tight} bimodules suffice to express tight universal properties, as Paré shows in the paper
  just cited.
 \par{}
 	But as pseudo-categories internal to \(\mathcal {C}\mathsf {at}\), double categories support another notion of bimodule which directly categorifies bimodules between ordinary categories, viewed as categories internal to \(\mathsf {Set}\). Bimodules of this sort are called \emph{loose} bimodules. In a loose bimodule, \(M(c, d)\) is instead a \emph{category} of “loose” heteromorphisms from \(c\) to \(d\), acted upon by pre- and post-composition in the loose direction --- but this time
  the actions are only unital and associative up to coherent isomorphism. More explicitly, if we see a double category \(\mathbb {C}\) as a pseudo-monad in spans of categories:
   \begin{center}\label{djm-00JP-fig0}
 		\begin {tikzcd}
 	& {\mathsf {Loose}({\mathbb {C}})} \\
 	{ \mathsf {Tight}({\mathbb {C}})} && { \mathsf {Tight}({\mathbb {C}})}
 	\arrow ["{\mathsf {dom}}"', from=1-2, to=2-1]
 	\arrow ["{\mathsf {codom}}", from=1-2, to=2-3]
 \end {tikzcd}
 	\end{center}
 	then a loose bimodule \(M : \mathbb {C} \mathrel {\mkern 3mu\vcenter {\hbox {$\shortmid $}}\mkern -10mu{\to }} \mathbb {D}\) should be \emph{pseudo-bimodule} between these pseudo-monads, consisting of a span of categories
 	
\begin{center}
 	\begin {tikzcd}
 	& {\mathsf {Car} (\mathbb {M})} \\
 	{ \mathsf {Tight}({\mathbb {C}})} && { \mathsf {Tight}({\mathbb {D}})}
 	\arrow ["{\mathsf {dom}}"', from=1-2, to=2-1]
 	\arrow ["{\mathsf {codom}}", from=1-2, to=2-3]
 \end {tikzcd}
 	\end{center}

 	together with a (coherent) action by \(\mathbb {C}\) and \(\mathbb {D}\) as expressed by the following commutative diagram of categories:
    
\begin{center}
 		\begin {tikzcd}[row sep=0.5em, column sep=0.1em]
 	{ \mathsf {Tight}({\mathbb {C}})} && { \mathsf {Tight}({\mathbb {C}})} && { \mathsf {Tight}({\mathbb {D}})} && { \mathsf {Tight}({\mathbb {D}})} \\
 	& {\mathsf {Loose}({\mathbb {C}})} && {\mathsf {Car} (\mathbb {M})} && {\mathsf {Loose}({\mathbb {D}})} \\
 	&& \bullet  && \bullet  \\
 	&&& \bullet  \\
 	&&& \\
 	{ \mathsf {Tight}({\mathbb {C}})} &&&{\mathsf {Car} (\mathbb {M})} &&& { \mathsf {Tight}({\mathbb {D}})}
 	\arrow [equals, from=1-1, to=6-1]
 	\arrow [equals, from=1-7, to=6-7]
 	\arrow ["{\mathsf {dom}}"', from=2-2, to=1-1]
 	\arrow ["{\mathsf {codom}}", from=2-2, to=1-3]
 	\arrow ["{\mathsf {dom}}"', from=2-4, to=1-3]
 	\arrow ["{\mathsf {codom}}", from=2-4, to=1-5]
 	\arrow ["{\mathsf {dom}}"', from=2-6, to=1-5]
 	\arrow ["{\mathsf {codom}}", from=2-6, to=1-7]
 	\arrow ["\lrcorner "{anchor=center, pos=0.125, rotate=135}, draw=none, from=3-3, to=1-3]
 	\arrow [from=3-3, to=2-2]
 	\arrow [from=3-3, to=2-4]
 	\arrow ["\lrcorner "{anchor=center, pos=0.125, rotate=135}, draw=none, from=3-5, to=1-5]
 	\arrow [from=3-5, to=2-4]
 	\arrow [from=3-5, to=2-6]
 	\arrow ["\lrcorner "{anchor=center, pos=0.125, rotate=135}, draw=none, from=4-4, to=2-4]
 	\arrow [from=4-4, to=3-3]
 	\arrow [from=4-4, to=3-5]
 	\arrow [dashed, from=4-4, to=6-4]
 	\arrow ["{\mathsf {dom}}"',from=6-4, to=6-1]
 	\arrow ["{\mathsf {codom}}", from=6-4, to=6-7]
 \end {tikzcd}
 	\end{center}

 	We refer to \(\mathsf {Car}(\mathbb {M})\) as the \emph{carrier} of the loose bimodule; it displays the category \(M(f,g)\) of \emph{heterosquares}
 	
\begin{center}
 	\begin {tikzcd}
 	c & d \\
 	c' & d'
 	\arrow [""{name=0, anchor=center, inner sep=0}, "{m_0}", "\shortmid "{marking}, from=1-1, to=1-2]
 	\arrow ["f"', from=1-1, to=2-1]
 	\arrow ["g", from=1-2, to=2-2]
 	\arrow [""{name=1, anchor=center, inner sep=0}, "{m_1}"', from=2-1, to=2-2]
 	\arrow ["\sigma ", shorten <=4pt, shorten >=4pt, Rightarrow, from=0, to=1]
 \end {tikzcd}
 	\end{center}

 	(composing in the tight direction) over the categories \(\mathsf {Tight}({\mathbb {C}})\) and \(\mathsf {Tight}({\mathbb {D}})\).
 \par{}
 	Loose bimodules can be used to define loose universal properties. For example, there is an evident equivalence \(\mathbb {S}\mathsf {pan}(\mathsf {C})(a \times  b, c) \simeq  \mathbb {S}\mathsf {pan}(\mathsf {C})(a, b \times  c)\) given by moving one leg to the other side:
 	
\begin{center}
 	\begin {math}
 		\left \{
 		\begin {tikzcd}[row sep=0.8em, column sep=0.8em]
 	& S \\
 	{A \times  B} && C
 	\arrow ["{(a, b)}"', from=1-2, to=2-1]
 	\arrow ["c", from=1-2, to=2-3]
 \end {tikzcd}
 		\right \}
 		\quad \simeq \quad 
 		\left \{
 		\begin {tikzcd}[row sep=0.8em, column sep=0.8em]
 	& S \\
 	A && {B \times  C}
 	\arrow ["a"', from=1-2, to=2-1]
 	\arrow ["{(b, c)}", from=1-2, to=2-3]
 \end {tikzcd}
 		\right \}
 		\end {math}
 	\end{center}

 	Similarly, \(\mathbb {P}\mathsf {rof}(\mathsf {A} \times  \mathsf {B}, \mathsf {C}) \simeq  \mathbb {P}\mathsf {rof}(\mathsf {A}, \mathsf {B}^{\mathsf {op}} \times  \mathsf {C})\) in the double category of bimodules of categories. Both of these equivalences can be lifted to equivalences of loose bimodules, and they express a \emph{loose adjunction} between double functors. These particular adjunctions were considered by Patterson as constitutional of a good notion of \emph{compact closed double category} \cite{patterson-2024-toward} \cite{patterson-2024-toward-2}.
 \par{}
 	As another example of a loose universal property which can be expressed by an equivalence of loose bimodules, consider the coproduct \(+ : \mathsf {C} \times  \mathsf {C} \to  \mathsf {C}\) in a lextensive category \(\mathsf {C}\) \cite{carboni-1993-introduction}. For \(\mathsf {C}\) to be lextensive means that for any objects \(a, b \in  \mathsf {C}\), the coproduct functor \(+ : (\mathsf {C} \downarrow  a) \times  (\mathsf {C} \downarrow  b) \to  (\mathsf {C} \downarrow  (a + b))\) is an equivalence. Since \(\mathsf {C} \downarrow  x \simeq  \mathbb {S}\mathsf {pan}(\mathsf {C})(x, \bullet )\), a natural expression of universal property of lextensive coproducts is as \emph{loose} coproducts in the double category \(\mathbb {S}\mathsf {pan}(\mathsf {C})\) of spans.
 	\begin{equation}\mathbb {S}\mathsf {pan}(\mathsf {C})(a + b, c) \simeq  \mathbb {S}\mathsf {pan}(\mathsf {C})(a, c) \times  \mathbb {S}\mathsf {pan}(\mathsf {C})(b, c).\end{equation}
 	More generally, Heindel and Sobociński \cite{heindel-2009-van} showed that van Kampen colimits are those colimits which remain colimits in the bicategory of spans; that is to say, van Kampen colimits have a \emph{loose} universal property in \(\mathbb {S}\mathsf {pan}(\mathsf {C})\).
 \par{}
 	The basic theory of loose bimodules has not yet been laid out. In this paper, we aim to settle some basic definitions and theory of loose bimodules, sufficent for expressing the above ideas and for applications in studying the compositionality of coupled dynamical systems \cite{libkind-2025-towards}. In particular, in this paper we will:
 \subsection{Define the 2-category of loose bimodules.}\label{djm-00JP-1}\par{}
 		 In fact, we will provide \emph{two} definitions for loose bimodules and prove them to be equivalent. The first adapts Joyal's \emph{barrel} presentation for bimodules. It is known that barrels — functors from a small category into the walking arrow, \(\Delta  [1]\) — are equivalent to bimodules internal to \(\mathsf {Set}\) via an equivalence of categories \(\mathsf {Cat} \downarrow  \Delta  [1] \simeq  \mathsf {Set}\text {-}\mathsf {Bimod}\). This suggests we define loose bimodules as double functors \(\mathbb {M} \to  \mathbb {L}\mathsf {oose}\) into the \hyperref[ssl-001G]{walking loose arrow}, which immediately yields a 2-category of loose bimodules given by the slice \(\mathcal {D}\mathsf {bl} \downarrow  \mathbb {L}\mathsf {oose}\).
We call loose bimodules presented in this way .
 \par{}
We justify this definition for loose bimodules by showing that it is equivalent to a more explicit formalisation of the notion of a \emph{pseudo-bimodule} between pseudo-categories sketched above. Our chosen formalisation for pseudo-bimodules will be expressed in the language of models for \emph{limit sketches}. We first recall in \cref{jrb-001S} some of the basic theory of limit sketches, including the limit sketch for ordinary bimodules,  before developing the theory of \hyperref[djm-00I5]{\emph{pseudo-models}} for \emph{\(\mathscr {F}\)-sketches} in \cref{djm-00I9}.
We then show that pseudo-bimodules are precisely pseudo-models in \(\mathsf {Cat}\) for the sketch whose \emph{strict} models in \(\mathsf {Set}\) are ordinary bimodules. We thus exhibit all the coherence data and conditions inherent to pseudo-bimodules as arising canonically from the strict notion of bimodules.
\par{}
To prove that our double-barrel definition is equivalent to our sketch-based notion of pseudo-bimodule internal to \(\mathsf {Cat}\), we begin by showing in \cref{djm-00IE} that Leinster's \emph{unbiased} double categories (Definition 5.2.1 of \cite{leinster-2004-higher}) are equivalent to the \hyperref[djm-00I5]{pseudo-models} for an \(\mathscr {F}\)-sketch structure on \(\Delta ^{\mathsf {op}}\) which encodes the \emph{Segal conditions}. We then give a “slice theorem” for pseudo-models of locally-discrete \(\mathscr {F}\)-sketches in \cref{djm-00IG}, which describes conditions under which the slice of a 2-category of pseudo-models for an \(\mathscr {F}\)-sketch is again a 2-category of pseudo-models for an \(\mathscr {F}\)-sketch. This generalises the well-known result that slices of presheaf categories are again presheaf categories, and will be proven by introducing a notion of “model fibration” whose relation to pseudo-models mirrors that of discrete fibrations to presheaves. It will follow as a special case of this slice theorem that the 2-category \(\mathcal {D}\mathsf {bl} \downarrow  \mathbb {L}\mathsf {oose}\) of double barrels, being a slice of a 2-category of pseudo-models, is the 2-category of pseudo-models for the \(\mathscr {F}\)-sketch of pseudo-bimodules.
\subsection{Investigate loose universal properties.}\par{}
After laying out our definitions of loose bimodules, we turn to constructing them. The  is trivial to construct both as a double barrel and as a pseudo-bimodule. In order to express loose universal properties, however, we need to form restrictions of loose bimodules along double functors, which we address in \cref{jrb-003H}.
\par{}
With this notion of restriction we then provide examples of \emph{loose universal properties} — those expressible in terms of equivalences of loose bimodules. In particular, we give a definition of loose adjunction in \cref{kdc-000M} and use this definition to express \hyperref[kdc-000S]{loose terminal objects}, \hyperref[kdc-000Q]{loose products}, and \hyperref[kdc-000N]{loose closures}.
 \par{}
Finally, in \cref{kdc-000T}, we give a general definition of \hyperref[kdc-000T]{van Kampen colimit} which generalizes  Heindel and Sobociński's characterization of van Kampen colimits in \(\mathbb {S}\mathsf {pan}(\mathsf {C})\) \cite{heindel-2009-van}. We show that though they consider the universal property bicategorically, it matches our loose colimit definition because \(\mathbb {S}\mathsf {pan}(\mathsf {C})\) is an equipment.
 \section{Sketches, barrels and bimodules}\label{jrb-002Z}\par{}
  In this section we recall the notion of a \emph{limit sketch} and describe a “slice theorem” for models of sketches which shows that slices of sketchable categories are themselves sketchable. We then use this result to present the equivalence between barrels and internal bimodules in \(\mathsf {Set}\) in a way that mirrors our later proof of the equivalence between double barrels and loose bimodules. This will give us a bit of time to get comfortable with the sketches for categories and bimodules before we see them again in the more complicated 2-categorical setting.
\subsection{The limit sketch for bimodules}\label{jrb-001S}
\begin{definition}[{Limit sketch}]\label{jrb-001T}
\par{}
A \textbf{limit sketch} is a category \(\mathsf {L}\) along with a collection of functors \(\mathsf {J} \to  x \downarrow  \mathsf {L} \) for small \(\mathsf {J}\) and \(x \in  \mathsf {L}\) called the \textbf{marked cones} of \(\mathsf {L}\) (with \textbf{vertex} \(x\)).
\par{}
A \textbf{model} \(M : \mathsf {L} \to  \mathsf {K}\) is a functor on the underlying categories such that for each marked cone \(S : \mathsf {J} \to  x \downarrow  \mathsf {L}\) of \(\mathsf {L}\), the cone:
\begin{equation}\mathsf {J} \xrightarrow {S} x \downarrow  \mathsf {L} \xrightarrow {x \downarrow  M} Mx \downarrow  \mathsf {K}\end{equation}
is marked in \(\mathsf {K}\). The 2-category of limit sketches, sketch morphisms and (arbitrary) natural transformations is denoted by \(\mathsf {Sketch}\).
\end{definition}

\begin{example}[{Limit sketches from categories}]\label{jrb-001Y}
\par{}
We can endow an arbitrary category \(\mathsf {C}\) with a limit sketch structure by declaring its marked cones to be all limit cones in \(\mathsf {C}\). When we refer to a model of a limit sketch \(\mathsf {L}\) in a category \(\mathsf {C}\) it is with respect to this induced limit sketch structure on \(\mathsf {C}\). That is, a model of \(\mathsf {L}\) in \(\mathsf {C}\) is a functor \(M : \mathsf {L} \to  \mathsf {C}\) that sends marked cones to limit cones.
\end{example}

\begin{example}[{The limit sketch for categories}]\label{jrb-0031}
\par{}
The 1-category \(\mathsf {Cat}_0\) of (small) categories can be identified with the full subcategory of simplicial sets — i.e. functors \(\Delta ^{\mathsf {op}} \to  \mathsf {Set}\) — which satisfy the \emph{Segal condition}. This condition can equivalently be expressed as being a model for a certain sketch structure on \(\Delta ^{\mathsf {op}}\), which means \(\mathsf {Cat}_0\) is the category of models in \(\mathsf {Set}\) for this sketch. We will describe this sketch structure on \(\Delta ^{\mathsf {op}}\) in detail, as it is closely related to the sketch for bimodules.
\par{}
  The sketch structure is defined in terms of the active–inert orthogonal factorisation system on \(\Delta \), so we first briefly recall how these two classes of maps in \(\Delta \) are defined.
\par{}
The \emph{inert maps} are those which “preserve distance”, or equivalently are generated by the maps \(\mathsf {d}_{0}, \mathsf {d}_{n} \colon  [n-1] \to  [n]\) in \(\Delta \). For example, there are two inert maps \([1] \to  [2]\) in \(\Delta \). These are depicted below.
  
\begin{center}
    \begin {tikzcd}
      {[1]} & \bullet  & \bullet  \\
      {[2]} & \bullet  & \bullet  & \bullet  \\
      {[1]} & \bullet  & \bullet  \\
      {[2]} & \bullet  & \bullet  & \bullet 
      \arrow ["{\mathsf {d}_{2}}"', color={rgb,255:red,153;green,153;blue,153}, from=1-1, to=2-1]
      \arrow [from=1-2, to=1-3]
      \arrow [color={rgb,255:red,153;green,153;blue,153}, maps to, from=1-2, to=2-2]
      \arrow [color={rgb,255:red,153;green,153;blue,153}, maps to, from=1-3, to=2-3]
      \arrow [from=2-2, to=2-3]
      \arrow [from=2-3, to=2-4]
      \arrow ["{\mathsf {d}_{0}}"', color={rgb,255:red,153;green,153;blue,153}, from=3-1, to=4-1]
      \arrow [from=3-2, to=3-3]
      \arrow [color={rgb,255:red,153;green,153;blue,153}, maps to, from=3-2, to=4-3]
      \arrow [color={rgb,255:red,153;green,153;blue,153}, maps to, from=3-3, to=4-4]
      \arrow [from=4-2, to=4-3]
      \arrow [from=4-3, to=4-4]
    \end {tikzcd}
  \end{center}\par{}
A map is \emph{active} when it preserves endpoints, i.e. the top and bottom objects of \([n]\). Consequently there is a unique active map from \([1]\) to \([n]\) for any \([n] \in  \Delta \); the active map \([1] \to  [2]\) is depicted below
  
\begin{center}
    \begin {tikzcd}[column sep=small]
      {[1]} & \bullet  & \bullet  \\
      {[2]} & \bullet  & \bullet  & \bullet 
      \arrow [color={rgb,255:red,153;green,153;blue,153}, from=1-1, to=2-1]
      \arrow [from=1-2, to=1-3]
      \arrow [color={rgb,255:red,153;green,153;blue,153}, maps to, from=1-2, to=2-2]
      \arrow [color={rgb,255:red,153;green,153;blue,153}, maps to, from=1-3, to=2-4]
      \arrow [from=2-2, to=2-3]
      \arrow [from=2-3, to=2-4]
    \end {tikzcd}
\end{center}

This map fails to be inert because the distance between \(0,1 \in  [1]\) is \(1\), whereas the distance between their images in \([2]\) is \(2\).
\par{}
Now let \(\Delta ^{\mathsf {op}}_{\mathsf {inert}}\) denote the wide subcategory of inert maps in \(\Delta ^{\mathsf {op}}\), and let \(\mathsf {el} \colon  \Delta _{\mathsf {el}}^{\mathsf {op}} \hookrightarrow  \Delta ^{\mathsf {op}}\) denote the full subcategory of \(\Delta ^{\mathsf {op}}_{\mathsf {inert}}\) with objects \([0]\) and \([1]\), which we call \emph{elementary}.
\par{}
The required limit sketch structure on \(\Delta ^{\mathsf {op}}\) is then given by declaring the marked cones to be the canonical inclusions:
\begin{equation}[n] \downarrow _{\mathsf {inert}} \Delta ^{\mathsf {op}}_{\mathsf {inert}} \hookrightarrow  [n] \downarrow  \Delta ^{\mathsf {op}}\end{equation}
for each \([n] \in  \Delta \).
\par{}
What this means is that for each \([n] \in  \Delta ^{\mathsf {op}}\) there is a unique marked cone with apex \([n]\) consisting of all inert maps to the objects \([0]\) and \([1]\). For example, the unique marked cone with apex \([2]\) is the following: 
  
\begin{center}
    \begin {tikzcd}[row sep=1em]
      && {[2]} \\
      & {[1]} && {[1]} \\
      {[0]} && {[0]} && {[0]}
      \arrow ["{\mathsf {d}_{2}^{\mathsf {op}}}"', from=1-3, to=2-2]
      \arrow ["{\mathsf {d}_{0}^{\mathsf {op}}}", from=1-3, to=2-4]
      \arrow ["{\mathsf {d}_{1}^{\mathsf {op}}}"', from=2-2, to=3-1]
      \arrow ["{\mathsf {d}_{0}^{\mathsf {op}}}", from=2-2, to=3-3]
      \arrow ["{\mathsf {d}_{1}^{\mathsf {op}}}"', from=2-4, to=3-3]
      \arrow ["{\mathsf {d}_{0}^{\mathsf {op}}}", from=2-4, to=3-5]
    \end {tikzcd}
  \end{center}

  which is better visualised as a cocone in \(\Delta \): 
  
\begin{center}
    \begin {tikzcd}[column sep = small, row sep = 1em]
      && \bullet  & \bullet  & \bullet  \\
      & \bullet  & \bullet  && \bullet  & \bullet  \\
      \bullet  &&& \bullet  &&& \bullet 
      \arrow [from=1-3, to=1-4]
      \arrow [from=1-4, to=1-5]
      \arrow [draw={rgb,255:red,153;green,153;blue,153}, maps to, from=2-2, to=1-3]
      \arrow [from=2-2, to=2-3]
      \arrow [draw={rgb,255:red,153;green,153;blue,153}, maps to, from=2-3, to=1-4]
      \arrow [draw={rgb,255:red,153;green,153;blue,153}, maps to, from=2-5, to=1-4]
      \arrow [from=2-5, to=2-6]
      \arrow [draw={rgb,255:red,153;green,153;blue,153}, maps to, from=2-6, to=1-5]
      \arrow [draw={rgb,255:red,153;green,153;blue,153}, maps to, from=3-1, to=2-2]
      \arrow [draw={rgb,255:red,153;green,153;blue,153}, maps to, from=3-4, to=2-3]
      \arrow [draw={rgb,255:red,153;green,153;blue,153}, maps to, from=3-4, to=2-5]
      \arrow [draw={rgb,255:red,153;green,153;blue,153}, maps to, from=3-7, to=2-6]
    \end {tikzcd}
  \end{center}\par{}
For a simplicial set \(M \colon  \Delta  ^{\mathsf {op}} \to  \mathsf {Set}\) to be a \emph{model} for this sketch structure, it must satisfy the property that the image of these marked cones are limit cones. In particular, the maps \(M_{\mathsf {d}_{0}}, M_{\mathsf {d}_{2}} : M([2]) \to  M([1])\) must exhibit \(M([2])\) as a pullback of the two maps \(M_{\mathsf {d}_{1}}, M_{\mathsf {d}_{2}} : M([1]) \to  M([0])\):
  
\begin{center}
    \begin {tikzcd}[row sep=1em, column sep=small]
      && {M([2])} \\
      & {M([1])} && {M([1])} \\
{M([0])} && {M([0])} && {M([0])}
      \arrow ["{M_{\mathsf {d}_{2}}}"', from=1-3, to=2-2]
      \arrow ["{M_{\mathsf {d}_{0}}}", from=1-3, to=2-4]
      \arrow ["{M_{\mathsf {d}_{1}}}"', from=2-2, to=3-1]
      \arrow ["{M_{\mathsf {d}_{0}}}", from=2-2, to=3-3]
      \arrow ["{M_{\mathsf {d}_{1}}}"', from=2-4, to=3-3]
      \arrow ["{M_{\mathsf {d}_{0}}}", from=2-4, to=3-5]
      \arrow ["\lrcorner "{anchor=center, pos=0.125, rotate=-45}, draw=none, from=1-3, to=3-3]
    \end {tikzcd}
\end{center}

and similarly \(M([n])\) must be given as the “\(n\)-fold” pullback of \(M([1])\) over \(M([0])\). This is precisely the Segal condition for a simplicial set to be the nerve of a category. Thus, a model \(M \colon  \Delta  ^{\mathsf {op}} \to  \mathsf {Set}\) of this limit sketch may be identified with the small category with object set \(M([0])\) morphism set \(M([1])\), and the source and target maps given by the images of the inert maps \(\mathsf {d}_{1}, \mathsf {d}_{0} \colon  [0] \to  [1]\). More generally, set \(M([n])\) is the set of \(n\) composable morphisms and the images of the various inert maps \([n] \hookrightarrow  [m]\) give the projections \(M([m]) \to  M([n])\) to sub-sequences of composable morphisms. The images of the \emph{active} maps give the composition of morphisms; in particular, the image of the unique active map \([1] \to  [n]\) defines the \(n\)-ary composition.
\end{example}

\begin{remark}[{Limit sketches from algebraic patterns}]\label{jrb-001V}
\par{}
The limit sketch structure on \(\Delta ^{\mathsf {op}}\) is an instance of a more general construction of limit sketches from \emph{algebraic patterns}. Algebraic patterns as introduced in \cite{chu-2019-homotopycoherent} are structures on \(\left ( \infty ,1 \right )\)-categories, which for our purposes can be “decategorified” to structure on a 1-category given by an orthogonal factorisation system, \(\mathsf {inert} \dashv  \mathsf {active}\), and a distinguished class of objects termed \textbf{elementary}. A category \(\mathsf {C}\) with this structure can be made into a limit sketch by declaring the marked cones to be:
\begin{equation}[x] \downarrow _{\mathsf {inert}} \mathsf {el}  \hookrightarrow  [x] \downarrow  \mathsf {C}\end{equation}
where \(\mathsf {el}\) is the inclusion in \(\mathsf {C}\) of the full subcategory of elementary objects.  is then the limit sketch structure induced by the algebraic pattern on \(\Delta ^{\mathsf {op}}\) with elementaries, inerts and actives as described there.
\end{remark}

\begin{example}[{The biased limit sketch for categories}]\label{jrb-0032}
\par{}
The condition of being a model for the above sketch structure on \(\Delta ^{\mathsf {op}}\) is rather strict, to the point that such a model \(M : \Delta ^{\mathsf {op}} \to  \mathsf {Set}\) can be recovered from its action on a finite subcategory of \(\Delta ^{\mathsf {op}}\). Indeed, the fact that nerves of categories are 2-coskeletal means that \(M\) can be recovered up to isomorphism from its restriction to the truncation \(\Delta _{\leq  2}^{\mathsf {op}}\). However, being a model for the restriction of the sketch structure on \(\Delta ^{\mathsf {op}}\) to \(\Delta _{\leq  2}^{\mathsf {op}}\) is not equivalent to being the 2-skeleton of the nerve of a category; there's no guarantee that such a truncated simplicial set satisfies associativity, which comes from an equality between morphisms \(\mathsf {d}_{2}\,\mathsf {d}_{1} =  \mathsf {d}_{1}\,\mathsf {d}_{1} : [1] \to  [3]\).
\par{}
Being a model for the restriction of the sketch structure to \(\Delta _{\leq  3}^{\mathsf {op}}\) \emph{does} suffice, though, as we now observe.
\par{}The images of the inerts \(\mathsf {d}_{0},\mathsf {d}_{1}: [0] \to  [1]\) under a map \(M : \Delta _{\leq  3}^{\mathsf {op}} \to  \mathsf {Set}\) determines a span:
\begin{equation}M_0 \xleftarrow {s} M_1 \xrightarrow {t} M_0\end{equation}
There is a unique possible way to then extend this action of \(M\) to any inerts in \(\Delta ^{\mathsf {op}}\) in a way that satisfies the Segal conditions, at least up to a choice of \(n\)-fold composites \(M_1^{(n)}\) of the span \(M_1\). Clearly we must map \([n]\) to the chosen \(M_1^{(n)}\), and the inert \(\mathsf {i}_{j} : [n] \to  [m]\) must be sent to the unique function \(f : M_1^{(m)} \to  M_1^{(n)}\) satisfying \(\pi _k\,f = \pi _{k+j}\), where \(\pi _k : M_1^{(n)} \to  M_1\) is the projection onto the \(k^{\text {th}}\) component. This follows from the equality of morphisms in \(\Delta \):
\begin{equation}[1] \xrightarrow {\mathsf {i}_{k}} [n] \xrightarrow {\mathsf {i}_{j}} [m] \quad  = \quad  [1] \xrightarrow {\mathsf {i}_{k+j}} [m]\end{equation}
and the fact that \(\pi _k = M_{\mathsf {i}_{k}}\).
\par{}
For extending the definition of the active maps in \(\Delta _{\leq  3}\) to all active maps in \(\Delta \), it is useful to observe that there is an isomorphism \(\theta \) between the wide subcategory \(\Delta _{\mathsf {active}}^{\mathsf {op}}\) of active maps and the augmented simplex category \(\Delta _{+}\) which can be visualised by replacing the “edges” of each \([n] \in  \Delta _{\mathsf {active}}^{\mathsf {op}}\) with vertices, and mapping morphisms as so:

  \begin{center}
\begin {tikzcd}[sep=small]
	\bullet  & \bullet  & \bullet  & \bullet  & \bullet  \\
	& \bullet  & \bullet  & \bullet 
	\arrow [""{name=0, anchor=center, inner sep=0}, "\bullet "{marking, text={rgb,255:red,71;green,71;blue,209}}, draw={rgb,255:red,71;green,71;blue,209}, from=1-1, to=1-2]
	\arrow [""{name=1, anchor=center, inner sep=0}, "\bullet "{marking, text={rgb,255:red,71;green,71;blue,209}}, draw={rgb,255:red,71;green,71;blue,209}, from=1-2, to=1-3]
	\arrow [""{name=2, anchor=center, inner sep=0}, "\bullet "{marking, text={rgb,255:red,71;green,71;blue,209}}, draw={rgb,255:red,71;green,71;blue,209}, from=1-3, to=1-4]
	\arrow [""{name=3, anchor=center, inner sep=0}, "\bullet "{marking, text={rgb,255:red,71;green,71;blue,209}}, draw={rgb,255:red,71;green,71;blue,209}, from=1-4, to=1-5]
	\arrow [color={rgb,255:red,77;green,77;blue,77}, maps to, from=2-2, to=1-1]
	\arrow [""{name=4, anchor=center, inner sep=0}, "\bullet "{marking, text={rgb,255:red,71;green,71;blue,209}}, draw={rgb,255:red,71;green,71;blue,209}, from=2-2, to=2-3]
	\arrow [color={rgb,255:red,77;green,77;blue,77}, maps to, from=2-3, to=1-3]
	\arrow [""{name=5, anchor=center, inner sep=0}, "\bullet "{marking, text={rgb,255:red,71;green,71;blue,209}}, draw={rgb,255:red,71;green,71;blue,209}, from=2-3, to=2-4]
	\arrow [color={rgb,255:red,77;green,77;blue,77}, maps to, from=2-4, to=1-5]
	\arrow [color={rgb,255:red,71;green,71;blue,209}, between={0.2}{0.8}, dashed, maps to, from=0, to=4]
	\arrow [color={rgb,255:red,71;green,71;blue,209}, between={0.2}{0.8}, dashed, maps to, from=1, to=4]
	\arrow [color={rgb,255:red,71;green,71;blue,209}, between={0.2}{0.8}, dashed, maps to, from=2, to=5]
	\arrow [color={rgb,255:red,71;green,71;blue,209}, between={0.2}{0.8}, dashed, maps to, from=3, to=5]
\end {tikzcd}
\qquad  $\leadsto $ \qquad 
\begin {tikzcd}[sep=small]
	\bullet  & \bullet  & \bullet  & \bullet  \\
	& \bullet  & \bullet 
	\arrow [from=1-1, to=1-2]
	\arrow [color={rgb,255:red,77;green,77;blue,77}, maps to, from=1-1, to=2-2]
	\arrow [from=1-2, to=1-3]
	\arrow [color={rgb,255:red,77;green,77;blue,77}, maps to, from=1-2, to=2-2]
	\arrow [from=1-3, to=1-4]
	\arrow [color={rgb,255:red,77;green,77;blue,77}, maps to, from=1-3, to=2-3]
	\arrow [color={rgb,255:red,77;green,77;blue,77}, maps to, from=1-4, to=2-3]
	\arrow [from=2-2, to=2-3]
\end {tikzcd}
\end{center}

The ordinal sum endows \(\Delta _{+}\) with a strict monoidal structure, and the Segal conditions on a model \(M : \Delta ^{\mathsf {op}} \to  \mathsf {Set}\) ensure that the composite of the induced map

For \(f : [n] \to  [m]\) active, we have that \(\mathsf {i}_{0} = f\,\mathsf {i}_{0} : [0] \to  [m]\) and \(\mathsf {i}_{m} = f\,\mathsf {i}_{n} : [0] \to  [m]\). Letting \(\mu _k : M_1^{(n)} \to  M_0\) denote the projection onto the \(k^\text {th}\) \(M_0\) component in the cone defining \(M_1^{(n)}\), we have \(\mu _0 \, M_f = M_0\) and \(\mu _n \, M_f = \mu _m\), so that \(M_f : M_1^{(m)} \to  M_1^{(n)}\) is moreover a map in \(\mathsf {Span} \left ( M_0,M_0 \right )\) from \(\mu _0,\mu _m : M_1^{(m)}\) to \(\mu _0,\mu _n : M_1^{(n)}\). Let \(M' : \Delta _\mathsf {active}^{\mathsf {op}}  \to  \mathsf {Span} \left ( M_0, M_0 \right )\) denote this induced map. The Segal conditions on \(M\) moreover ensure that the composite:
\begin{equation}\Delta _{+} \xrightarrow {\theta } \Delta _{\mathsf {active}}^{\mathsf {op}} \xrightarrow {M'} \mathsf {Span} \left ( M_0,M_0 \right )\end{equation}
is strictly-monoidal with respect to the ordinal sum in \(\Delta _{+}\) and the chosen compositions of the relevant spans in \(\mathsf {Span} \left ( M_0,M_0 \right )\). Such a monoidal map is equivalent to a monoid in \(\mathsf {Span} \left ( M_0,M_0 \right )\), which is equivalently a map \(N : \left ( \Delta _{+} \right )_{\leq  2} \to  \mathsf {Span} \left ( M_0,M_0 \right )\) which is monoidal to the extent that the monoidal product in \(\left ( \Delta _{+} \right )_{\leq  2}\) is defined. I.e. such that \(N_2 \cong  N_0 \otimes  N_0 \otimes  N_0\), etc. This is because all the axioms for a monoid can expressed as equations between at most ternary operations.
\par{}
It follows that any model \(M : \Delta _{\leq  3}^{\mathsf {op}} \to  \mathsf {Set}\) for the restriction of the sketch structure on \(\Delta ^{\mathsf {op}}\) to \(\Delta _{\leq  3}^{\mathsf {op}}\) uniquely extends to a model of \(\Delta ^{\mathsf {op}}\), and thus that the sketch morphism \(\Delta _{\leq  3}^{\mathsf {op}} \hookrightarrow  \Delta ^{\mathsf {op}}\) induces an equivalence of categories of models \(\mathsf {Sketch} \left ( \Delta ^{\mathsf {op}}, \mathsf {Set} \right ) \simeq  \mathsf {Sketch} \left ( \Delta _{\leq  3}^{\mathsf {op}}, \mathsf {Set} \right )\), so we can equally view \(\Delta _{\leq  3}^{\mathsf {op}}\) as a sketch for the category \(\mathsf {Cat}_0\).
\par{}
The difference between the two sketches for categories is that the data of a model for \(\Delta _{\leq  3}^{\mathsf {op}}\) encodes at most 3-ary composition, whereas models for \(\Delta ^{\mathsf {op}}\) express unbiased \(n\)-ary composition. For this reason, we refer to \(\Delta _{\leq  3}\) with its inherited sketch structure as the \emph{biased} sketch for categories.
\end{example}

\begin{remark}[{The sketch for internal categories}]\label{jrb-0033}
\par{}
We have observed an equivalence between categories models for \(\Delta ^{\mathsf {op}}\) in \(\mathsf {Set}\) and \(\mathsf {Cat}_0\), the 1-category of small categories. More generally, models of \(\Delta ^{\mathsf {op}}\) in any other category \(\mathcal {E}\) with pullbacks are equivalent to categories internal to \(\mathcal {E}\). This is perhaps made most clear by comparing models for the biased sketch for categories to the usual definition of internal category.
\end{remark}
\par{}
From our sketch(es) for categories we can obtain a sketch for barrels (or indeed any slice of \(\mathsf {Cat}\)) by the following general property of models for sketches in \(\mathsf {Set}\):

\begin{theorem}[{Slice theorem for sketches}]\label{jrb-001X}
\par{}
Given a limit sketch \(\mathsf {L}\) and a model \(M : \mathsf {L} \to  \mathsf {Set}\), there is an isomorphism:
\begin{equation}\mathsf {Sketch} \left ( \mathsf {L}, \mathsf {Set} \right ) \downarrow  M \cong  \mathsf {Sketch} \left ( \mathsf {El}\left (M\right ), \mathsf {Set} \right )\end{equation}
where \(\mathsf {El}\left (M\right )\) is the limit sketch on the category of elements of \(M\) whose marked cones are all lifts of marked cones in \(\mathsf {L}\) along the projection \(\pi _M : \mathsf {El}\left (M\right ) \to  \mathsf {L}\).
\end{theorem}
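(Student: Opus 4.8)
The plan is to reduce to the classical fact that a slice of a copresheaf category is again a copresheaf category, and then to verify that this isomorphism carries models to models. \emph{First}, I would recall the isomorphism of categories \(\mathsf {Set}^{\mathsf {L}}\downarrow M \cong \mathsf {Set}^{\mathsf {El}(M)}\) obtained by applying the Grothendieck construction fibrewise: a natural transformation \(\alpha : N\to M\) is sent to the functor \(\widehat {\alpha }:\mathsf {El}(M)\to \mathsf {Set}\) with \(\widehat {\alpha }(x,a)=\alpha _x^{-1}(a)\subseteq Nx\), acting on a morphism \(f:(x,a)\to (y,b)\) of \(\mathsf {El}(M)\) by the restriction of \(Nf\) (which lands in \(\alpha _y^{-1}(b)\) by naturality); conversely \(P:\mathsf {El}(M)\to \mathsf {Set}\) is sent to \(\coprod _M P\) with \((\coprod _M P)(x)=\coprod _{a\in Mx}P(x,a)\) together with the evident projection to \(M\). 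These are mutually inverse, and being standard I would just cite it. It then remains to show that under this isomorphism a model \(N\) with \(\alpha : N\to M\) corresponds to a model \(P\) of the sketch \(\mathsf {El}(M)\).

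\emph{Second}, I would make the marked cones of \(\mathsf {El}(M)\) explicit. Since \(\pi _M\) is a discrete opfibration, for each object \((x,a)\) the comparison functor \((x,a)\downarrow \mathsf {El}(M)\to x\downarrow \mathsf {L}\) is an isomorphism of categories: an object \(f:x\to y\) of \(x\downarrow \mathsf {L}\) has the unique lift \(f:(x,a)\to (y,Mf\,a)\), and morphisms lift uniquely as well. Consequently each marked cone \(S:\mathsf {J}\to x\downarrow \mathsf {L}\) of \(\mathsf {L}\), together with a choice of \(a\in Mx\), determines a unique lifted cone \(\widetilde S_a:\mathsf {J}\to (x,a)\downarrow \mathsf {El}(M)\), and by definition these \(\widetilde S_a\) (as \(S\) and \(a\) vary) are exactly the marked cones of \(\mathsf {El}(M)\). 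Writing \(D\) for the underlying diagram of \(S\) and \(\lambda _j:x\to D(j)\) for its legs, the underlying diagram of \(\widetilde S_a\) sends \(j\) to \((D(j),M\lambda _j\,a)\).

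\emph{Third}, the crux: fix \(\alpha :N\to M\) with \(P=\widehat {\alpha }\) and a marked cone \(S\) as above. Sorting a compatible family \((q_j)\) with \(q_j\in N(D(j))=\coprod _b P(D(j),b)\) according to \(b_j:=\alpha _{D(j)}(q_j)\) — which is automatically compatible in the diagram \(M\circ D\) — yields a canonical isomorphism
\[
\lim _{j}\, N(D(j))\;\cong\;\coprod _{(b_j)\in\,\lim _{j} M(D(j))}\ \lim _{j}\, P(D(j),\,b_j).
\]
Because \(M\) is a model, \(\{M\lambda _j\}\) exhibits \(Mx\cong \lim _j M(D(j))\), so the indexing set may be replaced by \(Mx\); under this replacement the canonical comparison map out of \(Nx=\coprod _{a\in Mx}P(x,a)\) becomes, on the \(a\)-th summand, exactly the comparison map \(P(x,a)\to \lim _j P(\widetilde S_a)\) for the lifted cone. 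Hence \(N\) sends \(S\) to a limit cone if and only if \(P\) sends every \(\widetilde S_a\) to a limit cone. Quantifying over all marked cones of \(\mathsf {L}\) shows that \(N\) is a model of \(\mathsf {L}\) precisely when \(P\) is a model of \(\mathsf {El}(M)\), so the isomorphism of the first step restricts to the claimed one, morphisms being inherited automatically.

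The main obstacle is the bookkeeping in the third step: checking that the compatibility condition defining \(\lim _j N(D(j))\) decomposes summand-by-summand along the fibres exactly as displayed, and — the essential use of the hypothesis that \(M\) is a model — that identifying the indexing set with \(Mx\) is precisely what aligns the comparison maps on each summand. (This is also where the proof genuinely needs \(M\) to be a model and not merely a functor.) The remaining ingredients are classical (the first step) or formal consequences of \(\pi _M\) being a discrete opfibration (the second step).
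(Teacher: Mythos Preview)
Your argument is correct. The third step is the heart, and the fibrewise decomposition of \(\lim_j N(D(j))\) you describe is exactly right; the point that the comparison map out of \(Nx\) becomes summand-wise the comparison for the lifted cone \(\widetilde S_a\) is the crux, and you have identified it.

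The paper, however, takes a different route. Rather than computing limits fibrewise, it passes along the equivalence \([\mathsf L,\mathsf{Set}]\simeq \mathsf{OpFib}(\mathsf L)\) to discrete opfibrations and then argues structurally: discrete opfibrations are closed under composition and left-cancellative, which gives \(\mathsf{OpFib}(\mathsf L)\downarrow \pi_M\cong \mathsf{OpFib}(\mathsf{El}(M))\), and to upgrade this to models one shows that the class of ``model opfibrations'' (those arising from models) is likewise preserved and reflected under post-composition with a discrete model opfibration. The paper in fact defers this to the more general \(\mathscr F\)-sketch setting, where the analogous statement (\cref{jrb-001I}) is proved via a cone-lifting characterisation of model opfibrations. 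Your approach is more elementary and entirely self-contained for \(\mathsf{Set}\)-valued sketches; the paper's approach buys a uniform proof that scales to the 2-categorical slice theorem (\cref{djm-00IG}), which is its real target, since the fibrewise-coproduct decomposition you use has no direct analogue there.
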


\begin{proof}[{Sketch of proof for \cref{jrb-001X}.}]\label{jrb-001S-proof}
\par{}\cref{jrb-001X} can be viewed as an extension of a well-known result about (co)presheaf categories that for any (accessible) copresheaf \(P : \mathsf {C} \to  \mathsf {Set}\), there is an equivalence:
\begin{equation}\left [ \mathsf {C}, \mathsf {Set} \right ] \downarrow  P \simeq  \left [ \mathsf {El}\left (P\right ), \mathsf {Set} \right ]\end{equation}
This can be proven by translating from copresheaves to discrete opfibrations via the equivalence \(\left [ \mathsf {C}, \mathsf {Set} \right ] \simeq  \mathsf {OpFib} \left ( \mathsf {C} \right )\) sending \(P\) to \(\pi _P : \mathsf {El}\left (P\right ) \to  \mathsf {C}\). We then have:
\begin{equation}\left [ \mathsf {C}, \mathsf {Set} \right ] \downarrow  P \simeq  
\mathsf {OpFib}\left ( \mathsf {C} \right ) \downarrow  \left [ \mathsf {El}\left (P\right ) \xrightarrow {\pi _P} \mathsf {C} \right ]
\cong  \mathsf {OpFib} \left ( \mathsf {El}\left (P\right ) \right )
\simeq  \left [ \mathsf {El}\left (P\right ), \mathsf {Set} \right ]
\end{equation}
with the middle isomorphism coming from the fact that discrete opfibrations are closed under composition and left-cancellative. To extend this to \emph{models}, it suffices to show that the class of discrete opfibrations which are equivalent to the category of elements for a model are also closed under composition and left-cancellative. We defer a characterisation of such "model opfibrations" and their properties until  in the more general setting of \(\mathscr {F}\)-sketches, at which point we also prove a slice theorem for \(\mathscr {F}\)-sketches, \cref{djm-00IG}, from which \cref{jrb-001X} follows as a special case.
\end{proof}

\begin{corollary}[{The limit sketches for barrels}]\label{jrb-001Z}
\par{}
There is an equivalence between the category \(\mathsf {Cat} \downarrow  \Delta  [1]\) of barrels and models for \(\left ( \Delta  \downarrow  [1] \right )^{\mathsf {op}}\) whose sketch structure is induced by the discrete opfibration \(\mathsf {cod} : \left ( \Delta  \downarrow  [1] \right )^{\mathsf {op}} \to  \Delta ^{\mathsf {op}}\). Similarly, there is an equivalence:
\begin{equation}\mathsf {Sketch} \left ( \left ( \Delta _{\leq  3} \downarrow  [1] \right )^{\mathsf {op}}, \mathsf {Set} \right ) \simeq  \mathsf {Cat} \downarrow  \Delta  [1]\end{equation}
for the induced sketch structure on \(\left ( \Delta _{\leq  3} \downarrow  [1] \right )^{\mathsf {op}}\).

\begin{proof}[{proof of \cref{jrb-001Z}.}]\label{jrb-001Z-proof}
\par{}
The codomain projection \(\mathsf {cod} : \left ( \Delta  \downarrow  [1] \right )^{\mathsf {op}} \to  \Delta ^{\mathsf {op}}\) is the category of elements for the corepresentable \(\Delta  [1]\). By \cref{jrb-001X}, we therefore have:
  \begin{equation}\mathsf {Sketch} \left ( \left ( \Delta  \downarrow  [1] \right )^{\mathsf {op}}, \mathsf {Set} \right ) \simeq  \mathsf {Sketch} \left ( \Delta ^{\mathsf {op}}, \mathsf {Set} \right )\downarrow  \Delta  [1]\end{equation}
  The result then follows from the equivalence \(\mathsf {Sketch} \left ( \Delta ^{\mathsf {op}}, \mathsf {Set} \right ) \simeq  \mathsf {Cat}\) described in , which identifies \(\Delta  [1]\) with the walking arrow. A similar argument applied using the equivalence \(\mathsf {Sketch} \left ( \Delta _{\leq  3}^{\mathsf {op}}, \mathsf {Set} \right ) \simeq  \mathsf {Cat}\) from \cref{jrb-0032} yields:
\begin{equation}\mathsf {Sketch} \left ( \left ( \Delta _{\leq  3} \downarrow  [1] \right )^{\mathsf {op}}, \mathsf {Set} \right ) \simeq  \mathsf {Cat} \downarrow  \Delta  [1]\end{equation}\end{proof}
\end{corollary}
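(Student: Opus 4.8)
The plan is to derive both equivalences as instances of the slice theorem for sketches, \cref{jrb-001X}, applied to the Segal sketch on $\Delta^{\mathsf{op}}$ (and to its restriction to $\Delta_{\leq 3}^{\mathsf{op}}$) at the model given by the walking arrow. First I would record that $\Delta[1]$, viewed as a functor $\Delta^{\mathsf{op}} \to \mathsf{Set}$, is the copresheaf corepresented by $[1] \in \Delta^{\mathsf{op}}$ (since $\Delta[1]_n = \Delta([n],[1]) = \Delta^{\mathsf{op}}([1],[n])$), so its category of elements is the coslice $[1]\backslash\Delta^{\mathsf{op}} \cong (\Delta \downarrow [1])^{\mathsf{op}}$ and the projection $\pi_{\Delta[1]}$ is exactly the discrete opfibration $\mathsf{cod}$ appearing in the statement. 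One must also check that $\Delta[1]$ is a model of the Segal sketch of \cref{jrb-0031} — it is, being the nerve of the walking-arrow category — so \cref{jrb-001X} applies with $\mathsf{L} = \Delta^{\mathsf{op}}$ and $M = \Delta[1]$, yielding an isomorphism
\[
\mathsf{Sketch}\bigl((\Delta \downarrow [1])^{\mathsf{op}}, \mathsf{Set}\bigr) \;=\; \mathsf{Sketch}\bigl(\mathsf{El}(\Delta[1]), \mathsf{Set}\bigr) \;\cong\; \mathsf{Sketch}(\Delta^{\mathsf{op}}, \mathsf{Set}) \downarrow \Delta[1],
\]
where the sketch on $(\Delta \downarrow [1])^{\mathsf{op}}$ is the one whose marked cones are all lifts along $\mathsf{cod}$ of the Segal cones on $\Delta^{\mathsf{op}}$ — which is precisely the ``induced'' structure named in the statement.

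Next I would transport the right-hand side along the equivalence $\mathsf{Sketch}(\Delta^{\mathsf{op}}, \mathsf{Set}) \simeq \mathsf{Cat}$ of \cref{jrb-0031}. This equivalence sends the model $\Delta[1]$ to the walking arrow (which we again write $\Delta[1]$), so slicing both sides over that object gives $\mathsf{Sketch}(\Delta^{\mathsf{op}}, \mathsf{Set}) \downarrow \Delta[1] \simeq \mathsf{Cat} \downarrow \Delta[1]$, the category of barrels. Composing with the displayed isomorphism gives the first claimed equivalence.

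For the biased version I would run the identical argument with $\mathsf{L} = \Delta_{\leq 3}^{\mathsf{op}}$ carrying the restricted sketch structure and $M = \Delta[1]|_{\Delta_{\leq 3}^{\mathsf{op}}}$ (still a model of the restricted sketch, being a restriction of a model). Since $\Delta_{\leq 3} \hookrightarrow \Delta$ is full and $[1] \in \Delta_{\leq 3}$, the category of elements of this restricted copresheaf is $(\Delta_{\leq 3} \downarrow [1])^{\mathsf{op}}$ with the induced sketch structure, so \cref{jrb-001X} gives $\mathsf{Sketch}\bigl((\Delta_{\leq 3} \downarrow [1])^{\mathsf{op}}, \mathsf{Set}\bigr) \cong \mathsf{Sketch}(\Delta_{\leq 3}^{\mathsf{op}}, \mathsf{Set}) \downarrow \Delta[1]|_{\Delta_{\leq 3}^{\mathsf{op}}}$. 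The restriction functor along $\Delta_{\leq 3}^{\mathsf{op}} \hookrightarrow \Delta^{\mathsf{op}}$ is the equivalence $\mathsf{Sketch}(\Delta^{\mathsf{op}}, \mathsf{Set}) \simeq \mathsf{Sketch}(\Delta_{\leq 3}^{\mathsf{op}}, \mathsf{Set}) \simeq \mathsf{Cat}$ of \cref{jrb-0032}, and it carries $\Delta[1]$ to $\Delta[1]|_{\Delta_{\leq 3}^{\mathsf{op}}}$, hence to the walking arrow; so the right-hand side is again $\mathsf{Cat} \downarrow \Delta[1]$, giving the second equivalence.

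The only real work is bookkeeping: verifying that the categories of elements produced by \cref{jrb-001X} are the stated slice categories (minding the op, since $\Delta[1]$ is a \emph{co}presheaf on $\Delta^{\mathsf{op}}$) and that the marked-cone structure it produces is literally the ``sketch structure induced by $\mathsf{cod}$'' on $(\Delta \downarrow [1])^{\mathsf{op}}$ (and its biased analogue). I expect no genuine obstacle, as \cref{jrb-001X} has been designed to make this formal; the one point that genuinely needs checking before the slice theorem can be invoked is that $\Delta[1]$ is a model of the unbiased and biased Segal sketches, which follows because it is a nerve.
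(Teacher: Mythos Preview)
Your proposal is correct and follows essentially the same route as the paper's proof: identify $(\Delta \downarrow [1])^{\mathsf{op}}$ as the category of elements of the corepresentable $\Delta[1]$, apply the slice theorem \cref{jrb-001X}, and then transport along the equivalences $\mathsf{Sketch}(\Delta^{\mathsf{op}},\mathsf{Set}) \simeq \mathsf{Cat}$ and $\mathsf{Sketch}(\Delta_{\leq 3}^{\mathsf{op}},\mathsf{Set}) \simeq \mathsf{Cat}$ from \cref{jrb-0031} and \cref{jrb-0032}. Your version is slightly more explicit in verifying that $\Delta[1]$ is a model of the Segal sketch (needed to invoke \cref{jrb-001X}) and in tracking the op conventions, but these are exactly the bookkeeping points the paper takes for granted.
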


\begin{explication}[{Models of \(\left ( \Delta  \downarrow  [1] \right )^{\mathsf {op}}\) as bimodules}]\label{jrb-0034}
\par{}
Knowing that models for \(\left ( \Delta  \downarrow  [1] \right )^{\mathsf {op}}\) in \(\mathsf {Set}\) are equivalent to barrels, we might refer to this sketch as the sketch for barrels. However, we'll now observe that models of this sketch in general categories with limits (or at least \emph{pullbacks}) correspond more directly to \emph{internal bimodules} (also known as \emph{internal profunctors}). It is only when considering models in \(\mathsf {Set}\) that the slice theorem can be applied, obtaining the equivalence between bimodules internal to \(\mathsf {Set}\) and barrels.
\par{}
The data of a \(\Delta ^{\mathsf {op}}\)-model in a category \(\mathcal {E}\) with pullbacks involves assigning to each map \(f : [n] \to  [1]\) in \(\Delta  \downarrow  [1]\) an object \(M_f \in  \mathcal {E}\). The model condition, however, ensures that each \(M_f\) is a limit of a diagram in \(\mathcal {E}\) of the \(M_g\)'s for \(g : [e] \to  [1]\) with \(e\) \emph{elementary} in \(\Delta \). There are only five such objects \(g\), and the maps between them are as follows:

  \begin{center}
\begin {tikzcd}
	{[1]} && {[1]} \\
	{[0]} & {[1]} & {[0]}
	\arrow ["{\mathsf {s}_{0}}"{description}, from=1-1, to=2-1]
	\arrow [dashed, from=1-1, to=2-2]
	\arrow [dashed, from=1-3, to=2-2]
	\arrow ["{\mathsf {s}_{0}}"{description}, from=1-3, to=2-3]
	\arrow ["{\mathsf {d}_{0}}", shift left=3, from=2-1, to=1-1]
	\arrow ["{\mathsf {d}_{1}}"', shift right=3, from=2-1, to=1-1]
	\arrow ["{\mathsf {d}_{1}}"', from=2-1, to=2-2]
	\arrow ["{1_{[1]}}", from=2-2, to=2-2, loop, in=235, out=305, distance=10mm]
	\arrow ["{\mathsf {d}_{0}}", shift left=3, from=2-3, to=1-3]
	\arrow ["{\mathsf {d}_{1}}"', shift right=3, from=2-3, to=1-3]
	\arrow ["{\mathsf {d}_{0}}", from=2-3, to=2-2]
\end {tikzcd}
\end{center}

We represent the image of this full-subcategory of the elementaries under a model \(M : \Delta ^{\mathsf {op}} \to  \mathcal {E}\) with the following suggestive notation:

  \begin{center}
\begin {tikzcd}
	{C_1} && {D_1} \\
	{C_0} & {\mathsf {Car}(M)} & {D_0}
	\arrow ["t"', shift right=3, from=1-1, to=2-1]
	\arrow ["s", shift left=3, from=1-1, to=2-1]
	\arrow ["t"', shift right=3, from=1-3, to=2-3]
	\arrow ["s", shift left=3, from=1-3, to=2-3]
	\arrow ["i"{description}, from=2-1, to=1-1]
	\arrow [dashed, from=2-2, to=1-1]
	\arrow [dashed, from=2-2, to=1-3]
	\arrow ["s", from=2-2, to=2-1]
	\arrow ["t"', from=2-2, to=2-3]
	\arrow ["i"{description}, from=2-3, to=1-3]
\end {tikzcd}
\end{center}

Now given a general map \(f : [n] \to  [1]\), which is equivalent to a non-decreasing sequence of 0's and 1's of length \(n+1\), the image \(M_f\) must be given by a certain composite of the spans \(\left \langle  s,t \right \rangle  : C_1 \to  C_0 \times  C_0\), \(\left \langle  s,t \right \rangle  : \mathsf {Car}(M) \to  C_0 \times  D_0\) and \(\left \langle  s,t \right \rangle  : D_1 \to  D_0 \times  D_0\). Any \(f : [n] \to  [1]\) can be pre-composed with the unique active \(\mathsf {a}_{n} : [1] \to  [n]\) to give one of the three possible maps \([1] \to  [1]\). This means every \(M_f\) admits some canonical morphism to either \(C_1\), \(D_1\) or \(\mathsf {Car}(M)\) depending on the image of \(f\) in \([1]\). These maps respectively give unbiased descriptions of:
\begin{enumerate}\item{}composition for the \(\mathcal {E}\)-graph  \(C_1 \rightrightarrows  C_0\)
  \item{}composition for the \(\mathcal {E}\)-graph \(D_1 \rightrightarrows  C_0\)
  \item{}actions of \(C\) and \(D\) on the right and left on \(\mathsf {Car} (M)\)\end{enumerate}\par{}
In particular, for the map \(f : [3] \to  [1]\) represented by the sequence \(0,0,1,1\), the unique active \(\mathsf {a}_{3} : [1] \to  [3]\) induces a map into \(\mathsf {Car}(M)\) of the following form:

  \begin{center}
 \begin {tikzcd}[row sep=0.8em, column sep=1.8em]
 	{ C_0} && { C_0} && { D_0} && { D_0} \\
 	& {C_1} && {\mathsf {Car}(M)} && {D_1} \\
 	&& \bullet  && \bullet  \\
 	&&& M_f \\
 	{C_0} &&& {\mathsf {Car}(M)} &&& {D_0}\\
 	\arrow [equals, from=1-1, to=5-1]
 	\arrow [equals, from=1-7, to=5-7]
 	\arrow ["s"',from=2-2, to=1-1]
 	\arrow ["t",from=2-2, to=1-3]
 	\arrow ["s"', from=2-4, to=1-3]
 	\arrow ["t", from=2-4, to=1-5]
 	\arrow ["s"', from=2-6, to=1-5]
 	\arrow ["t", from=2-6, to=1-7]
 	\arrow ["\lrcorner "{anchor=center, pos=0.125, rotate=135}, draw=none, from=3-3, to=1-3]
 	\arrow [from=3-3, to=2-2]
 	\arrow [from=3-3, to=2-4]
 	\arrow ["\lrcorner "{anchor=center, pos=0.125, rotate=135}, draw=none, from=3-5, to=1-5]
 	\arrow [from=3-5, to=2-4]
 	\arrow [from=3-5, to=2-6]
 	\arrow ["\lrcorner "{anchor=center, pos=0.125, rotate=135}, draw=none, from=4-4, to=2-4]
 	\arrow [from=4-4, to=3-3]
 	\arrow [from=4-4, to=3-5]
 	\arrow [dashed, from=4-4, to=5-4]
 	\arrow [from=5-4, to=5-1]
 	\arrow [from=5-4, to=5-7]
 \end {tikzcd}
  \end{center}

  exhibiting the actions of \(C\) and \(D\) on \(\mathsf {Car}(M)\).
\par{}
The functoriality of \(M\) ensures that these composition operations do indeed define \(\mathcal {E}\)-category structures on the \(\mathcal {E}\)-graphs \(C\) and \(D\), and ensures that the action of these categories on the “heteromorphism” object \(\mathsf {Car}(M)\) is functorial as required by the definition of internal bimodule.
\par{}
Similarly, models for \(\left ( \Delta _{\leq  3} \downarrow  [1] \right )^{\mathsf {op}}\) in \(\mathcal {E}\) express the biased definition for \(\mathcal {E}\)-bimodule.
\end{explication}
\par{}
We shall henceforth refer to the sketches \(\left ( \Delta ^{\mathsf {op}} \downarrow  [1] \right )\) and \(\left ( \Delta _{\leq  3}^{\mathsf {op}} \downarrow  [1] \right )\) respectively as the \emph{sketch for bimodules} and the \emph{biased sketch for bimodules}.
\subsection{From barrels to bimodules}\label{jrb-0035}\par{}\cref{jrb-001Z} provides an abstract argument for the equivalence between \(\mathsf {Set}\)-bimodules and barrels. We now supplement this with a more direct elaboration of how all the data of a \(\mathsf {Set}\)-bimodule can be recovered from the comparatively simple structure of a barrel.

\begin{definition}[{The source and target of a  barrel}]\label{ssl-000V}
\par{}
  Let \(C \colon  \mathsf {C} \to  \Delta  [1]\) be a . The \textbf{source} \(\mathsf {C}_0\) and \textbf{target} \(\mathsf {C}_1\) of \(C\) are the fibres of \(C\) over the objects \(0\) and \(1\) of \(\Delta  [1]\) respectively. These are equivalently the (domains of the)
 pullbacks of \(C\) along the functors \(0,1 \colon  \bullet  \to  \Delta  [1]\).
\end{definition}
\par{}
  The source and target of a barrel act on a set called the \textbf{carrier} which is defined below. 

\begin{definition}[{Carrier of a  barrel}]\label{ssl-0038}
\par{}
  Let \(C \colon   \mathsf {C} \to  \Delta  [1]\) be a  . Its \textbf{carrier} \(\mathsf {Car}(C)\) is the set of morphisms in \(\mathsf {C}\) which map to the walking arrow \(0 \to  1\). Elements of \(\mathsf {Car}(C)\) are called \textbf{heteromorphisms} of \(C\).
\end{definition}

\begin{explication}[{The labelling perspective for barrels}]\label{ssl-0062}
\par{}
   Given a  \(C \colon  \mathsf {C} \to  \Delta  [1]\), the objects and morphisms of \(\Delta  [1]\) \emph{label} the elements \(\mathsf {C}\) as being either part of the source, target, or carrier. We can represent this labelling visually with colours: blue for elements of the source, pink for elements of the target, and black for the heteromorphisms.
\begin{center}
	\begin {tikzcd}[row sep=1em]
		\textcolor {rgb,255:red,92;green,92;blue,214}{{c_0}} & \textcolor {rgb,255:red,214;green,92;blue,214}{{c_1}} \\
		\textcolor {rgb,255:red,179;green,179;blue,179}{0} & \textcolor {rgb,255:red,179;green,179;blue,179}{1}
		\arrow ["x", from=1-1, to=1-2]
		\arrow [draw={rgb,255:red,179;green,179;blue,179}, maps to, from=1-1, to=2-1]
		\arrow [draw={rgb,255:red,179;green,179;blue,179}, maps to, from=1-2, to=2-2]
		\arrow [draw={rgb,255:red,179;green,179;blue,179}, from=2-1, to=2-2]
	\end {tikzcd}
\end{center}\par{}
Composition in the category \(\Delta  [1]\) tells us that we can only post-compose a black arrow with a pink one, and the result will be coloured black. Similarly, we can only  \emph{pre-compose} black arrows with blue ones, the result again being a black arrow.
\begin{center}
	\begin {tikzcd}[row sep=1em]
		\textcolor {rgb,255:red,92;green,92;blue,214}{{c_0}} & \textcolor {rgb,255:red,214;green,92;blue,214}{{c_1}} & \textcolor {rgb,255:red,214;green,92;blue,214}{{c_1'}} \\
		\textcolor {rgb,255:red,179;green,179;blue,179}{0} & \textcolor {rgb,255:red,179;green,179;blue,179}{1} & \textcolor {rgb,255:red,179;green,179;blue,179}{1}
		\arrow ["x", from=1-1, to=1-2]
		\arrow ["{f\,x}", popup=1em, from=1-1, to=1-3]
		\arrow [draw={rgb,255:red,179;green,179;blue,179}, maps to, from=1-1, to=2-1]
		\arrow ["f", color={rgb,255:red,214;green,92;blue,214}, from=1-2, to=1-3]
		\arrow [draw={rgb,255:red,179;green,179;blue,179}, maps to, from=1-2, to=2-2]
		\arrow [draw={rgb,255:red,179;green,179;blue,179}, maps to, from=1-3, to=2-3]
		\arrow [draw={rgb,255:red,179;green,179;blue,179}, from=2-1, to=2-2]
		\arrow [draw={rgb,255:red,179;green,179;blue,179}, popdown=1em, from=2-1, to=2-3]
		\arrow [draw={rgb,255:red,179;green,179;blue,179}, equals, from=2-2, to=2-3]
	  \end {tikzcd}
    \qquad 
	\begin {tikzcd}[row sep=1em]
		\textcolor {rgb,255:red,92;green,92;blue,214}{{c_0}} & \textcolor {rgb,255:red,92;green,92;blue,214}{{c_0'}} & \textcolor {rgb,255:red,214;green,92;blue,214}{{c_1}} \\
		\textcolor {rgb,255:red,179;green,179;blue,179}{0} & \textcolor {rgb,255:red,179;green,179;blue,179}{0} & \textcolor {rgb,255:red,179;green,179;blue,179}{1}
		\arrow ["f", color={rgb,255:red,92;green,92;blue,214}, from=1-1, to=1-2]
		\arrow ["{x\,f}", popup=1em, from=1-1, to=1-3]
		\arrow [draw={rgb,255:red,179;green,179;blue,179}, maps to, from=1-1, to=2-1]
		\arrow ["x", from=1-2, to=1-3]
		\arrow [draw={rgb,255:red,179;green,179;blue,179}, maps to, from=1-2, to=2-2]
		\arrow [draw={rgb,255:red,179;green,179;blue,179}, maps to, from=1-3, to=2-3]
		\arrow [draw={rgb,255:red,179;green,179;blue,179}, equals,  from=2-1, to=2-2]
		\arrow [draw={rgb,255:red,179;green,179;blue,179}, popdown=1em, from=2-1, to=2-3]
		\arrow [draw={rgb,255:red,179;green,179;blue,179}, from=2-2, to=2-3]
	\end {tikzcd}
\end{center}\par{}
More generally, we can compose any finite sequence of arrows which obey the following rules:
\begin{enumerate}\item{}blue arrows are only followed by blue or black arrows
  \item{}black arrows are only followed by pink arrows
  \item{}pink arrows are only followed by pink arrows \end{enumerate}
The length and colouring of such a path is determined by the sequence of images of the objects in the path under the map \(C : \mathsf {C} \to  \Delta  [1]\). For example, a coloured path of the following sort:

  \begin{center}
	\begin {tikzcd}[row sep=1em]
	\textcolor {rgb,255:red,92;green,92;blue,214}{\bullet } &
  \textcolor {rgb,255:red,92;green,92;blue,214}{\bullet } &
  \textcolor {rgb,255:red,92;green,92;blue,214}{\bullet } &
  \textcolor {rgb,255:red,214;green,92;blue,214}{\bullet } &
  \textcolor {rgb,255:red,214;green,92;blue,214}{\bullet } &
  \textcolor {rgb,255:red,214;green,92;blue,214}{\bullet } &
  \textcolor {rgb,255:red,214;green,92;blue,214}{\bullet } &
  \\
	\textcolor {rgb,255:red,179;green,179;blue,179}{0} &
	\textcolor {rgb,255:red,179;green,179;blue,179}{0} &
	\textcolor {rgb,255:red,179;green,179;blue,179}{0} &
  \textcolor {rgb,255:red,179;green,179;blue,179}{1} &
  \textcolor {rgb,255:red,179;green,179;blue,179}{1} &
  \textcolor {rgb,255:red,179;green,179;blue,179}{1} &
  \textcolor {rgb,255:red,179;green,179;blue,179}{1} 
  \arrow [draw={rgb,255:red,92;green,92;blue,214},  from=1-1, to=1-2]
  \arrow [draw={rgb,255:red,92;green,92;blue,214},  from=1-2, to=1-3]
	\arrow [from=1-3, to=1-4]
    \arrow [draw={rgb,255:red,214;green,92;blue,214},  from=1-4, to=1-5]
    \arrow [draw={rgb,255:red,214;green,92;blue,214},  from=1-5, to=1-6]
    \arrow [draw={rgb,255:red,214;green,92;blue,214},  from=1-6, to=1-7]
		\arrow [draw={rgb,255:red,179;green,179;blue,179}, maps to, from=1-1, to=2-1]
		\arrow [draw={rgb,255:red,179;green,179;blue,179}, maps to, from=1-2, to=2-2]
		\arrow [draw={rgb,255:red,179;green,179;blue,179}, maps to, from=1-3, to=2-3]
		\arrow [draw={rgb,255:red,179;green,179;blue,179}, maps to, from=1-4, to=2-4]
		\arrow [draw={rgb,255:red,179;green,179;blue,179}, maps to, from=1-5, to=2-5]
		\arrow [draw={rgb,255:red,179;green,179;blue,179}, maps to, from=1-6, to=2-6]
		\arrow [draw={rgb,255:red,179;green,179;blue,179}, maps to, from=1-7, to=2-7]
		\arrow [draw={rgb,255:red,179;green,179;blue,179}, equals, from=2-1, to=2-2]
		\arrow [draw={rgb,255:red,179;green,179;blue,179}, equals, from=2-2, to=2-3]
		\arrow [draw={rgb,255:red,179;green,179;blue,179}, from=2-3, to=2-4]
		\arrow [draw={rgb,255:red,179;green,179;blue,179}, equals, from=2-4, to=2-5]
		\arrow [draw={rgb,255:red,179;green,179;blue,179}, equals, from=2-5, to=2-6]
		\arrow [draw={rgb,255:red,179;green,179;blue,179}, equals, from=2-6, to=2-7]
	\end {tikzcd}
\end{center}

corresponds to the sequence \(0,0,0,1,1,1,1\). Such a sequence of length \(n\) is equivalent to a map \([n] \to  [1]\) in \(\Delta \). For any map \(f : [n] \to  [1]\) in \(\Delta \), let \(C_f\) denote the set of paths of arrows in \(\mathsf {C}\) whose type is given by the corresponding sequence. A “subsequence” of \(f\) is determined by an inert map \(g : [m] \to  [n]\), which induces an inert map \(f\,g \to  f\) in \(\left ( \Delta  \downarrow  [1] \right )\). For such an inert map, there is a function \(C_f \to  C_{f\,g}\) given by restricting the sequence of arrows in \(\mathsf {C}\) to the subsequence lying over the image of \(g\).
\par{}
For any \emph{active} map \(g : [m] \to  [n]\) we get an active map \(g : f\,g \to  f\) in \(\left ( \Delta  \downarrow  [1] \right )\) and a function \(C_f \to  C_{f\,g}\) given by composing along the “image” of each edge in \(m\):

  \begin{center}
	\begin {tikzcd}[row sep=1em,column sep=1em]
	\textcolor {rgb,255:red,92;green,92;blue,214}{\bullet } &
  \textcolor {rgb,255:red,92;green,92;blue,214}{\bullet } &
  \textcolor {rgb,255:red,92;green,92;blue,214}{\bullet } &
  \textcolor {rgb,255:red,214;green,92;blue,214}{\bullet } &
  \textcolor {rgb,255:red,214;green,92;blue,214}{\bullet } &
  \textcolor {rgb,255:red,214;green,92;blue,214}{\bullet } &
  \textcolor {rgb,255:red,214;green,92;blue,214}{\bullet } &
  \\
	\textcolor {rgb,255:red,179;green,179;blue,179}{0} &
	\textcolor {rgb,255:red,179;green,179;blue,179}{0} &
	\textcolor {rgb,255:red,179;green,179;blue,179}{0} &
  \textcolor {rgb,255:red,179;green,179;blue,179}{1} &
  \textcolor {rgb,255:red,179;green,179;blue,179}{1} &
  \textcolor {rgb,255:red,179;green,179;blue,179}{1} &
\textcolor {rgb,255:red,179;green,179;blue,179}{1}
\\
	\textcolor {rgb,255:red,179;green,179;blue,179}{0} &
	\textcolor {rgb,255:red,179;green,179;blue,179}{0} &
 &
 &
  \textcolor {rgb,255:red,179;green,179;blue,179}{1} &
 &
  \textcolor {rgb,255:red,179;green,179;blue,179}{1} 
  \arrow ["a",draw={rgb,255:red,92;green,92;blue,214},  from=1-1, to=1-2]
  \arrow ["b",draw={rgb,255:red,92;green,92;blue,214},  from=1-2, to=1-3]
	\arrow ["c",from=1-3, to=1-4]
    \arrow ["d",draw={rgb,255:red,214;green,92;blue,214},  from=1-4, to=1-5]
    \arrow ["e",draw={rgb,255:red,214;green,92;blue,214},  from=1-5, to=1-6]
    \arrow ["f",draw={rgb,255:red,214;green,92;blue,214},  from=1-6, to=1-7]
		\arrow [draw={rgb,255:red,179;green,179;blue,179}, maps to, from=1-1, to=2-1]
		\arrow [draw={rgb,255:red,179;green,179;blue,179}, maps to, from=1-2, to=2-2]
		\arrow [draw={rgb,255:red,179;green,179;blue,179}, maps to, from=1-3, to=2-3]
		\arrow [draw={rgb,255:red,179;green,179;blue,179}, maps to, from=1-4, to=2-4]
		\arrow [draw={rgb,255:red,179;green,179;blue,179}, maps to, from=1-5, to=2-5]
		\arrow [draw={rgb,255:red,179;green,179;blue,179}, maps to, from=1-6, to=2-6]
		\arrow [draw={rgb,255:red,179;green,179;blue,179}, maps to, from=1-7, to=2-7]
		\arrow [draw={rgb,255:red,179;green,179;blue,179}, equals, from=2-1, to=2-2]
		\arrow [draw={rgb,255:red,179;green,179;blue,179}, equals, from=2-2, to=2-3]
		\arrow [draw={rgb,255:red,179;green,179;blue,179}, from=2-3, to=2-4]
		\arrow [draw={rgb,255:red,179;green,179;blue,179}, equals, from=2-4, to=2-5]
		\arrow [draw={rgb,255:red,179;green,179;blue,179}, equals, from=2-5, to=2-6]
\arrow [draw={rgb,255:red,179;green,179;blue,179}, equals, from=2-6, to=2-7]
		\arrow [draw={rgb,255:red,179;green,179;blue,179}, maps to, from=3-1, to=2-1]
		\arrow [draw={rgb,255:red,179;green,179;blue,179}, maps to, from=3-2, to=2-2]
		\arrow [draw={rgb,255:red,179;green,179;blue,179}, maps to, from=3-5, to=2-5]
		\arrow [draw={rgb,255:red,179;green,179;blue,179}, maps to, from=3-7, to=2-7]
		\arrow [draw={rgb,255:red,179;green,179;blue,179}, equals, from=3-1, to=3-2]
		\arrow [draw={rgb,255:red,179;green,179;blue,179}, from=3-2, to=3-5]
		\arrow [draw={rgb,255:red,179;green,179;blue,179}, equals, from=3-5, to=3-7]
\end {tikzcd}
\quad  $\leadsto $ \quad 
	\begin {tikzcd}[row sep=1.4em,column sep=1.4em]
	\textcolor {rgb,255:red,92;green,92;blue,214}{\bullet } &
  \textcolor {rgb,255:red,92;green,92;blue,214}{\bullet } &
  \textcolor {rgb,255:red,214;green,92;blue,214}{\bullet } &
  \textcolor {rgb,255:red,214;green,92;blue,214}{\bullet } &
  \\
	\textcolor {rgb,255:red,179;green,179;blue,179}{0} &
	\textcolor {rgb,255:red,179;green,179;blue,179}{0} &
  \textcolor {rgb,255:red,179;green,179;blue,179}{1} &
  \textcolor {rgb,255:red,179;green,179;blue,179}{1} &
  \arrow ["a",draw={rgb,255:red,92;green,92;blue,214},  from=1-1, to=1-2]
  \arrow ["dcb", from=1-2, to=1-3]
    \arrow ["fe",draw={rgb,255:red,214;green,92;blue,214},  from=1-3, to=1-4]
		\arrow [draw={rgb,255:red,179;green,179;blue,179}, maps to, from=1-1, to=2-1]
		\arrow [draw={rgb,255:red,179;green,179;blue,179}, maps to, from=1-2, to=2-2]
		\arrow [draw={rgb,255:red,179;green,179;blue,179}, maps to, from=1-3, to=2-3]
		\arrow [draw={rgb,255:red,179;green,179;blue,179}, maps to, from=1-4, to=2-4]
		\arrow [draw={rgb,255:red,179;green,179;blue,179}, equals, from=2-1, to=2-2]
		\arrow [draw={rgb,255:red,179;green,179;blue,179}, from=2-2, to=2-3]
		\arrow [draw={rgb,255:red,179;green,179;blue,179}, equals, from=2-3, to=2-4]
\end {tikzcd}
\end{center}

The map \(f \mapsto  C_f\) thus extends to a functor \(\left ( \Delta  \downarrow  [1] \right )^{\mathsf {op}} \to  \mathsf {Set}\) which is seen to satisfy the model condition upon observing that each \(C_f\) is obtained by “gluing together” copies of \(\mathsf {C}_0^\to \), \(\mathsf {Car}(C)\) and \(\mathsf {C}_1^\to \) in the appropriate way. We thus recover the \(\mathsf {Set}\)-bimodule corresponding to a given barrel.
\end{explication}

\begin{example}[{Hom barrels}]\label{ssl-000O}
\par{}
  Given a category \(\mathsf {C}\), its \textbf{hom barrel} is the barrel \(\mathsf {C} \times  \Delta  [1] \to  \Delta  [1]\) give by projection onto the second component. Note that the heteromorphisms of this barrel are precisely the homomorphisms of \(\mathsf {C}\), whence the name.
\end{example}

\begin{remark}[{Source, target and hom from the sketch perspective}]\label{jrb-003E}
\par{}
The \emph{barrel} perspective on bimodules makes it particularly easy to define the source, target and hom of a bimodule, but these concepts also admit natural descriptions from the sketch perspective. First, note that the maps \(\mathsf {d}_{0}, \mathsf {d}_{1} : [0] \to  [1]\) and \(\mathsf {s}_{0} : [1] \to  [0]\) in \(\Delta \) induce maps:

  \begin{center}
\begin {tikzcd}[column sep=5em]
{\left ( \Delta  \downarrow  [1] \right )^{\mathsf {op}}} & {\left ( \Delta  \downarrow  [0] \right )^{\mathsf {op}}}
\arrow ["{\Delta  \downarrow  \mathsf {s}_{0}}"{description}, from=1-1, to=1-2]
\arrow ["{\Delta  \downarrow  \mathsf {d}_{0}}"', popup=1em, from=1-2, to=1-1]
\arrow ["{\Delta  \downarrow  \mathsf {d}_{1}}", popdown=1em, from=1-2, to=1-1]
\end {tikzcd}
\end{center}

which are moreover maps of sketches by virtue of being maps in the slice over \(\Delta  ^{\mathsf {op}}\). Precomposition by these sketch maps therefore induces functors between the categories of models for these sketches. Recalling that models for \(\left ( \Delta  \downarrow  [0] \right )^{\mathsf {op}} \cong  \Delta  ^{\mathsf {op}}\) are categories, we observe that the precomposition maps \(\left ( \Delta  \downarrow  \mathsf {d}_{0} \right )^*\) and \(\left ( \Delta  \downarrow  \mathsf {d}_{0} \right )^*\) respectively correspond to the target and source of the bimodule, whereas precomposing by \({\Delta  \downarrow  \mathsf {s}_{0}}\) sends a category to its hom bimodule.
\end{remark}
\section{\(\mathscr {F}\)-sketches, double barrels and loose bimodules}\label{jrb-0036}\par{}
We now recapitulate the story from \cref{jrb-002Z} but with \emph{double} barrels and \emph{loose} bimodules. The role of limit sketches in the previous section will here be played by \(\mathscr {F}\)-sketches. Aside from the proliferation of coherence conditions in this higher-dimensional setting, we also contend with the fact that these higher-dimensional analogues are less established in the literature. In particular, the notions of \emph{pseudo-models} and \emph{model fibrations} for \(\mathscr {F}\)-sketches are, to our knowledge, new. We must therefore proceed more slowly as we establish the technical definitions and lemmas required to adapt the results of \cref{jrb-002Z}.
\subsection{\(\mathscr {F}\)-sketches and pseudo-models}\label{djm-00I9}\par{}
One approach to obtaining a sketch for loose bimodules would be to vastly augment the sketch for bimodules to capture all the higher-dimensional coherence data and composition. This approach obscures the fact that the weakness and coherence data for loose bimodules are canonical; they arise from weakening what it means to \emph{be} a model, rather than the sketch itself.
\par{}	
Our approach is instead to introduce the weaker notion of \emph{pseudo-models} for sketches, which faithfully captures the pseudo-structure and coherences expected of categorified models of ordinary sketches. Uniformly weakening all coherence conditions for a model often fails to produce the correct notion of pseudo-model, so we must instead work with the \emph{\(\mathscr {F}\)-sketches} of \cite{arkor-2024-enhanced}, which allow us to precisely specify the \emph{inert} operations — whose laws must hold strictly — and the \emph{active} operations whose laws may hold merely up to coherent isomorphisms.
\par{}
To justify this definition, we will prove in detail that the pseudo-models in \(\mathcal {C}\mathsf {at}\) of \(\Delta ^{\mathsf {op}}\), endowed with a certain \(\mathscr {F}\)-sketch structure, are unbiased pseudo-double categories in the sense of Definition 5.2.1 of Leinster's \emph{Higher Categories, Higher Operads} \cite{leinster-2004-higher}. This is also a crucial step in proving the equivalence:
	\begin{equation}
		\mathcal {D}\mathsf {bl} \downarrow  \mathbb {L}\mathsf {oose} \simeq  \mathcal {P}\mathsf {sMod}^{\mathsf {p}}_{\mathscr {F}} \left (\left ( \Delta  \downarrow  [1] \right )^{\mathsf {op}},\mathcal {C}\mathsf {at}\right )
	\end{equation}
	between the 2-categories of double barrels and of pseudo-models for (an \(\mathscr {F}\)-sketch structure on) \(\left ( \Delta  \downarrow  [1] \right )^{\mathsf {op}}\) in \(\mathcal {C}\mathsf {at}\). This provides a satisfying parallel with the fact that the \emph{strict} models of \(\left ( \Delta  \downarrow  [1] \right )^{\mathsf {op}}\) in \(\mathsf {Set}\) are ordinary bimodules.
\par{}
From this result we obtain an equivalence between double barrels and a natural notion of loose bimodule in the sense of pseudo-bimodule internal to \(\mathcal {C}\mathsf {at}\). Indeed, we present the 2-category of pseudo-models for \(\left ( \Delta  \downarrow  [1] \right )^{\mathsf {op}}\) in a 2-category \(\mathcal {E}\) with pullbacks as a natural definition of (unbiased) pseudo-bimodule internal to \(\mathcal {E}\).
\par{}
We begin by recalling the definitions of \(\mathscr {F}\)-category and \(\mathscr {F}\)-sketch — introduced respectively in \cite{lack-2012-enhanced} and Definition 5.8 of \cite{arkor-2024-enhanced} — though with a change of terminology: to avoid conflict between the use of the terms “tight” and “loose” in \cite{arkor-2024-enhanced} and our use of the terms for double categories we will substitute the term “inert” for “tight”. What they call “loose” morphisms we simply call “morphisms”.
 
\begin{definition}[{\(\mathscr {F}\)-categories and \(\mathscr {F}\)-sketches}]\label{djm-00IA}
\par{}
	An \(\mathscr {F}\)-category \(\mathcal {A}\) consists of an \textbf{underlying} \(2\)-category \(\mathcal {A}\) together with a class of marked \(1\)-cells called \textbf{inerts}, containing all identities and closed under composition. We denote by \({\mathcal {A}}_{\mathsf {inert}}\) the locally-full sub-2-category of \(\mathcal {A}\) spanned by the inerts. When all 1-cells are inert, we say the \(\mathscr {F}\)-category is \textbf{chordate}. When only the identity 1-cells are inert, we the \(\mathscr {F}\)-category is \textbf{inchordate}.
\par{}
A \textbf{cone} in an \(\mathscr {F}\)-category \(\mathcal {A}\) is a strict 2-functor \(\mathcal {J} \to  a \downarrow  {\mathcal {A}}_{\mathsf {inert}}\) for some \(a \in  \mathcal {A}\) called the \textbf{vertex} of the cone.
An \textbf{\(\mathscr {F}\)-sketch} \(\mathcal {A}\) is an \(\mathscr {F}\)-category together with a collection \(\mathsf {Cone}_{\mathcal {A}}\) of cones, referred to as the \textbf{marked cones} of \(\mathcal {A}\).
\end{definition}

\begin{remark}[{A more general notion of \(\mathscr {F}\)-sketch}]\label{djm-00IB}
\par{}
Our notion of cone for \hyperref[djm-00IA]{\(\mathscr {F}\)-sketches} \(\mathcal {A}\) requires not only that the projections be inert, but also the underlying diagram in \(\mathcal {A}\) be inert. 
This is less general than the definition for \(\mathscr {F}\)-sketch given in \cite{arkor-2024-enhanced} which allows for arbitrary marked \emph{cylinders} \(W \Rightarrow   \mathcal {A} (a ,S -)\) where \(S : \mathcal {J} \to  \mathcal {A}\) and \(W : \mathcal {J} \to  \mathscr {F}\) are \(\mathscr {F}\)-functors from a small \(\mathscr {F}\)-category. Our definition corresponds to requiring that \(\mathcal {J}\) be chordate, and \(W = \Delta  I\), where \(I\) is the (terminal) unit in \(\mathscr {F}\). However, for \(\mathscr {F}\)-sketches where the inerts form the left class of an orthogonal factorisation system, as is the case for those arising from algebraic patterns (see \hyperref[djm-00HQ]{below}), there is no loss of generality in assuming \(\mathcal {J}\) is chordate when \(W = \Delta  I\).
\end{remark}

\begin{definition}[{pseudo-\(\mathscr {F}\)-functor}]\label{djm-00I2}
\par{}
	Let \(\mathcal {A}\) and \(\mathcal {B}\) be \hyperref[djm-00IA]{\(\mathscr {F}\)-categories}. A \textbf{pseudo-\(\mathscr {F}\)-functor} \(F : \mathcal {A} \to  \mathcal {B}\) is a pseudofunctor \((F, F_0, F_2)\) on their underlying 2-categories which preserves inerts (\(Fi\) is inert when \(i\) is inert) and which are strict with respect to inerts in the following sense:
	\begin{enumerate}\item{}
			the unitors \(F_0 : 1_{Fx} \Rightarrow  F(1_x)\) are identities.
		
		\item{}
			the compositors \(F_2 : Fg \; Ff \Rightarrow  F(g \; f)\) are identities whenever either \(g\) or \(f\) is inert.
		\end{enumerate}\par{}
For \(w \in  \left \{ \mathsf {s(trict)},\mathsf {ps(eudo)}, \mathsf {l(a)x}, \mathsf {c(ola)x} \right \}\) we denote by \(\mathcal {P}\mathsf {s}^{w}_{\mathscr {F}}(\mathcal {A}, \mathcal {B})\) the 2-category of pseudo-\(\mathscr {F}\)-functors, \(w\)-transformations which are strict on inerts — henceforth \textbf{\(w\)-\(\mathscr {F}\)-transformations} — and modifications between them.
\end{definition}

\begin{remark}[{Strictness on inerts is insufficient}]\label{jrb-0037}
\par{}
Any pseudo-\(\mathscr {F}\)-functor \(F : \mathcal {A} \to  \mathcal {B}\) restricts to a \emph{strict} 2-functor \(F_{\mathsf {inert}} : \mathcal {A}_{\mathsf {inert}} \to  \mathcal {B}_{\mathsf {inert}}\), but that property is not sufficient for a pseudo-functor to be a pseudo-\(\mathscr {F}\)-functor. For example, any unital pseudofunctor from the \(\mathscr {F}\)-category freely generated by an inert \(A \to  B\) and a non-inert \(B \to  C\) has the strict-on-inerts property, but the only pseudo-\(\mathscr {F}\)-functors in this case are \emph{strict} 2-functors.
\end{remark}

\begin{definition}[{Pseudo-model of an \(\mathscr {F}\)-sketch}]\label{djm-00I5}
\par{}
	Let \(\mathcal {A}\) and \(\mathcal {K}\) be \hyperref[djm-00IA]{\(\mathscr {F}\)-sketches}. A \textbf{pseudo-model} of \(\mathcal {A}\) in \(\mathcal {K}\) is a \hyperref[djm-00I2]{pseudo-\(\mathscr {F}\)-functor} \(M : \mathcal {A} \to  \mathcal {K}\) such that for each marked cone \(S : \mathcal {J} \to  a \downarrow  {\mathcal {A}}_{\mathsf {inert}}\), the composite:
\begin{equation}\mathcal {J} \xrightarrow {S} a \downarrow  {\mathcal {A}}_{\mathsf {inert}} \xrightarrow {a \downarrow  {M}_{\mathsf {inert}}} Ma \downarrow  {\mathcal {K}}_{\mathsf {inert}}\end{equation} is marked in \(\mathcal {K}\).
\par{}
For a 2-category \(\mathcal {B}\), a model of \(\mathcal {A}\) in \(\mathcal {B}\) is defined to be a model of \(\mathcal {A}\) in the \(\mathscr {F}\)-sketch given by viewing \(\mathcal {B}\) as a chordate \(\mathscr {F}\)-category \(\mathcal {B}\) with all limit cones as marked cones.
\par{}
We denote the 2-category of pseudo-models, \(w\)-\(\mathscr {F}\)-transformations, and modifications by \(\mathcal {P}\mathsf {sMod}^{w}_{\mathscr {F}}(\mathcal {A}, \mathcal {K}) \hookrightarrow  \mathcal {P}\mathsf {s}^{w}_{\mathscr {F}}(\mathcal {A}, \mathcal {K})\).
\end{definition}

\begin{remark}[{(Co)lax models}]\label{jrb-0038}
\par{}
There is an obvious extension of these definitions to \emph{(co)lax-\(\mathscr {F}\)-functor} and \emph{(co)lax model} for an \(\mathscr {F}\)-sketch. We do not address these here, but simply note that the strategy we will use to establish results for the pseudo-\(\mathscr {F}\)-functors and pseudo-models \emph{does not} immediately extend along these generalisations.
\end{remark}

\begin{lemma}[{pre-composition by pseudo-\(\mathscr {F}\)-functors}]\label{djm-00IC}
\par{}
	Any \hyperref[djm-00I2]{pseudo-\(\mathscr {F}\)-functor} \(F : \mathcal {A} \to  \mathcal {B}\) induces a 2-functor
	\begin{equation}
		\mathcal {P}\mathsf {s}^{\mathsf {p}}_{\mathscr {F}}(\mathcal {B}, \mathcal {K}) \to  \mathcal {P}\mathsf {s}^{\mathsf {p}}_{\mathscr {F}}(\mathcal {A}, \mathcal {K})
	\end{equation}
	by pre-composition. If \(F\) is furthermore a \emph{\hyperref[djm-00I5]{pseudo-model}} between \(\mathscr {F}\)-sketches, then it induces a 2-functor
	\begin{equation}
		\mathcal {P}\mathsf {sMod}^{\mathsf {p}}_{\mathscr {F}}(\mathcal {B}, \mathcal {K}) \to  \mathcal {P}\mathsf {sMod}^{\mathsf {p}}_{\mathscr {F}}(\mathcal {A},\mathcal {K}).
	\end{equation}
\begin{proof}[{proof of \cref{djm-00IC}.}]\label{djm-00IC-proof}
\par{}
	This follows straightforwardly by the 2-out-of-3 property for identities.
\end{proof}
\end{lemma}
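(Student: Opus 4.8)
The plan is to take the $2$-functor in question to be whiskering with $F$: on objects it sends a pseudo-$\mathscr{F}$-functor $M\colon\mathcal{B}\to\mathcal{K}$ to $M\circ F$, on $1$-cells it sends a $\mathsf{p}$-$\mathscr{F}$-transformation $\alpha\colon M\Rightarrow N$ to the whiskering $\alpha\ast F$, and on $2$-cells it sends a modification to its whiskering with $F$. Since whiskering of pseudofunctors by a fixed pseudofunctor is strictly associative and strictly unital, the $2$-functoriality will be automatic; the only content is that whiskering with $F$ preserves the three defining conditions of \cref{djm-00I2} (being a pseudo-$\mathscr{F}$-functor, a $\mathsf{p}$-$\mathscr{F}$-transformation, a modification) and, for the second claim, the model condition of \cref{djm-00I5}.

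First I would check that $M\circ F$ is again a pseudo-$\mathscr{F}$-functor. That a composite of pseudofunctors is a pseudofunctor is standard, and since $F$ and $M$ each preserve inerts, so does $M\circ F$. For strictness on inerts, recall that the unitor of $M\circ F$ at $x$ is the vertical composite $1_{MFx}\xrightarrow{(M_0)_{Fx}}M(1_{Fx})\xrightarrow{M((F_0)_x)}MF(1_x)$, and the compositor at a composable pair $(g,f)$ is $MFg\,MFf\xrightarrow{(M_2)_{Fg,Ff}}M(Fg\,Ff)\xrightarrow{M((F_2)_{g,f})}MF(gf)$. When, say, $f$ is inert in $\mathcal{A}$: strictness of $F$ on inerts makes $(F_2)_{g,f}$ an identity, hence $M((F_2)_{g,f})$ an identity; and $Ff$ is inert in $\mathcal{B}$ since $F$ preserves inerts, so strictness of $M$ on inerts makes $(M_2)_{Fg,Ff}$ an identity; a vertical composite of identity $2$-cells is an identity. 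The symmetric case $g$ inert, and the simpler unitor case, are identical. This is exactly the ``$2$-out-of-$3$ for identities'' step: in each pasting $\gamma=\beta\alpha$ defining a coherence cell of $M\circ F$, the hypotheses on $F$ and $M$ have already forced $\alpha$ and $\beta$ to be identities.

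Next I would handle the $1$- and $2$-cell cases. For a $\mathsf{p}$-$\mathscr{F}$-transformation $\alpha\colon M\Rightarrow N$, the whiskering $\alpha\ast F$ has naturality $2$-cell at an inert $i$ of $\mathcal{A}$ equal to $\alpha_{Fi}$; since $Fi$ is inert in $\mathcal{B}$ and $\alpha$ is strict on inerts, $\alpha_{Fi}$ is an identity, so $\alpha\ast F$ is strict on inerts. Whiskering a modification by $F$ visibly yields a modification. Together with the strict $2$-functoriality of whiskering this establishes the first claimed $2$-functor.

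For the second statement I would use that restriction to inert $1$-cells is compatible with composition: writing $F_{\mathsf{inert}}\colon\mathcal{A}_{\mathsf{inert}}\to\mathcal{B}_{\mathsf{inert}}$ and $M_{\mathsf{inert}}\colon\mathcal{B}_{\mathsf{inert}}\to\mathcal{K}_{\mathsf{inert}}$ for the strict $2$-functors obtained by restriction (cf.\ \cref{jrb-0037}), one has $(M\circ F)_{\mathsf{inert}}=M_{\mathsf{inert}}\circ F_{\mathsf{inert}}$, and hence, by functoriality of the coslice construction $a\downarrow(-)$ in both the category and the base object, the identity $a\downarrow(M\circ F)_{\mathsf{inert}}=\big(Fa\downarrow M_{\mathsf{inert}}\big)\circ\big(a\downarrow F_{\mathsf{inert}}\big)$. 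Now given a marked cone $S\colon\mathcal{J}\to a\downarrow\mathcal{A}_{\mathsf{inert}}$ of $\mathcal{A}$, the fact that $F$ is a pseudo-model says that $(a\downarrow F_{\mathsf{inert}})\circ S$ is a marked cone of $\mathcal{B}$ with vertex $Fa$; the fact that $M$ is a pseudo-model, applied to that marked cone, says that post-composing with $Fa\downarrow M_{\mathsf{inert}}$ yields a marked cone of $\mathcal{K}$; by the displayed identity this is $(a\downarrow(M\circ F)_{\mathsf{inert}})\circ S$, so $M\circ F$ satisfies the model condition. Hence whiskering with $F$ restricts to the second claimed $2$-functor. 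I do not expect a genuine obstacle here; the only thing to keep straight is which coherence $2$-cell of $M\circ F$ is trivialised by which hypothesis, and the role of inert-preservation of $F$ in making the strictness-on-inerts of $M$ applicable once $F$ has been applied.
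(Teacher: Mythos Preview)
Your proposal is correct and is precisely the unpacking of the paper's one-line proof; the paper merely says ``this follows straightforwardly by the 2-out-of-3 property for identities,'' and your argument makes explicit that each coherence cell of $M\circ F$ is a vertical composite of cells already forced to be identities by the strictness hypotheses on $F$ and $M$ together with inert-preservation. Your treatment of the model condition via $(M\circ F)_{\mathsf{inert}}=M_{\mathsf{inert}}\circ F_{\mathsf{inert}}$ is likewise what the paper leaves implicit.
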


\begin{example}[{The limit sketch for double categories}]\label{jrb-0029}
\par{}The opposite of the simplex category is given the structure of a (locally-discrete) \(\mathscr {F}\)-category by declaring its \emph{inerts} to be those 1-cells which preserve distances. The marked cones are given, as in ,
by the functors:
\begin{equation}[n] \downarrow _{\mathsf {inert}} \mathsf {el}  \hookrightarrow  [n] \downarrow  \Delta ^{\mathsf {op}}\end{equation}
for each \([n] \in  \Delta ^{\mathsf {op}}\), viewing \([n] \downarrow _{\mathsf {inert}} \mathsf {el}\) as a chordate, locally-discrete \(\mathscr {F}\)-category.
\par{}
We call this the \emph{limit sketch for double categories} because, as we'll see in \cref{djm-00IE}, pseudo-models for this \(\mathscr {F}\)-sketch in \(\mathsf {Cat}\) are indeed (pseudo) double categories. More generally, pseudo-models for this \(\mathscr {F}\)-sketch in an arbitrary 2-category \(\mathcal {E}\) with pullbacks provides a satisfactory notion of internal \emph{pseudo-category}.
\end{example}

\begin{remark}[{\(\mathscr {F}\)-sketches from algebraic patterns}]\label{djm-00HQ}
\par{}
This \(\mathscr {F}\)-sketch structure on \(\Delta ^{\mathsf {op}}\) is \hyperref[jrb-001V]{again} a special case of a more general construction of an \(\mathscr {F}\)-sketch structure for an \emph{algebraic pattern} (\cite{chu-2019-homotopycoherent}). An algebraic pattern \(\left ( \mathsf {P},{\mathsf {P}}_{\mathsf {inert}}, \mathsf {P}_{\mathsf {el}} \right )\) can be equipped with the structure of an \(\mathscr {F}\)-sketch as follows:
	\begin{enumerate}\item{}
			The underlying 2-category is \(\mathsf {P}\), viewed as a locally-discrete 2-category.
		
		\item{}
			The inert morphisms are those which are inert in the algebraic pattern sense
		
    \item{}The marked cones are the inclusions \(p {\downarrow }_{\mathsf {inert}} \mathsf {P}_{\mathsf {el}} \hookrightarrow  p \downarrow  {\mathsf {P}}_{\mathsf {inert}}\) for each \(p \in  \mathsf {P}\). These are precisely the cones which encode the \emph{Segal condition} for functors out of the algebraic pattern.
	\end{enumerate}\end{remark}

\begin{theorem}[{pseudo-models of \(\Delta ^{\mathsf {op}}\) are pseudo-double categories}]\label{djm-00IE}
\par{}
The 2-category \(\mathcal {P}\mathsf {sMod}^{\mathsf {p}}_{\mathscr {F}}(\Delta ^{\mathsf {op}}, \mathcal {C}\mathsf {at})\) of \hyperref[djm-00I5]{pseudo-models} of the \(\mathscr {F}\)-sketch \(\Delta ^{\mathsf {op}}\) is 2-equivalent to the 2-category \(\mathcal {D}\mathsf {bl}\) of \emph{unbiased (pseudo) double categories}, in the sense of Definition 5.2.1 of \cite{leinster-2004-higher}, with pseudo-functors and tight transformations as the 1-cells and 2-cells.

\begin{proof}[{partial proof of \cref{djm-00IE}.}]\label{djm-00IE-proof}
\par{}
Our proof will proceed as follows:
\begin{enumerate}\item{}we define a map on objects \(A : \mathcal {P}\mathsf {sMod}^{\mathsf {p}}_{\mathscr {F}}(\Delta ^{\mathsf {op}}, \mathcal {C}\mathsf {at}) \to  \mathcal {D}\mathsf {bl}\)
  \item{}we define another map on objects \(B : \mathcal {D}\mathsf {bl} \to  \mathcal {P}\mathsf {sMod}^{\mathsf {p}}_{\mathscr {F}}(\Delta ^{\mathsf {op}}, \mathcal {C}\mathsf {at})\) and observe that \(\left ( AB \right ) M \cong  M\) for any pseudo double category \(M\) (and thus that \(A\) is essentially surjective)
  \item{}we extend the map on objects \(A\) into a 2-functor and observe that it's 2-fully-faithful.\end{enumerate}
Each stage of this proof involves some fairly laborious checking of coherence conditions, some details of which we omit where we feel that both the method and outcome of fully elaborating the proof are unenlightening. The central theme of this proof is that the action of \(B(\mathbb {D})\) on objects, 1-cells and 2-cells of \(\Delta ^{\mathsf {op}}\) can be specified with very little data, given the stringency of the pseudo-model condition on pseudo-\(\mathscr {F}\)-functors and the active–inert factorisation system on \(\Delta \). Here we give only the first stage of the proof — the map on objects \(A : \mathcal {P}\mathsf {sMod}^{\mathsf {p}}_{\mathscr {F}}(\Delta ^{\mathsf {op}}, \mathcal {C}\mathsf {at}) \to  \mathcal {D}\mathsf {bl}\) — which hopefully suffices to demonstrate the compatibility between the data involved in the objects of each 2-category. The proof is given in full in the appendix.
\par{}
We will define the mapping from pseudo-models of \(\Delta ^{\mathsf {op}}\) to Leinster's notion of pseudo double categories by explicitly exhibiting the structure and conditions specified in Definition 5.2.1 of \cite{leinster-2004-higher}.
\par{}
Let \(M : \Delta ^{\mathsf {op}} \to  \mathcal {C}\mathsf {at}\) be a pseudo-model of \(\Delta ^{\mathsf {op}}\), with \(M_n\) denoting the image under \(M\) of \([n] \in  \Delta \). We associate to \(M\) the pseudo double category defined as follows:
\begin{enumerate}\item{}The \emph{diagram} in \(\mathcal {C}\mathsf {at}\) is given by:

  \begin{center}
\begin {tikzcd}[row sep=tiny]
	& {M_1} \\
	{M_0} && {M_0}
	\arrow ["{M{\mathsf {d}_{1}}}"', from=1-2, to=2-1]
	\arrow ["{M{\mathsf {d}_{0}}}", from=1-2, to=2-3]
\end {tikzcd}
\end{center}

The model condition (corresponding to the Segal condition for the algebraic pattern \(\Delta ^{\mathsf {op}}\)) requires that \(M_n\) be isomorphic to the \(n\)-fold composite, denoted
for short by \(M_1^{(n)}\), of this span. We denote this isomorphism by \(\theta _n : M_1^{(n)} \to  M_n\).
  
  \item{}For each \(n \geq  0\), we define the external composition of \(n\) loose arrows, \(\mathsf {comp}_n : M_1^{(n)} \to  M_1\), to be:
  \begin{equation}M_1^{(n)} \xrightarrow {\theta _n} M_n \xrightarrow {M \mathsf {a}_n} M_1\end{equation}
  where \(\mathsf {a}_n : [1] \to  [n]\) is the unique such (active) map in \(\Delta ^{\mathsf {op}}\). The maps \(\mathsf {comp}_n\) commute with the projections down to \(M_0\) by the fact that each triangle in the following diagram commutes, where \(\pi _j : M_1^{(n)} \to  M_1\) is the canonical projection onto the \(j^{\text {th}}\) component:

  \begin{center}
\begin {tikzcd}
	& {M_1^{(n)}} \\
	{M_0} & {M_n} & {M_0} \\
	& {M_1}
	\arrow ["{M\mathsf {d}_{1}\;\pi _1}"', from=1-2, to=2-1]
	\arrow ["{\theta _n}", from=1-2, to=2-2]
	\arrow ["{M\mathsf {d}_{0}\;\pi _n}", from=1-2, to=2-3]
	\arrow ["{M\mathsf {d}_{n}}"', from=2-2, to=2-1]
	\arrow ["{M\mathsf {d}_{0}}", from=2-2, to=2-3]
	\arrow ["{M\mathsf {a}_n}"{pos=0.3}, from=2-2, to=3-2]
	\arrow ["{M\mathsf {d}_{1}}", from=3-2, to=2-1]
	\arrow ["{M\mathsf {d}_{0}}"', from=3-2, to=2-3]
\end {tikzcd}
\end{center}
\item{}The \emph{double-sequence maps} are invertible 2-cells of the following form (where \(\diamond \) indicates composition of spans):

  \begin{center}
\begin {tikzcd}
	{M^{(k_1 + \dots  + k_n)}_1} & {M_1^{(k_1)} \diamond \dots  \diamond  M_1^{(k_n)}} & [+4em] {M_1^{(n)}} & [+1em] {M_1}
	\arrow ["\cong ", from=1-1, to=1-2]
	\arrow [""{name=0, anchor=center, inner sep=0},"{\mathsf {comp}_{k_1 + \dots  + k_n}}"', popdown=2em, from=1-1, to=1-4]
	\arrow [""{name=1, anchor=center, inner sep=0},"{\mathsf {comp}_{k_1} \diamond  \dots  \diamond  \mathsf {comp}_{k_n}}", from=1-2, to=1-3]
\arrow ["{\mathsf {comp}_n}", from=1-3, to=1-4]
\arrow ["\gamma ", Rightarrow, shorten <=8pt, shorten >=8pt, from=0|-1, to=0]
\end {tikzcd}
\end{center}

To exhibit such 2-cells, we first observe that the following diagram commutes:

  \begin{center}
\begin {tikzcd}
	{M^{(k_1 + \dots  + k_n)}_1} & {M_1^{(k_1)} \diamond \dots  \diamond  M_1^{(k_n)}} & [+4em] {M_1^{(n)}} \\
	{M_{k_1 + \dots  + k_n}} && {M_n}
	\arrow ["\cong ", from=1-1, to=1-2]
	\arrow ["\theta "', from=1-1, to=2-1]
	\arrow ["{\mathsf {comp}_{k_1} \diamond  \dots  \mathsf {comp}_{k_n}}", from=1-2, to=1-3]
	\arrow ["\theta "', from=1-3, to=2-3]
	\arrow [""{name=0, anchor=center, inner sep=0}, "{M(\mathsf {a}_{k_1}+\dots  + \mathsf {a}_{k_n})}"', from=2-1, to=2-3]
	\arrow ["\circlearrowright "{marking, allow upside down}, draw=none, from=1-2, to=0]
\end {tikzcd}
\end{center}

because it commutes under post-composition by the various projections \(M \mathsf {i}_j : M_n \to  M_1\) of the universal cone under \(M_n\) where \(\mathsf {i}_j : [1] \to  [n]\) is the inert map with \(\mathsf {i}_j (0) = j-1\) (recall that more generally \(\mathsf {i}_j : [m] \to  [n]\) denotes the inert map with the same property). We observe this is true as follows:
\begin{equation}\begin {aligned}
M \mathsf {i}_j \; M(\mathsf {a}_{k_1}+\dots  + \mathsf {a}_{k_n}) \; \theta 
&= \; M \mathsf {a}_{k_j} \; M \mathsf {i}_{k_1 + \dots  + k_{j-1}} \; \theta  \\
&= \; M \mathsf {a}_{k_j} \; \theta  \; M^{(\mathsf {i}_{k_1 + \dots  + k_{j-1}})}_1 \\
&= \; \mathsf {comp}_{k_j}\; M^{(\mathsf {i}_{k_1 + \dots  + k_{j-1}})}_1 \\
&= \; \pi _j \; \left ( \mathsf {comp}_{k_1} \diamond  \dots  \diamond  \mathsf {comp}_{k_n} \right ) \\
&= \; M \mathsf {i}_j \; \theta  \; \left ( \mathsf {comp}_{k_1} \diamond  \dots  \diamond  \mathsf {comp}_{k_n} \right ) \\
\end {aligned}\end{equation}
Consequently, the following diagram commutes:

  \begin{center}
\begin {tikzcd}
	{M^{(k_1 + \dots  + k_n)}_1} & {M_1^{(k_1)} \diamond \dots  \diamond  M_1^{(k_n)}} & [+4em] {M_1^{(n)}} & [+1em]{M_1} \\
	{M_{k_1 + \dots  + k_n}} && {M_n}
	\arrow ["\cong ", from=1-1, to=1-2]
	\arrow ["\theta "', from=1-1, to=2-1]
	\arrow ["{\mathsf {comp}_{k_1} \diamond  \dots  \diamond  \mathsf {comp}_{k_n}}", from=1-2, to=1-3]
	\arrow ["{\mathsf {comp}_n}", from=1-3, to=1-4]
	\arrow ["\theta "', from=1-3, to=2-3]
	\arrow [""{name=0, anchor=center, inner sep=0}, "{M(\mathsf {a}_{k_1}+\dots  + \mathsf {a}_{k_n})}"', from=2-1, to=2-3]
	\arrow ["{M\mathsf {a}_n}"', rounded corners, to path={ -| (\tikztotarget )[pos=0.25]\tikztonodes }, from=2-3, to=1-4]
	\arrow ["\circlearrowright "{marking, allow upside down}, draw=none, from=1-2, to=0]
\end {tikzcd}
\end{center}

This means that a 2-cell \(\gamma \) of the form shown above is equivalent to a 2-cell of the form:
\begin{equation}M\mathsf {a}_n \;M(\mathsf {a}_{k_1}+\dots  + \mathsf {a}_{k_n}) \Rightarrow   M \left ( \mathsf {a}_{k_1 + \dots  + k_n} \right )\end{equation} which we obtain as the laxator for \(M\) witnessing the composition:
\begin{equation}(\mathsf {a}_{k_1}+\dots  + \mathsf {a}_{k_n})\; \mathsf {a}_n = \mathsf {a}_{k_1 + \dots  + k_n}\end{equation}
This 2-cell is invertible, as required. To see that these 2-cells are moreover \emph{globular}, i.e. vertical with respect to \(M\mathsf {d}_{0}, M\mathsf {d}_{1} : M_1 \to  M_0\), we appeal to the associative property of \(M\) which allows us to perform the following manipulation of string diagrams for the 2-cell given by post-whiskering the pseudo compositor \(\mu \) with \(M\mathsf {d}_{0}\):
\begin{equation*}
    \tikzfig[1.2]{string-diagram}
\end{equation*}
Since any compositor with an inert input (indicated in bold and coloured blue) is an identity, the entire 2-cell must be an identity as required. The same argument applies for post-whiskering by \(M \mathsf {d}_{1}\).

\item{}The \(\iota \) maps are given by an invertible 2-cell of the form:

  \begin{center}
\begin {tikzcd}
	{M_1} & |[alias=middle]| {M_1^{(1)}} & {M_1}
	\arrow ["{\theta _1^{-1}}", from=1-1, to=1-2]
	\arrow [""{name=1, anchor=center, inner sep=0},equals, popdown=2em, from=1-1, to=1-3]
	\arrow ["{\mathsf {comp}_1}", from=1-2, to=1-3]
	\arrow ["{\iota }"{outer sep=3pt}, Rightarrow, shorten <=4pt, shorten >=4pt,from=middle, to=1]
  \end {tikzcd}
\end{center}

which we take to be the identity, noting that \(\mathsf {comp}_1\;\theta _1^{-1} = M1_{[1]}\;\theta _1 \;\theta _1^{-1} = 1_{M_1}\) by the fact that \hyperref[djm-00I2]{\(M\) is strictly unital}.
\end{enumerate}
The associativity and identity coherence axioms for \(\gamma \) and \(\iota \) correspond directly to those for the pseudo-\(\mathscr {F}\)-functor \(M\).
\end{proof}
\end{theorem}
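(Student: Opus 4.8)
The plan is to complete the three-step strategy announced in the partial proof. The object map $A$ is already in hand, so what remains is: (2) to build an object map $B\colon \mathcal{D}\mathsf{bl} \to \mathcal{P}\mathsf{sMod}^{\mathsf{p}}_{\mathscr{F}}(\Delta^{\mathsf{op}}, \mathcal{C}\mathsf{at})$ with $AB \cong \mathrm{id}_{\mathcal{D}\mathsf{bl}}$, yielding essential surjectivity of $A$; and (3) to promote $A$ to a 2-functor and show it is bijective on 2-cells and an isomorphism on each hom-category, i.e.\ 2-fully-faithful. Essential surjectivity together with 2-fully-faithfulness then gives the asserted 2-equivalence, and there is no need to extend $B$ itself to a 2-functor.

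For step (2): given an unbiased pseudo double category $\mathbb{D}$ with object-and-tight-arrow category $D_0$, loose-arrow-and-square category $D_1$, source/target $D_1 \rightrightarrows D_0$, and composition functors $\mathsf{comp}_n$, I would set $B(\mathbb{D})([0]) = D_0$, $B(\mathbb{D})([1]) = D_1$, and $B(\mathbb{D})([n]) = D_1^{(n)}$, the literal $n$-fold pullback; then the Segal cones of \cref{jrb-0029} are sent to genuine limit cones, so the pseudo-model condition of \cref{djm-00I5} holds by fiat. The value of $B(\mathbb{D})$ on an arbitrary morphism of $\Delta^{\mathsf{op}}$ is essentially forced: such a morphism factors as an inert followed by an active, the inert part must be sent to the canonical projection between iterated pullbacks (strictly, so that $B(\mathbb{D})$ is strict on inerts), and the active part is pinned down by the $\mathsf{comp}_n$ via the isomorphism $\Delta^{\mathsf{op}}_{\mathsf{active}} \cong \Delta_+$ of \cref{jrb-0032} --- an active map, written as an ordinal sum of the $\mathsf{a}_{k_i}$, goes to the corresponding $\diamond$-composite of the $\mathsf{comp}_{k_i}$ postcomposed with $\mathsf{comp}_n$, exactly as in the commuting diagrams already displayed for $A$. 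The compositors of $B(\mathbb{D})$ with an inert leg are identities (a projection composed with anything is again a projection, and this factorisation is strictly functorial there), while the remaining compositors --- indexed by composable pairs of active maps --- are assembled from the unbiased coherence isomorphisms $\gamma$ and $\iota$ of $\mathbb{D}$, and the pseudofunctor coherence laws for $B(\mathbb{D})$ translate, under the $\Delta_+$ dictionary, precisely into Leinster's associativity and unit axioms. Finally, $A$ applied to $B(\mathbb{D})$ returns $\mathbb{D}$ on the nose (the $\theta_n$ are identities), so $AB \cong \mathrm{id}$.

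For step (3): a $\mathsf{p}$-$\mathscr{F}$-transformation $\alpha\colon M \Rightarrow N$ consists of functors $\alpha_{[n]}\colon M_n \to N_n$ with pseudo-naturality squares that are identities on inert maps; restricting to $[0]$ and $[1]$ gives functors between the object and loose-arrow categories commuting strictly with source and target, and the pseudo-naturality square at $\mathsf{a}_n$ supplies exactly the comparison 2-cell that a pseudo double functor must carry for $n$-ary composition. I would check that the transformation axioms plus strictness on inerts cut this package down to precisely a pseudo double functor $AM \to AN$, and conversely that every pseudo double functor extends uniquely; modifications unwind the same way to tight transformations. Hence $A$ is bijective on 2-cells and induces an isomorphism of hom-categories $\mathcal{P}\mathsf{sMod}^{\mathsf{p}}_{\mathscr{F}}(M, N) \xrightarrow{\ \sim\ } \mathcal{D}\mathsf{bl}(AM, AN)$, so $A$ is 2-fully-faithful. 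I expect the main obstacle to be step (2): verifying that the value of $B(\mathbb{D})$ on active maps is well-defined independently of how such a map is decomposed as an ordinal sum, and that the ensuing associativity and unit constraints for the compositors match Leinster's coherence axioms under the active/$\Delta_+$ translation. A secondary obstacle is the bookkeeping in step (3) --- confirming that strictness on inerts rigidifies a pseudo-natural family indexed by all of $\Delta^{\mathsf{op}}$ down to the finite data of a pseudo double functor with nothing lost or gained, which requires care with the pseudo-naturality hexagons at composites of active maps. Both are laborious but mechanical, as already flagged for the first stage, and the full verification belongs in the appendix.
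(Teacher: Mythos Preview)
Your plan matches the paper's appendix proof closely: $B$ sends $[n]$ to the iterated pullback $D_1^{(n)}$, inerts to projections, and an active $\mathsf{a}_{k_1}+\cdots+\mathsf{a}_{k_n}:[n]\to[m]$ to the $\diamond$-composite $\mathsf{comp}_{k_1}\diamond\cdots\diamond\mathsf{comp}_{k_n}$ alone (not postcomposed with $\mathsf{comp}_n$ --- that would land in $M_1$, not $M_n$), with compositors for pairs of actives built from $\gamma$; step (3) proceeds exactly as you outline, by restriction to $[0],[1]$ and the pseudo-naturality cells at the $\mathsf{a}_n$. One subtlety the paper handles that you should watch for: $AB(\mathbb{D})$ is not $\mathbb{D}$ on the nose but rather its \emph{unital} isomorph (where $\iota$ is the identity), because the construction of $B$ normalises by replacing $\mathsf{comp}_1$ with the identity, and this forces one to glue $\iota$-cells onto $\gamma$ when assembling the compositors in the presence of some $k_i = 1$.
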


\begin{remark}[{Lax model morphisms are lax double functors}]\label{jrb-0023}
\par{}
Tracing through the action on 1-cells in the proof of \cref{djm-00IE} reveals that the only dependence on the naturality cells being invertible comes from requiring that the corresponding \(F \mathsf {comp}_n\) cells be invertible as well. Weakening the invertibility conditions on both in each possible direction we instead obtain the 2-equivalences:
\begin{equation}
\mathcal {P}\mathsf {sMod}^{\mathsf {lx}}_{\mathscr {F}}(\Delta ^{\mathsf {op}}, \mathcal {C}\mathsf {at}) \simeq  \mathcal {D}\mathsf {bl}^{\mathsf {lx}}
\qquad 
\mathcal {P}\mathsf {sMod}^{\mathsf {cx}}_{\mathscr {F}}(\Delta ^{\mathsf {op}}, \mathcal {C}\mathsf {at}) \simeq  \mathcal {D}\mathsf {bl}^{\mathsf {cx}}
\end{equation}\end{remark}

\begin{remark}[{The biased sketch for pseudo-categories}]\label{jrb-003J}
\par{}
Given that pseudo-models of the sketch for categories in \(\mathcal {C}\mathsf {at}\) are pseudo double categories, one might suppose that pseudo-models of the \emph{biased} sketch for categories in \(\mathcal {C}\mathsf {at}\) are \emph{biased} pseudo double categories. This is, however, not true. The reason is essentially that the pentagon law for the associators of a pseudo double category involves an equality between 2-cells between morphisms which represent the various ways of composing \emph{four} composable arrows, whereas a pseudo-model for \(\Delta _{\leq  3}^{\mathsf {op}}\) expresses at most ternary composites. More concretely, given four composable loose morphisms:
\begin{equation}a  \mathrel {\overset {f}{\mathrel {\mkern 3mu\vcenter {\hbox {$\shortmid $}}\mkern -10mu{\to }}}}  b  \mathrel {\overset {g}{\mathrel {\mkern 3mu\vcenter {\hbox {$\shortmid $}}\mkern -10mu{\to }}}}  c  \mathrel {\overset {h}{\mathrel {\mkern 3mu\vcenter {\hbox {$\shortmid $}}\mkern -10mu{\to }}}}  d \mathrel {\overset {k}{\mathrel {\mkern 3mu\vcenter {\hbox {$\shortmid $}}\mkern -10mu{\to }}}} e\end{equation}
the pentagon law says that the following composites of associators are equal:

  \begin{center}
\begin {tikzcd}
	{((kh)g)f} & {(k(hg))f} & {k((hg)f)} \\
	{(kh)(gf)} && {k(h(gf))}
	\arrow ["{\alpha \,f}", from=1-1, to=1-2]
	\arrow ["\alpha "', from=1-1, to=2-1]
	\arrow ["\alpha ", from=1-2, to=1-3]
	\arrow ["{k \,\alpha }", from=1-3, to=2-3]
	\arrow [""{name=0, anchor=center, inner sep=0}, "\alpha "', from=2-1, to=2-3]
	\arrow ["\circlearrowright "{marking, allow upside down}, draw=none, from=1-2, to=0]
\end {tikzcd}
\end{center}

Viewed from the pseudo-model perspective, a path of four composable loose morphisms is an element of \(M_4\), and the start and end of the two paths in the above diagram are given by its image under the following two maps from \(M_4\) to \(M_1\):

  \begin{center}
\begin {tikzcd}[column sep=5em]
{M_4} & {M_1}
	\arrow ["{M_{\mathsf {d}_{1}}\,M_{\mathsf {d}_{2}}\,M_{\mathsf {d}_{3}}}"', shift right=1, from=1-1, to=1-2]
	\arrow ["{M_{\mathsf {d}_{1}}\,M_{\mathsf {d}_{1}}\,M_{\mathsf {d}_{1}}}", shift left=1, from=1-1, to=1-2]
\end {tikzcd}
\end{center}

The condition that composite along the two paths of associators are equal corresponds to the coherence property of pseudofunctors that all ways of defining unbiased compositors out of the binary compositor and unitor are equal.
\par{}
We note, however, that the data and coherence conditions for a pseudo-model of \(\Delta _{\leq  4}\) are moreover \emph{sufficient} to recover the usual biased notion of internal pseudo-category, as none of the coherence conditions for a pseudo-category involve equalities between morphisms for operations of higher arity than four.
That Leinster's unbiased notion of \emph{weak double category} is equivalent to the ordinary biased notion appears to be folklore, though we observe that the proof essentially follows by a variation of the proof for the equivalence between the biased and unbiased notions of monoidal category provided in Appendix B of \cite{leinster-2004-higher}. We shall therefore refer to the restriction of the \(\mathscr {F}\)-sketch structure on \(\Delta  ^{\mathsf {op}}\) to \(\Delta _{\leq  4}^{\mathsf {op}}\) as the \textbf{biased sketch for pseudo-categories}.
\end{remark}
\subsection{The slice theorem for pseudo-models}\label{djm-00KT}\par{}
The \emph{slice theorem} for \hyperref[djm-00I5]{pseudo-models} of \hyperref[djm-00IA]{\(\mathscr {F}\)-sketches}, generalising the one given for limit sketches in \cref{jrb-001X}, is as follows.

\begin{theorem}[{The slice theorem for \(\mathscr {F}\)-sketches}]\label{djm-00IG}
\par{}
	Let \(\mathsf {A}\) be a locally-discrete \hyperref[djm-00IA]{\(\mathscr {F}\)-sketch} whose marked cones are all inert, and let \(M : \mathsf {A} \to  \mathsf {Set}\) be a \emph{strict}, set-valued model of \(\mathsf {A}\). Let \(\mathcal {E}\mathsf {l}(M)\) be 2-category of \emph{elements} of \(M\) and equip this 2-category with the following structure of an \(\mathscr {F}\)-sketch induced by the projection \(\pi _M : \mathcal {E}\mathsf {l}(M) \to  \mathsf {A}\):  
	\begin{enumerate}\item{}
			inert 1-cells  \(\mathcal {E}\mathsf {l}(M)\) are those whose underlying 1-cell in \(\mathsf {A}\) is inert.
		
		\item{}
			marked cones in \(\mathcal {E}\mathsf {l}(M)\) are those mapped by \(\pi _M\) to marked cones in \(\mathsf {A}\).
		\end{enumerate}\par{}
	 then condsidering \(M\) as a (discrete) \hyperref[djm-00I5]{pseudo-model} in \(\mathcal {C}\mathsf {at}\), we have an equivalence: 
	\begin{equation}
		\mathcal {P}\mathsf {sMod}^{\mathsf {p}}_{\mathscr {F}} \left (\mathsf {A},\mathcal {C}\mathsf {at}\right ) \downarrow  M \simeq  \mathcal {P}\mathsf {sMod}^{\mathsf {p}}_{\mathscr {F}} \left (\mathcal {E}\mathsf {l}(M),\mathcal {C}\mathsf {at}\right ).
	\end{equation}\end{theorem}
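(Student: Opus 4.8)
The plan is to mirror the proof of \cref{jrb-001X} one dimension up, with a notion of \emph{model opfibration} over a locally-discrete $\mathscr{F}$-sketch playing the role that discrete opfibrations played there. Concretely, I would: (a) establish a 2-equivalence $\mathcal{P}\mathsf{sMod}^{\mathsf{p}}_{\mathscr{F}}(\mathsf{B},\mathcal{C}\mathsf{at}) \simeq \mathsf{ModFib}(\mathsf{B})$ for every locally-discrete $\mathscr{F}$-sketch $\mathsf{B}$ with inert marked cones, via a Grothendieck construction; (b) show model opfibrations are closed under composition and left-cancellative; (c) use (b) to repackage the slice $\mathsf{ModFib}(\mathsf{A}) \downarrow \pi_M$ as $\mathsf{ModFib}(\mathcal{E}\mathsf{l}(M))$; and (d) compose everything, applying (a) both to $\mathsf{A}$ and to $\mathcal{E}\mathsf{l}(M)$ --- the latter being again locally discrete with inert marked cones, so the hypotheses of the theorem are met.

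For (a), I would define a \emph{model opfibration} $p\colon\mathcal{E}\to\mathsf{B}$ to be an opfibration equipped with a normal cleavage (identities lift to identities) whose chosen cocartesian lifts of inerts compose strictly, on either side, with arbitrary chosen cocartesian lifts --- the fibrational transcription of the clauses of \cref{djm-00I2} forcing $F_0$ and every $F_2$ with an inert leg to be an identity --- and satisfying the \emph{Segal condition}: for each marked cone $S\colon\mathcal{J}\to b\downarrow\mathsf{B}_{\mathsf{inert}}$ the canonical comparison from the fibre $\mathcal{E}_b$ to the $\mathcal{C}\mathsf{at}$-limit of the fibres $\mathcal{E}_{S(-)}$ along the split reindexings is an isomorphism. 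I expect the classical Grothendieck correspondence for (op)fibrations over a 1-category to upgrade to a 2-equivalence with $\mathcal{P}\mathsf{sMod}^{\mathsf{p}}_{\mathscr{F}}(\mathsf{B},\mathcal{C}\mathsf{at})$: pseudofunctors strict on inerts correspond to the cleavages described, $\mathsf{p}$-$\mathscr{F}$-transformations to cartesian functors over $\mathsf{B}$ that are strict on the inert cleavage, and modifications to vertical transformations. Applied to $M$ --- a \emph{strict} $\mathsf{Set}$-valued model, hence a discrete pseudo-model in $\mathcal{C}\mathsf{at}$ --- the associated model opfibration is exactly the discrete opfibration $\pi_M\colon\mathcal{E}\mathsf{l}(M)\to\mathsf{A}$, which is split and meets the Segal condition on the nose.

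For (b) and (c): opfibrations compose and are left-cancellative, and the normal-cleavage-plus-inert clauses are preserved under both operations by direct inspection; the Segal clause survives because, for the $\mathscr{F}$-sketch structure induced on the total space, every marked cone of $\mathcal{E}$ lies over one of $\mathsf{B}$, so for composition the limit of fibres of $pq$ factors as an iterated limit, and for left-cancellation one cancels the comparison square for $p$ out of that for $pq$. Given this, an object of $\mathsf{ModFib}(\mathsf{A})\downarrow\pi_M$ is a model opfibration $\mathcal{C}\to\mathsf{A}$ together with a morphism to $\pi_M$ over $\mathsf{A}$; left-cancellation makes that morphism itself a model opfibration $\mathcal{C}\to\mathcal{E}\mathsf{l}(M)$, and closure under composition supplies the converse passage, so $\mathsf{ModFib}(\mathsf{A})\downarrow\pi_M$ is isomorphic to $\mathsf{ModFib}(\mathcal{E}\mathsf{l}(M))$ as a 2-category (a pure repackaging, with no choices, exactly as in \cref{jrb-001X}). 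The same analysis on hom-categories shows this repackaging is 2-functorial.

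Chaining: $\mathcal{P}\mathsf{sMod}^{\mathsf{p}}_{\mathscr{F}}(\mathsf{A},\mathcal{C}\mathsf{at})\downarrow M \simeq \mathsf{ModFib}(\mathsf{A})\downarrow\pi_M \cong \mathsf{ModFib}(\mathcal{E}\mathsf{l}(M)) \simeq \mathcal{P}\mathsf{sMod}^{\mathsf{p}}_{\mathscr{F}}(\mathcal{E}\mathsf{l}(M),\mathcal{C}\mathsf{at})$, which is the claim. I expect the main obstacle to be step (a): isolating the definition of model opfibration so that it faithfully encodes \emph{both} the strict-on-inerts conditions of \cref{djm-00I2} \emph{and} the pseudo-model limit condition, and then verifying that the Grothendieck correspondence is a genuine 2-equivalence at the level of 1-cells and 2-cells --- in particular that $\mathsf{p}$-$\mathscr{F}$-transformations, rather than lax or colax ones, are precisely the cartesian functors that are strict on the inert cleavage. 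A minor additional point will be a size/accessibility hypothesis on $M$, exactly of the kind implicit in the copresheaf fact underlying \cref{jrb-001X}; everything after (a) is bookkeeping with limits and (op)fibrations.
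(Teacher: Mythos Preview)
Your overall strategy matches the paper's almost exactly: translate to a fibrational picture via a Grothendieck-type equivalence, then push the slice through using composition and cancellation properties of model opfibrations. The paper carries this out by first isolating $\mathscr{F}$-opfibrations (\cref{jrb-000W}) and proving the Grothendieck equivalence (\cref{jrb-000X}), then defining model opfibrations (\cref{jrb-001P}) and characterising them by a cone-lifting property (\cref{jrb-001F}, \cref{jrb-001G}) rather than your fibrewise Segal condition --- but these are equivalent reformulations of the same idea.

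There is, however, a genuine error in your step (b). You assert that ``opfibrations compose and are left-cancellative'', and use this to conclude that a morphism of model opfibrations into $\pi_M$ is itself a model opfibration over $\mathcal{E}\mathsf{l}(M)$. Opfibrations are \emph{not} left-cancellable in general: take $g\colon B\to\ast$ (trivially an opfibration) and any $f\colon A\to B$; then $gf$ is an opfibration but $f$ need not be. The paper makes this point explicitly (\cref{jrb-002D}) and shows that what actually holds is cancellation against a \emph{discrete} opfibration: \cref{jrb-0018} and \cref{jrb-0019} establish that any map of $\mathscr{F}$-opfibrations into a discrete one is itself an $\mathscr{F}$-opfibration, and \cref{jrb-001I} upgrades this to model opfibrations. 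The hypothesis that $M$ is $\mathsf{Set}$-valued is precisely what makes $\pi_M$ discrete, and this discreteness is load-bearing --- it is not a size or accessibility technicality as you suggest at the end. Your argument survives once you replace the false general cancellation claim with the correct discrete version, but as written the plan for (b) would fail at the first attempt to prove it.
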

\par{}
	Taking \(\mathcal {A} = \Delta ^{\mathsf {op}}\) and \(M = \Delta  [1]\) in the above theorem will yield the equivalence
	\begin{equation}
		\mathcal {P}\mathsf {sMod}^{\mathsf {p}}_{\mathscr {F}} \left (\Delta ^{\mathsf {op}},\mathcal {C}\mathsf {at}\right ) \downarrow  \Delta  [1] \simeq  \mathcal {P}\mathsf {sMod}^{\mathsf {p}}_{\mathscr {F}} \left (\left ( \Delta  \downarrow  [1] \right )^{\mathsf {op}},\mathcal {C}\mathsf {at}\right )
	\end{equation}
	Keeping in mind the equivalence \(\mathcal {P}\mathsf {sMod}^{\mathsf {p}}_{\mathscr {F}} \left (\Delta ^{\mathsf {op}},\mathcal {C}\mathsf {at}\right ) \simeq  \mathcal {D}\mathsf {bl}\) of \cref{djm-00IE} and noting that \(\Delta  [1] = \yo [1]\) corresponds to \(\mathbb {L}\mathsf {oose}\) under this equivalence, we observe that the 2-category on the left is equivalent to the 2-category of double barrels as described in \cref{djm-00JP-1}. Later, in \cref{jrb-003B}, we will observe that pseudo-models for \(\left ( \Delta  \downarrow  [1] \right )^{\mathsf {op}}\) provide an unbiased presentation for internal pseudo-bimodules, thus establishing the connection between double barrels and pseudo-bimodules in \(\mathcal {C}\mathsf {at}\).
  
\begin{remark}[{The local-discreteness condition}]\label{jrb-0039}
\par{}
A locally-discrete \(\mathscr {F}\)-category is one whose underlying 2-cells are all identities. The \(\mathscr {F}\)-sketches whose pseudo-models we are primarily interested in — the (biased and unbiased) sketches for pseudo-categories and pseudo-bimodules — are both of this form. Such an \(\mathscr {F}\)-sketch can equivalently be viewed as a 1-category with a given wide subcategory specifying the inerts. This is moreover equivalent to a category enriched in the cartesian-closed category \(\mathsf {Set}^\hookrightarrow \) of injective maps in \(\mathsf {Set}\). We work with the more general structure \(\mathscr {F}\)-sketches to more naturally express the property of being a pseudo-model and allow more flexible notions of sketch. We anticipate that this flexibility will allow us to exhibit the \emph{pseudo-model coclassifier} of an \(\mathscr {F}\)-sketch \(\mathcal {A}\) — i.e. another sketch \(\mathcal {A}'\) whose \emph{strict} models are equivalent to the pseudo-models of \(\mathcal {A}\). Even when \(\mathcal {A}\) is locally-discrete, its pseudo-model classifier will typically not be.
\end{remark}
\par{}
As noted in \cref{jrb-001S-proof}, \cref{djm-00IG} resembles the property that slices of (\(\mathsf {Set}\)-valued) presheaf categories are again presheaf categories, a convenient proof for which exploits the equivalence between presheaves and discrete fibrations. We can use a similar trick to prove \cref{djm-00IG} by establishing an equivalence between pseudo-\(\mathscr {F}\)-functors and “\(\mathscr {F}\)-opfibrations”. 
\subsubsection{\(\mathscr {F}\)-opfibrations}\label{jrb-000V}\par{}
A pseudo-\(\mathscr {F}\)-functor from a locally-discrete \(\mathscr {F}\)-category, \(\mathsf {A}\), into \(\mathcal {C}\mathsf {at}\) is \emph{a fortiori} a pseudo-functor from the underlying 1-category, \({\mathsf {A}}_{\lambda }\), into \(\mathcal {C}\mathsf {at}\). There is a well-known equivalence:
\begin{equation}
\mathcal {P}\mathsf {s}^{\mathsf {p}} \left ( {\mathsf {A}}_{\lambda },\mathcal {C}\mathsf {at} \right )
\simeq 
\mathcal {O}\mathsf {pFib} \left ( {\mathsf {A}}_{\lambda } \right )
\end{equation}
where \(\mathcal {O}\mathsf {pFib} \left ( {\mathsf {A}}_{\lambda } \right )\) is the 2-category with:
\begin{description}
\item[\textbf{0-cells:}] opfibrations over \(\mathsf {A}\)
  \item[\textbf{1-cells:}] opcartesian maps
  \item[\textbf{2-cells:}] vertical 2-cells between opcartesian maps\end{description} whose action on an object \(P : {\mathsf {A}}_{\lambda } \to  \mathcal {C}\mathsf {at}\) is given by taking its \emph{Grothendieck construction} \(\pi _P : \int {P} \to  \mathsf {A}\).
\par{}
Since \(\mathcal {P}\mathsf {s}^{\mathsf {p}}_{\mathscr {F}} \left (\mathsf {A},\mathcal {C}\mathsf {at}\right )\) is a locally-full sub-2-category of \(\mathcal {P}\mathsf {s}^{\mathsf {p}} \left ( {\mathsf {A}}_{\lambda }, \mathcal {C}\mathsf {at} \right )\), there exists some equivalent locally-full sub-2-category of “\(\mathscr {F}\)-opfibrations” in \(\mathcal {O}\mathsf {pFib} \left ( \mathsf {A} \right )\). This equivalence provides a convenient translation of \cref{djm-00IG} to a statement about \(\mathscr {F}\)-opfibrations. Before providing this statement, we give an explicit characterisation of those opfibrations which result from the Grothendieck construction of a pseudo-\(\mathscr {F}\)-functor, and those opcartesian morphisms between them which correspond to pseudo-\(\mathscr {F}\)-transformations.

\begin{definition}[{\(\mathscr {F}\)-opfibration}]\label{jrb-000W}
\par{}
An \textbf{\(\mathscr {F}\)-opfibration} a locally-discrete \(\mathscr {F}\)-category \(\mathsf {A}\) is a cloven opfibration \(p : \mathsf {E} \to  \mathsf {A}\) with two additional properties:
\begin{enumerate}\item{}for \(x \in  \mathsf {E}\), the opcartesian lift \(\underline {1_{px}}\) is equal to \(1_x\)
 
  \item{}for composable 1-cells in \(\mathsf {A}\) of the form:

  \begin{center}
\begin {tikzcd}
	px & b & a
	\arrow ["v", from=1-1, to=1-2]
	\arrow ["u", from=1-2, to=1-3]
\end {tikzcd}
\end{center}

if either \(u\) or \(v\) is inert then \(\underline {u} \left ( v_* x \right ) \;\underline {v}\left ( x \right ) = \underline {uv} \left ( x \right )\).\end{enumerate}
where \(\underline {u} \left ( x \right ) : u_*\left ( x \right ) \to  x\) denotes the chosen opcartesian lift of \(u\).
The domain of an \(\mathscr {F}\)-opfibration naturally inherits a (locally-discrete) \(\mathscr {F}\)-category structure by declaring the inerts to be \emph{chosen}-opcartesian lifts of inerts in \(\mathsf {A}\).
\end{definition}

\begin{definition}[{\(\mathscr {F}\)-opcartesian maps, \(\mathcal {O}\mathsf {pFib}_{\mathscr {F}}\left (\mathsf {A}\right )\)}]\label{jrb-000Y}
\par{}
Given two \(\mathscr {F}\)-opfibrations \(p : \mathsf {D} \to  \mathsf {A}\) and \(q : \mathsf {E} \to  \mathsf {A}\), we will say a opcartesian map \(f : \mathsf {D} \to  \mathsf {E}\) between them is additionally \textbf{\(\mathscr {F}\)-opcartesian} if it strictly preserves chosen lifts of inerts. I.e. if for any inert map \(u : px \to  a\) in \(\mathsf {A}\) we have:
\begin{equation}f \left ( x  \xrightarrow {\underline {u}x} u_*x \right ) = fx  \xrightarrow {\underline {u}(fx)} u_*(fx)\end{equation}
Note that this is equivalent to requiring that the opcartesian map \(f : \mathsf {D} \to  \mathsf {E}\) be an \(\mathscr {F}\)-functor with respect to the induced \(\mathscr {F}\)-category structures on \(\mathsf {D}\) and \(\mathsf {E}\) as described in \cref{jrb-000W}.
\par{}
We let \(\mathcal {O}\mathsf {pFib}_{\mathscr {F}}\left (\mathsf {A}\right )\) denote the 2-category of \(\mathscr {F}\)-opfibrations, \(\mathscr {F}\)-opcartesian maps and vertical transformations over a locally discrete \(\mathscr {F}\)-category \(\mathsf {A}\).
\end{definition}

\begin{lemma}[{\(\mathcal {O}\mathsf {pFib}_{\mathscr {F}}\left (\mathsf {A}\right )\) is equivalent to \(\mathcal {P}\mathsf {s}^{\mathsf {p}}_{\mathscr {F}} \left (\mathsf {A},\mathcal {C}\mathsf {at}\right )\)}]\label{jrb-000X}
\par{}
For \(\mathsf {A}\) a , the Grothendieck construction:
\begin{equation}
\mathcal {P}\mathsf {s}^{\mathsf {p}} \left ( {\mathsf {A}}_{\lambda },\mathcal {C}\mathsf {at} \right )
\xrightarrow {\int }
\mathcal {O}\mathsf {pFib} \left ( {\mathsf {A}}_{\lambda } \right )
\end{equation}
restricts to an equivalence:
\begin{equation}\mathcal {P}\mathsf {s}^{\mathsf {p}}_{\mathscr {F}} \left (\mathsf {A},\mathcal {C}\mathsf {at}\right ) \xrightarrow {\int }
\mathcal {O}\mathsf {pFib}_{\mathscr {F}}\left (\mathsf {A}\right )
\end{equation}
\begin{proof}[{proof of \cref{jrb-000X}.}]\label{jrb-000X-proof}
\par{}
It suffices to show that \(\mathcal {O}\mathsf {pFib}_{\mathscr {F}}\left (\mathsf {A}\right )\) corresponds to the essential image of \(\mathcal {P}\mathsf {s}^{\mathsf {p}}_{\mathscr {F}} \left (\mathsf {A},\mathcal {C}\mathsf {at}\right ) \hookrightarrow  \mathcal {P}\mathsf {s}^{\mathsf {p}} \left ( {\mathsf {A}}_{\lambda },\mathcal {C}\mathsf {at} \right )\) under the Grothendieck construction. It is straightforward to see that performing the Grothendieck construction on a pseudo-\(\mathscr {F}\)-functor produces an \(\mathscr {F}\)-opfibration, since the comparison between \(\left ( uv \right )_* \left ( a,x \right ) = \left ( a,F_{uv}x \right )\) and \(u_* \left ( v_* \left ( a,x \right ) \right ) = \left ( a,F_uF_v x \right )\) is given in terms of the compositor \(F_2 : F_uF_v \Rightarrow  F_{uv}\) by \(\left ( 1_a, F_2x \right )\). By \hyperref[djm-00I2]{the definition of pseudo-\(\mathscr {F}\)-functor} these compositors will be identities precisely when they need to be to satisfy condition 2 in \hyperref[jrb-000Y]{the definition of \(\mathscr {F}\)-opfibration}. Similarly, the unitarity condition of \cref{djm-00I2} ensures that the resulting opfibration satisfies condition 1 of \cref{jrb-000Y}. This correspondence between the conditions of being a pseudo-\(\mathscr {F}\)-functor and being an \(\mathscr {F}\)-opfibration can similarly be used to show that given any \(\mathscr {F}\)-opfibration, the corresponding \emph{indexing pseudofunctor} (i.e. its image under the pseudo-inverse of the Grothendieck construction) will be a pseudo-\(\mathscr {F}\)-functor.
\par{}
It remains to show that \(\mathscr {F}\)-opcartesian maps between the Grothendieck construction of pseudo-\(\mathscr {F}\)-functors are precisely those which arise from pseudo-\(\mathscr {F}\)-transformations. Given an \emph{arbitrary} pseudonatural transformation \(\phi  : F \Rightarrow   G : {\mathsf {A}}_{\lambda } \to  \mathcal {C}\mathsf {at}\) between pseudofunctors, the corresponding opcartesian map \(\int  \phi  : \int  F \to  \int  G\) between the Grothendieck constructions is given on objects and morphisms as follows:

  \begin{center}
\begin {tikzcd}
	{(a,x)} \\
	{(b,y)}
	\arrow ["{(u,f)}"', from=1-1, to=2-1]
\end {tikzcd}
$\mapsto $
\begin {tikzcd}
	{(a,\phi _a x)} \\
	{(b,\phi _b y)}
	\arrow ["{(u,\phi _b f \; \phi _u x)}", from=1-1, to=2-1]
\end {tikzcd}
\end{center}

and therefore acts in particular on chosen-opcartesian morphisms as:

  \begin{center}
\begin {tikzcd}
	{(a,x)} \\
	{(b,F_u \,x)}
	\arrow ["{(u,1_{F_ux})}"', from=1-1, to=2-1]
\end {tikzcd}
$\mapsto $
\begin {tikzcd}
	{(a,\phi _a \,x)} \\
	{(b,\phi _b \,F_u\,x)}
	\arrow ["{(u,\;\phi _u x)}", from=1-1, to=2-1]
\end {tikzcd}
\end{center}

The opcartesian map \(\int  \phi \) will thus be moreover \(\mathscr {F}\)-opcartesian precisely if for all inert \(u : a \to  b\), the pseudo-naturality transformation \(\phi _u :  G_u\; \phi _a\;x \Rightarrow   \phi _b \; F_u\;x\) is an identity; i.e. if and only if \(\phi \) is a pseudo-\(\mathscr {F}\)-transformation.
\end{proof}
\end{lemma}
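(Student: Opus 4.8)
The plan is to bootstrap from the classical equivalence $\int : \mathcal{P}\mathsf{s}^{\mathsf{p}}({\mathsf{A}}_{\lambda}, \mathcal{C}\mathsf{at}) \xrightarrow{\simeq} \mathcal{O}\mathsf{pFib}({\mathsf{A}}_{\lambda})$ by identifying, on each side, the locally-full sub-2-category carved out by the $\mathscr{F}$-structure. Since $\mathcal{P}\mathsf{s}^{\mathsf{p}}_{\mathscr{F}}(\mathsf{A}, \mathcal{C}\mathsf{at})$ is locally full in $\mathcal{P}\mathsf{s}^{\mathsf{p}}({\mathsf{A}}_{\lambda}, \mathcal{C}\mathsf{at})$, $\mathcal{O}\mathsf{pFib}_{\mathscr{F}}(\mathsf{A})$ is locally full in $\mathcal{O}\mathsf{pFib}({\mathsf{A}}_{\lambda})$, and $\int$ is already an equivalence of 2-categories, it suffices to check two things: (i) $\int F$ is an $\mathscr{F}$-opfibration whenever $F$ is a pseudo-$\mathscr{F}$-functor, and conversely the indexing pseudofunctor of any $\mathscr{F}$-opfibration satisfies \cref{djm-00I2}; and (ii) for pseudo-$\mathscr{F}$-functors $F, G$, a pseudonatural transformation $\phi : F \Rightarrow G$ is a pseudo-$\mathscr{F}$-transformation precisely when $\int\phi$ is $\mathscr{F}$-opcartesian. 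Granting these, the restricted functor is essentially surjective — every $\mathscr{F}$-opfibration is isomorphic, via the canonical cleavage-preserving comparison, to $\int$ of its indexing pseudo-$\mathscr{F}$-functor — and a local equivalence — the full subcategory $\mathcal{P}\mathsf{s}^{\mathsf{p}}_{\mathscr{F}}(\mathsf{A},\mathcal{C}\mathsf{at})(F,G)$ maps onto $\mathcal{O}\mathsf{pFib}_{\mathscr{F}}(\mathsf{A})(\int F, \int G)$, and both hom-categories carry unrestricted 2-cells — hence an equivalence of 2-categories.

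The content of (i) is a term-by-term comparison of the canonical Grothendieck cleavage with the coherence data of $F$. Writing $\int F$ with objects $(a,x)$ for $x \in Fa$ and chosen opcartesian lift $\underline{u}(a,x) = (u, 1_{F_u x})$, the first clause of \cref{jrb-000W} — the chosen lift of an identity is an identity — says exactly that the unitor $F_0 : 1_{Fx} \Rightarrow F(1_x)$ is the identity, while the second clause — chosen lifts of $u$ and $v$ compose to the chosen lift of $uv$ whenever $u$ or $v$ is inert — says exactly that the comparison built from the compositor $F_2$ is an identity on such composites. These are clauses (1) and (2) of \cref{djm-00I2}; preservation of inerts is automatic since $\mathcal{C}\mathsf{at}$ is chordate. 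Running the same dictionary through the pseudo-inverse of $\int$ shows the indexing pseudofunctor of an $\mathscr{F}$-opfibration is a pseudo-$\mathscr{F}$-functor, and that the canonical comparison between a cloven opfibration and $\int$ of its indexing pseudofunctor preserves chosen lifts, hence is $\mathscr{F}$-opcartesian.

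For (ii), an arbitrary pseudonatural $\phi : F \Rightarrow G$ induces $\int\phi : (a,x) \mapsto (a, \phi_a x)$ on objects and $(u,f) \mapsto (u, \phi_b f \circ (\phi_u)_x)$ on morphisms; it therefore sends the chosen lift $(u, 1_{F_u x})$ to $(u, (\phi_u)_x) : (a, \phi_a x) \to (b, \phi_b F_u x)$, whereas the chosen lift of $u$ at $(a, \phi_a x)$ in $\int G$ is $(u, 1_{G_u \phi_a x})$. Hence $\int\phi$ preserves the chosen lift of an inert $u$ on the nose exactly when $\phi_u$ is an identity 2-cell — precisely the condition that $\phi$ be strict on inerts. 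I expect no genuine obstacle: the only care required is in tracking which 2-cells — compositors, unitors, pseudonaturality cells — are being asserted equal to identities, together with the concluding appeal to the general fact that a locally-full 2-functor which is essentially surjective up to equivalence and a local equivalence is an equivalence of 2-categories.
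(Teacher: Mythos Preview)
Your proposal is correct and follows essentially the same approach as the paper: both bootstrap from the classical Grothendieck equivalence by matching the strictness conditions in \cref{djm-00I2} against the cleavage conditions in \cref{jrb-000W} term-by-term, and then compute explicitly that $\int\phi$ sends a chosen lift of an inert $u$ to $(u,(\phi_u)_x)$, so that strict preservation of chosen inert lifts is equivalent to $\phi_u$ being an identity. Your framing in terms of locally-full sub-2-categories and the essential-surjectivity-plus-local-equivalence criterion is slightly more explicit than the paper's, but the substance is identical.
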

\par{}
We're primarily interested in the 2-category of pseudo-\(\mathscr {F}\)-functors and \emph{pseudo}-\(\mathscr {F}\)-transformations, but we note that the proof above extends essentially without modification to exhibit an equivalence between the 2-category with pseudo-\(\mathscr {F}\)-functors and \emph{lax}-\(\mathscr {F}\)-transformations — those lax transformations which have strict naturality squares at inerts —
and the 2-category of \(\mathscr {F}\)-opfibrations and \emph{arbitrary} \(\mathscr {F}\)-functors in the slice. We will denote these 2-categories \(\mathcal {P}\mathsf {s}^{\mathsf {lx}}_{\mathscr {F}} \left ( {\mathsf {A}}_{\lambda },\mathcal {C}\mathsf {at} \right )\)
and \(\mathcal {O}\mathsf {pFib}^{\mathsf {lx}}_{\mathscr {F}}\left (\mathsf {A}\right )\) respectively.
\begin{lemma}\label{jrb-0025}
\par{}
For \(\mathsf {A}\) a locally-discrete \(\mathscr {F}\)-category, the Grothendieck construction:
\begin{equation}
\mathcal {P}\mathsf {s}^{\mathsf {lx}} \left ( {\mathsf {A}}_{\lambda },\mathcal {C}\mathsf {at} \right )
\xrightarrow {\int }
\mathcal {O}\mathsf {pFib}^{\mathsf {lx}} \left ( {\mathsf {A}}_{\lambda } \right )
\end{equation}
restricts to an equivalence:
\begin{equation}\mathcal {P}\mathsf {s}^{\mathsf {lx}}_{\mathscr {F}}\left ( \mathsf {A}, \mathcal {C}\mathsf {at} \right ) \xrightarrow {\int }
\mathcal {O}\mathsf {pFib}^{\mathsf {lx}}_{\mathscr {F}}\left (\mathsf {A}\right )
\end{equation}\end{lemma}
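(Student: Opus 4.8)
The plan is to retrace the proof of \cref{jrb-000X}, checking that each step either carries over verbatim or requires only the obvious weakening from ``pseudo'' to ``lax'' on $1$-cells. Since the $0$-cells of $\mathcal{P}\mathsf{s}^{\mathsf{lx}}_{\mathscr{F}}(\mathsf{A},\mathcal{C}\mathsf{at})$ and $\mathcal{P}\mathsf{s}^{\mathsf{p}}_{\mathscr{F}}(\mathsf{A},\mathcal{C}\mathsf{at})$ are the same — namely the pseudo-$\mathscr{F}$-functors — the identification of their essential image under $\int$ with the class of $\mathscr{F}$-opfibrations is exactly the argument already given in the proof of \cref{jrb-000X}: the compositors and unitors of a pseudo-$\mathscr{F}$-functor are forced to be identities precisely where conditions $1$ and $2$ of \cref{jrb-000W} demand it, and conversely the indexing pseudofunctor of an $\mathscr{F}$-opfibration is a pseudo-$\mathscr{F}$-functor. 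So only the $1$- and $2$-cell levels need fresh attention.

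For $1$-cells, recall that an arbitrary lax natural transformation $\phi : F \Rightarrow G : \mathsf{A}_\lambda \to \mathcal{C}\mathsf{at}$ corresponds under the Grothendieck construction to the functor over $\mathsf{A}_\lambda$ sending $(a,x) \mapsto (a,\phi_a x)$ and $(u,f) : (a,x) \to (b,y)$ to $(u,\ \phi_b f \circ \phi_u x)$, where now $\phi_u : G_u\,\phi_a \Rightarrow \phi_b\,F_u$ is the lax-naturality $2$-cell, no longer required to be invertible; this is a genuine functor over $\mathsf{A}_\lambda$ but is no longer opcartesian, which is exactly why the target $2$-category must be $\mathcal{O}\mathsf{pFib}^{\mathsf{lx}}(\mathsf{A}_\lambda)$ of opfibrations and \emph{all} functors over $\mathsf{A}_\lambda$. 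The direction of $\phi_u$ is the one thing to bookkeep carefully here: with it pointing this way $\int\phi$ is functorial (this is where ``lax'' rather than ``oplax'' is used) and the induced $2$-category on the opfibration side is correctly $\mathcal{O}\mathsf{pFib}^{\mathsf{lx}}$ rather than its dual. Now evaluate $\int\phi$ on the chosen opcartesian lift $(u,1_{F_u x})$ of an inert $u$: it returns $(u,\phi_u x)$, which coincides with the chosen lift $(u,1_{G_u\phi_a x})$ at $(a,\phi_a x)$ precisely when $\phi_u x$ is an identity. Ranging over all inert $u$ and all objects $x$, this is exactly the condition that $\phi$ be a lax-$\mathscr{F}$-transformation; and, as in \cref{jrb-000Y}, preservation of chosen lifts of inerts is the same as being an $\mathscr{F}$-functor for the induced $\mathscr{F}$-structures of \cref{jrb-000W} (preservation for composites of inerts being automatic from condition $2$ there). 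Hence $\int\phi$ is an $\mathscr{F}$-functor over $\mathsf{A}$ iff $\phi$ is a lax-$\mathscr{F}$-transformation. At the level of modifications nothing changes from \cref{jrb-000X}.

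Finally I would assemble these observations: the unrestricted Grothendieck equivalence $\mathcal{P}\mathsf{s}^{\mathsf{lx}}(\mathsf{A}_\lambda,\mathcal{C}\mathsf{at}) \simeq \mathcal{O}\mathsf{pFib}^{\mathsf{lx}}(\mathsf{A}_\lambda)$ identifies the locally-full sub-$2$-category $\mathcal{P}\mathsf{s}^{\mathsf{lx}}_{\mathscr{F}}(\mathsf{A},\mathcal{C}\mathsf{at})$ with the locally-full sub-$2$-category $\mathcal{O}\mathsf{pFib}^{\mathsf{lx}}_{\mathscr{F}}(\mathsf{A})$, because (by the two paragraphs above) an object, resp.\ a $1$-cell, of the codomain lies in the latter exactly when its preimage lies in the former; an equivalence of $2$-categories restricting to corresponding locally-full sub-$2$-categories in this way is again an equivalence, giving the claim. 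I do not expect any serious obstacle: the content is entirely a matter of replaying \cref{jrb-000X} with the invertibility of $\phi_u$ dropped everywhere except at inerts, the only mild subtlety being the orientation convention for lax transformations noted above and the need to confirm that the $\mathscr{F}$-opfibration side really does admit non-opcartesian functors as its lax $1$-cells.
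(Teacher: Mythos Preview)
Your proposal is correct and matches the paper's approach exactly: the paper does not give a separate proof of \cref{jrb-0025} but simply remarks that ``the proof above extends essentially without modification'' to the lax case, with $1$-cells on the opfibration side now being arbitrary $\mathscr{F}$-functors in the slice rather than opcartesian ones. Your retracing of \cref{jrb-000X} with invertibility of $\phi_u$ dropped except at inerts is precisely this extension, spelled out in the detail the paper omits.
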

\par{}
The usefulness of the fibred perspective on pseudo-\(\mathscr {F}\)-functors is that we can exploit a result about factorisations of opfibrations through discrete opfibrations, suitably modified for the \(\mathscr {F}\)-categorical setting.

\begin{definition}[{Discrete \(\mathscr {F}\)-opfibration}]\label{jrb-0015}
\par{}
A \emph{discrete \(\mathscr {F}\)-opfibration} over a locally discrete \(\mathscr {F}\)-category \(\mathsf {A}\) is an \(\mathscr {F}\)-opfibration whose underlying opfibration of categories is discrete.
\end{definition}
\par{}
It is not generally true that a map between opfibrations is itself an opfibration, even if the map is opcartesian. However, if the codomain is a \emph{discrete} opfibration, then the result \emph{does} hold. To see that this is true, consider a map in \(\mathcal {C}\mathsf {at} \downarrow  \mathsf {B}\) between opfibrations:

  \begin{center}
\begin {tikzcd}
	{\mathsf {E}} && {\mathsf {D}} \\
	& {\mathsf {B}}
	\arrow ["f", from=1-1, to=1-3]
	\arrow ["p"', from=1-1, to=2-2]
	\arrow ["q", from=1-3, to=2-2]
\end {tikzcd}
\end{center}

with \(q\) discrete.

\begin{lemma}\label{jrb-0018}
\par{}
Any morphism in \(\mathsf {E}\) which is \(p\)-opcartesian is \(f\)-opcartesian.

\begin{proof}[{proof of \cref{jrb-0018}.}]\label{jrb-0018-proof}
\par{}
A morphsism \(u : x \to  y\) is \(p\)-preopcartesian if for any \(z \in  \mathsf {E}\), precomposition by \(u\) gives a bijection from \(p\)-vertical maps \(\mathsf {V}_p \left ( y,z \right )\) to \(p\)-lifts of \(pu\), \(\left [ pu \right ]_p \left ( x,z \right )\). There is clearly a commuting square:

  \begin{center}
\begin {tikzcd}
	{\mathsf {V}_f(y,z)} & {\left [ fu \right ]_f(x,z)} \\
	{\mathsf {V}_p (y,z)} & {\left [ pu \right ]_p (x,z)}
	\arrow ["{u^*}", from=1-1, to=1-2]
	\arrow [""{name=0, anchor=center, inner sep=0}, "{\mathsf {incl}}"',hook', from=1-1, to=2-1]
	\arrow [""{name=1, anchor=center, inner sep=0}, "{\mathsf {incl}}", hook', from=1-2, to=2-2]
	\arrow ["{u^*}"', from=2-1, to=2-2]
	\arrow ["\circlearrowright "{marking, allow upside down}, draw=none, from=0, to=1]
\end {tikzcd}
\end{center}

Since \(u\) is \(p\)-preopcartsian, the bottom arrow is an isomorphism, and since \(q\) is discrete, the left inclusion is an isomorphism. It follows that both the upper precomposition and the right inclusion are also isomorphisms, so \(u\) is additionally \(f\)-preopcartesian. Since \(f\)-preopcartesian morphisms are \(p\)-preopcartesian, and thus closed under composition, they must additionally be \(f\)-opcartesian.
\end{proof}
\end{lemma}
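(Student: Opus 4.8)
The plan is to reduce to the pointwise (``weak'') form of opcartesianness and then chase a single commuting square of hom-sets, exploiting that $q$ is \emph{discrete}, not merely an opfibration. Recall that $u \colon x \to y$ is $p$-preopcartesian iff for every $z$ precomposition by $u$ restricts to a bijection $u^* \colon \mathsf{V}_p(y,z) \to [pu]_p(x,z)$ from the $p$-vertical maps $y \to z$ to the maps $x \to z$ lying over $pu$, and that in an opfibration the preopcartesian and the opcartesian morphisms coincide and are closed under composition. Since $p$ is an opfibration, a $p$-opcartesian $u$ is a fortiori $p$-preopcartesian; so it suffices to prove (i) $u$ is then also $f$-preopcartesian, and (ii) $f$-preopcartesian morphisms are closed under composition, from which $u$ is $f$-opcartesian.

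The engine for both parts is that discreteness of $q$ matches the $p$-notions with the $f$-notions. Fix $z$ and form the square whose two rows are $u^* \colon \mathsf{V}_f(y,z) \to [fu]_f(x,z)$ and $u^* \colon \mathsf{V}_p(y,z) \to [pu]_p(x,z)$, with vertical legs the inclusions induced by $p = qf$; it commutes, being in both cases the same precomposition map. Both legs are in fact bijections: if $g \colon y \to z$ has $pg$ an identity, then $qfg$ is an identity, hence $fg$ is an identity by discreteness, so $g$ is $f$-vertical; and if $w \colon x \to z$ lies over $pu$, then $fw$ and $fu$ are both lifts of $q(fu) = pu$ based at $fx$, so they coincide — sharing in particular the codomain $fz = fy$ — by discreteness, whence $w$ lies over $fu$. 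The reverse inclusions are immediate from $p = qf$.

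For (i): in that square the bottom row is a bijection since $u$ is $p$-preopcartesian, and both legs are bijections, so the top row is a bijection; as $z$ was arbitrary, $u$ is $f$-preopcartesian. For (ii): reading the square the other way, if a morphism is $f$-preopcartesian (top row a bijection) then the bottom row is a bijection, i.e. it is $p$-preopcartesian; and $p$-preopcartesian morphisms are closed under composition because $p$ is an opfibration, so $f$-preopcartesian morphisms are too. Since a preopcartesian morphism lying in a composition-closed class of preopcartesian morphisms is opcartesian, $u$ is $f$-opcartesian. If one prefers this last step spelled out, it is cleanest to first check that $f$ is itself an opfibration: for any $h \colon fx \to d$ in $\mathsf{D}$ a $p$-opcartesian lift of $qh$ at $x$ is an $f$-preopcartesian lift of $h$, using $fu = h$ by discreteness of $q$ once more.

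I expect the main obstacle to be nothing deep but purely a matter of careful bookkeeping: keeping straight whether ``vertical'' and ``lying over $pu$'' are meant relative to $p$ or to $f$, and in particular noticing that a discrete opfibration pins down lifts \emph{together with their codomains}. That last point is exactly what upgrades the two hom-set inclusions from mere injections to bijections and makes the square-chase close; without discreteness of $q$ (only an opfibration) the inclusions fail and, as the surrounding text notes, the statement itself fails.
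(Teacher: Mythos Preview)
Your proof is correct and follows essentially the same route as the paper's: both set up the commuting square of hom-sets, use discreteness of $q$ to identify the $p$- and $f$-notions of vertical and of lying-over-$u$, deduce $f$-preopcartesianness from $p$-preopcartesianness, and finish via closure under composition. You are slightly more explicit than the paper in arguing directly that \emph{both} vertical inclusions are bijections (the paper only names the left one and infers the rest diagrammatically), and your closing remark that $f$ is itself an opfibration is exactly what the paper establishes in the paragraph immediately after the lemma.
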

\par{}
Now, given a morphism \(u : x \to  fe\) in \(\mathsf {D}\), we can map down along \(q\) to \(qu : qx \to  pe\) and take the \(p\)-opcartesian (and thus \(f\)-opcartesian) lift,  \(\underline {qu}\left ( e \right ) : \left ( qu \right )_* e \to  e\). The image of this morphism under \(f\) is a lift of \(qu\) along \(q\), and so must be equal to \(u\). The functor \(f\) thus satisfies the opcartesian lifting property for opfibrations.

\begin{lemma}[{Lifting \(\mathscr {F}\)-opfibrations along discrete \(\mathscr {F}\)-opfibrations}]\label{jrb-0019}
\par{}
Given a morphism in \(\mathscr {F}\mathsf {Cat} \downarrow  \mathsf {B}\) between \(\mathscr {F}\)-opfibrations over a locally-discrete \(\mathscr {F}\)-category:

  \begin{center}
\begin {tikzcd}
	{\mathsf {E}} && {\mathsf {D}} \\
	& {\mathsf {B}}
	\arrow ["f", from=1-1, to=1-3]
	\arrow ["p"', from=1-1, to=2-2]
	\arrow ["q", from=1-3, to=2-2]
\end {tikzcd}
\end{center}

with \(q\) discrete, \(f\) is moreover an \(\mathscr {F}\)-opfibration with respect to the induced \(\mathscr {F}\)-category structure on \(\mathsf {D}\).

\begin{proof}[{proof of \cref{jrb-0019}.}]\label{jrb-0019-proof}
\par{}
From our observation above, such an \(f\) is automatically an opfibration so it suffices to demonstrate the two additional properties specified in \cref{jrb-000W}:
\begin{enumerate}\item{}for each \(d \in  \mathsf {D}\), the opcartesian lift of \(1_{fx}\) is equal to the opcartesian lift of \(1_{px}\), which is equal to \(1_x\) by the \(\mathscr {F}\)-opcartesianness of \(p\). 
  \item{}for composable 1-cells in \(\mathsf {D}\) of the form:

  \begin{center}
\begin {tikzcd}
	fx & a & b
	\arrow ["u", from=1-1, to=1-2]
	\arrow ["v", from=1-2, to=1-3]
\end {tikzcd}
  \end{center}
the composite of opcartesian lifts \(\underline {v} \left ( u_* x \right )\;\underline {u} \left ( x \right ) \) along \(f\) is by definition equal to the composite of opcartesian lifts \(\underline {qv} \left ( qu_*x \right ) \; \underline {qu} \left ( x \right ) \;\) along \(p\). Whenever either \(u\) (and thus \(qu\)) or \(v\) (and thus \(qv\)) are inert, this lift will be equal to \(\underline {(qv)(qu)}x = \underline {q(vu)}x\) by the \(\mathscr {F}\)-opcartesianness of \(p\), which is by definition the opcartesian lift \(\underline {vu}(x)\) along \(f\).
\end{enumerate}\end{proof}
\end{lemma}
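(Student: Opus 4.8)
The plan is to fix a canonical cleavage for $f$ and then verify the two clauses of \cref{jrb-000W} for it by reducing them, clause by clause, to the corresponding clauses already known for $p$; discreteness of $q$ is exactly what makes this reduction go through.

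First I would recall that $f$ is an opfibration at all, which is the content of \cref{jrb-0018} together with the observation recorded just before this statement: every $p$-opcartesian arrow of $\mathsf{E}$ is $f$-opcartesian, and for $u : fx \to a$ in $\mathsf{D}$ one pushes down to $q(u) : px \to qa$ in $\mathsf{B}$, takes the chosen $p$-opcartesian lift $\underline{q(u)}(x)$, and observes that $f(\underline{q(u)}(x))$ and $u$ are both arrows out of $fx$ lying over $q(u)$ along the discrete opfibration $q$, hence equal. Accordingly I would \emph{define} the cleavage of $f$ by $\underline{u}(x) := \underline{q(u)}(x)$ and $u_* x := (q(u))_* x$; the previous sentence shows this is a genuine opcartesian lift of $u$, so in particular $f(u_* x) = a$.

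Next, for clause (1) of \cref{jrb-000W} the chosen $f$-opcartesian lift of $1_{fx}$ is $\underline{q(1_{fx})}(x) = \underline{1_{px}}(x)$, which equals $1_x$ because $p$ is an $\mathscr{F}$-opfibration. For clause (2), take composable $fx \xrightarrow{u} a \xrightarrow{v} b$ in $\mathsf{D}$ with, say, one of $u,v$ inert. I would first make the translation of inertness explicit: since $q$ is \emph{discrete}, every arrow of $\mathsf{D}$ is the unique lift of its $q$-image, so it lies in the $q$-induced class of inerts precisely when its $q$-image is inert in $\mathsf{B}$; hence $q(u)$ or $q(v)$ is inert. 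Unwinding the cleavage just chosen, the composite $\underline{v}(u_* x)\,\underline{u}(x)$ is literally $\underline{q(v)}\big((q(u))_* x\big)\,\underline{q(u)}(x)$, which by clause (2) of \cref{jrb-000W} applied to $p$ --- for the composable pair $q(u),q(v)$ in $\mathsf{B}$, one of which is inert --- equals $\underline{q(v)q(u)}(x) = \underline{q(vu)}(x) = \underline{vu}(x)$. This is exactly clause (2) for $f$, so $f$ is an $\mathscr{F}$-opfibration.

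I do not expect a real obstacle; the whole argument is bookkeeping with the definitions. The one point needing care --- and the only place discreteness of $q$ is used beyond \cref{jrb-0018} --- is the equivalence ``$u$ inert in $\mathsf{D}$ $\iff$ $q(u)$ inert in $\mathsf{B}$'', which rests on the fact that a discrete opfibration forces the cleavage on $\mathsf{D}$. I would also add a brief remark that, with the cleavage chosen above, the $\mathscr{F}$-category structure $f$ induces on $\mathsf{E}$ (as in the final sentence of \cref{jrb-000W}) coincides with the one induced by $p$: a chosen $f$-opcartesian lift of an inert of $\mathsf{D}$ is by construction a chosen $p$-opcartesian lift of its inert $q$-image in $\mathsf{B}$, and conversely every such $p$-lift arises this way.
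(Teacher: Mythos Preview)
Your proposal is correct and follows essentially the same approach as the paper: define the $f$-cleavage by $\underline{u}(x):=\underline{q(u)}(x)$ via the $p$-cleavage, then reduce each clause of \cref{jrb-000W} for $f$ to the corresponding clause for $p$. Your explicit justification of the equivalence ``$u$ inert in $\mathsf{D}$ $\iff$ $q(u)$ inert in $\mathsf{B}$'' via discreteness of $q$, and the closing remark that the $\mathscr{F}$-structure $f$ induces on $\mathsf{E}$ agrees with that induced by $p$, are helpful elaborations the paper leaves implicit.
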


\begin{lemma}[{\(\mathscr {F}\)-opfibrations are closed under composition}]\label{jrb-001C}
\par{}
Given a locally-discrete \(\mathscr {F}\)-category, \(\mathsf {A}\), an \(\mathscr {F}\)-opfibration \(p : \mathsf {B} \to  \mathsf {A}\) and another \(\mathscr {F}\)-opfibration \(q : \mathsf {C} \to  \mathsf {B}\) with respect to the induced \(\mathscr {F}\)-category structure on \(\mathsf {B}\), the composite \(pq : \mathsf {C} \to  \mathsf {A}\) is also an \(\mathscr {F}\)-opfibration.

\begin{proof}[{proof of \cref{jrb-001C}.}]\label{jrb-001C-proof}
\par{}
Fibrations in \(\mathcal {C}\mathsf {at}\) are known to be closed under composition, so it remains to observe that the strictness-on-inerts property holds for the composite of \(\mathscr {F}\)-opfibrations:
  \begin{enumerate}\item{}for each \(c \in  \mathsf {C}\), the opcartesian lift of \(1_{pq(c)}\) along \(pq\) is equal to the opcartesian lift of \(1_{qc}\) along \(p\), which is equal to \(1_c\) by the \(\mathscr {F}\)-opcartesianness of \(p\) and \(q\) respectively. 
  \item{}Consider composable 1-cells in \(\mathsf {A}\) of the form:

  \begin{center}
\begin {tikzcd}
	pq(c) & a & b
	\arrow ["u", from=1-1, to=1-2]
	\arrow ["v", from=1-2, to=1-3]
\end {tikzcd}
  \end{center}
  Lifting along \(pq\) is done by first lifting along \(p\), then along \(q\). To simplify the notation, let's denote the step-wise lifting of each morphism along \(p\) as follows:

  \begin{center}
\begin {tikzcd}
	q(c) & a^p & b^p
	\arrow ["u^p", from=1-1, to=1-2]
	\arrow ["v^p", from=1-2, to=1-3]
\end {tikzcd}
  \end{center}

  Then lifting each of those morphisms along \(q\) gives:

  \begin{center}
\begin {tikzcd}
  c & \left ( a^p \right )^q & \left ( b^p \right )^q
	\arrow ["\left ( u^p \right )^q", from=1-1, to=1-2]
	\arrow ["\left ( v^p \right )^q", from=1-2, to=1-3]
\end {tikzcd}
    \end{center}

If either \(u\) or \(v\) are inert in \(\mathsf {A}\), then so (respectively) is \(u^p\) or \(v^p\) in \(\mathsf {B}\), from which it follows that:
  \begin{equation}\left ( v^p \right )^q\; \left ( u^p \right )^q = \left ( v^p \; u^p \right )^q = \left ( v\;u \right )^{pq}\end{equation}
  as required.
\end{enumerate}\end{proof}
\end{lemma}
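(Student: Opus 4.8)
The plan is to reduce the statement to the classical fact that opfibrations in \(\mathcal{C}\mathsf{at}\) are closed under composition, and then to verify the two strictness-on-inerts clauses of \cref{jrb-000W} for the composite \(pq\) by directly invoking the corresponding clauses, first for \(p\) and then for \(q\). The first thing to do is fix the canonical cleavage of \(pq\). Since \(p\) and \(q\) underlie opfibrations of categories, \(pq : \mathsf{C} \to \mathsf{A}\) is an opfibration, and it carries a cleavage built stepwise from those of \(p\) and \(q\): given \(c \in \mathsf{C}\) and a \(1\)-cell \(w : pq(c) \to a\) in \(\mathsf{A}\), one first takes the chosen \(p\)-opcartesian lift \(\underline{w}(qc)\) of \(w\) at \(qc \in \mathsf{B}\), and then the chosen \(q\)-opcartesian lift of the morphism \(\underline{w}(qc)\) at \(c \in \mathsf{C}\); since a \(q\)-opcartesian morphism lying over a \(p\)-opcartesian morphism is \(pq\)-opcartesian, this is the chosen \(pq\)-opcartesian lift \(\underline{w}(c)\). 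Note moreover that the induced \(\mathscr{F}\)-structures nest: an inert of \(\mathsf{A}\) lifts along \(p\) to an inert of \(\mathsf{B}\) (by the definition of the induced \(\mathscr{F}\)-structure on \(\mathsf{B}\)), which in turn lifts along \(q\) to an inert of \(\mathsf{C}\), so the inerts of \(\mathsf{C}\) for the structure induced by \(pq\) are exactly these stepwise lifts.

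With the cleavage fixed, clause (1) of \cref{jrb-000W} is immediate: the chosen \(pq\)-opcartesian lift of \(1_{pq(c)}\) is the chosen \(q\)-opcartesian lift of the chosen \(p\)-opcartesian lift of \(1_{p(qc)}\); the latter is \(1_{qc}\) by the \(\mathscr{F}\)-opfibration axiom for \(p\), and its \(q\)-opcartesian lift is \(1_c\) by the \(\mathscr{F}\)-opfibration axiom for \(q\).

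For clause (2), consider composable \(1\)-cells \(pq(c) \xrightarrow{u} a \xrightarrow{v} b\) in \(\mathsf{A}\) with \(u\) or \(v\) inert, and let \(u^p, v^p\) denote their stepwise lifts along \(p\); these are composable in \(\mathsf{B}\), and if \(u\) (respectively \(v\)) is inert then so is \(u^p\) (respectively \(v^p\)). The \(\mathscr{F}\)-opfibration axiom for \(p\) gives that the chosen \(p\)-lift of \(vu\) equals \(v^p\,u^p\); the \(\mathscr{F}\)-opfibration axiom for \(q\), applied to the composable pair \(u^p, v^p\) one of which is inert, then gives that the chosen \(q\)-lift of \(v^p\,u^p\) equals the composite of the chosen \(q\)-lifts of \(u^p\) and of \(v^p\). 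Unwinding the stepwise cleavage, this is exactly the assertion that the chosen \(pq\)-lift of \(vu\) equals the composite of the chosen \(pq\)-lifts of \(u\) and \(v\), which is clause (2).

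There is no genuinely hard step here; the only point requiring care is the bookkeeping of the nested cleavages — in particular the observation that the \(p\)-opcartesian lift of an inert of \(\mathsf{A}\) is an inert of \(\mathsf{B}\), which is what makes the \(\mathscr{F}\)-opfibration axiom for \(q\) applicable to the lifted pair \(u^p, v^p\). Once that is in place, both clauses of \cref{jrb-000W} for \(pq\) fall out by quoting the corresponding clauses for \(p\) and then for \(q\).
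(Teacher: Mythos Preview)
Your proposal is correct and follows essentially the same approach as the paper's proof: both reduce to the classical closure of opfibrations under composition, fix the stepwise cleavage, and then verify the two clauses of \cref{jrb-000W} by first invoking the \(\mathscr{F}\)-opfibration axioms for \(p\) and then for \(q\), with the key observation being that chosen \(p\)-lifts of inerts in \(\mathsf{A}\) are inert in \(\mathsf{B}\). Your version is slightly more explicit about the construction of the composite cleavage, but the argument is the same.
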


\begin{corollary}\label{jrb-001B}
\par{}
For \(\mathsf {A}\) a locally-discrete \(\mathscr {F}\)-category and \(q : \mathsf {E} \to  \mathsf {A}\) a discrete \(\mathscr {F}\)-opfibration, post-composition by \(q\) gives isomorphisms:
\begin{equation}\mathcal {O}\mathsf {pFib}_{\mathscr {F}}\left (\mathsf {E}\right ) \cong  \mathcal {O}\mathsf {pFib}_{\mathscr {F}}\left (\mathsf {A}\right ) \downarrow  q
 \qquad  \mathcal {O}\mathsf {pFib}^{\mathsf {lx}}_{\mathscr {F}}\left (\mathsf {E}\right ) \cong  \mathcal {O}\mathsf {pFib}^{\mathsf {lx}}_{\mathscr {F}}\left (\mathsf {A}\right ) \downarrow  q\end{equation}
\begin{proof}[{proof of \cref{jrb-001B}.}]\label{jrb-001B-proof}
\par{}
As a map on objects, post-composition by \(q\) does indeed send \(\mathscr {F}\)-opfibrations to \(\mathscr {F}\)-opfibrations with a map to \(q\) by \cref{jrb-001C}. These maps are bijective on objects by \cref{jrb-0019}. Morphisms between objects in the slice \(\mathcal {O}\mathsf {pFib}_{\mathscr {F}}\left (\mathsf {A}\right )\downarrow  q\)
are precisely the opcartesian maps over \(\mathsf {E}\) by \cref{jrb-0018}.\end{proof}
\end{corollary}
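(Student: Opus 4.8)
The plan is to realise both displayed comparisons as the 2-functor given by post-composition with $q$, and to verify directly that this 2-functor is bijective on objects and an isomorphism on each hom-category; the $\mathsf{lx}$-variant will follow by the same argument with ``$\mathscr{F}$-opcartesian map'' replaced by ``arbitrary $\mathscr{F}$-functor in the slice''. So the first step is to pin down the object map. Post-composition sends an $\mathscr{F}$-opfibration $p:\mathsf{B}\to\mathsf{E}$ — where $\mathsf{E}$ carries the $\mathscr{F}$-structure induced by $q$ — to $qp:\mathsf{B}\to\mathsf{A}$ together with the structure map $p$. By \cref{jrb-001C} the composite $qp$ is again an $\mathscr{F}$-opfibration, with the ``lift along $q$, then along $p$'' cleavage used in that proof. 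Moreover $p$ is then automatically an $\mathscr{F}$-opcartesian map $qp\to q$: since $q$ is discrete every morphism of $\mathsf{E}$ is $q$-opcartesian, so $p$ trivially preserves opcartesian morphisms, and the explicit form of the composite cleavage shows that $p$ carries the chosen $qp$-lift of an inert $u$ to the chosen $q$-lift of $u$, so $p$ strictly preserves chosen lifts of inerts. Conversely, an object of $\mathcal{O}\mathsf{pFib}_{\mathscr{F}}(\mathsf{A})\downarrow q$ is an $\mathscr{F}$-opfibration $r:\mathsf{D}\to\mathsf{A}$ equipped with an $\mathscr{F}$-opcartesian $f:\mathsf{D}\to\mathsf{E}$ over $\mathsf{A}$; by \cref{jrb-0019} the functor $f$ is itself an $\mathscr{F}$-opfibration over $\mathsf{E}$, with chosen lifts obtained by lifting along $r$. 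Post-composing this $f$ with $q$ returns $r=qf$ with structure map $f$, and the two constructions are mutually inverse once one checks that the composite cleavage of $qf$ coincides on the nose with the cleavage of $r$ — which it does because $q$ discrete forces the chosen $q$-lift of any morphism of $\mathsf{A}$, so the ``lift along $q$'' step adds nothing. This establishes a bijection on objects.

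For $\mathscr{F}$-opfibrations $p:\mathsf{B}\to\mathsf{E}$ and $p':\mathsf{B}'\to\mathsf{E}$, a $1$-cell of $\mathcal{O}\mathsf{pFib}_{\mathscr{F}}(\mathsf{A})\downarrow q$ between the corresponding objects is an $\mathscr{F}$-opcartesian functor $g:\mathsf{B}\to\mathsf{B}'$ over $\mathsf{A}$ with $p'g=p$, and this last equation is precisely the statement that $g$ is a morphism over $\mathsf{E}$. So the content is that, for such a $g$, being $\mathscr{F}$-opcartesian over $\mathsf{A}$ is equivalent to being $\mathscr{F}$-opcartesian over $\mathsf{E}$; this is the key fact already used by the slogan after \cref{jrb-0018}. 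Concretely, a morphism of $\mathsf{B}$ is $qp$-opcartesian iff it is $p$-opcartesian (the standard criterion for composites of opfibrations, together with the observation that along the discrete $q$ every morphism is opcartesian), so $g$ preserves $qp$-opcartesian morphisms iff it preserves $p$-opcartesian ones; and since the inerts of $\mathsf{E}$ are exactly the chosen $q$-lifts of inerts of $\mathsf{A}$, the chosen $qp$-lift of an inert of $\mathsf{A}$ is the chosen $p$-lift of an inert of $\mathsf{E}$, so $g$ strictly preserves chosen lifts of inerts over $\mathsf{A}$ iff it does so over $\mathsf{E}$ (see \cref{jrb-000W}, \cref{jrb-000Y}). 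For $2$-cells, a $2$-cell of the slice is a transformation $\alpha:g\Rightarrow g'$ with $p'\alpha$ an identity, and $p'\alpha=1$ forces $qp'\alpha=1$, so slice $2$-cells are exactly the vertical transformations over $\mathsf{E}$. The assignment is visibly $2$-functorial — it is the identity on underlying functors and transformations — hence an isomorphism on each hom-category, and with bijectivity on objects this gives the claimed isomorphism of $2$-categories.

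The $\mathsf{lx}$-version is proven verbatim: the object-level argument via \cref{jrb-001C} and \cref{jrb-0019} is untouched, and for $1$-cells one only needs that an $\mathscr{F}$-functor over $\mathsf{A}$ commuting with the structure maps is the same thing as an $\mathscr{F}$-functor over $\mathsf{E}$, which is again the equation $p'g=p$; the $2$-cells are the same vertical transformations. The one genuinely fiddly point in the whole proof — and the step I expect to need the most care — is the strict matching of cleavages, i.e.\ that post-composition with $q$ and the passage of \cref{jrb-0019} are honest inverses rather than merely an equivalence. This reduces to the fact that $q$, being a discrete opfibration, has its lifts uniquely determined, so the composite cleavage of \cref{jrb-001C} preserves the underlying $r$-cleavage exactly; once that is in hand, the rest is bookkeeping with the definitions of $\mathscr{F}$-opfibration and $\mathscr{F}$-opcartesian map.
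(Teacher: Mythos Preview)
Your proposal is correct and follows essentially the same approach as the paper: both arguments establish the object bijection via \cref{jrb-001C} and \cref{jrb-0019}, and identify the morphisms in the slice with those over $\mathsf{E}$ via \cref{jrb-0018}. Your version is considerably more explicit than the paper's terse proof---in particular your careful treatment of the strict matching of cleavages (using discreteness of $q$) to ensure an isomorphism rather than a mere equivalence is a point the paper leaves implicit.
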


\begin{lemma}[{The slice theorem for pseudo-\(\mathscr {F}\)-functors}]\label{jrb-001N}
\par{}
Given a locally-discrete \(\mathscr {F}\)-category \(\mathsf {A}\) and a pseudo-\(\mathscr {F}\)-functor \(M : \mathsf {A} \to  \mathcal {C}\mathsf {at}\) whose image lands in discrete categories (i.e comes from an \(\mathscr {F}\)-functor \(\mathsf {A} \to  \mathsf {Set}\)), there are equivalences:
\begin{equation}\mathcal {P}\mathsf {s}^{\mathsf {p}}_{\mathscr {F}} \left (\mathsf {A},\mathcal {C}\mathsf {at}\right ) \downarrow   M \simeq  \mathcal {P}\mathsf {s}^{\mathsf {p}}_{\mathscr {F}} \left (\mathcal {E}\mathsf {l} \left ( M \right ),\mathcal {C}\mathsf {at}\right )\end{equation}
\begin{equation}\mathcal {P}\mathsf {s}^{\mathsf {lx}}_{\mathscr {F}} \left (\mathsf {A},\mathcal {C}\mathsf {at}\right ) \downarrow   M \simeq  \mathcal {P}\mathsf {s}^{\mathsf {lx}}_{\mathscr {F}} \left (\mathcal {E}\mathsf {l} \left ( M \right ),\mathcal {C}\mathsf {at}\right )\end{equation}
\begin{proof}[{proof of \cref{jrb-001N}.}]\label{jrb-001N-proof}
\par{}
By \cref{jrb-000X} there is an equivalence \(\int  : \mathcal {P}\mathsf {s}^{\mathsf {p}}_{\mathscr {F}} \left (\mathsf {A},\mathcal {C}\mathsf {at}\right ) \to  \mathcal {O}\mathsf {pFib}_{\mathscr {F}}\left (\mathsf {C}\right )\) which sends a pseudo-\(\mathscr {F}\)-functor to its Grothendieck construction. In particular, it will send an \(M\) as described above to the discrete \(\mathscr {F}\)-opfibration \(\pi _M : \mathcal {E}\mathsf {l} \left ( M \right ) \to  \mathsf {A}\) (note that \(\mathcal {E}\mathsf {l} \left ( M \right ) = \int {M}\)). The result then follows from this observation and \cref{jrb-001B}:
\begin{equation}
\begin {aligned}
\mathcal {P}\mathsf {s}^{\mathsf {p}}_{\mathscr {F}} \left (\mathsf {A},\mathcal {C}\mathsf {at}\right ) \downarrow   M
\simeq  \; & \mathcal {O}\mathsf {pFib}_{\mathscr {F}}\left (\mathsf {A}\right ) \downarrow   \left [ \mathcal {E}\mathsf {l} \left ( M \right ) \xrightarrow {\pi _M} \mathsf {A} \right ] \\
\cong  \; & \mathcal {O}\mathsf {pFib}_{\mathscr {F}}\left (\mathcal {E}\mathsf {l} \left ( M \right )\right ) \\
\simeq  \; & \mathcal {P}\mathsf {s}^{\mathsf {p}}_{\mathscr {F}} \left (\mathcal {E}\mathsf {l}\left ( M \right ),\mathcal {C}\mathsf {at}\right )
\end {aligned}
\end{equation}
A similar argument using the equivalence \(\mathcal {O}\mathsf {pFib}^{\mathsf {lx}}_{\mathscr {F}}\left (\mathsf {E}\right ) \cong  \mathcal {O}\mathsf {pFib}^{\mathsf {lx}}_{\mathscr {F}}\left (\mathsf {A}\right ) \downarrow  q\) proves the lax case.
\end{proof}
\end{lemma}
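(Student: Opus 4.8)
The plan is to deduce \cref{jrb-001N} from the Grothendieck‑construction equivalence of \cref{jrb-000X} together with the post‑composition isomorphism of \cref{jrb-001B}, transporting the comma construction back and forth across $\int$. Concretely, I would assemble the chain $\mathcal{P}\mathsf{s}^{\mathsf{p}}_{\mathscr{F}}(\mathsf{A},\mathcal{C}\mathsf{at}) \downarrow M \;\simeq\; \mathcal{O}\mathsf{pFib}_{\mathscr{F}}(\mathsf{A}) \downarrow \pi_M \;\cong\; \mathcal{O}\mathsf{pFib}_{\mathscr{F}}(\mathcal{E}\mathsf{l}(M)) \;\simeq\; \mathcal{P}\mathsf{s}^{\mathsf{p}}_{\mathscr{F}}(\mathcal{E}\mathsf{l}(M),\mathcal{C}\mathsf{at})$, where $\pi_M \colon \mathcal{E}\mathsf{l}(M) \to \mathsf{A}$ is the Grothendieck construction of $M$.

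First I would note that slicing over a fixed object is (bi)functorial and carries equivalent objects in equivalent $2$-categories to equivalent slices; applying this to the $2$-equivalence $\int$ of \cref{jrb-000X} at the object $M$ gives the first link. Next I would identify $\int M$: since $M$ is assumed to come from an ordinary (hence strict) functor $\mathsf{A} \to \mathsf{Set}$, its Grothendieck construction is exactly the category of elements $\pi_M$, a discrete opfibration, and the strict unitality and strict compositionality of $M$ — which hold in particular at inerts — are precisely conditions (1) and (2) of \cref{jrb-000W}, so $\pi_M$ is a \emph{discrete $\mathscr{F}$-opfibration} and $\mathcal{E}\mathsf{l}(M)$ is again a locally-discrete $\mathscr{F}$-category. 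That is the hypothesis of \cref{jrb-001B}, which supplies the middle isomorphism. Finally I would apply \cref{jrb-000X} once more, this time to the locally-discrete $\mathscr{F}$-category $\mathcal{E}\mathsf{l}(M)$, to get the last link; composing the three equivalences gives the pseudo case. Rerunning the identical argument with \cref{jrb-0025} and the lax clause of \cref{jrb-001B} in place of \cref{jrb-000X} and its pseudo clause gives the lax case. This lemma is the engine behind the slice theorem \cref{djm-00IG}: once both sides are further cut down to the sub-$2$-categories of models, using that $\pi_M$ is a ``model'' $\mathscr{F}$-opfibration, that theorem drops out.

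The hard part is not the formal chain but two bookkeeping checks. First, ``slicing preserves equivalences'' must be applied in its genuine $2$-categorical form: one should verify that the pseudo-inverse of $\int$ together with its unit and counit are compatible with the comma construction, so that $\mathcal{P}\mathsf{s}^{\mathsf{p}}_{\mathscr{F}}(\mathsf{A},\mathcal{C}\mathsf{at}) \downarrow M$ is equivalent to the slice of $\mathcal{O}\mathsf{pFib}_{\mathscr{F}}(\mathsf{A})$ over $\int M$ itself, not merely over some object isomorphic to it. Second, one must confirm that $\int M$ genuinely satisfies the hypotheses of \cref{jrb-001B}, i.e. is a \emph{discrete} $\mathscr{F}$-opfibration rather than just a discrete opfibration; this is where strict unitality and compositionality of $M$ at inerts are essential, and it reuses exactly the analysis already performed in the proof of \cref{jrb-000X}. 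I expect neither check to pose real difficulty, so the substantive content has been front-loaded into \cref{jrb-000X,jrb-001B} and the lemmas beneath them.
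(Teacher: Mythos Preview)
Your proposal is correct and follows essentially the same approach as the paper: the paper assembles exactly the chain $\mathcal{P}\mathsf{s}^{\mathsf{p}}_{\mathscr{F}}(\mathsf{A},\mathcal{C}\mathsf{at}) \downarrow M \simeq \mathcal{O}\mathsf{pFib}_{\mathscr{F}}(\mathsf{A}) \downarrow \pi_M \cong \mathcal{O}\mathsf{pFib}_{\mathscr{F}}(\mathcal{E}\mathsf{l}(M)) \simeq \mathcal{P}\mathsf{s}^{\mathsf{p}}_{\mathscr{F}}(\mathcal{E}\mathsf{l}(M),\mathcal{C}\mathsf{at})$ via \cref{jrb-000X} and \cref{jrb-001B}, and handles the lax case by the same argument with \cref{jrb-0025}. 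Your additional remarks on the two bookkeeping checks (that slicing is compatible with the 2-equivalence and that $\pi_M$ is a genuine discrete $\mathscr{F}$-opfibration) are accurate elaborations the paper leaves implicit.
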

\subsubsection{Model opfibrations}\label{jrb-001H}\par{}
We'd like to now take the equivalence:
\begin{equation}\mathcal {P}\mathsf {s}^{\mathsf {p}}_{\mathscr {F}} \left (\mathsf {A},\mathcal {C}\mathsf {at}\right ) \downarrow  M \simeq  \mathcal {P}\mathsf {s}^{\mathsf {p}}_{\mathscr {F}} \left ( \mathcal {E}\mathsf {l} \left ( M \right ),\mathcal {C}\mathsf {at}\right )\end{equation}
for \(\mathsf {A}\) a locally-discrete \(F\)-category and \(M\) a “discrete” pseudo-\(\mathscr {F}\)-functor to \(\mathcal {C}\mathsf {at}\), and restrict it to an equivalence:
\begin{equation}\mathcal {P}\mathsf {sMod}^{\mathsf {p}}_{\mathscr {F}} \left (\mathsf {A},\mathcal {C}\mathsf {at}\right ) \downarrow  M \simeq  \mathcal {P}\mathsf {sMod}^{\mathsf {p}}_{\mathscr {F}} \left ( \mathcal {E}\mathsf {l} \left ( M \right ),\mathcal {C}\mathsf {at}\right )\end{equation}
in the case where \(\mathsf {A}\) is moreover an \(\mathscr {F}\)-sketch and \(M\) is moreover a model.

We can further exploit our fibred perspective on pseudo-\(\mathscr {F}\)-functors to analyse pseudo-\emph{models} once we give a characterisation of those \(\mathscr {F}\)-opfibrations which arise as Grothendieck constructions of pseudo-models.
\par{}
\begin{definition}[{Model opfibration}]\label{jrb-001P}
\par{}
A functor \(\mathsf {B} \to  \mathsf {A}\) into a locally-discrete \(\mathscr {F}\)-sketch is a \textbf{model opfibration} if it is isomorphic to the Grothendieck construction of a pseudo model of \(\mathsf {A}\). It is a \textbf{discrete model opfibration} if it is moreover a discrete \(\mathscr {F}\)-opfibration.
\end{definition}

To provide a more convenient characterisation of model opfibrations we first establish a lemma about lifting functors along general \(\mathscr {F}\)-opfibrations:

\begin{lemma}\label{jrb-001E}
\par{}
Given a locally-discrete \(\mathscr {F}\)-category \(\mathsf {A}\), a pseudo-\(\mathscr {F}\)-functor \(P : \mathsf {A} \to  \mathcal {C}\mathsf {at}\) and an \(\mathscr {F}\)-functor \(S : \mathsf {J} \to  \mathsf {A}\) from a \emph{chordate} \(\mathscr {F}\)-category (all morphisms are inerts), the category of lifts of \(S\) to an \(\mathscr {F}\)-functor \(S' : \mathsf {J} \to  \int  P\) is isomorphic to \(\lim  PS\).

\begin{proof}[{proof of \cref{jrb-001E}.}]\label{jrb-001E-proof}
\par{}
Consider such a lift:

  \begin{center}
\begin {tikzcd}
	& {\int  P} \\
	{\mathsf {J}} & {\mathsf {A}}
	\arrow [""{name=0, anchor=center, inner sep=0}, "{\pi _P}", from=1-2, to=2-2]
	\arrow ["{S'}", from=2-1, to=1-2]
	\arrow ["S"', from=2-1, to=2-2]
	\arrow ["\circlearrowright "{marking, allow upside down}, draw=none, from=2-1, to=0]
\end {tikzcd}
\end{center}

The data of \(S'\) are choices for each \(j \in  \mathsf {J}\) of an object in the fibre over \(S_j\) of \(\pi _P\), i.e. an object \(x_j\) in \(PS_j\). Because the only inerts in \(\int  P\) are chosen-opcartesian lifts of inerts in \(\mathsf {A}\), the image of a morphism \(u : j \to  k\) in \(\mathsf {J}\) must be \emph{the} chosen opcartesian lift \(\underline {S_u} : \left ( Sj, x_j \right ) \to  \left ( Sk, x_k \right )\), from which it follows that \(x_k = PS_u x_k\). Such data are equivalent to a cone from \(\bullet \) to \(PS\) in \(\mathcal {C}\mathsf {at}\).
\par{}
A (vertical) morphism between lifts from \(j \mapsto  x_j\) to \(j \mapsto  y_j\) is equivalent to specifying for each \(j \in  \mathsf {J}\) a morphism \(\alpha _j : x_j \to  y_j\) in \(PS_j\) in a natural way, i.e. such that for each \(u : j \to  k\) in \(\mathsf {J}\), \(\alpha _k = P_u \alpha _j\), which is equivalently a morphism between the corresponding cones \(\left \langle  x_j \right \rangle \) and \(\left \langle  y_j \right \rangle \) in \(\mathcal {C}\mathsf {at}\). The result then follows by the fact that \(\lim  PS \cong  \left [ \mathsf {J},\mathcal {C}\mathsf {at} \right ] \left ( \Delta  1, PS \right )\).
\end{proof}
\end{lemma}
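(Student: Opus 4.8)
The plan is to unwind the definition of an $\mathscr{F}$-functor lift $S' : \mathsf{J} \to \int P$ of $S$, show that its data is exactly that of a cone over $PS$ with apex the terminal category, and then check that this identification extends to the morphisms of the two categories. First I would recall that $\int P$ carries the induced locally-discrete $\mathscr{F}$-category structure of \cref{jrb-000W}, whose inert $1$-cells are precisely the \emph{chosen} opcartesian lifts of inerts of $\mathsf{A}$ (these being closed under composition by condition 2 of that definition, so that they genuinely exhaust the inerts of $\int P$). Since $\mathsf{J}$ is chordate, every $1$-cell $u : j \to k$ of $\mathsf{J}$ is inert, hence $S_u$ is inert in $\mathsf{A}$ and an $\mathscr{F}$-functor $S'$ must send $u$ to an inert of $\int P$ lying over $S_u$; such a morphism is forced to be the chosen opcartesian lift $\underline{S_u}$. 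Writing $S'j = (S_j, x_j)$ with $x_j \in PS_j$, the target of $\underline{S_u}(S_j, x_j)$ is $(S_k, (PS_u)\,x_j)$, and comparing with $S'k = (S_k, x_k)$ forces $x_k = (PS_u)\,x_j$.

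Conversely, I would check that any family $(x_j)_{j \in \mathsf{J}}$ with $x_k = (PS_u)\,x_j$ for every $u : j \to k$ assembles into a (necessarily unique) $\mathscr{F}$-functor lift of $S$: functoriality on a composite $vu$ holds because $P$ is strict on inerts, so $(PS)_{vu} = (PS)_v (PS)_u$ on the nose, and no compositor or unitor data need to be supplied since every compositor of $P$ along an inert is an identity by \cref{djm-00I2}. But such a family is exactly a cone $\Delta 1 \Rightarrow PS$ in $\mathcal{C}\mathsf{at}$ --- a choice of an object of $PS_j$ in each slot, compatible with the transition functors --- equivalently a single object of $\lim PS$.

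For the $1$-cells, a (vertical) $\mathscr{F}$-natural transformation $\alpha : S' \Rightarrow T'$ between two lifts has component at $j$ a morphism of $\int P$ over $1_{S_j}$ from $(S_j, x_j)$ to $(S_j, y_j)$, which is just a morphism $\alpha_j : x_j \to y_j$ in $PS_j$; naturality of $\alpha$ against the chosen lift $\underline{S_u}$ then forces $\alpha_k = (PS_u)\,\alpha_j$ for each $u : j \to k$. This is precisely a morphism of cones, i.e.\ a morphism in $\lim PS$. Combining the two halves gives a bijection on objects and on each hom-set that is strictly compatible with composition and identities, so the category of lifts is isomorphic to $\lim PS \cong [\mathsf{J}, \mathcal{C}\mathsf{at}](\Delta 1, PS)$.

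The main obstacle is the $\mathscr{F}$-categorical bookkeeping rather than anything conceptual: one must be careful that an \emph{$\mathscr{F}$-functor} lift genuinely pins the behaviour on morphisms down to the \emph{chosen} opcartesian lift --- this uses both the description of the induced $\mathscr{F}$-structure on $\int P$ and the chordateness of $\mathsf{J}$ --- and that the pseudofunctoriality of $P$ contributes no leftover compositor data along these inerts, which is exactly the strictness-on-inerts clause of \cref{djm-00I2}. Once those points are secured, the identification of lifts with cones, and hence with $\lim PS$, is routine.
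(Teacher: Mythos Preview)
Your proposal is correct and follows essentially the same approach as the paper's proof: identify the data of an $\mathscr{F}$-lift with a compatible family $(x_j)$ via the fact that inerts in $\int P$ are exactly the chosen opcartesian lifts, then match morphisms of lifts with morphisms of cones. You are in fact slightly more careful than the paper, explicitly checking the converse direction (that a compatible family reassembles into a functorial lift using strictness of $P$ on inerts) where the paper leaves this implicit.
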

\par{}
For the purpose of characterising model opfibrations, the lifts we are interested in are of functors from the domain and codomain of the marked cones of \(\mathsf {A}\) (recall that we define marked cones in \(\mathsf {A}\) with vertex \(a \in  \mathsf {A}\) as \(\mathscr {F}\)-functors from locally-discrete chordate \(\mathscr {F}\)-categories to \(a \downarrow  {\mathsf {A}}_{\mathsf {inert}}\)). The diagram over which this cone lies is simply the composite of this \(\mathscr {F}\)-functor with the projection \(\mathsf {cod} : a \downarrow  {\mathsf {A}}_{\mathsf {inert}} \to  \mathsf {A}\).

\begin{definition}[{\(\mathscr {F}\)-opfibrations which lift cones}]\label{jrb-001F}
\par{}
Given a locally-discrete \(\mathscr {F}\)-category \(\mathsf {A}\) and an \hyperref[jrb-000W]{\(\mathscr {F}\)-opfibration} \(p : \mathsf {B} \to  \mathsf {A}\), we say that \(p\) \textbf{lifts} a marked cone \(S : \mathsf {J} \to  a \downarrow  {\mathsf {A}}_{\mathsf {inert}}\) if:
\begin{enumerate}\item{}for all commuting squares of \(\mathscr {F}\)-functors as shown on the left, there is a unique diagonal filler as shown on the right:

  \begin{center}
\begin {tikzcd}
	{\mathsf {J}} & {\mathsf {B}} \\
	{a \downarrow  {\mathsf {A}}_{\mathsf {inert}}} & {\mathsf {A}}
	\arrow ["{L}", from=1-1, to=1-2]
	\arrow [""{name=0, anchor=center, inner sep=0}, "S"', from=1-1, to=2-1]
	\arrow [""{name=1, anchor=center, inner sep=0}, "{p}", from=1-2, to=2-2]
	\arrow ["{\mathsf {cod}}"', from=2-1, to=2-2]
	\arrow ["\circlearrowright "{marking, allow upside down}, draw=none, from=0, to=1]
  \end {tikzcd}
  $ \quad  \leadsto  \quad $
  \begin {tikzcd}
	{\mathsf {J}} & {\mathsf {B}} \\
	{a \downarrow  {\mathsf {A}}_{\mathsf {inert}}} & {\mathsf {A}}
	\arrow ["L", from=1-1, to=1-2]
	\arrow ["S"', from=1-1, to=2-1]
	\arrow ["{p}", from=1-2, to=2-2]
	\arrow [""{name=0, anchor=center, inner sep=0}, "{L'}"{description}, from=2-1, to=1-2]
	\arrow ["{\mathsf {cod}}"', from=2-1, to=2-2]
	\arrow ["\circlearrowright "{marking, allow upside down, pos=0.6}, draw=none, from=0, to=1-1]
	\arrow ["\circlearrowright "{marking, allow upside down, pos=0.6}, draw=none, from=0, to=2-2]
\end {tikzcd}
  \end{center}
  \item{}morphism of lifts \(\alpha  : L \Rightarrow   K\) extend uniquely to morphisms between the corresponding diagonal fillers:

  \begin{center}
\begin {tikzcd}
	{\mathsf {J}} & {\mathsf {B}} \\
	{a \downarrow  {\mathsf {A}}_{\mathsf {inert}}} & {\mathsf {A}}
	\arrow [""{name=0, anchor=center, inner sep=0}, "L"', from=1-1, to=1-2]
	\arrow [""{name=1, anchor=center, inner sep=0}, "K", popup=1em, from=1-1, to=1-2]
	\arrow ["S"', from=1-1, to=2-1]
	\arrow ["{p}", from=1-2, to=2-2]
	\arrow ["{\mathsf {cod}}"', from=2-1, to=2-2]
	\arrow ["\alpha "', shorten <=4pt, shorten >=4pt, Rightarrow, from=0, to=0|-1]
  \end {tikzcd}
  $ \quad  \leadsto  \quad  $
  \begin {tikzcd}
	{\mathsf {J}} & {\mathsf {B}} \\
	{a \downarrow  {\mathsf {A}}_{\mathsf {inert}}} & {\mathsf {A}}
	\arrow ["S"', from=1-1, to=2-1]
	\arrow ["{p}", from=1-2, to=2-2]
	\arrow [""{name=0, anchor=center, inner sep=0}, "{L'}"', curve={height=10pt}, from=2-1, to=1-2]
	\arrow [""{name=1, anchor=center, inner sep=0}, "{K'}"{pos=0.7}, curve={height=-10pt}, from=2-1, to=1-2]
	\arrow ["{\mathsf {cod}}"', from=2-1, to=2-2]
	\arrow ["\alpha  '"',shorten <=1pt, shorten >=1pt, Rightarrow, from=0, to=1]
\end {tikzcd}
  \end{center}\end{enumerate}
This can be more succinctly expressed as the condition that precomposition by \(S\) induces an isomorphism 
\(\left ( \mathscr {F}\mathsf {Cat} \downarrow  \mathsf {A} \right ) \left ( \mathsf {cod}, p \right ) \cong  \left ( \mathscr {F}\mathsf {Cat} \downarrow  \mathsf {A} \right ) \left ( \mathsf {cod}\;S, p \right )\)\end{definition}

\begin{lemma}\label{jrb-001G}
\par{}
An \(\mathscr {F}\)-opfibration over an \(\mathscr {F}\)-sketch \(\mathsf {A}\) is a model opfibration if and only if it
lifts all marked cones of the \(\mathscr {F}\)-sketch.

\begin{proof}[{proof of \cref{jrb-001G}.}]\label{jrb-001G-proof}
\par{}
By \cref{jrb-000X} we may assume the \(\mathscr {F}\)-opfibration is the Grothendieck construction of a pseudo-\(\mathscr {F}\)-functor, \(P : \mathsf {A} \to  \mathcal {C}\mathsf {at}\).
  Note that the category of lifts of \(\mathsf {cod} : a \downarrow  {\mathsf {A}}_{\mathsf {inert}} \to  \mathsf {A}\) along \(\pi _P : \int  P \to  \mathsf {A}\) is equivalent to \(P_a\), since every object in \(a \downarrow  {\mathsf {A}}_{\mathsf {inert}}\) admits an inert morphism to \(1_a\), and must therefore be mapped under any lift \(T : a \downarrow  {\mathsf {A}}_{\mathsf {inert}} \to  \int  P\) to the opcartesian "pushforward" of the image of \(1_a\) along the underlying inert morphism in \(\mathsf {A}\). I.e. the object \(u : a \to  x\) must be mapped to \(u_* \left ( T(1_a) \right )\).
 Thus any such lift is determined by the image of \(1_a\), which is equivalent to an element of \(P_a\). A vertical 2-cell between lifts is equivalent to morphisms between the corresponding objects in \(P_a\).
\par{}
So we have:
\begin{enumerate}\item{}\(\mathscr {F}\mathsf {Cat} \downarrow  \mathsf {A} \left ( a \downarrow  {\mathsf {A}}_{\mathsf {inert}} \xrightarrow {\mathsf {cod}} \mathsf {A}, \pi _P \right ) \cong  P_a\)
\item{}\(\mathscr {F}\mathsf {Cat} \downarrow  \mathsf {A} \left ( \mathsf {cod} \; S, \pi _P \right ) \cong  \lim  PS\)\end{enumerate}\par{}
The map \(P_a \to  \lim  PS\) induced by pre-composition along \(S : \mathsf {J} \to  a \downarrow  {\mathsf {A}}_{\mathsf {inert}}\) is simply the map induced by the cone \(\Delta  P_a \Rightarrow   PS\) whose component at \(j \in  \mathsf {J}\) is the image of the arrow \(S_j : a \to  s_j\) under the pseudo-\(\mathscr {F}\)-functor \(P\). So the condition that this map be an isomorphism, and thus that \(P\) map this marked cone to a limit cone, is equivalent to the condition that \(\pi _P\) lift this cone.
\end{proof}
\end{lemma}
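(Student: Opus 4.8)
The plan is to pass to the fibred picture and then identify two hom-categories in the slice $\mathscr{F}\mathsf{Cat} \downarrow \mathsf{A}$ together with the functor between them. By \cref{jrb-000X} (applicable since, as throughout this subsection, $\mathsf{A}$ is locally-discrete) every $\mathscr{F}$-opfibration over $\mathsf{A}$ is isomorphic to the Grothendieck construction $\pi_P \colon \int P \to \mathsf{A}$ of a pseudo-$\mathscr{F}$-functor $P \colon \mathsf{A} \to \mathcal{C}\mathsf{at}$, and it is a \emph{model} opfibration exactly when $P$ can be chosen to be a pseudo-model, i.e. when $P$ sends every marked cone of $\mathsf{A}$ to a limit cone in $\mathcal{C}\mathsf{at}$. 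So it suffices to prove, for each fixed marked cone $S \colon \mathsf{J} \to a \downarrow \mathsf{A}_{\mathsf{inert}}$ (with $\mathsf{J}$ chordate, per this paper's convention), that $\pi_P$ lifts $S$ if and only if $P$ sends $S$ to a limit cone. By the compact reformulation at the end of \cref{jrb-001F}, ``$\pi_P$ lifts $S$'' means exactly that precomposition by $S$ is an isomorphism of categories $\bigl(\mathscr{F}\mathsf{Cat} \downarrow \mathsf{A}\bigr)(\mathsf{cod}, \pi_P) \xrightarrow{\ \sim\ } \bigl(\mathscr{F}\mathsf{Cat} \downarrow \mathsf{A}\bigr)(\mathsf{cod} \circ S, \pi_P)$.

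Next I would compute the two sides. For the domain I claim $\bigl(\mathscr{F}\mathsf{Cat} \downarrow \mathsf{A}\bigr)(\mathsf{cod}, \pi_P) \cong P_a$: a morphism there is an $\mathscr{F}$-functor $T \colon a \downarrow \mathsf{A}_{\mathsf{inert}} \to \int P$ lifting $\mathsf{cod}$, and since $(a,1_a)$ is initial in $a \downarrow \mathsf{A}_{\mathsf{inert}}$ with the unique arrow to any $(x,u)$ being $u$ itself --- which is inert --- the $\mathscr{F}$-functoriality of $T$ forces $T$ to send that arrow to the chosen opcartesian lift of $u$, whence $T(x,u) = u_\ast\bigl(T(a,1_a)\bigr)$; thus $T$ is determined by the object $T(a,1_a)$ of $P_a$, any object of $P_a$ arises this way, and the same reasoning identifies vertical $2$-cells between lifts with morphisms in $P_a$. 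For the codomain, $\mathsf{cod}\circ S \colon \mathsf{J} \to \mathsf{A}$ is an $\mathscr{F}$-functor out of a chordate $\mathscr{F}$-category, so \cref{jrb-001E} gives $\bigl(\mathscr{F}\mathsf{Cat} \downarrow \mathsf{A}\bigr)(\mathsf{cod}\circ S, \pi_P) \cong \lim P(\mathsf{cod}\circ S)$.

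Finally I would recognise the precomposition functor under these identifications. Restricting $T$ along $S$ sends $j \in \mathsf{J}$ to $T\bigl(S(j)\bigr) = (S_j)_\ast\bigl(T(a,1_a)\bigr)$, where $S_j \colon a \to \mathsf{cod}\,S(j)$ is the $j$-th leg of the cone; under Grothendieck this is $P(S_j)$ applied to $T(a,1_a)$. Hence precomposition by $S$ is precisely the canonical map $P_a \to \lim P(\mathsf{cod}\circ S)$ induced by the cone on $P(\mathsf{cod}\circ S)$ with legs $P(S_j)$, and that cone is a limit cone by definition exactly when $P$ sends the marked cone $S$ to a limit cone. Therefore $\pi_P$ lifts $S$ iff $P$ sends $S$ to a limit cone; ranging over all marked cones of $\mathsf{A}$ completes the proof.

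I expect the only real care to be needed in the first half of the middle paragraph: one must verify that strict preservation of inerts genuinely determines $T$ on \emph{all} of $a \downarrow \mathsf{A}_{\mathsf{inert}}$ from its value at $(a,1_a)$ (using initiality of $(a,1_a)$ and that every morphism out of it is inert, together with the description of inerts in $\int P$ as chosen opcartesian lifts), and that $T \mapsto T(a,1_a)$ is an isomorphism of categories rather than merely a bijection on objects. Everything else is a direct assembly of \cref{jrb-000X}, \cref{jrb-001E}, and \cref{jrb-001F}; the remaining points of attention are just getting the directions of the opcartesian pushforwards and the shape of the comparison cone to line up.
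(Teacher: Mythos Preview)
Your proposal is correct and follows essentially the same approach as the paper: reduce via \cref{jrb-000X} to $\pi_P$ for a pseudo-$\mathscr{F}$-functor $P$, identify the two hom-categories in $\mathscr{F}\mathsf{Cat}\downarrow\mathsf{A}$ as $P_a$ (using initiality of $1_a$ in the inert coslice) and $\lim P(\mathsf{cod}\circ S)$ (via \cref{jrb-001E}), and recognise the precomposition-by-$S$ map as the canonical comparison cone. The paper's proof is the same argument with the same ingredients.
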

\par{}
Our proof of the equivalence:
\begin{equation}\mathcal {P}\mathsf {sMod}^{\mathsf {p}}_{\mathscr {F}} \left (\mathsf {A},\mathcal {C}\mathsf {at}\right ) \downarrow  M \simeq  \mathcal {P}\mathsf {sMod}^{\mathsf {p}}_{\mathscr {F}} \left ( \mathcal {E}\mathsf {l} \left ( M \right ),\mathcal {C}\mathsf {at}\right )\end{equation}
in \cref{djm-00IG} will have the same structure as our proof of \cref{jrb-001N}, with the full subcategory \(\mathcal {O}\mathsf {pFib}_{\mathsf {mod}}\left (\mathsf {A}\right ) \hookrightarrow  \mathcal {O}\mathsf {pFib}_{\mathscr {F}}\left (\mathsf {A}\right )\) of model opfibrations playing the role of \(\mathcal {O}\mathsf {pFib}_{\mathscr {F}}\left (\mathsf {A}\right )\).

\begin{proof}[{proof of \cref{djm-00IG}.}]\label{jrb-001H-proof}
\par{}\begin{equation}
\begin {aligned}
\mathcal {P}\mathsf {sMod}^{\mathsf {p}}_{\mathscr {F}} \left (\mathsf {A},\mathcal {C}\mathsf {at}\right ) \downarrow  M
\simeq  \; & \mathcal {O}\mathsf {pFib}_{\mathsf {mod}}\left (\mathsf {A}\right ) \downarrow  \left [ \mathcal {E}\mathsf {l} \left ( M \right ) \xrightarrow {\pi _M} \mathsf {A} \right ] \\
\cong  \; & \mathcal {O}\mathsf {pFib}_{\mathsf {mod}}\left (\mathcal {E}\mathsf {l} \left ( M \right )\right )  \\
\simeq  \; & \mathcal {P}\mathsf {sMod}^{\mathsf {p}}_{\mathscr {F}} \left (\mathcal {E}\mathsf {l}\left ( M \right ),\mathcal {C}\mathsf {at}\right )
\end {aligned}
\end{equation}
The first and last equivalences follow from \hyperref[jrb-001P]{the definition of model opfibrations}, but the middle isomorphism is yet to be proven. Given that the result holds with \(\mathscr {F}\)-opfibrations replacing model opfibrations, it suffices to show that post-composition of \(\mathscr {F}\)-opfibrations by discrete model opfibrations preserves and reflects the property of being a model opfibration. This is proven below as \cref{jrb-001I} after establishing a technical result about \(\mathscr {F}\)-opfibrations in \cref{jrb-001R}.
\end{proof}

\begin{lemma}[{Inert slice projections are isomorphisms}]\label{jrb-001R}
\par{}
Given an \(\mathscr {F}\)-opfibration \(p : \mathsf {B} \to  \mathsf {A}\) with \(\mathsf {A}\) locally-discrete, the projection \(b \downarrow  p : b \downarrow  {\mathsf {B}}_{\mathsf {inert}} \to  p_b \downarrow  {\mathsf {A}}_{\mathsf {inert}}\) is an isomorphism.

\begin{proof}[{proof of \cref{jrb-001R}.}]\label{jrb-001R-proof}
\par{}
Let \(p_{\mathsf {inert}} : {\mathsf {B}}_{\mathsf {inert}} \to  {\mathsf {A}}_{\mathsf {inert}}\) denote the restriction of \(p\) to the inert subcategories. The map \(p_{\mathsf {inert}}\) is a discrete opfibration since the inerts of \(\mathsf {B}\) are, by definition, the chosen lifts of inerts in \(\mathsf {A}\). The result then follows by the definition of discrete opfibration.
\end{proof}
\end{lemma}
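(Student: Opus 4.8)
The plan is to reduce the statement to the elementary fact that a discrete opfibration $q : \mathsf{E} \to \mathsf{C}$ induces an isomorphism of coslice categories $e \downarrow \mathsf{E} \xrightarrow{\ \sim\ } q(e) \downarrow \mathsf{C}$ for every object $e \in \mathsf{E}$; the real content is then only in identifying the correct discrete opfibration.

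First I would restrict $p$ to the inert subcategories, obtaining a functor $p_{\mathsf{inert}} : \mathsf{B}_{\mathsf{inert}} \to \mathsf{A}_{\mathsf{inert}}$. This is well-defined: by \cref{jrb-000W} the inert $1$-cells of $\mathsf{B}$ are exactly the chosen opcartesian lifts $\underline{u}(x)$ of inert $1$-cells $u$ of $\mathsf{A}$, and $p\bigl(\underline{u}(x)\bigr) = u$ is inert; that these lifts contain identities and are closed under composition — so that $\mathsf{B}_{\mathsf{inert}}$ really is a subcategory — is conditions $1$ and $2$ of \cref{jrb-000W} together with closure of the inerts of $\mathsf{A}$ under composition. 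Next I would check that $p_{\mathsf{inert}}$ is a \emph{discrete} opfibration. Given $x \in \mathsf{B}$ and an inert $u : px \to a$ in $\mathsf{A}$, the chosen lift $\underline{u}(x) : x \to u_*x$ is an inert morphism of $\mathsf{B}$ lying over $u$ (existence); and if $w : x \to y$ is any inert morphism of $\mathsf{B}$ with $p(w) = u$, then, being inert, $w$ equals $\underline{u'}(x)$ for some inert $u'$ of $\mathsf{A}$, whence $u' = p(w) = u$ and $w = \underline{u}(x)$ (uniqueness). In particular the fibres of $p_{\mathsf{inert}}$ are discrete, since by condition $1$ the only inert lift of an identity is an identity.

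Applying the coslice fact to $q = p_{\mathsf{inert}}$ and $e = b$ then gives the lemma, since $b \downarrow p$ is by construction the functor $b \downarrow \mathsf{B}_{\mathsf{inert}} \to p_b \downarrow \mathsf{A}_{\mathsf{inert}}$ induced by $p_{\mathsf{inert}}$. Should one wish to spell the coslice fact out rather than cite it: $b \downarrow p$ is bijective on objects because an object of $b \downarrow \mathsf{B}_{\mathsf{inert}}$ lying over a fixed inert $v : p_b \to a$ is precisely a lift of $v$ starting at $b$, of which there is exactly one; and it is fully faithful because, given inerts $\phi : b \to x$ and $\psi : b \to y$ in $\mathsf{B}$ and an inert $\sigma : px \to py$ with $\sigma\,(p\phi) = p\psi$, the composite $\underline{\sigma}(x)\circ\phi$ is a lift of $p\psi$ starting at $b$, hence equals $\psi$ by uniqueness, so $\underline{\sigma}(x) : x \to y$ is the unique morphism $\phi \to \psi$ over $\sigma$.

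The step to be most careful about is the uniqueness half of the discrete-opfibration claim, which rests on the fact that the induced $\mathscr{F}$-structure on $\mathsf{B}$ marks \emph{only} the chosen opcartesian lifts of inerts, leaving no room for a second inert lift of a given inert map of $\mathsf{A}$. Everything else is formal, so no genuine obstacle arises — fittingly, since this lemma functions merely as a technical stepping-stone in the proofs of \cref{jrb-001I} and \cref{djm-00IG}.
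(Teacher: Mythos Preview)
Your proposal is correct and follows exactly the same approach as the paper: restrict $p$ to the inert subcategories, observe that $p_{\mathsf{inert}}$ is a discrete opfibration because inerts in $\mathsf{B}$ are by definition the chosen lifts of inerts in $\mathsf{A}$, and conclude via the standard coslice fact for discrete opfibrations. The paper's proof is essentially a two-sentence sketch of what you have spelled out in detail.
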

\par{}
We are now ready to prove preservation and reflection of the model-opfibration property under post-composition by discrete model opfibrations:

\begin{lemma}[{Post-composition with discrete model opfibrations preserves and reflects model opfibrations}]\label{jrb-001I}
\par{}
Given a locally-discrete \(\mathscr {F}\)-sketch, \(\mathsf {A}\), a discrete model opfibration \(p : \mathsf {B} \to  \mathsf {A}\) and an \(\mathscr {F}\)-opfibration \(q : \mathsf {C} \to  \mathsf {B}\), \(q\) is a model opfibration with respect to the induced \(\mathscr {F}\)-sketch structure on \(\mathsf {B}\) if and only if the composite \(p\;q\) is a model opfibration.

\begin{proof}[{proof of \cref{jrb-001I}.}]\label{jrb-001I-proof}
\par{}
By \cref{jrb-001G} it suffices to exhibit certain unique diagonal fillers in the appropriate contexts. First, to show compositionality, assume we have a marked cone \(S : \mathsf {J} \to  x \downarrow  {\mathsf {A}}_{\mathsf {inert}}\) for the \(\mathscr {F}\)-sketch structure on \(\mathsf {A}\), and a functor \(L : \mathsf {J} \to  \mathsf {C}\):

  \begin{center}
\begin {tikzcd}[row sep=1em]
	{\mathsf {J}} & {\mathsf {C}} \\
	& {\mathsf {B}} \\
	{x \downarrow  {\mathsf {A}}_{\mathsf {inert}}} & {\mathsf {A}}
	\arrow [""{name=0, anchor=center, inner sep=0}, "L", from=1-1, to=1-2]
	\arrow ["S"', from=1-1, to=3-1]
	\arrow ["q", from=1-2, to=2-2]
	\arrow ["{p}", from=2-2, to=3-2]
	\arrow [""{name=1, anchor=center, inner sep=0}, "{\mathsf {cod}}"', from=3-1, to=3-2]
	\arrow ["\circlearrowright "{marking, allow upside down}, draw=none, from=0, to=1]
\end {tikzcd}
\end{center}

By the model-opfibration property of \(p\) there is a unique morphism \(u : x \downarrow  {\mathsf {A}}_{\mathsf {inert}} \to  \mathsf {B}\) making the following diagram commute:

  \begin{center}
\begin {tikzcd}[row sep=1em]
	{\mathsf {J}} & {\mathsf {C}} \\
	& {\mathsf {B}} \\
	{x \downarrow  {\mathsf {A}}_{\mathsf {inert}}} & {\mathsf {A}}
	\arrow ["L", from=1-1, to=1-2]
	\arrow ["S"', from=1-1, to=3-1]
	\arrow ["q", from=1-2, to=2-2]
	\arrow ["{p}", from=2-2, to=3-2]
	\arrow ["{u}"{description}, from=3-1, to=2-2]
	\arrow ["{\mathsf {cod}}"', from=3-1, to=3-2]
\end {tikzcd}
  \end{center}

  the image of \(1_x \in  x \downarrow  {\mathsf {A}}_{\mathsf {inert}}\) under \(u\) picks an element \(y\) in the fibre of \(p\) over \(x\) such that \(u\) factors as \(x \downarrow  {\mathsf {A}}_{\mathsf {inert}} \xrightarrow {\left ( y \downarrow  p \right )^{-1}} y \downarrow  {\mathsf {B}}_{\mathsf {inert}} \xrightarrow {\mathsf {cod}} \mathsf {B}\), where \(\left ( y \downarrow  p \right )^{-1}\) exists by \cref{jrb-001R}. The map \(S' := \left ( y \downarrow  p \right )^{-1}\;S\) is then a marked cone for the \(\mathscr {F}\)-sketch \(\mathsf {B}\), and so we have a further lift \(v\) as in the diagram below:

  \begin{center}
\begin {tikzcd}[row sep=1em]
	{\mathsf {J}} & {\mathsf {C}} \\
	{y \downarrow  {\mathsf {B}}_{\mathsf {inert}}} & {\mathsf {B}} \\
	{x \downarrow  {\mathsf {A}}_{\mathsf {inert}}} & {\mathsf {A}}
	\arrow ["L", from=1-1, to=1-2]
	\arrow ["{S'}"', from=1-1, to=2-1]
	\arrow ["q", from=1-2, to=2-2]
	\arrow ["{v}", from=2-1, to=1-2]
	\arrow ["{\mathsf {cod}}", from=2-1, to=2-2]
	\arrow ["y \downarrow  p"', from=2-1, to=3-1]
	\arrow ["{p}", from=2-2, to=3-2]
	\arrow ["{u}"{description}, from=3-1, to=2-2]
	\arrow ["{\mathsf {cod}}"', from=3-1, to=3-2]
\end {tikzcd}
  \end{center}

  Because the map \(y \downarrow  p : y \downarrow  {\mathsf {B}}_{\mathsf {inert}} \to  x \downarrow  {\mathsf {A}}_{\mathsf {inert}}\) is an isomorphism, such a diagonal filler \(v\) is equivalent to a diagonal filler \(w : x \downarrow  {\mathsf {A}}_{\mathsf {inert}} \to  \mathsf {C}\) satisfying the  condition that \(q\;w = u\) and \(w\; \left ( y \downarrow  p \right )\; S' = L\). The first condition is \emph{a priori} stronger than necessary for a diagonal filler of the outside square, which need only satisfy \(p\;q\;w = \mathsf {cod}\). However, the seemingly weaker condition implies that \(q\;w\) is a diagonal filler for \(S\) against \(p\), and thus is equivalent to the stronger condition by the model-opfibration property of \(p\).
\par{}
Now consider two lifts of \(S\) with a \(\left ( p\;q \right )\)-vertical 2-cell between them:

  \begin{center}
\begin {tikzcd}[row sep=1em]
	{\mathsf {J}} & {\mathsf {C}} \\
	& {\mathsf {B}} \\
	{x \downarrow  {\mathsf {A}}_{\mathsf {inert}}} & {\mathsf {A}}
	\arrow [""{name=0, anchor=center, inner sep=0}, "L"', from=1-1, to=1-2]
	\arrow [""{name=1, anchor=center, inner sep=0}, "K", popup=1em, from=1-1, to=1-2]
	\arrow ["S"', from=1-1, to=3-1]
	\arrow ["q", from=1-2, to=2-2]
	\arrow ["p", from=2-2, to=3-2]
	\arrow [""{name=2, anchor=center, inner sep=0}, "{\mathsf {cod}}"', from=3-1, to=3-2]
	\arrow ["{\alpha }"{outer sep=1pt},shorten <=5pt, shorten >=5pt, Rightarrow, from=0, to=0|-1]
	\arrow ["\circlearrowright "{marking, allow upside down}, draw=none, from=2, to=0]
\end {tikzcd}
\end{center}

By the discreteness of \(p\) such an \(\alpha \) must moreover be \(q\)-vertical. We can therefore repeat the previous trick of lifting the cone \(S\) to a marked cone \(S' : \mathsf {J} \to  y \downarrow  {\mathsf {B}}_{\mathsf {inert}}\) such that \(\mathsf {cod}\; S' = q\;L = q\;K\):

  \begin{center}
\begin {tikzcd}[row sep=1em]
	{\mathsf {J}} & {\mathsf {C}} \\
	{y \downarrow  {\mathsf {B}}_{\mathsf {inert}}}& {\mathsf {B}} \\
	{x \downarrow  {\mathsf {A}}_{\mathsf {inert}}} & {\mathsf {A}}
	\arrow [""{name=0, anchor=center, inner sep=0}, "L"', from=1-1, to=1-2]
	\arrow [""{name=1, anchor=center, inner sep=0}, "K", popup=1em, from=1-1, to=1-2]
	\arrow ["S'"', from=1-1, to=2-1]
	\arrow ["y \downarrow  p"', from=2-1, to=3-1]
	\arrow ["q", from=1-2, to=2-2]
	\arrow ["p", from=2-2, to=3-2]
	\arrow [""{name=2, anchor=center, inner sep=0}, "{\mathsf {cod}}"', from=3-1, to=3-2]
	\arrow ["{\alpha }"{outer sep=1pt},shorten <=5pt, shorten >=5pt, Rightarrow, from=0, to=0|-1]
	\arrow ["\circlearrowright "{marking, allow upside down}, draw=none, from=2, to=0]
\end {tikzcd}
\end{center}

By the model-opfibration property of \(q\) we can then factor \(\alpha \) through \(S'\) in a unique way, giving a 2-cell \(\alpha ' : K' \Rightarrow  L'\) between the diagonal fillers of \(K\) and \(L\). Precomposing by the isomorphism \(\left ( y \downarrow  p \right )^{-1} : x \downarrow  {\mathsf {A}}_{\mathsf {inert}} \to  y \downarrow  {\mathsf {B}}_{\mathsf {inert}}\) then produces the unique factorisation of \(\alpha \) through \(S\).
  
\begin{remark}[{Is discreteness necessary?}]\label{jrb-002F}
\par{}
Though we've just appealed to the discreteness of \(p\) for the lifting of 2-cells, it wasn't strictly necessary; \(p\,q\) will be a model opfibration for \emph{any} model opfibration \(p\). This fact won't have any further relevance in this paper so its proof is relegated to \cref{jrb-002D} in the appendix.
\end{remark}
\par{}
The proof of the converse proceeds similarly. Assume now that \(p\;q : \mathsf {C} \to  \mathsf {A}\) is known to be a model opfibration, and that we have a lift along \(q\) of a marked cone \(S : \mathsf {J} \to  y \downarrow  {\mathsf {B}}_{\mathsf {inert}}\). Such a marked cone is by definition a lift of a marked cone for \(\mathsf {A}\) along an isomorphism \(y \downarrow  p : y \downarrow  {\mathsf {B}}_{\mathsf {inert}} \to  p_y \downarrow  {\mathsf {A}}_{\mathsf {inert}}\):

  \begin{center}
\begin {tikzcd}[row sep=1em]
	{\mathsf {J}} & {\mathsf {C}} \\
	{y \downarrow  {\mathsf {B}}_{\mathsf {inert}}} & {\mathsf {B}} \\
	{p_y \downarrow  {\mathsf {A}}_{\mathsf {inert}}} & {\mathsf {A}}
	\arrow ["L", from=1-1, to=1-2]
	\arrow ["S"', from=1-1, to=2-1]
	\arrow ["q", from=1-2, to=2-2]
	\arrow ["{\mathsf {cod}}"', from=2-1, to=2-2]
	\arrow ["{y \downarrow  p}"', from=2-1, to=3-1]
	\arrow ["p", from=2-2, to=3-2]
	\arrow ["{\mathsf {cod}}"', from=3-1, to=3-2]
\end {tikzcd}
  \end{center}

By the model-opfibration property of \(p\;q\), there's a unique diagonal filler \(u : p_y \downarrow  {\mathsf {A}}_{\mathsf {inert}} \to  \mathsf {C}\) which when precomposed by the isomorphism \(y \downarrow  p\) gives a map \(v : y \downarrow  {\mathsf {B}}_{\mathsf {inert}} \to  \mathsf {C}\). Such a map is unique with the property that \(p\;q\;v\;\left ( y \downarrow  p \right )^{-1} = \mathsf {cod}\) and \(v\;S = L\). This first property seems weaker than the necessary condition that \(q\;v = \mathsf {cod}\) — or equivalently \(q\;v\;\left ( y \downarrow  p \right )^{-1} = u\) — but is actually equivalent by the model-opfibration property of \(p\) as noted above. 
\par{}
The unique factorisation of 2-cells between lifts through \(S\) follows by an analogous argument.
\end{proof}
\end{lemma}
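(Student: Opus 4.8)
The plan is to reduce the statement, via \cref{jrb-001G}, to a comparison of which marked cones get lifted, and then to transport such lifts back and forth across the isomorphisms of slice categories supplied by \cref{jrb-001R}. By \cref{jrb-001C} both \(q\) and the composite \(p\,q\) are already \(\mathscr{F}\)-opfibrations, so \cref{jrb-001G} reduces the claim to: \(p\,q\) lifts every marked cone of \(\mathsf{A}\) if and only if \(q\) lifts every marked cone of the induced \(\mathscr{F}\)-sketch on \(\mathsf{B}\). Because \(p\) is a \emph{discrete} \(\mathscr{F}\)-opfibration, \cref{jrb-001R} tells us that for each \(y \in \mathsf{B}\) the projection \(y \downarrow p : y \downarrow {\mathsf{B}}_{\mathsf{inert}} \to p_y \downarrow {\mathsf{A}}_{\mathsf{inert}}\) is an isomorphism; hence the marked cones of \(\mathsf{B}\) are exactly the cones \((y\downarrow p)^{-1}\,S\) with \(S\) a marked cone of \(\mathsf{A}\) with vertex \(p_y\) and \(y\) a chosen object of the fibre over that vertex. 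This is the translation device I will use throughout.

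For the forward (``preserves'') implication, suppose \(q\) is a model opfibration, and consider a marked cone \(S : \mathsf{J} \to x \downarrow {\mathsf{A}}_{\mathsf{inert}}\) together with a commuting square consisting of \(L : \mathsf{J} \to \mathsf{C}\) and \(\mathsf{cod} : x\downarrow{\mathsf{A}}_{\mathsf{inert}} \to \mathsf{A}\) with \(p\,q\,L = \mathsf{cod}\,S\). I would first apply the model-opfibration property of \(p\) to the square \((q\,L,\mathsf{cod})\), obtaining its unique diagonal filler \(u : x\downarrow{\mathsf{A}}_{\mathsf{inert}} \to \mathsf{B}\); as in the proof of \cref{jrb-001G}, \(u\) is determined by \(y := u(1_x)\) in the fibre of \(p\) over \(x\) and factors as \((y\downarrow p)^{-1}\) followed by the codomain projection. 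Then \(S' := (y\downarrow p)^{-1}\,S\) is a marked cone of \(\mathsf{B}\) over which \(q\,L\) lies, so the model-opfibration property of \(q\) yields a unique filler \(v\), and \(w := v\,(y\downarrow p)^{-1}\) is a diagonal filler of the original square. Uniqueness is the crux: any filler \(w'\) of the outer square a priori satisfies only \(p\,q\,w' = \mathsf{cod}\), but then \(q\,w'\) is itself a diagonal filler of the inner \(p\)-square, so \(q\,w' = u\) by uniqueness for \(p\), whence \(w'\,(y\downarrow p)\) is a filler against \(q\) and so equals \(v\), giving \(w' = w\). The morphism-of-lifts clause of \cref{jrb-001F} goes through the same way, using discreteness of \(p\) to see that a \((p\,q)\)-vertical \(2\)-cell is automatically \(q\)-vertical, which is what permits invoking the model-opfibration property of \(q\).

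The converse (``reflects'') is the mirror image: given \(p\,q\) a model opfibration and a marked cone \(S' : \mathsf{J} \to y\downarrow{\mathsf{B}}_{\mathsf{inert}}\) of \(\mathsf{B}\), write \(S' = (y\downarrow p)^{-1}\,S\); post-composing a given \(q\)-square with \(p\) (using that \(y\downarrow p\) commutes with codomain projections) produces a square against \(p\,q\), whose unique filler \(u\) I transport to \(v := u\,(y\downarrow p)\). That \(q\,v = \mathsf{cod}\) follows because \(q\,u\) and \(\mathsf{cod}\,(y\downarrow p)^{-1}\) are both diagonal fillers of one and the same \(p\)-square — here using that \(p\) is itself a model opfibration, together with the same weaker-implies-stronger observation — and so are equal. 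Uniqueness and the \(2\)-cell case transport across the isomorphism \(y\downarrow p\) just as before.

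I expect the main obstacle to be isolating and cleanly stating, in both directions, the observation that a diagonal filler for \(p\,q\) satisfying only the outer triangle identity is forced to be compatible over \(\mathsf{B}\) (that is, \(q\,w = u\)): this is exactly the mechanism by which model opfibrations close under composition, and it is where the uniqueness clause in the definition of lifting a marked cone is essential. The remaining work is largely bookkeeping — tracking the direction of the isomorphisms \(y\downarrow p\) and which slice category each filler inhabits — together with a careful check that discreteness of \(p\) is needed only for the \(2\)-cell clause and not the \(1\)-cell clause, so that the argument in fact survives for an arbitrary model opfibration \(p\).
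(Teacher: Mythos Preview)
Your proposal is correct and follows essentially the same approach as the paper's proof: reduce via \cref{jrb-001G} to the cone-lifting characterisation, use the isomorphism \(y\downarrow p\) from \cref{jrb-001R} to translate between marked cones of \(\mathsf{A}\) and \(\mathsf{B}\), and in both directions rely on the key ``weaker-implies-stronger'' observation that a filler of the outer square is forced by the model-opfibration property of \(p\) to satisfy \(q\,w = u\). You even correctly anticipate that discreteness of \(p\) is needed only for the \(2\)-cell clause, which matches the paper's own remark.
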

\par{}
This completes our proof of 
\cref{djm-00IG}. The same argument yields the following lax version of the slice theorem for models:
\begin{theorem}[{The lax slice theorem for \(\mathscr {F}\)-sketches}]\label{jrb-0026}
\par{}
	For \(\mathsf {A}\) a locally-discrete \hyperref[djm-00IA]{\(\mathscr {F}\)-sketch} and \(M : \mathsf {A} \to  \mathsf {Cat}\) a discrete pseudo-model, there is an equivalence:
	\begin{equation}
		\mathcal {P}\mathsf {sMod}^{\mathsf {lx}}_{\mathscr {F}} \left (\mathsf {A},\mathcal {C}\mathsf {at}\right ) \downarrow  M \simeq  \mathcal {P}\mathsf {sMod}^{\mathsf {lx}}_{\mathscr {F}} \left (\mathcal {E}\mathsf {l}(M),\mathcal {C}\mathsf {at}\right ).
	\end{equation}\end{theorem}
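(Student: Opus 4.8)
The plan is to reproduce, essentially verbatim, the proof of \cref{djm-00IG}, substituting for each ingredient its lax counterpart. Writing $\mathcal{O}\mathsf{pFib}^{\mathsf{lx}}_{\mathsf{mod}}(-)$ for the locally-full sub-$2$-category of $\mathcal{O}\mathsf{pFib}^{\mathsf{lx}}_{\mathscr{F}}(-)$ spanned by the model opfibrations, the goal is to assemble the chain
\[
\begin{aligned}
\mathcal{P}\mathsf{sMod}^{\mathsf{lx}}_{\mathscr{F}}(\mathsf{A},\mathcal{C}\mathsf{at})\downarrow M
\;\simeq\;&\;\mathcal{O}\mathsf{pFib}^{\mathsf{lx}}_{\mathsf{mod}}(\mathsf{A})\downarrow\bigl[\mathcal{E}\mathsf{l}(M)\xrightarrow{\pi_M}\mathsf{A}\bigr]\\
\;\cong\;&\;\mathcal{O}\mathsf{pFib}^{\mathsf{lx}}_{\mathsf{mod}}(\mathcal{E}\mathsf{l}(M))\\
\;\simeq\;&\;\mathcal{P}\mathsf{sMod}^{\mathsf{lx}}_{\mathscr{F}}(\mathcal{E}\mathsf{l}(M),\mathcal{C}\mathsf{at}),
\end{aligned}
\]
exactly as in the pseudo case, so that only the changes at each step need comment.

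First I would invoke \cref{jrb-0025} in place of \cref{jrb-000X}: for any locally-discrete $\mathscr{F}$-category $\mathsf{B}$ the Grothendieck construction is an equivalence $\mathcal{P}\mathsf{s}^{\mathsf{lx}}_{\mathscr{F}}(\mathsf{B},\mathcal{C}\mathsf{at})\simeq\mathcal{O}\mathsf{pFib}^{\mathsf{lx}}_{\mathscr{F}}(\mathsf{B})$. Restricting along the inclusion of pseudo-models into pseudo-$\mathscr{F}$-functors carves out precisely the model opfibrations on the fibred side, by \cref{jrb-001G}; and the characterisation there is stated purely in terms of the underlying $\mathscr{F}$-opfibration, hence is indifferent to whether the $1$-cells are pseudo- or lax-$\mathscr{F}$-transformations. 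This supplies the first and last equivalences in the chain, together with the identification (using $\mathcal{E}\mathsf{l}(M)=\int M$ and the discreteness of $\pi_M$) of $M$ with the discrete model opfibration $\pi_M$.

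For the middle isomorphism I would combine the lax half of \cref{jrb-001B}, which already gives $\mathcal{O}\mathsf{pFib}^{\mathsf{lx}}_{\mathscr{F}}(\mathcal{E}\mathsf{l}(M))\cong\mathcal{O}\mathsf{pFib}^{\mathsf{lx}}_{\mathscr{F}}(\mathsf{A})\downarrow\pi_M$, with \cref{jrb-001I}. The latter is the only genuinely new ingredient, but its proof is formulated entirely in terms of objects (model opfibrations) and of vertical $2$-cells between them, using the discreteness of the base opfibration $p$ to transport $2$-cells; nothing there depends on whether the $1$-cells are $\mathscr{F}$-opcartesian maps or arbitrary $\mathscr{F}$-functors in the slice, so it transports verbatim, and restricting \cref{jrb-001B} along $\mathcal{O}\mathsf{pFib}^{\mathsf{lx}}_{\mathsf{mod}}\hookrightarrow\mathcal{O}\mathsf{pFib}^{\mathsf{lx}}_{\mathscr{F}}$ is therefore legitimate. \Cref{jrb-0018,jrb-0019} likewise only concern the underlying opfibration structure of a single functor and carry over unchanged.

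The point that deserves a sanity check — and the closest thing to an obstacle — is the bookkeeping of $1$-cells along the chain. A $1$-cell of $\mathcal{P}\mathsf{sMod}^{\mathsf{lx}}_{\mathscr{F}}(\mathsf{A},\mathcal{C}\mathsf{at})\downarrow M$ is a lax-$\mathscr{F}$-transformation $\eta\colon P\Rightarrow Q$ with $\psi\cdot\eta=\phi$; under the Grothendieck construction $\int\eta$ is an (arbitrary) $\mathscr{F}$-functor over $\mathsf{A}$ with $\int\psi\circ\int\eta=\int\phi$, hence an $\mathscr{F}$-functor over $\mathcal{E}\mathsf{l}(M)$, and conversely every such $\mathscr{F}$-functor over $\mathcal{E}\mathsf{l}(M)$ arises this way by the equivalence \cref{jrb-0025}. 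One must also check this is exactly the $1$-cell structure of $\mathcal{O}\mathsf{pFib}^{\mathsf{lx}}_{\mathscr{F}}(\mathcal{E}\mathsf{l}(M))$ — i.e.\ \emph{all} $\mathscr{F}$-functors over $\mathcal{E}\mathsf{l}(M)$, not merely the $\mathscr{F}$-opcartesian ones — which is immediate from its definition. Since $M$ is $\mathsf{Set}$-valued, a lax transformation \emph{into} $M$ has identity naturality $2$-cells, so the lax and pseudo slices over $M$ agree at the level of objects, and the argument of \cref{djm-00IG} closes verbatim in the lax case; the lax analogue of the final equivalence then follows from the equivalence $\mathcal{O}\mathsf{pFib}^{\mathsf{lx}}_{\mathscr{F}}(\mathcal{E}\mathsf{l}(M))\simeq\mathcal{P}\mathsf{s}^{\mathsf{lx}}_{\mathscr{F}}(\mathcal{E}\mathsf{l}(M),\mathcal{C}\mathsf{at})$ of \cref{jrb-0025} restricted to model opfibrations, as above.
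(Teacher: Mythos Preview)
Your proposal is correct and follows exactly the approach the paper intends: the paper's own ``proof'' is the single sentence ``The same argument yields the following lax version of the slice theorem for models,'' and you have faithfully spelled out what that same argument is, swapping in \cref{jrb-0025} for \cref{jrb-000X} and the lax half of \cref{jrb-001B}, while observing that \cref{jrb-001G} and \cref{jrb-001I} concern only the underlying opfibration and so transfer unchanged.
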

\par{}
By the equivalence between \(\mathcal {C}\mathsf {at}\)-valued pseudo-models for \(\Delta ^{\mathsf {op}}\) and \(\mathcal {D}\mathsf {bl}\) given in \cref{djm-00IE} we can use \cref{djm-00IG} to show that slices of \(\mathcal {D}\mathsf {bl}\) and \(\mathcal {D}\mathsf {bl}^{\mathsf {lx}}\) over vertically-discrete pseudo double categories are also “pseudo-sketchable”. In particular, we obtain an \(\mathscr {F}\)-sketch whose pseudo-models in \(\mathcal {C}\mathsf {at}\) are double barrels:

\begin{corollary}\label{jrb-001L}
\par{}\begin{equation}\mathcal {P}\mathsf {sMod}^{\mathsf {p}}_{\mathscr {F}} \left ( \left ( \Delta   \downarrow  [1] \right )^{\mathsf {op}},\mathcal {C}\mathsf {at}\right ) \simeq  \mathcal {D}\mathsf {bl} \downarrow  \mathbb {L}\mathsf {oose}\end{equation}
\begin{equation}\mathcal {P}\mathsf {sMod}^{\mathsf {lx}}_{\mathscr {F}} \left ( \left ( \Delta   \downarrow  [1] \right )^{\mathsf {op}},\mathcal {C}\mathsf {at}\right ) \simeq  \mathcal {D}\mathsf {bl}^{\mathsf {lx}} \downarrow  \mathbb {L}\mathsf {oose}\end{equation}
\begin{proof}[{proof of \cref{jrb-001L}.}]\label{jrb-001L-proof}
\par{}
The \(\mathscr {F}\)-sketch structure on \(\Delta ^{\mathsf {op}}\) is \emph{realised} in the sense that all marked cones are in fact limit cones. All representable presheaves on \(\Delta \), including \(\Delta  [1] = \yo _{[1]}\), therefore map these marked cones to limits, and are thus discrete pseudo-models. The equivalences \(\mathcal {P}\mathsf {sMod}^{\mathsf {p}}_{\mathscr {F}} \left ( \Delta ^{\mathsf {op}},\mathcal {C}\mathsf {at}\right ) \simeq  \mathcal {D}\mathsf {bl}\) and \(\mathcal {P}\mathsf {sMod}^{\mathsf {lx}}_{\mathscr {F}} \left ( \Delta ^{\mathsf {op}},\mathcal {C}\mathsf {at}\right ) \simeq  \mathcal {D}\mathsf {bl}^{\mathsf {lx}}\) map \(\Delta  [1]\) to \(\mathbb {L}\mathsf {oose}\), so the result follows from 
\cref{djm-00IG} and \cref{jrb-0026}.
\end{proof}
\end{corollary}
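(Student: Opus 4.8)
The plan is to specialise the slice theorem \cref{djm-00IG} (and its lax counterpart \cref{jrb-0026}) to the $\mathscr{F}$-sketch $\mathsf{A} = \Delta^{\mathsf{op}}$ of \cref{jrb-0029} and the model $M = \Delta[1]$, and then transport the resulting equivalences across the identification $\mathcal{P}\mathsf{sMod}^{\mathsf{p}}_{\mathscr{F}}(\Delta^{\mathsf{op}},\mathcal{C}\mathsf{at}) \simeq \mathcal{D}\mathsf{bl}$ of \cref{djm-00IE} (respectively the identification $\mathcal{P}\mathsf{sMod}^{\mathsf{lx}}_{\mathscr{F}}(\Delta^{\mathsf{op}},\mathcal{C}\mathsf{at}) \simeq \mathcal{D}\mathsf{bl}^{\mathsf{lx}}$ of \cref{jrb-0023}). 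The only inputs not already assembled are that $\Delta[1]$ is a legitimate discrete model of the $\mathscr{F}$-sketch $\Delta^{\mathsf{op}}$ and that it corresponds to the walking loose arrow $\mathbb{L}\mathsf{oose}$ under the equivalence of \cref{djm-00IE}.

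First I would check that the Segal $\mathscr{F}$-sketch structure on $\Delta^{\mathsf{op}}$ is \emph{realised}: each marked cone $[n]\downarrow_{\mathsf{inert}}\mathsf{el}\hookrightarrow[n]\downarrow\Delta^{\mathsf{op}}$ is an honest limit cone, since dually $[n]$ is the iterated pushout $[1]\sqcup_{[0]}\cdots\sqcup_{[0]}[1]$ of $n$ copies of $[1]$ glued along their shared endpoints in $\Delta$ (the spine colimit). Hence every representable presheaf on $\Delta$ preserves these cones; in particular $\Delta[1] = \yo_{[1]}\colon\Delta^{\mathsf{op}}\to\mathsf{Set}$ is a strict model (it is moreover the nerve of the category $[1]$, cf.\ \cref{jrb-0031}), and through the discrete inclusion $\mathsf{Set}\hookrightarrow\mathcal{C}\mathsf{at}$ it becomes a discrete pseudo-model. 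Since $\Delta^{\mathsf{op}}$ is locally discrete with inert marked cones, \cref{djm-00IG} applies with $\mathsf{A} = \Delta^{\mathsf{op}}$ and $M = \Delta[1]$ and yields
\[
\mathcal{P}\mathsf{sMod}^{\mathsf{p}}_{\mathscr{F}}(\Delta^{\mathsf{op}},\mathcal{C}\mathsf{at})\downarrow\Delta[1]\;\simeq\;\mathcal{P}\mathsf{sMod}^{\mathsf{p}}_{\mathscr{F}}(\mathcal{E}\mathsf{l}(\Delta[1]),\mathcal{C}\mathsf{at}).
\]
Then I would identify both sides. On the right, $\Delta[1] = \Delta^{\mathsf{op}}([1],-)$ is corepresentable on $\Delta^{\mathsf{op}}$ at $[1]$, so its $2$-category of elements is the coslice $[1]\downarrow\Delta^{\mathsf{op}} = (\Delta\downarrow[1])^{\mathsf{op}}$ with projection $\pi_{\Delta[1]}$ the functor of \cref{jrb-001Z}, and the $\mathscr{F}$-sketch structure induced on it by \cref{djm-00IG} is precisely the one carried by $(\Delta\downarrow[1])^{\mathsf{op}}$ in the statement. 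On the left, unwinding the object assignment from the proof of \cref{djm-00IE} applied to the discrete pseudo-model $\Delta[1]$ returns the double category with object set $\{0,1\}$, only identity tight arrows, loose-arrow category presenting one non-identity loose arrow $0\to1$ together with the two loose identities, and only identity squares --- that is, $\mathbb{L}\mathsf{oose}$. Hence the equivalence of \cref{djm-00IE} restricts to $\mathcal{P}\mathsf{sMod}^{\mathsf{p}}_{\mathscr{F}}(\Delta^{\mathsf{op}},\mathcal{C}\mathsf{at})\downarrow\Delta[1]\simeq\mathcal{D}\mathsf{bl}\downarrow\mathbb{L}\mathsf{oose}$, and composing the two displayed equivalences gives the first claim. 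For the second I would run the identical argument, using \cref{jrb-0026} in place of \cref{djm-00IG} and the equivalence of \cref{jrb-0023} in place of that of \cref{djm-00IE}, which sends $\Delta[1]$ to $\mathbb{L}\mathsf{oose}$ by the same computation.

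I do not expect a real obstacle: the substantive work has already been carried out in \cref{djm-00IG}, \cref{jrb-0026}, and \cref{djm-00IE}. The two points needing a little care are confirming realisedness of the Segal sketch --- so that $\Delta[1]$, indeed any corepresentable on $\Delta^{\mathsf{op}}$, is a bona fide strict model to which the slice theorem applies --- and unwinding the construction of \cref{djm-00IE} concretely enough to recognise $\mathbb{L}\mathsf{oose}$ as the image of $\Delta[1]$; both are routine bookkeeping rather than a genuine difficulty.
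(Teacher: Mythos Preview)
Your proposal is correct and follows essentially the same route as the paper: verify that $\Delta[1]$ is a discrete pseudo-model because the Segal sketch is realised, identify its image under the equivalence of \cref{djm-00IE} with $\mathbb{L}\mathsf{oose}$, and then invoke \cref{djm-00IG} and \cref{jrb-0026}. You simply supply more of the bookkeeping (the spine-colimit justification for realisedness, the explicit identification $\mathcal{E}\mathsf{l}(\Delta[1]) \cong (\Delta\downarrow[1])^{\mathsf{op}}$) that the paper leaves implicit.
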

\subsection{Pseudo-bimodules}\label{jrb-003A}
\begin{explication}[{Pseudo-Models of \(\left ( \Delta  \downarrow  [1] \right )^{\mathsf {op}}\) as pseudo-bimodules}]\label{jrb-003B}
\par{}
Just as strict models for the limit sketch \(\left ( \Delta  \downarrow  [1] \right )^{\mathsf {op}}\) in a 1-category \(E\) with pullbacks present bimodules internal to \(E\) (\cref{jrb-0034}), \emph{pseudo}-models for the \(\mathscr {F}\)-sketch \(\left ( \Delta  \downarrow  [1] \right )^{\mathsf {op}}\) in a 2-category \(\mathcal {E}\) with pullbacks present a natural notion of \emph{pseudo}-bimodules internal to \(\mathcal {E}\).
\par{}
To give a pseudo-model \(M : \left ( \Delta  \downarrow  [1] \right )^{\mathsf {op}} \to  \mathcal {E}\) involves in particular specifying a diagram of the form:

  \begin{center}
\begin {tikzcd}
	{C_1} && {D_1} \\
	{C_0} & {\mathsf {Car}(M)} & {D_0}
	\arrow ["t"', shift right=3, from=1-1, to=2-1]
	\arrow ["s", shift left=3, from=1-1, to=2-1]
	\arrow ["t"', shift right=3, from=1-3, to=2-3]
	\arrow ["s", shift left=3, from=1-3, to=2-3]
	\arrow ["i"{description}, from=2-1, to=1-1]
	\arrow [dashed, from=2-2, to=1-1]
	\arrow [dashed, from=2-2, to=1-3]
	\arrow ["s", from=2-2, to=2-1]
	\arrow ["t"', from=2-2, to=2-3]
	\arrow ["i"{description}, from=2-3, to=1-3]
\end {tikzcd}
\end{center}

as is also true for the strict models. And again, for each \(f : [n] \to  [1] \in  \left ( \Delta  \downarrow  [1] \right )^{\mathsf {op}}\) it must be the case that \(M_f\) is given by a composite of the spans \(C_1 : C_0 \mathrel {\mkern 3mu\vcenter {\hbox {$\shortmid $}}\mkern -10mu{\to }} C_0\), \(\mathsf {Car}(M) : C_0 \mathrel {\mkern 3mu\vcenter {\hbox {$\shortmid $}}\mkern -10mu{\to }} D_0\) and \(D_1 : D_0 \mathrel {\mkern 3mu\vcenter {\hbox {$\shortmid $}}\mkern -10mu{\to }} D_0\). For example, if \(f\) is represented by the sequence \(0,0,1,1,1\) then we must have \(M_f \cong  C_1 \diamond  \mathsf {Car}(M) \diamond  D_1 \diamond  D_1\). Each \(M_f\) admits a canonical map to either \(C_1\), \(\mathsf {Car}(M)\) or \(D_1\) determined by the unique active \([1] \to  [n]\), providing notions of composition for \(C_1\) and \(D_1\) as well as actions of \(C_1\) and \(D_1\) on \(\mathsf {Car}(M)\). The difference between these pseudo-models and the strict models in \cref{jrb-0034} is that these actions and compositions are no longer required to be strictly associative, but are instead \emph{pseudo}-associative. For example, for \(f : [3] \to  [1]\) given by the sequence \(0,0,1,1\), the composites of the images under \(M\) of the two composable pairs of maps in \(\Delta  \downarrow  [1]\) below need only be \emph{isomorphic}, rather than equal:

  \begin{center}
\begin {tikzcd}[sep=small]
	0 & 0 & 1 & 1 \\
	0 & 0 && 1 \\
	0 &&& 1
	\arrow [equals, from=1-1, to=1-2]
	\arrow [from=1-2, to=1-3]
	\arrow [equals, from=1-3, to=1-4]
	\arrow [maps to, draw={rgb,255:red,179;green,179;blue,179}, from=2-1, to=1-1]
	\arrow [equals, from=2-1, to=2-2]
	\arrow [maps to, draw={rgb,255:red,179;green,179;blue,179}, from=2-2, to=1-2]
	\arrow [from=2-2, to=2-4]
	\arrow [maps to, draw={rgb,255:red,179;green,179;blue,179}, from=2-4, to=1-4]
	\arrow [maps to, draw={rgb,255:red,179;green,179;blue,179}, from=3-1, to=2-1]
	\arrow [from=3-1, to=3-4]
	\arrow [maps to, draw={rgb,255:red,179;green,179;blue,179}, from=3-4, to=2-4]
\end {tikzcd}
\qquad 
\begin {tikzcd}[sep=small]
	0 & 0 & 1 & 1 \\
	0 && 1 & 1 \\
	0 &&& 1
	\arrow [equals, from=1-1, to=1-2]
	\arrow [from=1-2, to=1-3]
	\arrow [equals, from=1-3, to=1-4]
	\arrow [maps to, draw={rgb,255:red,179;green,179;blue,179}, from=2-1, to=1-1]
	\arrow [from=2-1, to=2-3]
	\arrow [maps to, draw={rgb,255:red,179;green,179;blue,179}, from=2-3, to=1-3]
	\arrow [equals, from=2-3, to=2-4]
	\arrow [maps to, draw={rgb,255:red,179;green,179;blue,179}, from=2-4, to=1-4]
	\arrow [maps to, draw={rgb,255:red,179;green,179;blue,179}, from=3-1, to=2-1]
	\arrow [from=3-1, to=3-4]
	\arrow [maps to, draw={rgb,255:red,179;green,179;blue,179}, from=3-4, to=2-4]
\end {tikzcd}
\end{center}

This says that the result of acting on \(\mathsf {Car}(M)\) from the right and then from the left is merely \emph{isomorphic} to the result of acting from the left and then from the right, which is what one should expect of a pseudo-bimodule. The same argument with all the numbers in the above diagrams replaced by either \(0\) or \(1\) says that the composition in \(C_1\) and \(D_1\) is only associative up to isomorphism (indeed, \(C_1\) and \(D_1\) specify \emph{pseudo}-categories internal to \(\mathcal {E}\)). However, not every operation is weakened; the strict preservation of composition with inerts means that the “restriction” operations (images of inerts under \(M\)) \emph{strictly} commute with all other operations. For example, if some heteromorphism \(h : {\color {blue} c} \to  {\color {magenta} d}\) is acted upon on the right by a \(C_1\) morphism \(f : {\color {blue} c' \to  c}\), the domain of the result must be \emph{equal} to \({\color {blue} c'}\), rather than merely isomorphic to it. As another example, given the following composable path:
\begin{equation}{\color {blue} c \xrightarrow {f} c'} \xrightarrow {g} {\color {magenta} d \xrightarrow {h} d'}\end{equation}
the result of composing the first two arrows to give \({\color {blue} c} \xrightarrow {gf} {\color {magenta} d \xrightarrow {h} d'}\) and then restricting to \(gf\) is equal to the result of restricting to the first two arrows and then composing.
\par{}
It is this need to distinguish between the “inert” operations — which must remain strict — and the “active” operations — which merely hold up to isomorphism — that motivates the use of \(\mathscr {F}\)-category structure on limit sketches.
\end{explication}

\begin{definition}[{The biased sketch for pseudo-bimodules}]\label{jrb-003K}
\par{}
We refer to the corresponding truncated \(\mathscr {F}\)-sketch \(\left ( \Delta _{\leq  4} \downarrow  [1] \right )^{\mathsf {op}}\) as the \textbf{biased sketch for pseudo-bimodules}, noting that it presents the same notions of “labelled” composition as above, but only up to 4-ary composites.
\end{definition}

\subsection{From double barrels to loose-bimodules}\label{jrb-003D}
\par{}
In this section we recall in detail the precise definition of \emph{double barrel} and observe how they present the data for a loose bimodule.

\begin{definition}[{walking loose arrow}]\label{djm-0047}
\par{}The \textbf{walking loose arrow}, \(\mathbb {L}\mathsf {oose}\), is the free strict double category generated by a single loose arrow \(\ell  \colon   0 \mathrel {\mkern 3mu\vcenter {\hbox {$\shortmid $}}\mkern -10mu{\to }} 1\).\end{definition}

\begin{explication}[{Walking loose arrow}]\label{ssl-001G}
\par{}
  The \hyperref[djm-0047]{walking loose arrow}, \(\mathbb {L}\mathsf {oose}\), is a double category  with two objects, three loose arrows (two identities along with the walking loose arrow \(\ell \)), two identity tight arrows, and three identity squares. These are all pictured below.

  \begin{center}\begin {tikzcd}
    0 & 0 & 1 & 1 \\
    0 & 0 & 1 & 1
    \arrow ["\shortmid "{marking}, equals, from=1-1, to=1-2]
    \arrow [equals, from=1-1, to=2-1]
    \arrow ["\shortmid "{marking}, from=1-2, to=1-3]
    \arrow [equals, from=1-2, to=2-2]
    \arrow ["\shortmid "{marking}, equals, from=1-3, to=1-4]
    \arrow [equals, from=1-3, to=2-3]
    \arrow [equals, from=1-4, to=2-4]
    \arrow ["\shortmid "{marking}, equals, from=2-1, to=2-2]
    \arrow ["\shortmid "{marking}, from=2-2, to=2-3]
    \arrow ["\shortmid "{marking}, equals, from=2-3, to=2-4]
  \end {tikzcd}\end{center}\end{explication}

\begin{definition}[{double barrel}]\label{ssl-0068}
\par{}A \textbf{double barrel} is a (necessarily strict) double functor \(M : \mathbb {M} \to  \mathbb {L}\mathsf {oose}\) into the \hyperref[djm-0047]{walking loose arrow}. The 2-category of double barrels is the (strict) slice 2-category \(\mathcal {D}\mathsf {bl} \downarrow  \mathbb {L}\mathsf {oose}\) over the \hyperref[djm-0047]{walking loose arrow}. Explicitly, it is the 2-category whose:
\begin{enumerate}\item{}
	objects are double barrels \(M : \mathbb {M} \to  \mathbb {L}\mathsf {oose}\).

\item{}
morphisms are strictly commuting triangles

\begin{center}
\begin {tikzcd}
	{\mathbb {M}} && {\mathbb {N}} \\
	& \mathbb {L}\mathsf {oose}
	\arrow ["{F}", from=1-1, to=1-3]
	\arrow ["{M}"', from=1-1, to=2-2]
	\arrow ["{N}", from=1-3, to=2-2]
\end {tikzcd}
\end{center}

where \(F\) is a pseudo-double functor

\item{}2-cells are tight transformations \(\alpha  \colon  F \Rightarrow  G\) so that \(N \alpha  = M\).\end{enumerate}
A \emph{lax} morphism of double barrels is a 1-cell as above but where \(F\) is a \emph{lax} double functor. These form the 1-cells of the 2-category \(\mathcal {D}\mathsf {bl}^{\mathsf {lx}} \downarrow  \mathbb {L}\mathsf {oose}\).
\end{definition}

\begin{definition}[{source and target of a double barrel}]\label{djm-004A}
\par{}
  Note that there are two double functors from the terminal double category into the walking loose arrow. These are \(0 \colon  \bullet  \to  \mathbb {L}\mathsf {oose}\) sending the unique object to \(0\) and \(1 \colon  \bullet  \to  \mathbb {L}\mathsf {oose}\) sending the unique object to \(1\).
\par{}Let \(M \colon  \mathbb {M} \to  \mathbb {L}\mathsf {oose}\) be a \hyperref[ssl-0068]{double barrel}. The \textbf{source} \(M_0\) of \(M\) is its pullback along (i.e. fibre above) \(0 : \bullet  \to  \mathbb {L}\mathsf {oose}\). Its \textbf{target} \(M_1\) is its pullback along \(1 : \bullet  \to  \mathbb {L}\mathsf {oose}\).\par{}Together, taking the source and target of a double barrel gives a cartesian 2-functor
\begin{equation}((-)_0, (-)_1) \colon  \mathcal {D}\mathsf {bl} \downarrow  \mathbb {L}\mathsf {oose} \to  \mathcal {D}\mathsf {bl} \times  \mathcal {D}\mathsf {bl}\end{equation}\end{definition}

\begin{notation}[{double barrel}]\label{ssl-0016}
\par{}
  We often denote a \hyperref[ssl-0068]{double barrel} \(M \colon  \mathbb {M} \to  \mathbb {L}\mathsf {oose}\) as \begin{equation}\mathbb {M}  \colon  \mathbb {D}_0 \mathrel {\mkern 3mu\vcenter {\hbox {$\shortmid $}}\mkern -10mu{\to }} \mathbb {D}_1\end{equation} where \(\mathbb {D}_0 \) and \(\mathbb {D}_1 \) are respectively the \hyperref[djm-004A]{source} and \hyperref[djm-004A]{target} of  \({M}\).
\par{}
  For double categories \(\mathbb {D}_0\) and \(\mathbb {D}_1\), a \textbf{\((\mathbb {D}_0, \mathbb {D}_1)\)-double barrel} is a \hyperref[ssl-0068]{double barrel} \(\mathbb {M} \colon  \mathbb {D}_0 \mathrel {\mkern 3mu\vcenter {\hbox {$\shortmid $}}\mkern -10mu{\to }} \mathbb {D}_1\).
\end{notation}

\begin{definition}[{Carrier of a double barrel}]\label{ssl-0036}
\par{}
  Let \(M : \mathbb {M} \to  \mathbb {L}\mathsf {oose}\) be a double barrel. Its \textbf{carrier} \(\mathsf {Car} (\mathbb {M})\) is the category whose: 
  \begin{itemize}\item{}objects are loose morphisms of \(\mathbb {M}\) living over the walking loose arrow. We refer to such loose morphisms as the \textbf{heteromorphisms} of \(\mathbb {M}\).
    \item{}morphisms are squares of \(\mathbb {M}\) living over the identity square of the walking loose arrow, referred to as the \textbf{heterocells} of \(\mathbb {M}\).\end{itemize}\end{definition}

\begin{explication}[{double barrels as bimodules between double categories}]\label{ssl-002C}
\par{}
  Let \(M \colon  \mathbb {M} \to  \mathbb {L}\mathsf {oose}\)  be a \hyperref[ssl-0068]{double barrel}. In the diagram below, \(\alpha  \colon  x \to  y\) is a morphism in  \(\mathsf {Car}(M)\), the \hyperref[ssl-0036]{carrier} of the double barrel.
\begin{center}
  \begin {tikzcd}
    \textcolor {rgb,255:red,51;green,102;blue,255}{{m_0}} & \textcolor {rgb,255:red,214;green,92;blue,214}{{m _1}} \\
    \textcolor {rgb,255:red,51;green,102;blue,255}{{n_0}} & \textcolor {rgb,255:red,214;green,92;blue,214}{{n_1}} & {\mathbb {M}} \\
    \textcolor {rgb,255:red,153;green,153;blue,153}{0} & \textcolor {rgb,255:red,153;green,153;blue,153}{1} \\
    \textcolor {rgb,255:red,153;green,153;blue,153}{0} & \textcolor {rgb,255:red,153;green,153;blue,153}{1} & {\mathbb {L}\mathsf {oose}}
    \arrow [""{name=0, anchor=center, inner sep=0}, "x", "\shortmid "{marking}, from=1-1, to=1-2]
    \arrow [color={rgb,255:red,51;green,102;blue,255}, from=1-1, to=2-1]
    \arrow [color={rgb,255:red,214;green,92;blue,214}, from=1-2, to=2-2]
    \arrow [""{name=1, anchor=center, inner sep=0}, "y"', "\shortmid "{marking}, from=2-1, to=2-2]
    \arrow [from=2-3, to=4-3]
    \arrow [""{name=2, anchor=center, inner sep=0}, "\shortmid "{marking, text={rgb,255:red,153;green,153;blue,153}}, color={rgb,255:red,153;green,153;blue,153}, from=3-1, to=3-2]
    \arrow [color={rgb,255:red,153;green,153;blue,153}, equals, from=3-1, to=4-1]
    \arrow [color={rgb,255:red,153;green,153;blue,153}, equals, from=3-2, to=4-2]
    \arrow [""{name=3, anchor=center, inner sep=0}, "\shortmid "{marking, text={rgb,255:red,153;green,153;blue,153}}, color={rgb,255:red,153;green,153;blue,153}, from=4-1, to=4-2]
    \arrow ["\alpha "{description}, draw=none, from=0, to=1]
    \arrow ["\mathrm {id}"{description}, color={rgb,255:red,153;green,153;blue,153}, draw=none, from=2, to=3]
  \end {tikzcd}
\end{center}\par{}
  We can act on the carrier on the left by the category of loose morphisms and squares in the source \(\mathbb {M}_0\). 
\begin{center}
  \begin {tikzcd}
    \textcolor {rgb,255:red,51;green,102;blue,255}{{m_0'}} & \textcolor {rgb,255:red,51;green,102;blue,255}{{m_0}} & \textcolor {rgb,255:red,214;green,92;blue,214}{{m_1}} & {} & \textcolor {rgb,255:red,51;green,102;blue,255}{{m_0'}} & \textcolor {rgb,255:red,214;green,92;blue,214}{{m_1}} \\
    \textcolor {rgb,255:red,51;green,102;blue,255}{{n_0'}} & \textcolor {rgb,255:red,51;green,102;blue,255}{{n_0}} & \textcolor {rgb,255:red,214;green,92;blue,214}{{n_1}} & {} & \textcolor {rgb,255:red,51;green,102;blue,255}{{n_0'}} & \textcolor {rgb,255:red,214;green,92;blue,214}{{n_1}} & {\mathbb {M}} \\
    \textcolor {rgb,255:red,153;green,153;blue,153}{0} & \textcolor {rgb,255:red,153;green,153;blue,153}{0} & \textcolor {rgb,255:red,153;green,153;blue,153}{1} & {} & \textcolor {rgb,255:red,153;green,153;blue,153}{0} & \textcolor {rgb,255:red,153;green,153;blue,153}{1} \\
    \textcolor {rgb,255:red,153;green,153;blue,153}{0} & \textcolor {rgb,255:red,153;green,153;blue,153}{0} & \textcolor {rgb,255:red,153;green,153;blue,153}{1} & {} & \textcolor {rgb,255:red,153;green,153;blue,153}{0} & \textcolor {rgb,255:red,153;green,153;blue,153}{1} & {\mathbb {L}\mathsf {oose}}
    \arrow [""{name=0, anchor=center, inner sep=0}, "{f}", "\shortmid "{marking, text={rgb,255:red,51;green,102;blue,255}}, color={rgb,255:red,51;green,102;blue,255}, from=1-1, to=1-2]
    \arrow [draw={rgb,255:red,51;green,102;blue,255}, from=1-1, to=2-1]
    \arrow [""{name=1, anchor=center, inner sep=0}, "x", "\shortmid "{marking}, from=1-2, to=1-3]
    \arrow [draw={rgb,255:red,51;green,102;blue,255}, from=1-2, to=2-2]
    \arrow [draw={rgb,255:red,214;green,92;blue,214}, from=1-3, to=2-3]
    \arrow ["{=}"{description}, draw=white, from=1-4, to=2-4]
    \arrow [""{name=2, anchor=center, inner sep=0}, "{ x\, f}", "\shortmid "{marking}, from=1-5, to=1-6]
    \arrow [draw={rgb,255:red,51;green,102;blue,255}, from=1-5, to=2-5]
    \arrow [draw={rgb,255:red,214;green,92;blue,214}, from=1-6, to=2-6]
    \arrow [""{name=3, anchor=center, inner sep=0}, "{g}"', "\shortmid "{marking, text={rgb,255:red,51;green,102;blue,255}}, color={rgb,255:red,51;green,102;blue,255}, from=2-1, to=2-2]
    \arrow [""{name=4, anchor=center, inner sep=0}, "y"', "\shortmid "{marking}, from=2-2, to=2-3]
    \arrow [""{name=5, anchor=center, inner sep=0}, "{y g}"', "\shortmid "{marking}, from=2-5, to=2-6]
    \arrow [from=2-7, to=4-7]
    \arrow ["\shortmid "{marking, text={rgb,255:red,153;green,153;blue,153}}, draw={rgb,255:red,153;green,153;blue,153}, equals, from=3-1, to=3-2]
    \arrow [draw={rgb,255:red,153;green,153;blue,153}, equals, from=3-1, to=4-1]
    \arrow ["\shortmid "{marking, text={rgb,255:red,153;green,153;blue,153}}, draw={rgb,255:red,153;green,153;blue,153}, from=3-2, to=3-3]
    \arrow [draw={rgb,255:red,153;green,153;blue,153}, equals, from=3-2, to=4-2]
    \arrow [draw={rgb,255:red,153;green,153;blue,153}, equals, from=3-3, to=4-3]
    \arrow ["{=}"{description}, draw=white, from=3-4, to=4-4]
    \arrow ["\shortmid "{marking, text={rgb,255:red,153;green,153;blue,153}}, draw={rgb,255:red,153;green,153;blue,153}, from=3-5, to=3-6]
    \arrow [draw={rgb,255:red,153;green,153;blue,153}, equals, from=3-5, to=4-5]
    \arrow [draw={rgb,255:red,153;green,153;blue,153}, equals, from=3-6, to=4-6]
    \arrow ["\shortmid "{marking, text={rgb,255:red,153;green,153;blue,153}}, draw={rgb,255:red,153;green,153;blue,153}, equals, from=4-1, to=4-2]
    \arrow ["\shortmid "{marking, text={rgb,255:red,153;green,153;blue,153}}, draw={rgb,255:red,153;green,153;blue,153}, from=4-2, to=4-3]
    \arrow ["\shortmid "{marking, text={rgb,255:red,153;green,153;blue,153}}, draw={rgb,255:red,153;green,153;blue,153}, from=4-5, to=4-6]
    \arrow ["{\beta }"{description}, color={rgb,255:red,51;green,102;blue,255}, shorten <=4pt, shorten >=4pt, draw=none, from=0, to=3]
    \arrow ["\alpha "{description}, shorten <=4pt, shorten >=4pt, draw=none, from=1, to=4]
    \arrow ["{x\,\beta }"{description}, shorten <=4pt, shorten >=4pt, draw=none, from=2, to=5]
  \end {tikzcd}
\end{center}\par{}
  We can also act on the carrier on the right by the category of loose morphisms and squares in the target \(\mathbb {M}_1\).
\begin{center}
  \begin {tikzcd}
    \textcolor {rgb,255:red,51;green,102;blue,255}{{m_0}} & \textcolor {rgb,255:red,214;green,92;blue,214}{{m _1}} & \textcolor {rgb,255:red,214;green,92;blue,214}{{m_1'}} & {} & \textcolor {rgb,255:red,51;green,102;blue,255}{{m_0}} & \textcolor {rgb,255:red,214;green,92;blue,214}{{m_1'}} \\
    \textcolor {rgb,255:red,51;green,102;blue,255}{{n_0}} & \textcolor {rgb,255:red,214;green,92;blue,214}{{n_1}} & \textcolor {rgb,255:red,214;green,92;blue,214}{{n_1'}} & {} & \textcolor {rgb,255:red,51;green,102;blue,255}{{n_0}} & \textcolor {rgb,255:red,214;green,92;blue,214}{{n_1'}} & {\mathbb {M}} \\
    \textcolor {rgb,255:red,153;green,153;blue,153}{0} & \textcolor {rgb,255:red,153;green,153;blue,153}{1} & \textcolor {rgb,255:red,153;green,153;blue,153}{1} & {} & \textcolor {rgb,255:red,153;green,153;blue,153}{0} & \textcolor {rgb,255:red,153;green,153;blue,153}{1} \\
    \textcolor {rgb,255:red,153;green,153;blue,153}{0} & \textcolor {rgb,255:red,153;green,153;blue,153}{1} & \textcolor {rgb,255:red,153;green,153;blue,153}{1} & {} & \textcolor {rgb,255:red,153;green,153;blue,153}{0} & \textcolor {rgb,255:red,153;green,153;blue,153}{1} & {\mathbb {L}\mathsf {oose}}
    \arrow [""{name=0, anchor=center, inner sep=0}, "x", "\shortmid "{marking}, from=1-1, to=1-2]
    \arrow [draw={rgb,255:red,51;green,102;blue,255}, from=1-1, to=2-1]
    \arrow [""{name=1, anchor=center, inner sep=0}, "{h}", "\shortmid "{marking, text={rgb,255:red,214;green,92;blue,214}}, color={rgb,255:red,214;green,92;blue,214}, from=1-2, to=1-3]
    \arrow [draw={rgb,255:red,214;green,92;blue,214}, from=1-2, to=2-2]
    \arrow [draw={rgb,255:red,214;green,92;blue,214}, from=1-3, to=2-3]
    \arrow ["{=}"{description}, draw=white, from=1-4, to=2-4]
    \arrow [""{name=2, anchor=center, inner sep=0}, "{h\,x}", "\shortmid "{marking}, from=1-5, to=1-6]
    \arrow [draw={rgb,255:red,51;green,102;blue,255}, from=1-5, to=2-5]
    \arrow [draw={rgb,255:red,214;green,92;blue,214}, from=1-6, to=2-6]
    \arrow [""{name=3, anchor=center, inner sep=0}, "y"', "\shortmid "{marking}, from=2-1, to=2-2]
    \arrow [""{name=4, anchor=center, inner sep=0}, "{k}"', "\shortmid "{marking, text={rgb,255:red,214;green,92;blue,214}}, color={rgb,255:red,214;green,92;blue,214}, from=2-2, to=2-3]
    \arrow [""{name=5, anchor=center, inner sep=0}, "{k\,y}"', "\shortmid "{marking}, from=2-5, to=2-6]
    \arrow [from=2-7, to=4-7]
    \arrow ["\shortmid "{marking, text={rgb,255:red,153;green,153;blue,153}}, draw={rgb,255:red,153;green,153;blue,153}, from=3-1, to=3-2]
    \arrow [draw={rgb,255:red,153;green,153;blue,153}, equals, from=3-1, to=4-1]
    \arrow ["\shortmid "{marking, text={rgb,255:red,153;green,153;blue,153}}, draw={rgb,255:red,153;green,153;blue,153}, equals, from=3-2, to=3-3]
    \arrow [draw={rgb,255:red,153;green,153;blue,153}, equals, from=3-2, to=4-2]
    \arrow [draw={rgb,255:red,153;green,153;blue,153}, equals, from=3-3, to=4-3]
    \arrow ["{=}"{description}, draw=white, from=3-4, to=4-4]
    \arrow ["\shortmid "{marking, text={rgb,255:red,153;green,153;blue,153}}, draw={rgb,255:red,153;green,153;blue,153}, from=3-5, to=3-6]
    \arrow [draw={rgb,255:red,153;green,153;blue,153}, equals, from=3-5, to=4-5]
    \arrow [draw={rgb,255:red,153;green,153;blue,153}, equals, from=3-6, to=4-6]
    \arrow ["\shortmid "{marking, text={rgb,255:red,153;green,153;blue,153}}, draw={rgb,255:red,153;green,153;blue,153}, from=4-1, to=4-2]
    \arrow ["\shortmid "{marking, text={rgb,255:red,153;green,153;blue,153}}, draw={rgb,255:red,153;green,153;blue,153}, equals, from=4-2, to=4-3]
    \arrow ["\shortmid "{marking, text={rgb,255:red,153;green,153;blue,153}}, draw={rgb,255:red,153;green,153;blue,153}, from=4-5, to=4-6]
    \arrow ["\alpha "{description}, shorten <=4pt, shorten >=4pt, draw=none, from=0, to=3]
    \arrow ["{\gamma }"{description}, color={rgb,255:red,214;green,92;blue,214}, shorten <=4pt, shorten >=4pt, draw=none, from=1, to=4]
    \arrow ["{\gamma  \, \alpha }"{description}, shorten <=4pt, shorten >=4pt, draw=none, from=2, to=5]
  \end {tikzcd}
\end{center}\par{}
  Since these actions are given by composition in a (pseudo-)double category \(\mathbb {M}\), they are unital and associative up to coherent isomorphism. The actual coherences themselves are just those from the double category \(\mathbb {M}\); we simply interpret them as an interaction between left and right action, or between left action and composition in the left double category, and so on, as determined by the labelling.
\end{explication}

\begin{example}[{Hom double barrels}]\label{jrb-003F}
\par{}
Given any pseudo double category \(\mathbb {C}\), there is a \textbf{hom double barrel} \(C(-,-) : \mathbb {C} \mathrel {\mkern 3mu\vcenter {\hbox {$\shortmid $}}\mkern -10mu{\to }} \mathbb {C}\) given by the projection \(\mathbb {C} \times  \mathbb {L}\mathsf {oose} \to  \mathbb {L}\mathsf {oose}\).
\end{example}

\begin{remark}[{Source, target and hom from the \(\mathscr {F}\)-sketch perspective}]\label{jrb-003G}
\par{}
Defining the source, target and hom maps for loose bimodules from the \(\mathscr {F}\)-category perspective proceeds in exactly the same way as for bimodules from the limit sketch perspective (\cref{jrb-003E}), since the maps:

  \begin{center}
\begin {tikzcd}[column sep=5em]
{\left ( \Delta  \downarrow  [1] \right )^{\mathsf {op}}} & {\left ( \Delta  \downarrow  [0] \right )^{\mathsf {op}}}
\arrow ["{\Delta  \downarrow  \mathsf {s}_{0}}"{description}, from=1-1, to=1-2]
\arrow ["{\Delta  \downarrow  \mathsf {d}_{0}}"', popup=1em, from=1-2, to=1-1]
\arrow ["{\Delta  \downarrow  \mathsf {d}_{1}}", popdown=1em, from=1-2, to=1-1]
\end {tikzcd}
\end{center}

are moreover morphisms of \(\mathscr {F}\)-sketches.
\end{remark}

\begin{explication}[{Biased pseudo-bimodules in \(\mathcal {C}\mathsf {at}\), explicitly}]\label{jrb-003C}
\par{}
We end this section with a detailed elaboration of the data and coherence conditions provided by the \emph{biased} presentation of pseudo-bimodules in \(\mathcal {C}\mathsf {at}\) to observe that it produces the “right” notion of loose bimodule, and to establish some terminology for loose bimodules which we shall use in later sections.
\par{}
Let \(\mathbb {C}\) and \(\mathbb {D}\) denote the source and target double categories of a biased pseudo-bimodule \(M : \left ( \Delta _{\leq  4} \downarrow  [1] \right )^{\mathsf {op}} \to  \mathcal {C}\mathsf {at}\). From this pseudo-model we obtain the following data:
  \begin{itemize}\item{}from the image of \((0) \to  (0,1) \leftarrow  (1)\) (viewing objects of \(\Delta _{\leq  4} \downarrow  [1]\) as sequences) we get a span of categories \(\mathsf {Tight}({C}) \leftarrow  \mathsf {Car} (\mathbb {M}) \rightarrow  \mathsf {Tight}({D})\), called the \emph{carrier}. An object \(x\in  \mathsf {Car} (\mathbb {M})\) mapping to \(c\) and \(d\) is denoted \(x:c\mathrel {\mkern 3mu\vcenter {\hbox {$\shortmid $}}\mkern -10mu{\to }} d\)
    and called a (loose) heteromorphism. A morphism \(\alpha :x\to  x'\) in \(\mathsf {Car} (\mathbb {M})\) mapping to \(f:c\to  c'\) and \(g:d\to  d'\) is called a heterocell and denoted
    as a cell like so:
    
\begin{center}\begin {tikzcd}
        c & d \\
        {c'} & {d'}
        \arrow [""{name=0, anchor=center, inner sep=0}, "x", "\shortmid "{marking}, from=1-1, to=1-2]
        \arrow ["f"{description}, from=1-1, to=2-1]
        \arrow ["g"{description}, from=1-2, to=2-2]
        \arrow [""{name=1, anchor=center, inner sep=0}, "{x'}"', from=2-1, to=2-2]
        \arrow ["\alpha "{description}, draw=none, from=0, to=1]
      \end {tikzcd}\end{center}

    We shall call a heterocell mapping to identities on both sides \emph{globular}.
    
    \item{}from the active maps \((0,1) \to  \left ( \underline {\mathbf {0}},0,\underline {\mathbf {1}} \right )\) and \((0,1) \to  \left ( \underline {\mathbf {0}},1,\underline {\mathbf {1}} \right )\)
we get respectively the “action” functors \(\mathsf {Loose}({C})\times _{\mathsf {Tight}({C})} \mathsf {Car}  (\mathbb {M})\to  \mathsf {Car} (\mathbb {M})\) and \(\mathsf {Car} (\mathbb {M})\times _{\mathsf {Tight}({D})} \mathsf {Loose}({D})\to  \mathsf {Car} (\mathbb {M})\), both denoted by \(\odot \).
    \item{}from the compositors for \((0,1) \to  (\underline {\mathbf {0}}, 0, \underline {\mathbf {1}}) \to  \left ( 0,1 \right )\) and
    \(\left ( 0,1 \right ) \to  \left ( \underline {\mathbf {0}}, 1 , \underline {\mathbf {1}} \right ) \to  \left ( 0,1 \right )\) we have for all heteromorphisms \(x:c\mathrel {\mkern 3mu\vcenter {\hbox {$\shortmid $}}\mkern -10mu{\to }} d\) globular unitors \(\mathrm {id}_c \odot  x\cong  x\) and \(x\odot  \mathrm {id}_d\cong  x\) which are invertible and natural in \(x\).
    \item{}from the compositors for each of the commuting squares:

  \begin{center}
  \begin {tikzcd}[ampersand replacement=\ampersand ,column sep=1em]
	\left ( 0,1 \right ) \ampersand  \left ( 0,0,1 \right ) \\
	\left ( 0,0,1 \right ) \ampersand  \left ( 0,0,0,1 \right )
	\arrow ["{\mathsf {d}_{1}}",from=1-1, to=1-2]
	\arrow ["{\mathsf {d}_{1}}"',""{name=0, anchor=center, inner sep=0}, from=1-1, to=2-1]
	\arrow ["{\mathsf {d}_{1}}",""{name=1, anchor=center, inner sep=0}, from=1-2, to=2-2]
	\arrow ["{\mathsf {d}_{2}}"',from=2-1, to=2-2]
	\arrow ["\circlearrowright "{marking, allow upside down}, draw=none, from=0, to=1]
  \end {tikzcd}
  \quad 
    \begin {tikzcd}[ampersand replacement=\ampersand ,column sep=1em]
	\left ( 0,1 \right ) \ampersand  \left ( 0,1,1 \right ) \\
	\left ( 0,1,1 \right ) \ampersand  \left ( 0,1,1,1 \right )
	\arrow ["{\mathsf {d}_{1}}",from=1-1, to=1-2]
	\arrow ["{\mathsf {d}_{1}}"',""{name=0, anchor=center, inner sep=0}, from=1-1, to=2-1]
	\arrow ["{\mathsf {d}_{1}}",""{name=1, anchor=center, inner sep=0}, from=1-2, to=2-2]
	\arrow ["{\mathsf {d}_{2}}"',from=2-1, to=2-2]
	\arrow ["\circlearrowright "{marking, allow upside down}, draw=none, from=0, to=1]
  \end {tikzcd}
    \quad 
    \begin {tikzcd}[ampersand replacement=\ampersand ,column sep=1em]
	\left ( 0,1 \right ) \ampersand  \left ( 0,1,1 \right ) \\
	\left ( 0,0,1 \right ) \ampersand  \left ( 0,0,1,1 \right )
	\arrow ["{\mathsf {d}_{1}}",from=1-1, to=1-2]
	\arrow ["{\mathsf {d}_{1}}"',""{name=0, anchor=center, inner sep=0}, from=1-1, to=2-1]
	\arrow ["{\mathsf {d}_{1}}",""{name=1, anchor=center, inner sep=0}, from=1-2, to=2-2]
	\arrow ["{\mathsf {d}_{2}}"',from=2-1, to=2-2]
	\arrow ["\circlearrowright "{marking, allow upside down}, draw=none, from=0, to=1]
\end {tikzcd}
    \end{center}

we get globular, invertible associators of the following three forms, natural in all their variables:
	\begin{equation}
		\begin {aligned}
(p'\odot  p)\odot  x &\cong  p'\odot  (p\odot  x) \\
x\odot  (q\odot  q') &\cong  (x\odot  q)\odot  q' \\
(p'\odot  x)\odot  q &\cong  p'\odot  (x\odot  q)
		\end {aligned}
	\end{equation}
    \item{}from the coherence equations from the two ways of defining the active map \([1] \to  [4]\) described in \cref{jrb-003J} we get four pentagon identities, corresponding to the four non-trivial non-decreasing sequences in \(\{0,1\}\) of length five. These relate the associators for strings of three pro-morphisms and one heteromorphism of the forms
      \(p''\odot  p'\odot  p\odot  x\), \(p'\odot  p\odot  x\odot  q\), \(p\odot  x\odot  q\odot  q'\), and \(x\odot  q\odot  q'\odot  q''\).
    
    \item{}from the coherence conditions for the two equal paths of morphisms in \(\Delta \):

  \begin{center}
\begin {tikzcd}[ampersand replacement=\ampersand ,row sep=0.1em]
	\ampersand  {[2]} \\
	{[1]} \ampersand \ampersand  {[3]} \ampersand  {[2]} \\
	\ampersand  {[2]}
	\arrow ["{\mathsf {d}_{2}}", from=1-2, to=2-3]
	\arrow ["{\mathsf {d}_{1}}", from=2-1, to=1-2]
	\arrow ["{\mathsf {d}_{1}}"', from=2-1, to=3-2]
	\arrow ["{\mathsf {s}_{1}}", from=2-3, to=2-4]
	\arrow ["{\mathsf {d}_{1}}"', from=3-2, to=2-3]
\end {tikzcd}
    \end{center}

      we get triangle identities of two kinds — corresponding to the two non-degenerate sequences of length 3 — which relate the left unitor of \(x:c\mathrel {\mkern 3mu\vcenter {\hbox {$\shortmid $}}\mkern -10mu{\to }} d\) to the right unitor of \(p:c\mathrel {\mkern 3mu\vcenter {\hbox {$\shortmid $}}\mkern -10mu{\to }} c'\)
      and the right unitor of \(x\) to the left unitor of \(q:d\mathrel {\mkern 3mu\vcenter {\hbox {$\shortmid $}}\mkern -10mu{\to }} d'\).
    \end{itemize}\par{}For pseudo-bimodules \(M :\mathbb {C}\mathrel {\mkern 3mu\vcenter {\hbox {$\shortmid $}}\mkern -10mu{\Rightarrow }}\mathbb {D}\) and \(M':\mathbb {C}'\mathrel {\mkern 3mu\vcenter {\hbox {$\shortmid $}}\mkern -10mu{\Rightarrow }}\mathbb {D}'\) a \textbf{pseudo-morphism} of pseudo-bimodules \( \phi  :M \Rightarrow   M' : \left ( \Delta _{\leq  4} \downarrow  [1] \right )^{\mathsf {op}} \to  \mathcal {C}\mathsf {at}\) then induces:
  \begin{itemize}\item{}pseudo-double functors \(F_0:\mathbb {C}\to  \mathbb {C}'\) and \(F_1:\mathbb {D}\to  \mathbb {D}'\)
    \item{}a functor \(F:\mathsf {Car} (\mathbb {M})\to  \mathsf {Car} (\mathbb {M}')\) on the carriers making a morphism of spans with \(F_0\) and \(F_1\); i.e., \(F\) respects the domain and codomain of heteromorphisms and heterocells.
    \item{}globular invertible laxators \(F(p\odot  x)\to  F_0(p)\odot  F(x)\) and \(F(x\odot  q)\to  F(x)\odot  F_1(q)\) for all \(p:c\mathrel {\mkern 3mu\vcenter {\hbox {$\shortmid $}}\mkern -10mu{\to }} c'\) and \(x:c\mathrel {\mkern 3mu\vcenter {\hbox {$\shortmid $}}\mkern -10mu{\to }} d,q:d\mathrel {\mkern 3mu\vcenter {\hbox {$\shortmid $}}\mkern -10mu{\to }} d'\) corresponding respectively to the pseudo-naturality cells for the morphisms \(\left ( 0,1 \right ) \to  \left ( \underline {\mathbf {0}},0,\underline {\mathbf {1}} \right )\) and \(\left ( 0,1 \right ) \to  \left ( \underline {\mathbf {0}},1,\underline {\mathbf {1}} \right )\) natural in \(p\), \(q\) and \(x\).
    \end{itemize}
  These data satisfy:
  \begin{itemize}\item{}three associativity hexagons for laxators, equating the two ways to get from \(F_0(p)\odot  F(x)\odot  F_1(q)\) to \(F(p\odot  x\odot  q)\),
    and similarly for \(F(p'\odot  p\odot  x)\) and \(F(x\odot  q\odot  q')\). These come from the composition law on the pseudo-naturality squares of \(\phi \) for the composites \(\mathsf {d}_{1}\,\mathsf {d}_{1} = \mathsf {d}_{2}\,\mathsf {d}_{1} : [1] \to  [3]\) applied to the three non-degenerate sequences of length four.
    \item{}unitality squares as in the definition of a lax double functor relating the laxator for \(\mathrm {id}_c\odot  x\) to the unitor \(F_{0,c}\), and
    similarly on the other side. These correspond to the composition law on the pseudo-naturality squares of \(\phi \) for the composites \(\left ( 0,1 \right ) \to  \left ( \underline {\mathbf {0}}, 0, \underline {\mathbf {1}} \right ) \to  \left ( 0,1 \right )\) and \(\left ( 0,1 \right ) \to  \left ( \underline {\mathbf {0}}, 1, \underline {\mathbf {1}} \right ) \to  \left ( 0,1 \right )\) as well as the unit law for \(\phi \) at \(\left ( 0,1 \right )\).\end{itemize}
  If we were considering instead \emph{lax}-morphisms of pseudo-bimodules then we would drop the invertibility condition on the laxators and have \(F_0\) and \(F_1\) instead by \emph{lax}-double functors.
\par{}
  For \(F,G: M\to  M'\) pseudo-bimodule morphisms, a \textbf{pseudo-bimodule transformation} \(\alpha :F\Rightarrow  G\) consists of:
  \begin{itemize}\item{}natural transformations \(\alpha _0:F_0\Rightarrow  G_0\) and \(\alpha _1:F_1\Rightarrow  G_1\) on the domain and codomain double functors
    \item{}for every heteromorphism \(x:c\mathrel {\mkern 3mu\vcenter {\hbox {$\shortmid $}}\mkern -10mu{\to }} d\) in \(X\), a cell
    
\begin{center}\begin {tikzcd}
        {F_0c} & {F_1d} \\
        {G_0c} & {G_1d}
        \arrow [""{name=0, anchor=center, inner sep=0}, "Fx", "\shortmid "{marking}, from=1-1, to=1-2]
        \arrow ["{\alpha _0c}"', from=1-1, to=2-1]
        \arrow ["{\alpha _1d}", from=1-2, to=2-2]
        \arrow [""{name=1, anchor=center, inner sep=0}, "Gx"', "\shortmid "{marking}, from=2-1, to=2-2]
        \arrow ["{\alpha  x}"{description}, draw=none, from=0, to=1]
      \end {tikzcd}\end{center}\end{itemize}
  satisfying
  \begin{itemize}\item{}naturality with respect to heterocells.
  \item{}two external functoriality conditions relating the laxators to the components.\end{itemize}
which come from the naturality condition for a modification between the pseudo-\(\mathscr {F}\)-transformations corresponding to \(F\) and \(G\).
\end{explication}
\section{Restriction for loose bimodules}\label{jrb-003H}\par{}
A notion of “restriction” of a bimodule along maps into its source and target is essential to the use of bimodules for expressing formal category theory. For example, given a 1-cell \(F : \mathsf {C} \to  \mathsf {D}\) in some 1-category \(\mathcal {E}\) with pullbacks, the internal notion of \(F\) being fully-faithful involves an isomorphism between the hom bimodule \(C(-,-)\) and the restriction of \(D(-,-)\) along its source and target via the map \(F\), often denoted \(D(F,F)\). Similarly, adjunctions are expressed as isomorphisms between bimodules of the form \(\mathsf {D} \left ( F,1 \right ) \cong  \mathsf {C} \left ( 1,G \right )\), each being the restriction along the source or target of a hom bimodule.
\par{}
In this section we explore the notion of restriction appropriate to \emph{loose} bimodules expressed from the biased perspective.
\subsection{Restriction of loose bimodules}\label{ssl-003B}\par{}
   Consider a  \(\mathbb {M} \colon  \mathbb {D}_0 \mathrel {\mkern 3mu\vcenter {\hbox {$\shortmid $}}\mkern -10mu{\to }} \mathbb {D}_1\) and pseudo double functors \(F_0 : \mathbb {E}_0 \to  \mathbb {D}_0 \) and \(F_0 : \mathbb {E}_1 \to  \mathbb {D}_1\).

   We aim to construct a "restricted" loose bimodule \(\mathbb {M}(F_0,F_1): \mathbb {E}_0 \mathrel {\mkern 3mu\vcenter {\hbox {$\shortmid $}}\mkern -10mu{\to }} \mathbb {E}_1\). We begin by defining the category that will be the \hyperref[ssl-0036]{carrier} for \(\mathbb {M}(F_0,F_1)\).

\begin{definition}[{Carrier of a restricted loose bimodule}]\label{ssl-003C}
\par{}
  Let \(\mathbb {M} \colon  \mathbb {D}_0 \mathrel {\mkern 3mu\vcenter {\hbox {$\shortmid $}}\mkern -10mu{\to }} \mathbb {D}_1\) be a  and let \(F_0 \colon  \mathbb {E}_0 \to  \mathbb {D}_0 \) and \(F_0 \colon  \mathbb {E}_1 \to  \mathbb {D}_1\) be pseudo double functors.
  
  We define a category, named \(\mathsf {Car}(\mathbb {M}(F_0, F_1))\) by anticipation, as follows:

  \begin{itemize}\item{}
      Objects are triples \((e_0 : {\mathsf {ob}}(\mathbb {E}_0), e_1: {\mathsf {ob}}(\mathbb {E}_1), m \colon  F_0 e_0 \mathrel {\mkern 3mu\vcenter {\hbox {$\shortmid $}}\mkern -10mu{\to }} F_1 e_1).\)
    \item{}
      A morphism \((e_0, e_1, m) \to  (e_0', e_1', m')\) is a triple \((f_0:e_0\to  e_0', f_1:e_1\to  e_1', \alpha )\) where \(\alpha \) is a square in \(\mathbb {M}\) of the following form:

  \begin{center}
        \begin {tikzcd}
          {F_0e_0} & {F_1e_1} \\
          {F_0e_0'} & {F_1e_1} \\
          {}
          \arrow [""{name=0, anchor=center, inner sep=0}, "m", "\shortmid "{marking}, from=1-1, to=1-2]
          \arrow ["{F_0f_0}"', from=1-1, to=2-1]
          \arrow ["{F_1f_1}", from=1-2, to=2-2]
          \arrow [""{name=1, anchor=center, inner sep=0}, "{m'}"', "\shortmid "{marking}, from=2-1, to=2-2]
          \arrow ["\alpha ", shorten <=4pt, shorten >=4pt, Rightarrow, from=0, to=1]
        \end {tikzcd}
      \end{center}\end{itemize}\par{}
  Equivalently, \(\mathsf {Car}(\mathbb {M}(F_0, F_1))\) is the limit of the diagram of  categories

  \begin{center}
    \begin {tikzcd}[cramped]
      {\mathsf {Tight}({\mathbb {E}_0})} && {\mathsf {Car}(\mathbb {M})} && {\mathsf {Tight}({\mathbb {E}_1})} \\
      & {\mathsf {Tight}({\mathbb {D}_0})} && {\mathsf {Tight}({\mathbb {D}_1})}
      \arrow ["{\mathsf {Tight}({F_0})}"', from=1-1, to=2-2]
      \arrow ["{\mathsf {dom}}", from=1-3, to=2-2]
      \arrow ["{\mathsf {codom}}"', from=1-3, to=2-4]
      \arrow ["{\mathsf {Tight}({F_1})}", from=1-5, to=2-4]
    \end {tikzcd}
  \end{center}\par{}
  The canonical limit projections give functors \({\mathsf {dom}} \colon  \mathsf {Car}(\mathbb {M}(F_0, F_1)) \to  \mathsf {Tight}({\mathbb {E}_0})\) and \({\mathsf {codom}} \colon  \mathsf {Car}(\mathbb {M}(F_0, F_1)) \to  \mathsf {Tight}({\mathbb {E}_1})\), as well as \(\pi :\mathsf {Car}(\mathbb {M}(F_0, F_1)) \to  \mathsf {Car}{\mathbb {M}}\).
\end{definition}
\par{}Now we describe the module structure on this carrier.
\begin{definition}[{Restriction of loose bimodules}]\label{ssl-003E}
\par{}
  
  Let \(\mathbb {M} \colon  \mathbb {D}_0 \mathrel {\mkern 3mu\vcenter {\hbox {$\shortmid $}}\mkern -10mu{\to }} \mathbb {D}_1\) be a  and let \(F_0 \colon  \mathbb {E}_0 \to  \mathbb {D}_0 \) and \(F_0 \colon   \mathbb {E}_1 \to  \mathbb {D}_1\) be pseudo double functors. The \textbf{restriction} \(\mathbb {M}(F_0, F_1) \colon  \mathbb {E}_0 \mathrel {\mkern 3mu\vcenter {\hbox {$\shortmid $}}\mkern -10mu{\to }} \mathbb {E}_1\) is defined as follows.

  \begin{itemize}\item{}The tight category \(\mathsf {Tight}({\mathbb {M}(F_0, F_1)})\) is \(\mathsf {Tight}({\mathbb {E}_0})+\mathsf {Tight}({\mathbb {E}_1})\).
    \item{}The loose category \(\mathsf {Loose}({\mathbb {M}(F_0,F_1)})\)
     is \(\mathsf {Loose}({\mathbb {E}_0})+\mathsf {Loose}({\mathbb {E}_1})+\mathsf {Car}(\mathbb {M}(F_0, F_1))\).
     \item{}External identities as well as the appropriate components of external domain, codomain, and composition are inherited from the \(\mathbb {E}_i\).
     \item{}The components of the external domain and codomain at \(\mathsf {Car}(\mathbb {M}(F_0, F_1))\) are given by the projections \({\mathsf {dom}} \colon  \mathsf {Car}(\mathbb {M}(F_0, F_1)) \to  \mathsf {Tight}({\mathbb {E}_0})\) and \({\mathsf {codom}} \colon  \mathsf {Car}(\mathbb {M}(F_0, F_1)) \to  \mathsf {Tight}({\mathbb {E}_1})\) highlighted above.
     \item{}The external composite of a string of proarrows like \(e_0\mathrel {\overset {n_0}{\mathrel {\mkern 3mu\vcenter {\hbox {$\shortmid $}}\mkern -10mu{\to }}}} e_0' \mathrel {\overset {(e_0',e_1,m)}{\mathrel {\mkern 3mu\vcenter {\hbox {$\shortmid $}}\mkern -10mu{\to }}}} e_1\mathrel {\overset {n_1}{\mathrel {\mkern 3mu\vcenter {\hbox {$\shortmid $}}\mkern -10mu{\to }}}} e_1'\) is simply \((e_0,e_1',F_0(n_0)m F_1(n_1))\), and similarly for external composites of cells that cross through \(\mathsf {Car}(\mathbb {M}(F_0, F_1))\).\end{itemize}\par{}
   It is immediate that the \hyperref[ssl-0036]{carrier} of the loose bimodule \(\mathbb {M}(F_0, F_1)\) is the carrier defined in \hyperref[ssl-003C]{Carrier of a restricted loose bimodule}. There is a canonical pseudo double functor \(\Pi :\mathbb {M}(F_0, F_1) \to  \mathbb {M}\) (which will be strict if the \(F_i\) are) given by \(F_i\) on the \(\mathbb {E}_i\) and by the canonical functor \(\pi :\mathsf {Car}(\mathbb {M}(F_0, F_1)) \to  \mathsf {Car}{\mathbb {M}}\) on the carriers. Composition with the structure map \(\mathbb {M}\to  \mathbb {L}\mathsf {oose}\) provides the structure of a loose bimodule on \(\mathbb {M}(F_0, F_1)\) and in particular a loose bimodule morphism \(\mathbb {M}(F_0, F_1) \to  \mathbb {M}\). Finally, we shall have
   considerable use for the observation that \(\Pi \) is faithful on cells of a fixed boundary.
\end{definition}
\subsubsection{Universal property of restriction of loose bimodules}\label{kdc-000P}\par{}We briefly give a universal property to the restriction just constructed. Our universal property is quite weak and does
not nearly subsume everything we need from restriction; thus the result below is by way of illustration only.
\begin{definition}[{The graph of categories of double categories and loose bimodules}]\label{kdc-000K}
\par{}
  We define the graph of categories of double categories and loose bimodules, denoted \(\mathbb {D}\mathsf {bl}_{\mathsf {ps},\mathsf {loose}}\), to be the virtual double category whose: 
  \begin{enumerate}\item{}objects are double categories
   \item{}tight 1-cells are pseudo double functors
   \item{}loose 1-cells are loose bimodules
   \item{}2-cells with boundary as displayed below 
   are simply functors \(\mathbb {M_0}\to \mathbb {M_1}\) over \(\mathbb {L}\mathsf {oose}\) 
   pulling back on the left to \(F\) and on the right to \(G\):
   
\begin{center}
   \begin {tikzcd}
      {\mathbb {D}_{0,0}} & {\mathbb {D}_{1,0}} \\
      {\mathbb {D}_{0,1}} & {\mathbb {D}_{1,1}}
      \arrow ["{M_0}", "\shortmid "{marking}, from=1-1, to=1-2]
      \arrow ["F"{description}, from=1-1, to=2-1]
      \arrow ["G"{description}, from=1-2, to=2-2]
      \arrow ["{M_1}"', "\shortmid "{marking}, from=2-1, to=2-2]
    \end {tikzcd}
    \end{center}\end{enumerate}\par{}Note that loose bimodules do not compose (), so that we cannot upgrade this graph of categories

to a pseudo double category. 
It should be possible to make a virtual double category, but we do not pursue that here.\end{definition}

\begin{proposition}[{The universal property of restriction}]\label{kdc-000G}
\par{}
  Consider a loose bimodule \(M:\mathbb {M}\to  \mathbb {L}\mathsf {oose}\) and pseudo double functors \(F_0:\mathbb {D}_0\to  M_0\) and \(F_1:\mathbb {D}_1\to  M_1\). Then there is a natural bijection 
  between cells with the two boundaries displayed below   in \(\mathbb {D}\mathsf {bl}_{\mathsf {ps},\mathsf {loose}}\).:
  
\begin{center}
  \begin {tikzcd}
      {N_0} & {N_1} & {N_0} & {N_1} \\
      {\mathbb {D}_0} & {\mathbb {D}_1} & {\mathbb {D}_0} & {\mathbb {D}_1} \\
      {M_0} & {M_1}
      \arrow ["N", "\shortmid "{marking}, from=1-1, to=1-2]
      \arrow ["{G_0}"', from=1-1, to=2-1]
      \arrow ["{G_1}", from=1-2, to=2-2]
      \arrow ["N", "\shortmid "{marking}, from=1-3, to=1-4]
      \arrow ["{G_0}"', from=1-3, to=2-3]
      \arrow ["{G_1}", from=1-4, to=2-4]
      \arrow ["{F_0}"', from=2-1, to=3-1]
      \arrow ["{F_1}", from=2-2, to=3-2]
      \arrow ["{M(F_0,F_1)}"', "\shortmid "{marking}, from=2-3, to=2-4]
      \arrow ["M"', "\shortmid "{marking}, from=3-1, to=3-2]
    \end {tikzcd}
  \end{center}
\begin{proof}[{proof of \cref{kdc-000G}}]\label{kdc-000G-proof}
\par{}
  Choosing a cell \(\alpha \) as on the right-hand side involves mapping a loose arrow \(n:e_0\mathrel {\mkern 3mu\vcenter {\hbox {$\shortmid $}}\mkern -10mu{\to }} e_1\) of
  the collage \(\mathbb {N}\) to a loose arrow \((G_0(e_0),G_1(e_1),\alpha _n:F_0G_0(e_0)\mathrel {\mkern 3mu\vcenter {\hbox {$\shortmid $}}\mkern -10mu{\to }} F_1G_1(e_1))\)
  of the collage of the restriction \(M(F_0,F_1)\), and similarly for the cells. Such data is visibly equivalent
  to the data of a cell as on the left-hand side, and the axioms to be checked are in either case identical,
  being entirely stated in \(\mathbb {M}\) itself. 
\end{proof}
\end{proposition}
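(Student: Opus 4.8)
The plan is to unfold both sides of the claimed bijection using the description of $2$-cells in $\mathbb{D}\mathsf{bl}_{\mathsf{ps},\mathsf{loose}}$ from \cref{kdc-000K} and then absorb the discrepancy into the defining limit of the carrier of the restriction, so that every verification takes place inside $\mathbb{M}$ itself. Concretely, a cell with the left-hand boundary is a double functor $H \colon \mathbb{N} \to \mathbb{M}$ over $\mathbb{L}\mathsf{oose}$ whose fibres over $0$ and $1$ are $F_0 G_0$ and $F_1 G_1$, while a cell with the right-hand boundary is a double functor $H' \colon \mathbb{N} \to \mathbb{M}(F_0, F_1)$ over $\mathbb{L}\mathsf{oose}$ whose fibres over $0$ and $1$ are $G_0$ and $G_1$. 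I would define the forward map of the bijection to be post-composition with the canonical $\Pi \colon \mathbb{M}(F_0, F_1) \to \mathbb{M}$ of \cref{ssl-003E}; since $\Pi$ restricts to $F_0$ and $F_1$ over $0$ and $1$, the composite $\Pi H'$ is a cell of the required left-hand type, and this assignment is manifestly natural in $N$, $G_0$ and $G_1$.

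For the inverse I would first note that, because $\mathbb{L}\mathsf{oose}$ has exactly three squares, all identities, the data of a double functor $\mathbb{N} \to \mathbb{M}(F_0, F_1)$ over $\mathbb{L}\mathsf{oose}$ splits into a double functor on the fibre $\mathbb{N}_0$, a double functor on $\mathbb{N}_1$, and a functor $\mathsf{Car}(\mathbb{N}) \to \mathsf{Car}(\mathbb{M}(F_0, F_1))$, glued together by the external actions and the pseudo-structure; imposing the prescribed fibres pins the first two down to $G_0$ and $G_1$. Now by the presentation of $\mathsf{Car}(\mathbb{M}(F_0, F_1))$ in \cref{ssl-003C} as the limit of the cospan $\mathsf{Tight}(\mathbb{D}_0) \to \mathsf{Tight}(M_0) \leftarrow \mathsf{Car}(\mathbb{M}) \to \mathsf{Tight}(M_1) \leftarrow \mathsf{Tight}(\mathbb{D}_1)$, a functor $\mathsf{Car}(\mathbb{N}) \to \mathsf{Car}(\mathbb{M}(F_0, F_1))$ whose composites with the two outer projections are $G_0 \circ \mathsf{dom}$ and $G_1 \circ \mathsf{codom}$ is exactly a single functor $K \colon \mathsf{Car}(\mathbb{N}) \to \mathsf{Car}(\mathbb{M})$ satisfying $\mathsf{dom} \circ K = F_0 G_0 \circ \mathsf{dom}$ and $\mathsf{codom} \circ K = F_1 G_1 \circ \mathsf{codom}$ — which is precisely the carrier component of a left-hand cell $H$. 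So from $H$ I recover the triple consisting of $G_0$ on $\mathbb{N}_0$, $G_1$ on $\mathbb{N}_1$ and $K$ on the carrier, assemble it into a candidate $H'$, and check that $\Pi H' = H$ and that the round trip from $H'$ is the identity, both holding strictly.

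The step I expect to be the main obstacle — and the only one that is not bookkeeping — is confirming that this candidate $H'$ is genuinely a double functor, i.e.\ that the coherence and functoriality axioms transport across the correspondence rather than merely the underlying assignments matching up. Here I would invoke the explicit formula for external composites through the carrier of the restriction in \cref{ssl-003E}, namely $(e_0, e_1', F_0(n_0)\, m\, F_1(n_1))$ and its analogue for cells: it shows that each structural equation demanded of $H'$ — compatibility with external composition, with external identities, and with the associativity and unit constraints — decomposes into one equation in $\mathsf{Car}(\mathbb{M})$, which is the $\Pi$-image of the corresponding equation for $\Pi H' = H$, together with two equations in $\mathsf{Tight}(\mathbb{D}_0)$ and $\mathsf{Tight}(\mathbb{D}_1)$ that are automatic since those components are fixed to $G_0$ and $G_1$. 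Since $\Pi$ is faithful on cells of a fixed boundary (the observation recorded at the end of \cref{ssl-003E}), the axioms hold for $H'$ if and only if they hold for $H$; this simultaneously shows the two maps are mutually inverse and, because the whole argument is phrased inside $\mathbb{M}$, makes naturality of the resulting bijection transparent.
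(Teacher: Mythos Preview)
Your proposal is correct and follows essentially the same approach as the paper: both arguments identify the data of a right-hand cell with that of a left-hand cell via the explicit description of the carrier of the restriction, and both observe that the coherence axioms coincide because they are ``entirely stated in $\mathbb{M}$ itself.'' You have simply spelled out in detail what the paper compresses into two sentences, making explicit the role of the limit presentation of $\mathsf{Car}(\mathbb{M}(F_0,F_1))$ and of the faithfulness of $\Pi$ on cells of fixed boundary.
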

\section{Loose adjunctions and loose universal properties}\label{kdc-000F}\par{}
As an application of the notion of restriction for loose bimodules introduced in \cref{jrb-003H}, we now present some corresponding loose universal constructions.

\begin{definition}[{loose adjunction}]\label{kdc-000M}
\par{}
  Given pseudo double functors \(F:\mathbb {D}\leftrightarrows  \mathbb {E}:G\), a \textbf{loose adjunction} between them
  consists of a pseudo-natural equivalence of loose bimodules \(\iota :\mathbb {E}(F,\mathrm {id}_{\mathbb {E}})\cong  \mathbb {D}(\mathrm {id}_{\mathbb {D}},G):\mathbb {D} \mathrel {\mkern 3mu\vcenter {\hbox {$\shortmid $}}\mkern -10mu{\Rightarrow }} \mathbb {E}\) 
  fixing \(\mathbb {D}\) and \(\mathbb {E}\). 
\end{definition}

\begin{remark}[{Lax loose adjunctions}]\label{jrb-003I}
\par{}
  There is a laxer notion of adjunction in which there is merely an \textbf{adjunction} between these bimodules. Such a notion is 
  needed to encompass the lax adjunctions used in \cite{patterson-2024-transposing} to construct a cartesian bicategory from 
  a double category with (iso-strong) finite products, but we leave further investigation for future work.
\end{remark}
\par{}
  Since a loose adjunction acts as the identity on the bounding double categories \(\mathbb {C}\) and \(\mathbb {D}\), its defining equivalence is determined by its action on carriers. The carrier categories of these restrictions
  are the categories of loose arrows \(F(d)\mathrel {\mkern 3mu\vcenter {\hbox {$\shortmid $}}\mkern -10mu{\to }} e\) and their cells, respectively, of 
  loose arrows \(d\mathrel {\mkern 3mu\vcenter {\hbox {$\shortmid $}}\mkern -10mu{\to }} G(e)\). Therefore a loose adjunction consists of an equivalence between 
  loose arrows and cells of the forms above, natural with respect to the \(\mathbb {D}\) and \(\mathbb {E}\) actions. 

\begin{example}[{Loose terminal object}]\label{kdc-000S}
\par{}
  Suppose the canonical double functor \(!:\mathbb {D}\to  1\) has a (pseudo) loose right adjoint \(x: 1\to  \mathbb {D}\). 
  This means that the loose hom \(\mathbb {D}(-,x)\) reduces to a point, as do cells into \(\mathrm {id}_x\). 
  More globally, the loose bimodule \(1(!,-):\mathbb {D}\mathrel {\mkern 3mu\vcenter {\hbox {$\shortmid $}}\mkern -10mu{\Rightarrow }} 1\) has a unique proarrow and cell of every 
  possible boundary over the walking loose arrow. This implies that an equivalence on the carriers
  \(1(!,-)\cong  \mathbb {D}(-,x)\) fixing the objects will automatically respect the actions, so that 
  the two conditions just given are both necessary and sufficient for a loose terminal object.
\par{}
  For instance, if \(\mathsf {C}\) has pullbacks and a strict initial object \(0\), then \(0\) will be
  loosely terminal in \(\mathbb {S}\mathsf {pan}(\mathsf {C})\), since \(0\) tips the unique span from any \(x\) 
  to \(0\), and \(\mathrm {id}_0\) gives the unique morphism of such spans with left side any \(f:x\to  x'\).
\end{example}

\begin{example}[{coproducts in lextensive categories are loose products in \(\mathbb {S}\mathsf {pan}\)}]\label{kdc-000Q}
\par{}
  Let \(\mathsf {C}\) be a lextensive category, that is, a category with finite limits and van Kampen finite coproducts. 
  Then \(\mathbb {D}:=\mathbb {S}\mathsf {pan}(\mathsf {C})\) has loose biproducts, in the sense that the diagonal double functor
  \(\Delta :\mathbb {D}\to  \mathbb {D}^2\) has a loose left and right adjoint given by the coproduct 
  \(+:\mathbb {D}^2\to  \mathbb {D}\). Indeed, a span \(a+b \mathrel {\mkern 3mu\vcenter {\hbox {$\shortmid $}}\mkern -10mu{\to }} c\) in \(\mathsf {C}\) is an object 
  of \(\mathsf {C}\downarrow  (a+b)\times  c\), which is equivalent to \(\mathsf {C}\downarrow  ((a \times  c) + (b \times  c))\) because lextensive 
  categories are distributive, and then to \(\mathsf {C}\downarrow  (a \times  c) \times  \mathsf {C}\downarrow  (b \times  c)\), i.e. to \(\mathbb {D}^2((a,b),(c,c))\), 
  by the definition of van Kampen coproducts.
\par{}
  Similarly, a cell as on left below is equivalent to a pair of cells as on the right:

  \begin{center}
\begin {tikzcd}[ampersand replacement=\amper ]
	{a+b} \amper  c \\
	{a'+b'} \amper  {c'}
	\arrow ["s"{inner sep=.8ex}, "\shortmid "{marking}, from=1-1, to=1-2]
	\arrow ["{f+g}"', from=1-1, to=2-1]
	\arrow ["h", from=1-2, to=2-2]
	\arrow ["{s'}"'{inner sep=.8ex}, "\shortmid "{marking}, from=2-1, to=2-2]
  \end {tikzcd}
  \qquad  $\leadsto $ \qquad 
\begin {tikzcd}[ampersand replacement=\amper ]
	a \amper  c \amper  b \amper  c \\
	{a'} \amper  {c'} \amper  {b'} \amper  {c'}
	\arrow ["{s_a}"{inner sep=.8ex}, "\shortmid "{marking}, from=1-1, to=1-2]
	\arrow ["f"', from=1-1, to=2-1]
	\arrow ["h", from=1-2, to=2-2]
	\arrow ["{s_b}"{inner sep=.8ex}, "\shortmid "{marking}, from=1-3, to=1-4]
	\arrow ["g"', from=1-3, to=2-3]
	\arrow ["h", from=1-4, to=2-4]
	\arrow ["{s'_{a'}}"'{inner sep=.8ex}, "\shortmid "{marking}, from=2-1, to=2-2]
	\arrow ["{s'_{b'}}"'{inner sep=.8ex}, "\shortmid "{marking}, from=2-3, to=2-4]
\end {tikzcd}
\end{center}

  Finally, if we are to compose a span \(a+b\mathrel {\overset {s}{\mathrel {\mkern 3mu\vcenter {\hbox {$\shortmid $}}\mkern -10mu{\to }}}} c\) with spans \(a'\mathrel {\overset {t}{\mathrel {\mkern 3mu\vcenter {\hbox {$\shortmid $}}\mkern -10mu{\to }}}} a,b'\mathrel {\overset {u}{\mathrel {\mkern 3mu\vcenter {\hbox {$\shortmid $}}\mkern -10mu{\to }}}} b,c\mathrel {\overset {v}{\mathrel {\mkern 3mu\vcenter {\hbox {$\shortmid $}}\mkern -10mu{\to }}}} c'\) and 
  reinterpret the result \(\mathsf {C}\downarrow  (a' \times  c') \times  \mathsf {C}\downarrow  (b' \times  c')\), we can just as well first pass into 
  \(\mathsf {C}\downarrow  (a \times  c) \times  \mathsf {C}\downarrow  (b \times  c)\) and then compose, essentially because both the mapping \(s\mapsto  s_a\)
  and the composition in \(\mathbb {D}\) are given by pullbacks. This concludes the proof that \(\mathbb {D}\) has 
  loose coproducts given by \(+\). The proof that \(+\) also gives loose products can be given along the same lines, 
  or by considering the external self-duality of span categories. 

\end{example}

\begin{example}[{Loosely monoidal closed double categories}]\label{kdc-000N}
\par{}
 Let \(\mathbb {D}\) be a symmetric monoidal double category. We say \(\mathbb {D}\) is \textbf{loosely monoidal closed} 
 if there is a double functor \([-,-]:\mathbb {D}^{\mathsf {co}}\times \mathbb {D}\to  \mathbb {D}\) such that the loose modules
 \(\mathbb {D}(-\otimes  -,-)\) and \(\mathbb {D}(-,[-,-])\) are equivalent when viewed with signature 
 \(\mathbb {D}^2\mathrel {\mkern 3mu\vcenter {\hbox {$\shortmid $}}\mkern -10mu{\Rightarrow }} \mathbb {D}\) (see .) In particular, for each object \(x\in  \mathbb {D}\),
 this implies the double functors \(-\otimes  x\) and \([x,-]\) are loosely adjoint. Our main examples 
 of these are in fact \textbf{compact} double categories as described by Patterson \cite{patterson-2024-toward} \cite{patterson-2024-toward-2}, 
 in that the loose hom will come from tensoring with a dual object.
\par{}
  For a very simple example, 
  consider the double category \(\mathbb {R}\mathsf {el}(\mathsf {E})\) of relations in a regular category, symmetric monoidal 
  under the cartesian product. There is an
  obvious bijection between relations \(a\times  b\mathrel {\mkern 3mu\vcenter {\hbox {$\shortmid $}}\mkern -10mu{\to }} c\) and relations \(a\mathrel {\mkern 3mu\vcenter {\hbox {$\shortmid $}}\mkern -10mu{\to }} b\times  c\), both being
  ternary relations on \(a\times  b\times  c\), and this bijection respects implication of relations as well 
  as composition with a pair of relations on the left or a single relation on the right.
  Thus \(\mathbb {R}\mathsf {el}(\mathsf {E})\) is loosely monoidal closed, with the closure coinciding with the cartesian product.
  Essentially the same argument works for categories of spans in any category with pullbacks. 
  In particular, considering the previous example, spans in a lextensive category \(\mathsf {C}\) are compact closed
  with respect to the cartesian product and also equipped with biproducts via the disjoint union.
\par{}
  If \(\mathsf {V}\) is cocomplete and closed symmetric monoidal, then the double category \(\mathsf {V}-\mathbb {P}\mathsf {rof}\) of
  enriched profunctors is loosely monoidal closed, with the closure given by \(J,K\mapsto  J^{\mathsf {op}}\otimes  K\). Indeed,
  insofar as a profunctor \(J\mathrel {\mkern 3mu\vcenter {\hbox {$\shortmid $}}\mkern -10mu{\to }} K\) is a functor \(J^{\mathsf {op}}\otimes  K\to  \mathsf {V}\), the correspondence on objects 
  is immediate; as for cells, given profunctors \(m:J\otimes  K\mathrel {\mkern 3mu\vcenter {\hbox {$\shortmid $}}\mkern -10mu{\to }} L\) and \(m':J'\otimes  K'\mathrel {\mkern 3mu\vcenter {\hbox {$\shortmid $}}\mkern -10mu{\to }} L'\), 
  together with functors \(f:J\to  J'\), \(g:K\to  K'\), and \(h:L\to  L'\), a cell in \(\mathsf {V}-\mathbb {P}\mathsf {rof}(-\otimes  -,-)\)
  with the given boundary is a natural transformation \(m\to  m'(f(-)\otimes  g(-),h(-))\),
  which is visibly equivalent to a natural transformation \(\bar  m\to  \bar  m'(f(-),g^{\mathsf {op}}(-)\otimes  h(-))\),
  where \(\bar {(-)} \) is the correspondence between profunctors \(J\otimes  K\mathrel {\mkern 3mu\vcenter {\hbox {$\shortmid $}}\mkern -10mu{\to }} L\) and \(J\mathrel {\mkern 3mu\vcenter {\hbox {$\shortmid $}}\mkern -10mu{\to }} K^{\mathsf {op}}\otimes  L\)
  mentioned above. 
\par{}
  Monoidal closed 2-categories produce both tightly and loosely monoidal closed double categories, depending 
  on the direction in which the 2-category is interpreted as a double category. Thus for instance we can interpret 
  \(\mathcal {C}\mathsf {at}\) as a double category in the loose direction and get a non-compact closed double category 
  via the cartesian product of categories. We have in fact no examples to hand of non-compact closed double categories 
  that are not degenerate in a similar sense, and indeed even the bicategorical analogue is surprisingly little studied. 
\end{example}

\begin{example}[{Van Kampen colimits}]\label{kdc-000T}
\par{}
  For our final example, we will need a notion of loose colimit of a single diagram in a double category.
\begin{definition}[{Loose colimit}]\label{kdc-000V}
\par{}
  Given double categories 
  \(\mathbb {I},\mathbb {D}\), let \(\mathbb {L}\mathsf {ax}(\mathbb {I},\mathbb {D})\) denote the double category of double functors, strict tight transformations, 
  pseudo loose transformations, and modifications from \(\mathbb {I}\) to \(\mathbb {D}\).  
  Let \(\Delta : \mathbb {D}\to  \mathbb {L}\mathsf {ax}(\mathbb {I},\mathbb {D})\) be the diagonal double functor, which is strict. Fixing a single diagram 
  \(D:\mathbb {I}\to  \mathbb {D}\), viewed as a pseudo double functor \(1\to  \mathbb {D}^{\mathbb {I}}\), 
  we can derive by restriction the loose bimodule 
  \(\mathbb {L}\mathsf {ax}(\mathbb {I},\mathbb {D})(D,\Delta  -):1\mathrel {\mkern 3mu\vcenter {\hbox {$\shortmid $}}\mkern -10mu{\Rightarrow }} \mathbb {D}\).
  A \textbf{loose colimit} of \(D\) is then a representation of this 
  loose bimodule: that is, an object \(\int _\ell ^J D\) equipped with an equivalence of bimodules
  \(\mathbb {L}\mathsf {ax}(\mathbb {I},\mathbb {D})(D,\Delta  -)\cong  \mathbb {D}(\int _\ell ^J D,-)\) fixing \(\mathbb {D}\).\end{definition}
\par{}Observe that we can induce 
  such a map of bimodules using a \textbf{loose cocone}, a protransformation \(\eta : D\mathrel {\mkern 3mu\vcenter {\hbox {$\shortmid $}}\mkern -10mu{\to }} \Delta \int _\ell ^J D\): 
  we can then map loose homs out of \(\int _\ell ^J D\) and cells out of \(\mathrm {id}_{\int _\ell ^J D}\) via composition and whiskering
  with \(\eta \), respectively. (Of course, there must be an equivalent definition of loose colimit directly in terms of initial
  cocones.)\par{}Furthermore, once we have a map \(t: \mathbb {D}(\int _\ell ^J D,-)\to  \mathbb {L}\mathsf {ax}(\mathbb {I},\mathbb {D})(D,\Delta  -)\) 
  of loose bimodules under \(\mathbb {D}\), such as the one induced by a such cocone, that \(t\) be an equivalence reduces 
  to the condition that it be an equivalence of carrier categories, thus precisely the following two conditions:
  \begin{enumerate}\item{}(One-dimensional universal property) Each protransformation \(D\mathrel {\mkern 3mu\vcenter {\hbox {$\shortmid $}}\mkern -10mu{\to }} \Delta  d\) is isomorphic to one in the image of \(t\) via 
    a globular modification.
    
    \item{}(Two-dimensional universal property) \(t\) induces a bijection between cells of the following forms:

  \begin{center}
      \begin {tikzcd}
          {\int ^J_\ell  D} & d && D & {\Delta  d} \\
          {\int ^J_\ell  D} & {d'} && D & {\Delta  d'}
          \arrow [""{name=0, anchor=center, inner sep=0}, "m", "\shortmid "{marking}, from=1-1, to=1-2]
          \arrow [equals, from=1-1, to=2-1]
          \arrow [""{name=1, anchor=center, inner sep=0}, "f", from=1-2, to=2-2]
          \arrow [""{name=2, anchor=center, inner sep=0}, "tm", "\shortmid "{marking}, from=1-4, to=1-5]
          \arrow [""{name=3, anchor=center, inner sep=0}, equals, from=1-4, to=2-4]
          \arrow ["{\Delta  f}", from=1-5, to=2-5]
          \arrow [""{name=4, anchor=center, inner sep=0}, "{m'}"', from=2-1, to=2-2]
          \arrow [""{name=5, anchor=center, inner sep=0}, "{tm'}"', "\shortmid "{marking}, from=2-4, to=2-5]
          \arrow ["\alpha "{description}, draw=none, from=0, to=4]
          \arrow [shorten <=19pt, shorten >=19pt, tail reversed, from=1, to=3]
          \arrow ["{t\alpha }"{description}, draw=none, from=2, to=5]
        \end {tikzcd}
      \end{center}

  \end{enumerate}\par{}Recall from \cite{patterson-2024-transposing} (Corollary 3.9) 
  that the companion of a tight arrow \(\alpha \) in \(\mathbb {L}\mathsf {ax}(\mathbb {I},\mathbb {D})\) exists if and only if each object component of \(\alpha \)
  has a companion and each cell component of \(\alpha \) is a commuter. 
  In particular, by Lemma 3.11 of loc. cit., if every pro-arrow 
  of \(\mathbb {I}\) is a companion, then the second condition becomes automatic; this is the case for instance if \(\mathbb {I}\) 
  is a 1-category interpreted as a tight double category. For purposes of the following definition, we 
  shall be most interested in the case that 
  \(\mathbb {I}\) is a 1-category (in the sense just mentioned) and \(\mathbb {D}\) is an equipment, 
  so that indeed \(\mathbb {D}^{\mathbb {I}}\) is also an equipment.

\begin{definition}[{Van Kampen colimit in a double category}]\label{kdc-000W}
\par{}
  Suppose \(D:\mathbb {I}\to  \mathbb {D}\) is a lax double functor and \(\eta :D\to  \Delta \int _t^J D\) 
  is a \textbf{tightly} colimiting cocone which has a pseudo protransformation companion
  \(\eta ^>:D^>\to  \Delta \int _t^J D\). Then we can ask whether \(\eta ^>\) also makes \(\int _t^J D\) into a 
  loose colimit of  \(D^>:\mathbb {I}_\ell \to  \mathbb {D}\), where \(\mathbb {I}_\ell \) is the bicategory 
  indexing \(D^>\) by covariantly transposing every tight arrow and cell of \(\mathbb {I}\). 
  If \(\eta ^>\) is indeed a loose colimit cocone, then we say that \(\eta \) is a 
  \textbf{van Kampen colimit} of \(D\).
\end{definition}
\par{}
  In \cref{kdc-000S}, we observed that a strict initial object in a category \(\mathsf {C}\) with pullbacks is 
  a van Kampen colimit
  in the double category \(\mathbb {S}\mathsf {pan}(\mathsf {C})\) of spans in \(\mathsf {C}\), and similarly in \cref{kdc-000Q} for 
  binary coproducts in a lextensive category. In fact, we can say more: 

\begin{proposition}[{Van Kampen colimits are van Kampen colimits}]\label{kdc-000X}
\par{}A colimit in a category \(\mathsf {C}\) 
  with pullbacks is van Kampen, in the standard sense, if and only if it is van Kampen in our sense in 
  \(\mathbb {S}\mathsf {pan}(\mathsf {C})\).
\begin{proof}[{proof of \cref{kdc-000X}}]\label{kdc-000X-proof}
\par{}
  It is known from \cite{sobocinski-2011-being}
 that a colimit in a category \(\mathsf {C}\) with pullbacks is van Kampen if and only if 
 it is preserved by the canonical covariant embedding of \(\mathsf {C}\) into its bicategory of spans. 
 Then the result follows from the lemma below and the fact that \(\mathbb {S}\mathsf {pan}(\mathsf {C})\) is
 an equipment.
\end{proof}
\end{proposition}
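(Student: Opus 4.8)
The plan is to unwind \cref{kdc-000W} in the case $\mathbb{D}=\mathbb{S}\mathsf{pan}(\mathsf{C})$ and then match the resulting ``loose'' condition with the bicategorical criterion of Sobociński. So let $\mathbb{I}$ be a $1$-category, viewed as a tight double category, and let $D:\mathbb{I}\to\mathbb{S}\mathsf{pan}(\mathsf{C})$ be the double functor associated to an ordinary diagram $D:\mathbb{I}\to\mathsf{C}$ via the tight inclusion $\mathsf{C}\hookrightarrow\mathbb{S}\mathsf{pan}(\mathsf{C})$; a tightly colimiting cocone $\eta:D\to\Delta\!\int_t^J D$ is then precisely a colimit cocone for $D$ in $\mathsf{C}$. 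Since $\mathbb{S}\mathsf{pan}(\mathsf{C})$ is an equipment and $\mathbb{I}$ a $1$-category, $\mathbb{L}\mathsf{ax}(\mathbb{I},\mathbb{S}\mathsf{pan}(\mathsf{C}))$ is again an equipment, so the companion protransformation $\eta^>:D^>\to\Delta\!\int_t^J D$ exists automatically and \cref{kdc-000W} applies. It thus remains to show that $\eta$ is van Kampen in the standard sense if and only if $\eta^>$ exhibits $\int_t^J D$ as a loose colimit of $D^>:\mathbb{I}_\ell\to\mathbb{S}\mathsf{pan}(\mathsf{C})$.

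I would first record that, since every proarrow of $\mathbb{I}_\ell$ is a companion, the two-dimensional universal property in \cref{kdc-000V} is automatic (as in the discussion preceding \cref{kdc-000W}), so ``$\eta^>$ is a loose colimit cocone'' reduces to its one-dimensional clause. Next I would translate this into the horizontal bicategory of $\mathbb{S}\mathsf{pan}(\mathsf{C})$, isolating the needed statement as the lemma the proof invokes: for any equipment $\mathbb{E}$, any $1$-category $\mathbb{I}$, and any diagram $D:\mathbb{I}\to\mathbb{E}$ landing in tight arrows, a tight colimit cocone $\eta$ together with its companion $\eta^>$ satisfies the loose-colimit condition of \cref{kdc-000V} if and only if $\eta^>$, transported into the horizontal bicategory of $\mathbb{E}$, is a bicategorical colimit cocone for the induced diagram. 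Its proof is bookkeeping: a protransformation $D^>\to\Delta d$ is exactly a bicategorical cocone, in the horizontal bicategory of $\mathbb{E}$, under the diagram induced by $D^>$; a globular modification between two such is exactly a morphism of cocones; and whiskering $\eta^>$ with a loose arrow $r:\int_t^J D\to d$ is exactly the reindexing of the cocone $\eta^>$ along $r$. Hence the one-dimensional clause says precisely that $\eta^>$ is bicategorically colimiting.

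Finally I would observe that in $\mathbb{S}\mathsf{pan}(\mathsf{C})$ the companion of a morphism $f$ of $\mathsf{C}$ is the ``graph'' span $\langle 1,f\rangle$, and that $f\mapsto f^>$ is exactly the canonical covariant embedding $\mathsf{C}\to\mathsf{Span}(\mathsf{C})$ of \cite{sobocinski-2011-being} into the bicategory of spans; hence $D^>$ is the image of $D$ and $\eta^>$ the image of $\eta$ under that embedding. Combining this with the lemma, $\eta$ is van Kampen in our sense if and only if the covariant embedding carries $\eta$ to a bicategorical colimit cocone, which by \cite{sobocinski-2011-being} is exactly the condition that $\eta$ be a van Kampen colimit in the standard sense. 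The main obstacle is the lemma: one must check that the loose-colimit data of \cref{kdc-000V} --- protransformations, globular modifications, and whiskering of $\eta^>$ --- match the bicategorical colimit data in the horizontal bicategory term for term in an equipment, and in particular confirm that the extra two-dimensional clause really does collapse for $1$-categorical index shapes; after that, the conclusion follows by a direct appeal to \cite{sobocinski-2011-being} together with the explicit description of companions in $\mathbb{S}\mathsf{pan}(\mathsf{C})$.
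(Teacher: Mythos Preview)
Your overall strategy matches the paper's: invoke Sobociński's characterisation of standard van Kampen colimits via the covariant embedding into the bicategory of spans, identify that embedding with taking companions in the equipment $\mathbb{S}\mathsf{pan}(\mathsf{C})$, and bridge to \cref{kdc-000W} via a lemma saying that in an equipment, a bicategorical colimit cocone is already a loose colimit cocone. This is exactly the shape of the paper's argument, and the lemma you isolate is the paper's \cref{kdc-000Y}.

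There is, however, a misreading in your handling of the two-dimensional clause. When you write that ``the two-dimensional universal property in \cref{kdc-000V} is automatic (as in the discussion preceding \cref{kdc-000W})'', you are misreading that passage: what becomes automatic there is the \emph{commuter} condition needed for companions of tight transformations in $\mathbb{L}\mathsf{ax}(\mathbb{I},\mathbb{D})$ to exist --- a statement about the \emph{existence} of $\eta^>$, not about the two-dimensional universal property of the loose colimit. The non-globular cells in the 2D clause have their tight arrow $f:d\to d'$ living in $\mathbb{D}$, not in the index, so the shape of $\mathbb{I}_\ell$ does nothing to collapse them.

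The paper's actual mechanism for the 2D clause is restriction in the equipment: a cell with boundary $(\mathrm{id},f)$ from $m$ to $n$ corresponds bijectively to a \emph{globular} cell from $m$ to the restriction $n(-,f)$; the bicategorical universal property handles globular cells, and since $t$ preserves restrictions one has $t(n(-,f)) = tn(-,\Delta f)$, whence the non-globular bijection follows. Your ``bookkeeping'' sketch never supplies this step, and it is precisely what is needed for the direction bicategorical $\Rightarrow$ loose (the converse being trivial, as the loose condition is a priori stronger). Once you replace your ``automatic'' claim with this restriction argument, your proof coincides with the paper's.
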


\begin{lemma}[{Bicategorical colimits in an equipment are loose double colimits}]\label{kdc-000Y}
\par{}
  Suppose \(\mathbb {D}\) is an equipment with underlying bicategory \(\mathcal {D}\). If \(D:\mathcal {I}\to  \mathcal {D}\) 
  is a lax double functor with a bicategorical colimit cocone \(\eta :D\to  \Delta \int _t^J D\), 
  then \(\eta \) is also a loose cocone for \(D\) viewed as a lax double functor \(\mathcal {I}\to  \mathbb {D}\). 

\begin{proof}[{proof of \cref{kdc-000Y}}]\label{kdc-000Y-proof}
\par{}
 Being a loose colimit in \(\mathbb {D}\) is, at a glance, a stronger condition than being 
 a colimit in \(\mathcal {D}\). The former requires that 
 cells of the shapes below must be in bijective correspondence, whereas the latter 
 only requires this for globular such cells:

  \begin{center}\begin {tikzcd}
	{\int ^J_\ell  D} & x & D & {\Delta  x} \\
	{\int ^J_\ell  D} & y & D & {\Delta  y}
	\arrow ["m"', "\shortmid "{marking}, from=1-1, to=1-2]
	\arrow [equals, from=1-1, to=2-1]
	\arrow [""{name=0, anchor=center, inner sep=0}, "f"', from=1-2, to=2-2]
	\arrow ["tm", "\shortmid "{marking}, from=1-3, to=1-4]
	\arrow [""{name=1, anchor=center, inner sep=0}, equals, from=1-3, to=2-3]
	\arrow ["{\Delta  f}", from=1-4, to=2-4]
	\arrow ["n", "\shortmid "{marking}, from=2-1, to=2-2]
	\arrow ["tn"', "\shortmid "{marking}, from=2-3, to=2-4]
	\arrow [shorten <=6pt, shorten >=6pt, tail reversed, from=0, to=0-|1]
\end {tikzcd}\end{center}

But cells as on the left above are in bijective correspondence with globular cells into 
\(n(-,f)\); if \(\eta \) is a bicategorical colimit cocone, then such cells are in bijection 
with globular cells into \(t(n(-,f))\), which is \(tn(-,\Delta  f)\) because \(t\) preserves restrictions;
finally, such globular cells are in bijection with cells of the shape on the right above.
\end{proof}
\end{lemma}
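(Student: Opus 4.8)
The plan is to show that the comparison map $t : \mathbb{D}(\int^J_\ell D, -) \to \mathbb{L}\mathsf{ax}(\mathbb{I},\mathbb{D})(D, \Delta -)$ of loose bimodules under $\mathbb{D}$ induced by the cocone $\eta$ is an equivalence, which, as observed in the discussion preceding \cref{kdc-000W}, reduces to checking that $t$ is an equivalence on carrier categories --- that is, the one- and two-dimensional universal properties stated there. The one-dimensional property (every protransformation $D \to \Delta d$ is globularly isomorphic to one in the image of $t$) is just the essential-surjectivity half of the bicategorical colimit equivalence $\mathcal{D}(\int^J_\ell D, d) \simeq [\mathbb{I},\mathcal{D}](D,\Delta d)$, so it needs no further work. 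The substance is the two-dimensional property, and the guiding observation is that ``bicategorical colimit'' is precisely the restriction of ``loose colimit'' to \emph{globular} cells, while in an equipment every cell can be straightened into a globular one.

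Concretely, I would first record the straightening step: since $\mathbb{D}$ is an equipment and hence, under the standing assumption that $\mathbb{L}\mathsf{ax}(\mathbb{I},\mathbb{D}) \simeq \mathbb{D}^{\mathbb{I}}$ is again an equipment (e.g. $\mathbb{I}$ a $1$-category, via Corollary 3.9 and Lemma 3.11 of \cite{patterson-2024-transposing}), restriction along a tight arrow $f : x \to y$ of $\mathbb{D}$ gives, for any loose arrow $n$ out of $\int^J_\ell D$ ending at $y$, a natural bijection between cells $m \Rightarrow n$ with left boundary $\mathrm{id}$ and right boundary $f$ and globular cells $m \Rightarrow n(\mathrm{id}, f)$; the same holds on the $\mathbb{L}\mathsf{ax}(\mathbb{I},\mathbb{D})$-side with $f$ replaced by $\Delta f$.

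Next I would invoke the hypothesis that $\eta$ is a bicategorical colimit cocone, which says precisely that $t$ induces a bijection on globular cells, so that globular cells $m \Rightarrow n(\mathrm{id}, f)$ correspond to globular cells $tm \Rightarrow t(n(\mathrm{id},f))$. Then, using that $t$ preserves restrictions --- $t(n(\mathrm{id},f)) \cong (tn)(\mathrm{id},\Delta f)$ --- one composes the four bijections (straighten on the left, transport across the bicategorical colimit, rewrite the restriction, unstraighten on the right) to obtain exactly the two-dimensional universal property of \cref{kdc-000W}. Since every proarrow of a $1$-category $\mathbb{I}$ is a companion, the commuter condition there is automatic, so nothing else need be checked and $\eta$ indeed exhibits $\int^J_\ell D$ as a loose colimit of $D$.

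The step I expect to be the main obstacle is the claim that $t$ preserves restrictions. Morally it holds because $t$ is a morphism of loose bimodules fixing $\mathbb{D}$, hence respects the right $\mathbb{D}$-action, and in an equipment restriction along $f$ is the loose action by the conjoint $\check f$; applying $t$ and moving $\check f$ back outside the map leaves $(tn)(\mathrm{id}, ?)$ with $?$ to be recognised as $\Delta f$. Making this precise requires pinning down that the right $\mathbb{D}$-action on $\mathbb{L}\mathsf{ax}(\mathbb{I},\mathbb{D})(D, \Delta -)$ is mediated by the diagonal, and verifying that $\Delta$ preserves conjoints, i.e. that the conjoint of $\Delta f$ in $\mathbb{D}^{\mathbb{I}}$ is $\Delta \check f$ --- which itself follows from the pointwise computation of the equipment structure on $\mathbb{D}^{\mathbb{I}}$. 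Everything else is routine bookkeeping with the two universal properties.
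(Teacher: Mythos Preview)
Your proposal is correct and follows essentially the same route as the paper: straighten non-globular cells to globular ones via restriction in the equipment, apply the bicategorical colimit bijection on globular cells, use that $t$ preserves restrictions to identify $t(n(\mathrm{id},f))$ with $(tn)(\mathrm{id},\Delta f)$, and unstraighten. Your write-up is in fact more careful than the paper's, which simply asserts that $t$ preserves restrictions and does not separately address the one-dimensional property; your unpacking of the restriction-preservation step via conjoints and the pointwise equipment structure on $\mathbb{D}^{\mathbb{I}}$ is a welcome elaboration of what the paper leaves implicit.
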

\end{example}
\section{Appendix}\label{jrb-002A}
\begin{proof}[{Proof of \cref{djm-00IE}}]\label{jrb-002B}
\par{}\par{}
The structure of the proof is as follows:
\begin{enumerate}\item{}we define a map on objects \(A : \mathcal {P}\mathsf {sMod}^{\mathsf {p}}_{\mathscr {F}}(\Delta ^{\mathsf {op}}, \mathcal {C}\mathsf {at}) \to  \mathcal {D}\mathsf {bl}\)
  \item{}we define another map on objects \(B : \mathcal {D}\mathsf {bl} \to  \mathcal {P}\mathsf {sMod}^{\mathsf {p}}_{\mathscr {F}}(\Delta ^{\mathsf {op}}, \mathcal {C}\mathsf {at})\) and observe that \(\left ( AB \right ) M \cong  M\) for any pseudo double category \(M\) (and thus that \(A\) is essentially surjective)
  \item{}we extend the map on objects \(A\) into a 2-functor and observe that it's 2-fully-faithful.\end{enumerate}
the first part of which was given in \cref{djm-00IE-proof} and is repeated here for completeness.

\par{}
We define the mapping from pseudo-models of \(\Delta ^{\mathsf {op}}\) to Leinster's notion of pseudo double categories by explicitly exhibiting the structure and conditions specified in Definition 5.2.1 of \cite{leinster-2004-higher}.

\par{}
Let \(M : \Delta ^{\mathsf {op}} \to  \mathcal {C}\mathsf {at}\) be a pseudo-model of \(\Delta ^{\mathsf {op}}\), with \(M_n\) denoting the image under \(M\) of \([n] \in  \Delta \). We associate to \(M\) the pseudo double category defined as follows:
\begin{enumerate}\item{}The \emph{diagram} in \(\mathcal {C}\mathsf {at}\) is given by:

  \begin{center}
\begin {tikzcd}[row sep=tiny]
	& {M_1} \\
	{M_0} && {M_0}
	\arrow ["{M{\mathsf {d}_{1}}}"', from=1-2, to=2-1]
	\arrow ["{M{\mathsf {d}_{0}}}", from=1-2, to=2-3]
\end {tikzcd}
\end{center}

The model condition (corresponding to the Segal condition for the algebraic pattern \(\Delta ^{\mathsf {op}}\)) requires that \(M_n\) be isomorphic to the \(n\)-fold composite, denoted
for short by \(M_1^{(n)}\), of this span. We denote this isomorphism by \(\theta _n : M_1^{(n)} \to  M_n\).
  
  \item{}For each \(n \geq  0\), we define the external composition of \(n\) loose arrows, \(\mathsf {comp}_n : M_1^{(n)} \to  M_1\), to be:
  \begin{equation}M_1^{(n)} \xrightarrow {\theta _n} M_n \xrightarrow {M \mathsf {a}_n} M_1\end{equation}
  where \(\mathsf {a}_n : [1] \to  [n]\) is the unique such (active) map in \(\Delta ^{\mathsf {op}}\). The maps \(\mathsf {comp}_n\) commute with the projections down to \(M_0\) by the fact that each triangle in the following diagram commutes, where \(\pi _j : M_1^{(n)} \to  M_1\) is the canonical projection onto the \(j^{\text {th}}\) component:

  \begin{center}
\begin {tikzcd}
	& {M_1^{(n)}} \\
	{M_0} & {M_n} & {M_0} \\
	& {M_1}
	\arrow ["{M\mathsf {d}_{1}\;\pi _1}"', from=1-2, to=2-1]
	\arrow ["{\theta _n}", from=1-2, to=2-2]
	\arrow ["{M\mathsf {d}_{0}\;\pi _n}", from=1-2, to=2-3]
	\arrow ["{M\mathsf {d}_{n}}"', from=2-2, to=2-1]
	\arrow ["{M\mathsf {d}_{0}}", from=2-2, to=2-3]
	\arrow ["{M\mathsf {a}_n}"{pos=0.3}, from=2-2, to=3-2]
	\arrow ["{M\mathsf {d}_{1}}", from=3-2, to=2-1]
	\arrow ["{M\mathsf {d}_{0}}"', from=3-2, to=2-3]
\end {tikzcd}
\end{center}
\item{}The \emph{double-sequence maps} are invertible 2-cells of the following form (where \(\diamond \) indicates composition of spans):

  \begin{center}
\begin {tikzcd}
	{M^{(k_1 + \dots  + k_n)}_1} & {M_1^{(k_1)} \diamond \dots  \diamond  M_1^{(k_n)}} & [+4em] {M_1^{(n)}} & [+1em] {M_1}
	\arrow ["\cong ", from=1-1, to=1-2]
	\arrow [""{name=0, anchor=center, inner sep=0},"{\mathsf {comp}_{k_1 + \dots  + k_n}}"', popdown=2em, from=1-1, to=1-4]
	\arrow [""{name=1, anchor=center, inner sep=0},"{\mathsf {comp}_{k_1} \diamond  \dots  \diamond  \mathsf {comp}_{k_n}}", from=1-2, to=1-3]
\arrow ["{\mathsf {comp}_n}", from=1-3, to=1-4]
\arrow ["\gamma ", Rightarrow, shorten <=8pt, shorten >=8pt, from=0|-1, to=0]
\end {tikzcd}
\end{center}

To exhibit such 2-cells, we first observe that the following diagram commutes:

  \begin{center}
\begin {tikzcd}
	{M^{(k_1 + \dots  + k_n)}_1} & {M_1^{(k_1)} \diamond \dots  \diamond  M_1^{(k_n)}} & [+4em] {M_1^{(n)}} \\
	{M_{k_1 + \dots  + k_n}} && {M_n}
	\arrow ["\cong ", from=1-1, to=1-2]
	\arrow ["\theta "', from=1-1, to=2-1]
	\arrow ["{\mathsf {comp}_{k_1} \diamond  \dots  \mathsf {comp}_{k_n}}", from=1-2, to=1-3]
	\arrow ["\theta "', from=1-3, to=2-3]
	\arrow [""{name=0, anchor=center, inner sep=0}, "{M(\mathsf {a}_{k_1}+\dots  + \mathsf {a}_{k_n})}"', from=2-1, to=2-3]
	\arrow ["\circlearrowright "{marking, allow upside down}, draw=none, from=1-2, to=0]
\end {tikzcd}
\end{center}

because it commutes under post-composition by the various projections \(M \mathsf {i}_j : M_n \to  M_1\) of the universal cone under \(M_n\) where \(\mathsf {i}_j : [1] \to  [n]\) is the inert map with \(\mathsf {i}_j (0) = j-1\) (recall that more generally \(\mathsf {i}_j : [m] \to  [n]\) denotes the inert map with the same property). We observe this is true as follows:
\begin{equation}\begin {aligned}
M \mathsf {i}_j \; M(\mathsf {a}_{k_1}+\dots  + \mathsf {a}_{k_n}) \; \theta 
&= \; M \mathsf {a}_{k_j} \; M \mathsf {i}_{k_1 + \dots  + k_{j-1}} \; \theta  \\
&= \; M \mathsf {a}_{k_j} \; \theta  \; M^{(\mathsf {i}_{k_1 + \dots  + k_{j-1}})}_1 \\
&= \; \mathsf {comp}_{k_j}\; M^{(\mathsf {i}_{k_1 + \dots  + k_{j-1}})}_1 \\
&= \; \pi _j \; \left ( \mathsf {comp}_{k_1} \diamond  \dots  \diamond  \mathsf {comp}_{k_n} \right ) \\
&= \; M \mathsf {i}_j \; \theta  \; \left ( \mathsf {comp}_{k_1} \diamond  \dots  \diamond  \mathsf {comp}_{k_n} \right ) \\
\end {aligned}\end{equation}
Consequently, the following diagram commutes:

  \begin{center}
\begin {tikzcd}
	{M^{(k_1 + \dots  + k_n)}_1} & {M_1^{(k_1)} \diamond \dots  \diamond  M_1^{(k_n)}} & [+4em] {M_1^{(n)}} & [+1em]{M_1} \\
	{M_{k_1 + \dots  + k_n}} && {M_n}
	\arrow ["\cong ", from=1-1, to=1-2]
	\arrow ["\theta "', from=1-1, to=2-1]
	\arrow ["{\mathsf {comp}_{k_1} \diamond  \dots  \diamond  \mathsf {comp}_{k_n}}", from=1-2, to=1-3]
	\arrow ["{\mathsf {comp}_n}", from=1-3, to=1-4]
	\arrow ["\theta "', from=1-3, to=2-3]
	\arrow [""{name=0, anchor=center, inner sep=0}, "{M(\mathsf {a}_{k_1}+\dots  + \mathsf {a}_{k_n})}"', from=2-1, to=2-3]
	\arrow ["{M\mathsf {a}_n}"', rounded corners, to path={ -| (\tikztotarget )[pos=0.25]\tikztonodes }, from=2-3, to=1-4]
	\arrow ["\circlearrowright "{marking, allow upside down}, draw=none, from=1-2, to=0]
\end {tikzcd}
\end{center}

This means that a 2-cell \(\gamma \) of the form shown above is equivalent to a 2-cell of the form:
\begin{equation}M\mathsf {a}_n \;M(\mathsf {a}_{k_1}+\dots  + \mathsf {a}_{k_n}) \Rightarrow   M \left ( \mathsf {a}_{k_1 + \dots  + k_n} \right )\end{equation} which we obtain as the laxator for \(M\) witnessing the composition:
\begin{equation}(\mathsf {a}_{k_1}+\dots  + \mathsf {a}_{k_n})\; \mathsf {a}_n = \mathsf {a}_{k_1 + \dots  + k_n}\end{equation}
This 2-cell is invertible, as required. To see that these 2-cells are moreover \emph{globular}, i.e. vertical with respect to \(M\mathsf {d}_{0}, M\mathsf {d}_{1} : M_1 \to  M_0\), we appeal to the associative property of \(M\) which allows us to perform the following manipulation of string diagrams for the 2-cell given by post-whiskering the pseudo compositor \(\mu \) with \(M\mathsf {d}_{0}\):
\begin{equation*}
    \tikzfig[1.2]{string-diagram}
\end{equation*}
  
Since any compositor with an inert input (indicated in bold and coloured blue) is an identity, the entire 2-cell must be an identity as required. The same argument applies for post-whiskering by \(M \mathsf {d}_{1}\).

\item{}The \(\iota \) maps are given by an invertible 2-cell of the form:

  \begin{center}
\begin {tikzcd}
	{M_1} & |[alias=middle]| {M_1^{(1)}} & {M_1}
	\arrow ["{\theta _1^{-1}}", from=1-1, to=1-2]
	\arrow [""{name=1, anchor=center, inner sep=0},equals, popdown=2em, from=1-1, to=1-3]
	\arrow ["{\mathsf {comp}_1}", from=1-2, to=1-3]
	\arrow ["{\iota }"{outer sep=3pt}, Rightarrow, shorten <=4pt, shorten >=4pt,from=middle, to=1]
  \end {tikzcd}
\end{center}

which we take to be the identity, noting that \(\mathsf {comp}_1\;\theta _1^{-1} = M1_{[1]}\;\theta _1 \;\theta _1^{-1} = 1_{M_1}\) by the fact that \hyperref[djm-00I2]{\(M\) is strictly unital}.
\end{enumerate}
The associativity and identity coherence axioms for \(\gamma \) and \(\iota \) correspond directly to those for the pseudo-\(\mathscr {F}\)-functor \(M\).

\par{}
We now have our map on objects \({\mathsf {ob}} \left ( \mathcal {P}\mathsf {sMod}^{\mathsf {p}}_{\mathscr {F}} \left (\Delta ^{\mathsf {op}},\mathcal {C}\mathsf {at}\right ) \right ) \to  {\mathsf {ob}} \left ( \mathcal {D}\mathsf {bl} \right )\). Conversely, given a pseudo double category, \(C\), we obtain a model \(M : \Delta ^{\mathsf {op}} \to  \mathcal {C}\mathsf {at}\) as follows:
\begin{enumerate}\item{}\(\mathsf {ps}\){On objects:} \(M_0 := C_0\), \(M_1 := C_1\) and \(M_n := C_1^{(n)} = M_1^{(n)}\) for \(n > 1\)
  \item{}\par{}\(\mathsf {ps}\){On morphisms:} we need only define the action on active and inert morphisms, as these form a factorisation system on \(\Delta \). Given a \emph{active} morphism \(f : [n] \to  [m]\) which can be expressed as a sum of active morphisms into \([1]\), \(\left ( \mathsf {a}_{k_1} + \dots  + \mathsf {a}_{k_n} \right ) : [n] \to  [k_1 + \dots  + k_n]\), the functor \(Mf\) is given by:
\begin{equation}M_1^{(m)} \xrightarrow {\cong } M_1^{(k_1)} \diamond  \dots  \diamond  M_1^{(k_n)} \xrightarrow {\mathsf {comp}_{k_1} \diamond  \dots  \diamond  \mathsf {comp}_{k_n}} M_1^{(n)}\end{equation}
modulo replacing \(M_1^{(1)}\) with \(M_1\) and \(\mathsf {comp}_1 : M_1^{(1)} \to  M_1\) with the identity. This replacement corresponds to substituting \(M\) with a “unital” (i.e. \(\iota \) is an identity) pseudo double category to which it is isomorphic. For an \emph{inert} morphism \(f : [n] \to  [m]\), we define \(Mf\) to simply be \(M_1^{(f)}\), i.e. the unique morphism \(M_1^{(m)} \to  M_1^{(n)}\) satisfying
\begin{equation}M_1^{(m)} \xrightarrow {M_1^{(f)}} M_1^{(n)} \xrightarrow {\pi _j} M_1 \quad  =
\quad  M_1^{(m)} \xrightarrow {\pi _{fj}} M_1
\end{equation} where as before we assume \(M_1 = M_1^{(1)}\).

\par{}
This action on morphisms is strictly functorial on inerts, as we have:
\begin{equation}\pi _{j} \; M_1^{(f)} \; M_1^{(g)} = \pi _{fj} \; M_1^{(g)} = \pi _{\left ( gf \right )j} = \pi _j \; M_1^{(gf)}\end{equation}
For a composite of an active \emph{after} an inert: 
\begin{equation}[n] \xrightarrow {f} [m] \xrightarrow {\mathsf {a}_{k_1} + \dots  + \mathsf {a}_{k_m}} [k_1 + \dots  + k_n]\end{equation}
which we note has an active–inert factorisation of the form:
\begin{equation}[n] \xrightarrow {\mathsf {a}_{k_{f1}} + \dots  + \mathsf {a}_{k_{fn}}} [k_{f1} + \dots  + k_{fn}] \xhookrightarrow {f'} [k_1 + \dots  + k_n]\end{equation}
we must verify that the following diagram commutes:

  \begin{center}
\begin {tikzcd}[column sep=8em]
	{M_1^{(k_1 + \dots  + k_n)}} & {M_1^{(m)}} \\
	{M_1^{(k_{f1} + \dots  +  k_{fn})}} & {M_1^{(n)}}
	\arrow ["{\mathsf {comp}_{k_1} \diamond  \dots  \diamond  \mathsf {comp}_{k_m}}", from=1-1, to=1-2]
	\arrow ["{M_1^{(f')}}"', from=1-1, to=2-1]
	\arrow ["{M^{(f)}_1}", from=1-2, to=2-2]
	\arrow ["{\mathsf {comp}_{k_{f1}} \diamond  \dots  \diamond  \mathsf {comp}_{k_{fn}}}"', from=2-1, to=2-2]
\end {tikzcd}
\end{center}

This follows from the fact it commutes after post-composition by each projection \(\pi _j : M_1^{(n)} \to  M_1\).

\par{}Now consider the composite of two active morphisms.
Given composable active morphisms, \([n] \xrightarrow {f} [m] \xrightarrow {g} [l]\), where \(f = \mathsf {a}_{x_1} + \dots  + \mathsf {a}_{x_n}\), \(g = \mathsf {a}_{y_1} + \dots  + \mathsf {a}_{y_m}\) and \(gf = \mathsf {a}_{z_1} + \dots  + \mathsf {a}_{z_m}\), we must provide a 2-cell of the following form:

  \begin{center}
\begin {tikzcd}[column sep=8em]
	{M_1^{(y_1 + \dots  + y_n)}} & |[alias=middle]| {M_1^{(x_1 + \dots  +  x_m)}} & {M_1^{(m)}}
	\arrow ["{\mathsf {comp}_{y_1} \diamond  \dots  \diamond  \mathsf {comp}_{y_n}}", from=1-1, to=1-2]
	\arrow [""{name=0, anchor=center, inner sep=0}, "{\mathsf {comp}_{z_1} \diamond  \dots  \diamond  \mathsf {comp}_{z_m}}"', popdown=1.5em, from=1-1, to=1-3]
	\arrow ["{\mathsf {comp}_{x_1} \diamond  \dots  \diamond  \mathsf {comp}_{x_m}}", from=1-2, to=1-3]
	\arrow ["\mu "', shorten >=4pt, Rightarrow, from=middle, to=middle|-0]
\end {tikzcd}
\end{center}

for which it suffices to define 2-cells:

  \begin{center}
\begin {tikzcd}[row sep=2em]
	{M_1^{(y_1 + \dots  + y_n)}} &  [+4em] |[alias=middle]| {M_1^{(x_1 + \dots  + x_m)}} & [+4em] {M_1^{(m)}} & {M_1} \\
	&& |[alias=foo]|{M_1^{(m)}}
	\arrow ["{\mathsf {comp}_{y_1} \diamond  \dots  \diamond  \mathsf {comp}_{y_n}}", from=1-1, to=1-2]
	\arrow [""{name=0, anchor=center, inner sep=0}, "{\mathsf {comp}_{z_1} \diamond  \dots  \diamond  \mathsf {comp}_{z_m}}"', rounded corners, to path={ |- (\tikztotarget )[pos=0.75]\tikztonodes }, from=1-1, to=2-3]
	\arrow ["{\mathsf {comp}_{x_1} \diamond  \dots  \diamond  \mathsf {comp}_{x_m}}", from=1-2, to=1-3]
	\arrow ["{\pi _j}", from=1-3, to=1-4]
	\arrow ["{\pi _j}"', rounded corners, to path={ -| (\tikztotarget )[pos=0.25]\tikztonodes },from=2-3, to=1-4]
	\arrow ["\mu _j"', shorten >=3pt, Rightarrow, from=middle, to=middle|-0]
\end {tikzcd}
\end{center}

which are compatible with the projections \(M \mathsf {d}_{0}, M \mathsf {d}_{1} : M_1 \to  M_0\) by the universal property of \(M_1^{(m)}\). Now, the composites along the top and bottom of this diagram simplify to:

  \begin{center}
\begin {tikzcd}[row sep=2em]
	{M_1^{(y_1 + \dots  + y_n)}} & [+2em] |[alias=middle]| {M_1^{(y_s + \dots  + y_t)}} & [+4em] {M_1^{(x_j)}} & [+1em] {M_1} \\
	& |[alias=foo]|{M_1^{(y_s + \dots  + y_t)}}
	\arrow ["{\mathsf {comp}_{y_s} \diamond  \dots  \diamond  \mathsf {comp}_{y_t}}", from=1-2, to=1-3]
	\arrow [""{name=0, anchor=center, inner sep=0}, "{\mathsf {comp}_{y_s + \dots  + y_t} = \mathsf {comp}_{z_j}}"', rounded corners, to path={ -| (\tikztotarget )[pos=0.25]\tikztonodes }, from=2-2, to=1-4]
	\arrow ["{\mathsf {comp}_{x_j}}", from=1-3, to=1-4]
	\arrow ["M \mathsf {i}_{y_1 + \dots  + y_{s-1}}", hook, from=1-1, to=1-2]
	\arrow ["M \mathsf {i}_{y_1 + \dots  + y_{s-1}}"', hook, rounded corners, to path={ |- (\tikztotarget )[pos=0.75]\tikztonodes },from=1-1, to=2-2]
	\arrow ["\mu _j"', shorten >=3pt, shorten <=6pt, Rightarrow, from=0|-middle, to=0]
\end {tikzcd}
\end{center}

where \(s = f(j-1)\) and \(t= fj\),
so we obtain such a 2-cell from the \(\gamma \) cells of the pseudo double category:

  \begin{center}
\begin {tikzcd}
{M_1^{(y_1 + \dots  + y_n)}}&  [+2em]{M^{(y_s + \dots  + y_t)}_1} & [+4em] {M_1^{(x_j)}} & [+1em] {M_1}
	\arrow ["M \mathsf {i}_{y_1 + \dots  + y_{s-1}}", hook,  from=1-1, to=1-2]
	\arrow [""{name=0, anchor=center, inner sep=0},"{\mathsf {comp}_{y_s + \dots  + y_t}}"', popdown=2em, from=1-2, to=1-4]
	\arrow [""{name=1, anchor=center, inner sep=0},"{\mathsf {comp}_{y_s} \diamond  \dots  \diamond  \mathsf {comp}_{y_t}}", from=1-2, to=1-3]
\arrow ["{\mathsf {comp}_{x_j}}", from=1-3, to=1-4]
\arrow ["\gamma ", Rightarrow, shorten <=8pt, shorten >=8pt, from=0|-1, to=0]
\end {tikzcd}
\end{center}

modulo some subtleties regarding our replacement of \(\mathsf {comp}_1\) with \(1_{M_1}\) which we now address. The \(\gamma \) cell we \emph{actually} want to use to fill the above diagram is the one corresponding to the \emph{unital} pseudo double category constructed from \(M\). This 2-cell, \(\gamma '\), will be the same as the \(\gamma \) cell for \(M\) when none of  \(y_s, \dots , y_t\) is equal to one. In cases where some \(y_j \in  \left \{ y_s, \dots , y_t \right \}\) \emph{is} equal to 1, we need to glue some \(\iota \) cells to the original \(\gamma \) cell to get the correct \(\gamma '\) cell for the isomorphic unital pseudo double category. For example, if \(y_s, \dots  ,y_t\) is \(2,1,3\), \(\gamma '\) is constructed as follows:

  \begin{center}
\begin {tikzcd}
	{M_1^{(2)} \diamond  M_1 \diamond  M_1^{(3)}} & |[alias=two]| {M_1^{(2)} \diamond  M_1^{(1)} \diamond  M_1^{(3)}} & [+4em] |[alias=three]| {M_1^{(3)}} & {M_1}
	\arrow ["{1 \diamond  \theta _1 \diamond  1}"', from=1-1, to=1-2]
	\arrow [""{name=0, anchor=center, inner sep=0}, "{\mathsf {comp}_2 \diamond  1 \diamond  \mathsf {comp}_3}", popup=1.8em, from=1-1, to=1-3]
	\arrow ["{\mathsf {comp}_2 \diamond  \mathsf {comp}_1 \diamond  \mathsf {comp}_3}", from=1-2, to=1-3]
	\arrow [""{name=1, anchor=center, inner sep=0}, "{\mathsf {comp}_5}"', popdown=1.5em, from=1-2, to=1-4]
	\arrow ["{\mathsf {comp}_3}", from=1-3, to=1-4]
	\arrow ["{1 \diamond \iota ^{-1}\diamond 1}", shorten <=4pt, shorten >=10pt, Rightarrow, from=0, to=0|-two]
	\arrow ["\gamma ", shorten >=5pt, shorten <=7pt, Rightarrow, from=1|-three, to=1]
\end {tikzcd}
\end{center}

This construction is analogous to the construction of a normal pseudofunctor isomorphic to a given pseudofunctor, as described in \cite{lack-2006-nerves}. We observe that these \(\mu _j\) 2-cells are coherent with respect to the projections
 \(M\mathsf {d}_{0}, M\mathsf {d}_{1} : M_1 \to  M_0\), i.e. we have:
\begin{equation}M\mathsf {d}_{0} \; \mu _j = M\mathsf {d}_{1}\; \mu _{j+1}\end{equation}
by the fact that the \(\gamma \) and \(\iota \) cells are globular and using the properties:
\begin{equation}M\mathsf {d}_{1}\; \mathsf {comp}_n = M\mathsf {d}_{1}\;\pi _1 \qquad 
M\mathsf {d}_{0}\; \mathsf {comp}_n = M\mathsf {d}_{0}\;\pi _n\end{equation}
So the \(\mu _j\) cells do indeed induce a unique \(\mu \) cell which we take to be our compositor.

\par{}
The compositors for active morphisms as well as the strict functoriality for composites of actives and inerts with inerts now suffice to define general compositors. Given an arbitrary pair of composable 1-cells in \(\Delta \), \([m] \xrightarrow {f} [n] \xrightarrow {g} [l]\), with active–inert factorisations as follows:
\begin{equation}[m] \xrightarrow {f_1} [\widehat {f}] \xrightarrow {f_2} [n] \xrightarrow {g_1} [\widehat {g}] \xrightarrow {g_2} [l]\end{equation}
we obtain the compositor \(Mf \; Mg \Rightarrow   M \left ( gf \right )\) as the following pasting, where: 
\begin{equation}[\widehat {f}] \xrightarrow {g_1'} [x] \xrightarrow {f_2'} [\widehat {g}]\end{equation}
is the active–inert factorisation of \(g_1\;f_2\):

  \begin{center}
\begin {tikzcd}
	& {M_1^{({\widehat {g}})}} & {M_1^{(n)}} & {M_1^{({\widehat {f}})}} \\
	{M_1^{(l)}} && {M_1^{(x)}} && {M_1^{(m)}}
	\arrow ["{Mg_1}", from=1-2, to=1-3]
	\arrow ["{Mf_2'}"{description, inner sep=1pt}, from=1-2, to=2-3]
	\arrow ["{Mf_2}", from=1-3, to=1-4]
	\arrow ["\circlearrowright "{marking, allow upside down, pos=0.4}, draw=none, from=1-3, to=2-3]
	\arrow ["{Mf_1}", from=1-4, to=2-5]
	\arrow ["{Mg_2}", from=2-1, to=1-2]
	\arrow [""{name=0, anchor=center, inner sep=0}, "{M(g_2\,f_2')}"', from=2-1, to=2-3]
	\arrow ["{Mg_1'}"{description, inner sep=1pt}, from=2-3, to=1-4]
	\arrow [""{name=1, anchor=center, inner sep=0}, "{M(g_1'\,f_1)}"', from=2-3, to=2-5]
	\arrow ["\mu "', shorten <=6pt, shorten >=6pt, Rightarrow, from=1-2, to=0]
	\arrow ["\circlearrowright "{marking, allow upside down, pos=0.6}, draw=none, from=1-4, to=1]
\end {tikzcd}
\end{center}

These compositors are clearly identities when one of \(f\) or \(g\) is inert, and they can be seen to satisfy associativity by the associativity axioms on the \(\gamma \) and \(\iota \) cells after some unwinding of definitions.

\par{}
Having now defined the maps on objects in both directions between pseudo-models of \(\Delta ^{\mathsf {op}}\) in \(\mathcal {C}\mathsf {at}\) and pseudo double categories, we observe that starting with a pseudo double category \(M\) and mapping across in both directions we obtain a new pseudo double category \(M'\) which is \emph{isomorphic} to the original one. Indeed, its the \emph{unital} pseudo double category isomorphic to \(M\): the isomorphism \(F : M \to  M'\) has components \(F_0 : M_0 \to  M'_0 = M_0\) and \(F_1 : M_1 \to  M'_1 = M_1\) given by identities, and composition maps of the form:

  \begin{center}
\begin {tikzcd}
	{M_1^{(n)}} & {M_1^{(n)}} \\
	{M_1} & {M_1}
	\arrow [equals, from=1-1, to=1-2]
	\arrow [""{name=0, anchor=center, inner sep=0}, "{\mathsf {comp}_n}"', from=1-1, to=2-1]
	\arrow [""{name=1, anchor=center, inner sep=0}, "{\mathsf {comp}'_n}", from=1-2, to=2-2]
	\arrow [equals, from=2-1, to=2-2]
	\arrow ["{F\mathsf {comp}_n}", shift right=2, shorten <=13pt, shorten >=13pt, Rightarrow, from=0, to=1]
\end {tikzcd}
\end{center}

given by identities when \(n \neq  0\), and by \(\iota \) when \(n = 1\). We conclude in particular that the map on objects \(\widehat {(-)} : {\mathsf {ob}} \left ( \mathcal {P}\mathsf {sMod}^{\mathsf {p}}_{\mathscr {F}}(\Delta ^{\mathsf {op}}, \mathcal {C}\mathsf {at}) \right ) \to  {\mathsf {ob}} \left ( \mathcal {D}\mathsf {bl} \right )\) is essentially surjective on objects. It remains to extend this map on objects to a 2-fully-faithful functor, at which point we can conclude that the 2-functor is in fact a 2-equivalence.
\end{enumerate}
\par{}
Given a pseudo-\(\mathscr {F}\)-transformation \(\phi  : M \Rightarrow   N\) we define a pseudo double functor \(\widehat {\phi } : \widehat {M} \to  \widehat {N}\) as follows:
\begin{enumerate}\item{}The maps \(\widehat {\phi }_0\) and \(\widehat {\phi }_1\) are just the components of \(\phi \) at \([0]\) and \([1]\). By the strictness of \(\phi \) on inerts, the following diagram strictly commutes, as required:

  \begin{center}
\begin {tikzcd}
	{M_1} & {N_1} \\
	{M_0 \times  M_0} & {N_0 \times  N_0}
	\arrow [""{name=0, anchor=center, inner sep=0}, "{\widehat {\phi }_1}", from=1-1, to=1-2]
	\arrow ["{\langle  M\mathsf {d}_{1},M\mathsf {d}_{0}\rangle }"', from=1-1, to=2-1]
	\arrow ["{\langle  N\mathsf {d}_{1},N\mathsf {d}_{0}\rangle }", from=1-2, to=2-2]
	\arrow [""{name=1, anchor=center, inner sep=0}, "{\widehat {\phi }_0 \times  \widehat {\phi }_0}"', from=2-1, to=2-2]
	\arrow ["\circlearrowright "{marking, allow upside down}, draw=none, from=0, to=1]
\end {tikzcd}
\end{center}
  \item{}The pseudo-functoriality cells \(F \mathsf {comp}_n\) are given in terms of the pseudo-naturality cells of \(\phi \) as follows:

  \begin{center}
\begin {tikzcd}
	{M_1^{(n)}} & {N_1^{(n)}} \\
	{M_n} & {N_n} \\
	{M_1} & {N_1}
	\arrow ["{\phi _1^{(n)}}", from=1-1, to=1-2]
	\arrow [""{name=0, anchor=center, inner sep=0}, "{\theta _n}"', from=1-1, to=2-1]
	\arrow [""{name=1, anchor=center, inner sep=0}, "{\mathsf {comp}_n}"', popleft=3em, from=1-1, to=3-1]
	\arrow [""{name=2, anchor=center, inner sep=0}, "{\theta _n}", from=1-2, to=2-2]
	\arrow [""{name=3, anchor=center, inner sep=0}, "{\mathsf {comp}_n}", popright=3em, from=1-2, to=3-2]
	\arrow ["{\phi _n}"{description, inner sep=1pt}, from=2-1, to=2-2]
	\arrow [""{name=4, anchor=center, inner sep=0}, "{M \mathsf {a}_n}"', from=2-1, to=3-1]
	\arrow [""{name=5, anchor=center, inner sep=0}, "{N\mathsf {a}_n}", from=2-2, to=3-2]
	\arrow ["{\phi _1}"', from=3-1, to=3-2]
	\arrow ["\circlearrowright "{marking, allow upside down}, draw=none, from=1, to=2-1]
	\arrow ["\circlearrowright "{marking, allow upside down}, draw=none, from=0, to=2]
	\arrow ["\circlearrowright "{marking, allow upside down}, draw=none, from=3, to=2-2]
	\arrow ["{\phi _{\mathsf {a}_n}}", shift right=2, shorten <=13pt, shorten >=13pt, Rightarrow, from=4, to=5]
\end {tikzcd}
  \end{center}

The upper square commutes by the fact that \(\phi \) is strict on inerts. We can spell this out in detail: let \(\pi _j : M_1^{(n)} \to  M_1\) denote the projection onto the \(j^{\text {th}}\) component, and let \(\mathsf {i}_j\) denote the inert morphism \([1] \to  [n]\) picking out \(j-1\) and \(j\). Then \(\phi _1^{(n)}\) is characterised by the property \(\pi _j\;\phi _1^{(n)} = \phi _1 \; \pi _j\), and \(M \mathsf {i}_j\;\theta _n = \pi _j\). So we have:
\begin{equation}\begin {aligned}
N \mathsf {i}_j \; \phi _n \; \theta _n = \phi _1 \; M \mathsf {i}_j \; \theta _n
= \phi _1 \; \pi _j = \pi _j \; \phi _1^{(n)} = N \mathsf {i}_j \; \theta _n \; \phi _1^{(n)}
\end {aligned}
\end{equation}
from which it follows that \(\phi _n \; \theta _n = \theta _n \; \phi _1^{(n)}\). The cell \(\phi _{\mathsf {a}_n}\), and thus the pasting of the diagram above, is vertical with respect to \(N \mathsf {d}_{0}\) and \(N \mathsf {d}_{1}\) as required by the fact that post-composing by \(N \mathsf {d}_{0}\) gives the pseudo-naturality cell for \(\mathsf {a}_n \;\mathsf {d}_{0}\) by the strict-on-inerts property of \(M\), \(N\) and \(\phi \), and this cell is an identity because \(\mathsf {a}_n \;\mathsf {d}_{0}\) is inert. That these pseudo-functoriality 2-cells for \(\widehat {\phi }\) are coherent with the \(\gamma \) and \(\iota \) 2-cells as defined above follows from the coherence of the pseudo-naturality 2-cells of \(\phi \) with respect to the pseudo-functoriality of \(M\) and \(N\).
\end{enumerate}
Thus, for any pseudo-\(\mathscr {F}\)-transformation \(\phi  : M \Rightarrow   N\) we have a pseudo double functor \(\widehat {\phi } : \widehat {M} \to  \widehat {N}\). This construction can moreover be performed in reverse: given any pseudo double functor \(F : \widehat {M} \to  \widehat {N}\) we recover a pseudo-\(\mathscr {F}\)-transformation \(\widetilde {F} : M \Rightarrow   N\) as follows:
\begin{enumerate}\item{}Components \(\widetilde {F}_0\) and \(\widetilde {F}_1\) are given by \(F_0\) and \(F_1\). For \(n > 1\), \(\widetilde {F}_n\) is given by the composite:
  \begin{equation}M_n \xrightarrow {\theta _n^{-1}} M_1^{(n)} \xrightarrow {F_1^{(n)}} N_1^{(n)} \xrightarrow {\theta _n ^{-1}} N_n\end{equation}
  These components do satisfy strict naturality at inert morphisms. To see this, observe that for a morphism \(f : [n] \to  [m]\) in \(\Delta \), the following square will commute whenever the two paths are equal under post-composition by all projections \(\pi _j : M_1^{(n)} \to  M_1\):

  \begin{center}
\begin {tikzcd}
	{M_m} & {M_1^{(m)}} \\
	{M_n} & {M_1^{(n)}}
	\arrow ["{\theta _m^{-1}}", from=1-1, to=1-2]
	\arrow ["Mf"', from=1-1, to=2-1]
	\arrow ["{M_1^{(f)}}", from=1-2, to=2-2]
	\arrow ["{\theta _n^{-1}}"', from=2-1, to=2-2]
\end {tikzcd}
\end{center}

Post-composing the lower path by \(\pi _j\) gives \(M(f \; \mathsf {i}_j)\), whereas post-composing the upper path by \(\pi _j\) gives \(M \left ( \mathsf {i}_{fj} \right )\). Those \(f\) such \(f \; \mathsf {i}_j = \mathsf {i}_{fj}\) for all \(0 < j \leq  n\) are precisely the inerts. Our naturality squares for \(f\) inert therefore commute by the following decomposition:

  \begin{center}
\begin {tikzcd}
{M_m} & {M_1^{(m)}} & {N_1^{(m)}} & {N_m} \\
{M_n} & {M_1^{(n)}} & {N_1^{(n)}} & {N_n}
\arrow ["\theta _m ^{-1}", from=1-1, to= 1-2]
\arrow ["F_1^{(m)}", from=1-2, to= 1-3]
\arrow ["\theta _m", from=1-3, to= 1-4]
\arrow ["\theta _n ^{-1}"', from=2-1, to= 2-2]
\arrow ["F_1^{(n)}"', from=2-2, to= 2-3]
\arrow ["\theta _n"', from=2-3, to= 2-4]
\arrow [""{name=0, anchor=center, inner sep=0},"{Mf}"', from=1-1, to=2-1]
\arrow [""{name=3, anchor=center, inner sep=0},"{Nf}", from=1-4, to=2-4]
\arrow [""{name=1, anchor=center, inner sep=0},"{M_1^{(f)}}"{description, inner sep=1pt}, from=1-2, to=2-2]
\arrow [""{name=2, anchor=center, inner sep=0},"{N_1^{(f)}}"{description, inner sep=1pt}, from=1-3, to=2-3]
\arrow ["\widetilde {F}_m", popup=1em, from=1-1, to=1-4]
\arrow ["\widetilde {F}_n"', popdown=1em, from=2-1, to=2-4]
\arrow ["\circlearrowright "{marking, allow upside down}, draw=none, from=0, to=1]
\arrow ["\circlearrowright "{marking, allow upside down}, draw=none, from=1, to=2]
\arrow ["\circlearrowright "{marking, allow upside down}, draw=none, from=2, to=3]
\end {tikzcd}
\end{center}
\item{}The pseudo-naturality squares for the active maps \(\mathsf {a}_n : [1] \to  [n]\) are recovered from the pseudo-functoriality cells of \(F\):

  \begin{center}
\begin {tikzcd}
	{M_n} & [+1em] {N_n} \\
  {M_1^{(n)}} & {N_1^{(n)}} \\
	{M_1} & {N_1}
	\arrow ["{\widetilde {F}_n}", from=1-1, to=1-2]
	\arrow [""{name=0, anchor=center, inner sep=0}, "{\theta _n ^{-1}}"', from=1-1, to=2-1]
	\arrow [""{name=1, anchor=center, inner sep=0}, "{M \mathsf {a}_n}"', popleft=3em, from=1-1, to=3-1]
	\arrow [""{name=2, anchor=center, inner sep=0}, "{\theta _n ^{-1}}", from=1-2, to=2-2]
	\arrow [""{name=3, anchor=center, inner sep=0}, "{N \mathsf {a}_n}", popright=3em, from=1-2, to=3-2]
	\arrow ["{F_1^{(n)}}"{description,inner sep=1pt}, from=2-1, to=2-2]
	\arrow [""{name=4, anchor=center, inner sep=0}, "{\mathsf {comp}_n}"', from=2-1, to=3-1]
	\arrow [""{name=5, anchor=center, inner sep=0}, "{\mathsf {comp}_n}", from=2-2, to=3-2]
	\arrow ["{F_1}"', from=3-1, to=3-2]
	\arrow ["\circlearrowright "{marking, allow upside down}, draw=none, from=1, to=2-1]
	\arrow ["\circlearrowright "{marking, allow upside down}, draw=none, from=0, to=2]
	\arrow ["\circlearrowright "{marking, allow upside down}, draw=none, from=3, to=2-2]
	\arrow ["{F_\mathsf {comp}}", shift right=2, shorten <=13pt, shorten >=13pt, Rightarrow, from=4, to=5]
\end {tikzcd}
  \end{center}

These in fact determine the pseudo-naturality 2-cells of \(\widetilde {F}\) for all other morphisms. First, by the fact that every morphism in \(\Delta \) factors as an active followed by an inert, it suffices to define the pseudo-naturality 2-cells on the actives. Consider now the data required for the pseudo-naturality square for an active morphism \(a : [n] \to  [m]\):

  \begin{center}
\begin {tikzcd}
	{M_m} & {M_n} \\
	{N_m} & {N_n}
	\arrow ["Ma", from=1-1, to=1-2]
	\arrow [""{name=0, anchor=center, inner sep=0}, "{\widetilde {F}_m}"', from=1-1, to=2-1]
	\arrow [""{name=1, anchor=center, inner sep=0}, "{\widetilde {F}_n}", from=1-2, to=2-2]
	\arrow ["Na"', from=2-1, to=2-2]
	\arrow ["{\widetilde {F}_a}", shift right=2, shorten <=13pt, shorten >=13pt, Rightarrow, from=0, to=1]
\end {tikzcd}
\end{center}

By the universal property of \(N_n\), to provide such a 2-cell \(\widetilde {F}_a\) it suffices to specify the post-compositions of \(\widetilde {F}_a\) by the projections \(N \mathsf {i}_j\), which by the strictness of \(\widetilde {F}\) at inerts must be the pseudo-naturality cells for the composites \(a\;\mathsf {i}_j\):

  \begin{center}
\begin {tikzcd}
	{M_m} & {M_n} & {M_1} \\
	{N_m} & {N_n} & {N_1}
	\arrow ["Ma", from=1-1, to=1-2]
	\arrow [""{name=0, anchor=center, inner sep=0}, "{\widetilde {F}_m}"', from=1-1, to=2-1]
	\arrow ["{M \mathsf {i}_j}", from=1-2, to=1-3]
	\arrow [""{name=1, anchor=center, inner sep=0}, "{\widetilde {F}_n}"{description, inner sep=1pt}, from=1-2, to=2-2]
	\arrow [""{name=2, anchor=center, inner sep=0}, "{F_1}", from=1-3, to=2-3]
	\arrow ["Na"', from=2-1, to=2-2]
	\arrow ["{N \mathsf {i}_j}"', from=2-2, to=2-3]
	\arrow ["{\widetilde {F}_a}", shift right=2, shorten <=13pt, shorten >=13pt, Rightarrow, from=0, to=1]
\arrow ["\circlearrowright "{marking, allow upside down}, draw=none, from=1, to=2]
\arrow ["M \left ( a\; \mathsf {i}_j \right )", popup=1em, from=1-1, to=1-3]
\arrow ["N \left ( a\; \mathsf {i}_j \right )"', popdown=1em, from=2-1, to=2-3]
\end {tikzcd}
\end{center}

The composite \([1] \xrightarrow {\mathsf {i}_j} [n] \xrightarrow {a} [m]\) has an active-inert factorisation \([1] \xrightarrow {\mathsf {a}_{k_j}} [k_j] \xrightarrow {i_j} [m]\), so by the strictness of \(M\), \(N\) and \(\widetilde {F}\) on inerts, the pasting of the squares above is equally given by the pasting:

  \begin{center}
\begin {tikzcd}
	{M_m} & {M_{k_j}} & {M_1} \\
	{N_m} & {N_{k_j}} & {N_1}
	\arrow ["Mi_j", from=1-1, to=1-2]
	\arrow [""{name=0, anchor=center, inner sep=0}, "{\widetilde {F}_m}"', from=1-1, to=2-1]
	\arrow ["{M \mathsf {a}_{k_j}}", from=1-2, to=1-3]
	\arrow [""{name=1, anchor=center, inner sep=0}, "{\widetilde {F}_{k_j}}"{description, inner sep=1pt}, from=1-2, to=2-2]
	\arrow [""{name=2, anchor=center, inner sep=0}, "{F_1}", from=1-3, to=2-3]
	\arrow ["Ni_j"', from=2-1, to=2-2]
	\arrow ["{N \mathsf {a}_{k_j}}"', from=2-2, to=2-3]
	\arrow ["\circlearrowright "{marking, allow upside down}, draw=none, from=0, to=1]
	\arrow ["{\widetilde {F} \mathsf {a}_{k_j}}", shift right=2, shorten <=13pt, shorten >=13pt, Rightarrow, from=1, to=2]
\end {tikzcd}
\end{center}

defined in terms of the pseudo-naturality cells at morphisms of the form \(\mathsf {a}_k\). These 2-cells will lift to define the 2-cell \(\widetilde {F}_a\) if they are coherent with respect to the maps \(N\mathsf {d}_{0}, N\mathsf {d}_{1} : N_1 \to  N_0\), i.e. if the following pasting diagrams agree:

  \begin{center}
\begin {tikzcd}
	{M_m} & {M_{k_j}} & {M_1} & {M_0}\\
	{N_m} & {N_{k_j}} & {N_1} & {N_0}
\arrow ["Mi_j", from=1-1, to=1-2]
\arrow ["M \mathsf {d}_{0}", from=1-3, to=1-4]
\arrow [""{name=end, anchor=center, inner sep=0},"F_0", from=1-4, to=2-4]
	\arrow [""{name=0, anchor=center, inner sep=0}, "{\widetilde {F}_m}"', from=1-1, to=2-1]
	\arrow ["{M \mathsf {a}_{k_j}}", from=1-2, to=1-3]
	\arrow [""{name=1, anchor=center, inner sep=0}, "{\widetilde {F}_{k_j}}"{description, inner sep=1pt}, from=1-2, to=2-2]
	\arrow [""{name=2, anchor=center, inner sep=0}, "{F_1}", from=1-3, to=2-3]
	\arrow ["Ni_j"', from=2-1, to=2-2]
	\arrow ["{N \mathsf {a}_{k_j}}"', from=2-2, to=2-3]
	\arrow ["{N\mathsf {d}_{0}}"', from=2-3, to=2-4]
	\arrow ["\circlearrowright "{marking, allow upside down}, draw=none, from=0, to=1]
	\arrow ["\circlearrowright "{marking, allow upside down}, draw=none, from=2, to=end]
	\arrow ["{\widetilde {F} \mathsf {a}_{k_j}}", shift right=2, shorten <=13pt, shorten >=13pt, Rightarrow, from=1, to=1-|2]
\end {tikzcd}
\qquad 
\begin {tikzcd}
	{M_m} & {M_{k_{j+1}}} & {M_1} &{M_0}\\
	{N_m} & {N_{k_{j+1}}} & {N_1} & {N_0}
\arrow ["{Mi_{j+1}}", from=1-1, to=1-2]
\arrow ["M \mathsf {d}_{1}", from=1-3, to=1-4]
\arrow [""{name=end, anchor=center, inner sep=0},"F_0", from=1-4, to=2-4]
	\arrow [""{name=0, anchor=center, inner sep=0}, "{\widetilde {F}_m}"', from=1-1, to=2-1]
	\arrow ["{M \mathsf {a}_{k_{j+1}}}", from=1-2, to=1-3]
	\arrow [""{name=1, anchor=center, inner sep=0}, "{\widetilde {F}_{k_{j+1}}}"{description, inner sep=1pt}, from=1-2, to=2-2]
	\arrow [""{name=2, anchor=center, inner sep=0}, "{F_1}", from=1-3, to=2-3]
	\arrow ["{N_{j+1}}"', from=2-1, to=2-2]
	\arrow ["{N \mathsf {a}_{k_{j+1}}}"', from=2-2, to=2-3]
	\arrow ["{N\mathsf {d}_{0}}"', from=2-3, to=2-4]
	\arrow ["\circlearrowright "{marking, allow upside down}, draw=none, from=0, to=1]
	\arrow ["{\widetilde {F} \mathsf {a}_{k_{j+1}}}", shift right=2, shorten <=16pt, shorten >=16pt, Rightarrow, from=1, to=1-|2]
\end {tikzcd}
\end{center}

which they do because the \(F_\mathsf {comp}\) cells — and thus the \(\widetilde {F}{\mathsf {a}_k}\) cells — are vertical with respect to \(N\mathsf {d}_{0}\) and \(N\mathsf {d}_{1}\), and the boundary of both diagrams is:

  \begin{center}
\begin {tikzcd}
	{M_m} & {M_0} \\
	{N_m} & {N_0}
	\arrow ["{M\mathsf {i}_{j+1}}", from=1-1, to=1-2]
	\arrow ["{\widetilde {F}_m}"', from=1-1, to=2-1]
	\arrow ["{F_0}", from=1-2, to=2-2]
	\arrow ["{N\mathsf {i}_{j}}"', from=2-1, to=2-2]
\end {tikzcd}
\end{center}

where \(\mathsf {i}_{j} : [0] \to  [m]\) is the map picking out object \(j-1\). This diagram commutes because \(\mathsf {u}_j\) is inert.

These 2-cells do indeed satisfy the property that \(N \mathsf {d}_{0} \; \widetilde {F} \mathsf {a}_{k_j} = N \mathsf {d}_{1}\; \widetilde {F} \mathsf {a}_{k_{j+1}}\), and so suffice to determine the pseudo-naturality cell for \(a\). One can then verify by a similar argument that for composites:
\begin{equation}[m] \xrightarrow {i} [n] \xrightarrow {a} [l]\end{equation} with \(i\) inert and \(a\) active, and with active–inert factorisation \(ai = i'a'\) that the following two pastings agree:

  \begin{center}
\begin {tikzcd}
	{M_l} & {M_{n}} & {M_m} \\
	{N_l} & {N_{n}} & {N_m}
	\arrow ["Ma", from=1-1, to=1-2]
	\arrow [""{name=0, anchor=center, inner sep=0}, "{\widetilde {F}_l}"', from=1-1, to=2-1]
	\arrow ["{Mi}", from=1-2, to=1-3]
	\arrow [""{name=1, anchor=center, inner sep=0}, "{\widetilde {F}_n}"{description, inner sep=1pt}, from=1-2, to=2-2]
	\arrow [""{name=2, anchor=center, inner sep=0}, "{F_m}", from=1-3, to=2-3]
	\arrow ["Na"', from=2-1, to=2-2]
	\arrow ["{Ni}"', from=2-2, to=2-3]
	\arrow ["\circlearrowright "{marking, allow upside down}, draw=none, from=1, to=2]
	\arrow ["{\widetilde {F} a}", shift right=2, shorten <=13pt, shorten >=13pt, Rightarrow, from=0, to=1]
\end {tikzcd}
\qquad 
\begin {tikzcd}
	{M_l} & {M_{n'}} & {M_m} \\
	{N_l} & {N_{n'}} & {N_m}
	\arrow ["Mi'", from=1-1, to=1-2]
	\arrow [""{name=0, anchor=center, inner sep=0}, "{\widetilde {F}_l}"', from=1-1, to=2-1]
	\arrow ["{Ma'}", from=1-2, to=1-3]
	\arrow [""{name=1, anchor=center, inner sep=0}, "{\widetilde {F}_n'}"{description, inner sep=1pt}, from=1-2, to=2-2]
	\arrow [""{name=2, anchor=center, inner sep=0}, "{F_m}", from=1-3, to=2-3]
	\arrow ["Ni'"', from=2-1, to=2-2]
	\arrow ["{Na'}"', from=2-2, to=2-3]
	\arrow ["\circlearrowright "{marking, allow upside down}, draw=none, from=0, to=1]
	\arrow ["{\widetilde {F} a'}", shift right=2, shorten <=13pt, shorten >=13pt, Rightarrow, from=1, to=2]
\end {tikzcd}
\end{center}

by showing that they agree under post-whiskering by the \(N \mathsf {i}_j\) maps,
and thus that the putative pseudo-naturality 2-cells assemble into a well-defined pseudo-\(\mathscr {F}\)-transformation.
\end{enumerate}
It is then straight-forward to check that this construction of a pseudo-\(\mathscr {F}\)-transformation from a pseudo double functor is clearly inverse to \(\phi  \mapsto  \widehat {\phi }\) and preserves composition.

  \par{}
  Consider now the 2-cells. A 2-cell between pseudo double functors \(F,G : M \to  N\) is given by natural transformations \(\Gamma _0 : F_0 \Rightarrow   G_0\) and \(\Gamma _1 : F_1 \Rightarrow   G_1\) satisfying \(N \mathsf {d}_{i} \; \Gamma _1 = \Gamma _0 \; M\mathsf {d}_{i}\) for \(i = 0,1\):

  \begin{center}
\begin {tikzcd}[row sep=1em]
	{M_1} & {N_1} \\
	{M_0} & {N_0}
	\arrow [""{name=0, anchor=center, inner sep=0}, "{G_1}"', from=1-1, to=1-2]
	\arrow [""{name=1, anchor=center, inner sep=0}, "{F_1}", popup=1em, from=1-1, to=1-2]
	\arrow ["{M\mathsf {d}_{i}}"', from=1-1, to=2-1]
	\arrow ["{N\mathsf {d}_{i}}", from=1-2, to=2-2]
	\arrow [""{name=2, anchor=center, inner sep=0}, "{F_0}", from=2-1, to=2-2]
	\arrow [""{name=3, anchor=center, inner sep=0}, "{G_0}"', popdown=1em, from=2-1, to=2-2]
	\arrow ["{\Gamma _1}", shorten <=5pt, shorten >=5pt, Rightarrow, from=1, to=1|-0]
	\arrow ["{\Gamma _0}", shorten <=5pt, shorten >=5pt, Rightarrow, from=2, to=2|-3]
\end {tikzcd}
  \end{center}

These 2-cells must additionally satisfy coherence conditions with respect to the composition cells for \(F\) and \(G\) expressed as the equality of the below two pasting diagrams:

  \begin{center}
\begin {tikzcd}
	{M_1^{(n)}} & {N_1^{(n)}} \\
	{M_1} & {N_1}
	\arrow [""{name=0, anchor=center, inner sep=0}, "{G_1^{(n)}}"', from=1-1, to=1-2]
	\arrow [""{name=1, anchor=center, inner sep=0}, "{F_1^{(n)}}", popup=1em, from=1-1, to=1-2]
	\arrow [""{name=2, anchor=center, inner sep=0}, "{\mathsf {comp}_n}"', from=1-1, to=2-1]
	\arrow [""{name=3, anchor=center, inner sep=0}, "{\mathsf {comp}_n}", from=1-2, to=2-2]
	\arrow ["{G_1}"', from=2-1, to=2-2]
	\arrow ["{\Gamma _1^{(n)}}", shorten <=6pt, shorten >=6pt, Rightarrow, from=1, to=1|-0]
	\arrow ["{G\mathsf {comp}_n}", shift right=3, shorten <=6pt, shorten >=6pt, Rightarrow, from=2, to=3]
  \end {tikzcd}
  \qquad 
  \begin {tikzcd}
	{M_1^{(n)}} & {N_1^{(n)}} \\
	{M_1} & {N_1}
	\arrow [""{name=1, anchor=center, inner sep=0}, "{F_1^{(n)}}", from=1-1, to=1-2]
	\arrow [""{name=2, anchor=center, inner sep=0}, "{\mathsf {comp}_n}"', from=1-1, to=2-1]
	\arrow [""{name=3, anchor=center, inner sep=0}, "{\mathsf {comp}_n}", from=1-2, to=2-2]
	\arrow [""{name=5, anchor=center, inner sep=0},"{F_1}", from=2-1, to=2-2]  
	\arrow [""{name=4, anchor=center, inner sep=0},"{G_1}"', popdown=1em, from=2-1, to=2-2]
	\arrow ["{\Gamma _1}", shorten <=4pt, shorten >=4pt, Rightarrow, from=5, to=5|-4]
	\arrow ["{F\mathsf {comp}_n}", shorten <=6pt, shorten >=6pt, Rightarrow, from=2, to=3]
\end {tikzcd}
  \end{center}

The data of \(\Gamma _0\) and \(\Gamma _1\) are a subset of the data of a modification between the corresponding pseudo-\(\mathscr {F}\)-transformations, but we once again observe that we can reconstruct the missing data — the \(\Gamma _n : F_n \Rightarrow   G_n : M_n \to  N_n\) maps — from just \(\Gamma _0\) and \(\Gamma _1\). Indeed, by the pseudo-model property for \(N\), such a 2-cell \(\Gamma _n\) is determined by its post-composition with the maps \(N \mathsf {i}_j : N_n \to  N_1\), and for the \(\Gamma _n\)'s to be natural in \(n\) it must be the case that \(N \mathsf {i}_j \; \Gamma _n = \Gamma _1 \; M \mathsf {i}_j\), and so \(\Gamma _n\) is given by:

  \begin{center}
\begin {tikzcd}
	{M_n} & {M_1^{(n)}} & {N_1^{(n)}} & {N_n}
	\arrow ["{\theta ^{-1}}", from=1-1, to=1-2]
	\arrow [""{name=0, anchor=center, inner sep=0}, "{F_1^{(n)}}", popup=0.5em, from=1-2, to=1-3]
	\arrow [""{name=1, anchor=center, inner sep=0}, "{G_1^{(n)}}"', popdown=0.5em, from=1-2, to=1-3]
	\arrow ["{\theta _n}", from=1-3, to=1-4]
	\arrow ["{\Gamma _1^{(n)}}", shift right=2, shorten <=4pt, shorten >=4pt, Rightarrow, from=0, to=1]
\end {tikzcd}
  \end{center}

A similar argument using projections \(N \mathsf {i}_j\) shows that the pseudo-naturality of the \(\Gamma _n\)'s with respect to active maps follows from pseudo-naturality with respect to the maps \(\mathsf {a}_k\), which is equivalent to the coherence of \(\Gamma \) with respect to the composition cells \(F \mathsf {comp}_n\) and \(G \mathsf {comp}_n\) described above. Thus the map from modifications between pseudo-\(\mathscr {F}\)-transformations and vertical transformations between the corresponding  pseudo double functors given by restricting to just the component at \([0]\) and \([1]\) has a unique section, and so is bijective. Restriction to the components at \([0]\) and \([1]\) clearly preserves vertical composition as well as pre and post whiskering by 1-cells.
\end{proof}
\subsection{Model opfibrations are closed under composition}\label{jrb-002D}\par{}
In \cref{jrb-001I} it is shown that postcomposition by a \emph{discrete} model opfibration preserves and reflects the property of being a model opfibration. To see that the discreteness condition is necessary, consider the case where the locally discrete \(\mathscr {F}\)-sketch \(\mathsf {A}\) has no marked cones and is inchordate as an \(\mathscr {F}\)-category. In this case, a model opfibration over \(\mathsf {A}\) is just an opfibration in \(\mathcal {C}\mathsf {at}\), and opfibrations in \(\mathcal {C}\mathsf {at}\) aren't right-cancellative. For example, the unique morphism from a category \(\mathsf {C}\) to the terminal category is an opfibration if and only if \(\mathsf {C}\) is a groupoid, but not all maps between groupoids are opfibrations.
\par{}
However, arbitrary opfibrations \emph{are} closed under composition, so one might expect the same to be true for model opfibrations. Indeed, the proof that post-composing a model opfibration by a discrete model opfibration \(q\) in \cref{jrb-001I-proof} only appealed to the discreteness of \(q\) in demonstrating the "lifting of 2-cells" property. The discreteness hypothesis does simplify the proof of this property but it is not essential, as we now demonstrate.

\begin{proposition}[{Model opfibrations are closed under composition}]\label{jrb-002E}
\par{}
Given a locally discrete \(\mathscr {F}\)-sketch, \(\mathsf {A}\), a model opfibration \(p : \mathsf {B} \to  \mathsf {A}\) and another model opfibration \(q : \mathsf {C} \to  \mathsf {B}\) with respect to the induced  \(\mathscr {F}\)-sketch structure on \(\mathsf {B}\), the composite \(p\,q : \mathsf {C} \to  \mathsf {A}\) is a model opfibration.

\begin{proof}[{proof of \cref{jrb-002E}.}]\label{jrb-002E-proof}
\par{}
As noted above, the argument given in \cref{jrb-001I-proof} for the existence of a unique diagonal filler for the square below, where \(S : \mathsf {J} \to  x \downarrow  {\mathsf {A}}_{\mathsf {inert}}\) is a marked cone of \(\mathsf {A}\) and \(L : \mathsf {J} \to  \mathsf {C}\) some lift along \(p\,q\), does not depend on the discreteness of \(p\).

  \begin{center}
\begin {tikzcd}[row sep=1em]
	{\mathsf {J}} & {\mathsf {C}} \\
	& {\mathsf {B}} \\
	{x \downarrow  {\mathsf {A}}_{\mathsf {inert}}} & {\mathsf {A}}
	\arrow [""{name=0, anchor=center, inner sep=0}, "L", from=1-1, to=1-2]
	\arrow ["S"', from=1-1, to=3-1]
	\arrow ["q", from=1-2, to=2-2]
	\arrow ["{p}", from=2-2, to=3-2]
	\arrow [""{name=1, anchor=center, inner sep=0}, "{\mathsf {cod}}"', from=3-1, to=3-2]
	\arrow ["\circlearrowright "{marking, allow upside down}, draw=none, from=0, to=1]
\end {tikzcd}
\end{center}\par{}
It therefore remains only to show that for two diagonals, \(K,L : x \downarrow  {\mathsf {A}}_{\mathsf {inert}} \to  \mathsf {C}\) and a \(p\,q\)-vertical 2-cell \(\alpha  : KS \Rightarrow   LS\), as shown below left, there exists a unique \(p\,q\)-vertical \(\beta  : K \Rightarrow   L\) such that \(\beta  \,S = \alpha \). To this end, first let \(l \in  \mathsf {B}\) be the image of \(1_x \in  x \downarrow  {\mathsf {A}}_{\mathsf {inert}}\) under \(q\,L\). The map \(l \downarrow  p : l \downarrow  {\mathsf {B}}_{\mathsf {inert}} \to  x \downarrow  {\mathsf {A}}_{\mathsf {inert}}\) is an isomorphism by \cref{jrb-001R}, so we can make the following assignments:
\begin{equation}S' = \left ( l \downarrow  p \right )^{-1}\,S \qquad  K' = K\,\left ( l \downarrow  p \right ) \qquad  L' = L\,\left ( l \downarrow  p \right )\end{equation}
as shown on the right below.

  \begin{center}
\begin {tikzcd}[row sep=1em]
	{\mathsf {J}} & {\mathsf {C}} \\
	& {\mathsf {B}} \\
	{x \downarrow  {\mathsf {A}}_{\mathsf {inert}}} & {\mathsf {A}}
	\arrow [""{name=0, anchor=center, inner sep=0}, "KS"', from=1-1, to=1-2]
	\arrow [""{name=1, anchor=center, inner sep=0}, "LS", popup=1em, from=1-1, to=1-2]
	\arrow ["S"', from=1-1, to=3-1]
	\arrow ["q", from=1-2, to=2-2]
	\arrow ["p", from=2-2, to=3-2]
  \arrow ["L"{description}, shift left=2, from=3-1, to=1-2]
	\arrow ["K"{description}, shift right=2, from=3-1, to=1-2]
	\arrow [""{name=2, anchor=center, inner sep=0}, "{\mathsf {cod}_\mathsf {A}}"', from=3-1, to=3-2]
	\arrow ["{\alpha }"{outer sep=1pt},shorten <=5pt, shorten >=5pt, Rightarrow, from=0, to=0|-1]
\end {tikzcd}
\qquad  $\leadsto $
\qquad 
\begin {tikzcd}[row sep=1em]
	{\mathsf {J}} & {\mathsf {C}} \\
	{l \downarrow  {\mathsf {B}}_{\mathsf {inert}}} & {\mathsf {B}} \\
	{x \downarrow  {\mathsf {A}}_{\mathsf {inert}}} & {\mathsf {A}}
	\arrow [""{name=0, anchor=center, inner sep=0}, "KS"{description}, from=1-1, to=1-2]
	\arrow [""{name=1, anchor=center, inner sep=0}, "LS", popup=1em, from=1-1, to=1-2]
	\arrow ["S'"', from=1-1, to=2-1]
	\arrow ["{l \downarrow  p}"', from=2-1, to=3-1]
	\arrow ["q", from=1-2, to=2-2]
	\arrow ["p", from=2-2, to=3-2]
  \arrow ["L'"{description}, shift left=1, from=2-1, to=1-2]
	\arrow ["K'"{description}, shift right=2.6, from=2-1, to=1-2]
	\arrow [""{name=2, anchor=center, inner sep=0}, "{\mathsf {cod}_{\mathsf {A}}}"', from=3-1, to=3-2]
	\arrow [""{name=3, anchor=center, inner sep=0}, "{\mathsf {cod}_{\mathsf {B}}}"', from=2-1, to=2-2]
  \arrow ["{\alpha }"{outer sep=1pt},shorten <=5pt, shorten >=5pt, Rightarrow, from=0, to=0|-1]
	\arrow ["\circlearrowright "{marking, allow upside down}, draw=none, from=2, to=3]
\end {tikzcd}  
\end{center}\par{}
Note that \(q\,L' = \mathsf {cod}_\mathsf {B}\), since both are morphisms of \(\mathscr {F}\)-fibrations which agree on \(1_x\).
\par{}
By the model opfibration property of \(p\) there exists a unique 2-cell \(\sigma  : qK \Rightarrow   qL\) satisfying \(\sigma  \,S = q\,\alpha \) and \(p\,\sigma  = 1_{\mathsf {cod}_\mathsf {A}}\). Let \(\sigma '= \sigma  \, \left ( l \downarrow  p \right )\). By the opfibration property of \(q\), we can take a \(q\)-opcartesian lift \(\tau  : K' \Rightarrow   \hat {L}\) of \(\sigma '\) along \(q\). Being a lift of \(\sigma '\), we must have in particular that \(q\hat {L} = qL' = \mathsf {cod}_\mathsf {B}\).
\par{}
The 2-cell \(\tau  \,S'\) is then also \(q\)-opcartesian, so we can factorise \(\alpha \) through \(\tau \,S'\) via a \(q\)-vertical 2-cell \(\hat {\alpha }\) as follows:

  \begin{center}
\begin {tikzcd}[column sep=4em]
\mathsf {J} & \mathsf {C}
\arrow [""{name=0, anchor=center, inner sep=0}, "KS=K'S'"', popdown=1em, from=1-1, to=1-2]
\arrow [""{name=1, anchor=center, inner sep=0}, "LS=L'S'", popup=1em, from=1-1, to=1-2]
\arrow ["{\alpha }"{outer sep=1pt},shorten <=5pt, shorten >=5pt, Rightarrow, from=0, to=0|-1]
\end {tikzcd}
\qquad  $=$ \qquad 
\begin {tikzcd}[column sep=4em]
\mathsf {J} & \mathsf {C}
\arrow [""{name=0, anchor=center, inner sep=0}, "K'S'"', popdown=1em, from=1-1, to=1-2]
\arrow [""{name=2, anchor=center, inner sep=0}, "L'S'", popup=1em, from=1-1, to=1-2]
\arrow [""{name=1, anchor=center, inner sep=0}, "\hat {L}S'"{description}, from=1-1, to=1-2]
\arrow ["{\tau \,S'}"{outer sep=1pt},shorten <=5pt, shorten >=5pt, Rightarrow, from=0, to=0|-1]
\arrow ["{\hat {\alpha }}"{outer sep=1pt},shorten <=5pt, shorten >=5pt, Rightarrow, from=1, to=1|-2]
\end {tikzcd}
\end{center}

We can then use the model opfibration property of \(q\) to factor \(\hat {\alpha }\) through \(S'\) as \(\hat {\alpha } = \tilde {\alpha } \, S'\).
\par{}
Our 2-cell \(\beta : K \Rightarrow  L\) is then defined as:
\begin{equation}K = K' \left ( l \downarrow  p \right )^{-1} \xRightarrow {\tau \, \left ( l \downarrow  p \right )^{-1}}
\hat {L} \left ( l \downarrow  p \right )^{-1} \xRightarrow { \tilde {\alpha }\,\left ( l \downarrow  p \right )^{-1}}
L' \left ( l \downarrow  p \right )^{-1} = L
\end{equation}
We first verify that it satisfies the necessary conditions:
\begin{equation}
\begin {aligned}
&p\,q\,\beta  =
p\,q\,\left ( \tilde {\alpha } \circ  \tau   \right )\,\left ( l \downarrow  p \right )^{-1} =
p\,\left ( 1_{\mathsf {cod}_\mathsf {B}} \circ  \sigma '  \right )\,\left ( l \downarrow  p \right )^{-1} =
p\, \sigma  =
1_{\mathsf {cod}_\mathsf {A}} \\
& \beta  \,S =
\left ( \tilde {\alpha } \circ  \tau   \right )\,\left ( l \downarrow  p \right )^{-1}\,S
= \left ( \tilde {\alpha } \circ  \tau   \right )\,S'
= \hat {\alpha } \circ  \tau \,S'
= \alpha 
\end {aligned}
\end{equation}\par{}
To demonstrate uniqueness, assume there were some other \(\gamma  : K \Rightarrow  L\) satisfying the properties above. Then \(q\,\gamma  = \sigma \) by the model-opfibration property of \(q\). Let \(\gamma ' = \gamma  \, \left ( l \downarrow  p \right )\) and let \(\hat {\gamma } : \hat {L} \Rightarrow   L'\) denote the unique \(q\)-vertical factorisation of \(\gamma '\) through \(\tau \). It follows that \(\hat {\gamma }\,S'\) is the unique \(q\)-vertical factorisation of \(\gamma '\,S' = \alpha \) through \(\tau \,S'\), which means \(\hat {\gamma }\,S' = \hat {\alpha }\). But \(\tilde {\alpha }\) is the unique \(q\)-vertical 2-cell with the property \(\tilde {\alpha } \,S' = \hat {\alpha }\), so \(\hat {\gamma } = \tilde {\alpha }\) and we have:
\begin{equation}\gamma  = \left ( \hat {\gamma } \, \tau  \right )\,\left ( l \downarrow  p \right )^{-1}
= \left ( \tilde {\alpha } \, \tau  \right )\,\left ( l \downarrow  p \right )^{-1} = \beta 
\end{equation}\end{proof}
\end{proposition}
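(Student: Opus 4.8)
The plan is to reduce everything to the cone-lifting characterisation of model opfibrations from \cref{jrb-001G}: an $\mathscr{F}$-opfibration over $\mathsf{A}$ is a model opfibration precisely when it lifts every marked cone of $\mathsf{A}$ in the sense of \cref{jrb-001F}. Since $\mathscr{F}$-opfibrations are closed under composition by \cref{jrb-001C}, the composite $pq$ is already an $\mathscr{F}$-opfibration, so it suffices to verify the two cone-lifting clauses: the existence of unique diagonal fillers for $1$-cells, and the unique factorisation of $2$-cells between such fillers. The proof of \cref{jrb-001I} already carries out exactly this analysis under the extra hypothesis that $p$ is discrete, and — as observed there and in \cref{jrb-002F} — discreteness of $p$ was used \emph{only} in handling the $2$-cells. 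So I would reuse the $1$-cell argument of \cref{jrb-001I-proof} verbatim and concentrate entirely on reproving the $2$-cell factorisation without discreteness.

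For the $2$-cell step, I would fix a marked cone $S \colon \mathsf{J} \to x \downarrow \mathsf{A}_{\mathsf{inert}}$, two diagonal fillers $K, L \colon x \downarrow \mathsf{A}_{\mathsf{inert}} \to \mathsf{C}$ and a $pq$-vertical $\alpha \colon KS \Rightarrow LS$, and aim to produce a unique $pq$-vertical $\beta \colon K \Rightarrow L$ with $\beta S = \alpha$. The first move is to trivialise the slice on the $\mathsf{B}$-side: writing $l \in \mathsf{B}$ for the image of $1_x$ under $qL$, \cref{jrb-001R} makes $l \downarrow p \colon l \downarrow \mathsf{B}_{\mathsf{inert}} \to x \downarrow \mathsf{A}_{\mathsf{inert}}$ an isomorphism, so I may pull everything back along its inverse, working with $S' = (l \downarrow p)^{-1} S$, $K' = K(l \downarrow p)$, $L' = L(l \downarrow p)$, where now $qL' = \mathsf{cod}_{\mathsf{B}}$. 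Next I would push $\alpha$ down by $q$ and use the model-opfibration property of $p$ to obtain the unique $p$-vertical $\sigma \colon qK \Rightarrow qL$ with $\sigma S = q\alpha$; then take a $q$-opcartesian lift $\tau \colon K' \Rightarrow \hat{L}$ of $\sigma(l \downarrow p)$ along $q$, so that $q\hat{L} = \mathsf{cod}_{\mathsf{B}}$. Since $q$-opcartesian $2$-cells restrict to $q$-opcartesian $2$-cells, $\tau S'$ is $q$-opcartesian, so $\alpha$ factors uniquely through it as $\alpha = \hat{\alpha} \circ \tau S'$ with $\hat{\alpha}$ $q$-vertical; finally the model-opfibration property of $q$ factors $\hat{\alpha}$ through $S'$ as $\hat{\alpha} = \tilde{\alpha} S'$. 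The required $\beta$ is then $\tilde{\alpha} \circ \tau$ transported back along $(l \downarrow p)^{-1}$, and one checks $pq\beta = \mathrm{id}$ and $\beta S = \alpha$ by unwinding these factorisations.

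Uniqueness I would handle in parallel: given any $pq$-vertical $\gamma \colon K \Rightarrow L$ with $\gamma S = \alpha$, the $2$-cell $q\gamma$ is $p$-vertical and restricts to $q\alpha$ along $S$, hence equals $\sigma$ by the uniqueness clause of the model-opfibration property of $p$; factoring $\gamma(l \downarrow p)$ through the $q$-opcartesian $\tau$ gives a $q$-vertical $\hat{\gamma} \colon \hat{L} \Rightarrow L'$ whose restriction $\hat{\gamma} S'$ is a $q$-vertical factorisation of $\alpha$ through $\tau S'$, so $\hat{\gamma} S' = \hat{\alpha}$; and since $\tilde{\alpha}$ is the unique $q$-vertical $2$-cell restricting to $\hat{\alpha}$ along $S'$, we get $\hat{\gamma} = \tilde{\alpha}$ and therefore $\gamma = \beta$.

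I expect the main obstacle to be the bookkeeping around the nested applications of the opcartesian and model-opfibration factorisations — first lifting $\sigma$ opcartesianly along $q$, then factoring $\alpha$ through that lift, then factoring through $S'$ — together with the verification that the resulting $\beta$ is genuinely $pq$-vertical rather than merely $p$-vertical. Conceptually the point is that a model opfibration, unlike a discrete one, no longer forces $\alpha$ to already be $q$-vertical, so one must manufacture the correct $q$-opcartesian/$q$-vertical decomposition by hand; once that decomposition exists, compatibility with $p$ comes for free from the model-opfibration property of $p$.
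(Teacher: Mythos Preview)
Your proposal is correct and follows essentially the same approach as the paper's proof: both reuse the $1$-cell filler argument from \cref{jrb-001I-proof} unchanged, then for the $2$-cell step pass along the isomorphism $l\downarrow p$ of \cref{jrb-001R}, extract $\sigma$ via the model-opfibration property of $p$, take a $q$-opcartesian lift $\tau$, factor $\alpha$ through $\tau S'$ to obtain $\hat{\alpha}$, and finally factor $\hat{\alpha}$ through $S'$ via the model-opfibration property of $q$ to build $\beta$. Your uniqueness argument is likewise the same as the paper's (and indeed you correctly attribute the step $q\gamma = \sigma$ to the model-opfibration property of $p$, where the paper has a small slip and writes $q$).
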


\section{Declaration}
Funded by the Advanced Research + Invention Agency (ARIA) through
project codes MSAI-PR01-P14 and MSAI-PRO1-P15.

\printbibliography

\end{document}